\pgfplotsset{compat=1.10}
\newtheorem{theorem}{Theorem}
\newtheorem{lemma}[theorem]{Lemma}
\newtheorem{definition}[theorem]{Definition}
\newtheorem{proposition}[theorem]{Proposition}
\newtheorem{corollary}[theorem]{Corollary}
\newtheorem{remark}[theorem]{Remark}
\numberwithin{theorem}{section}
\numberwithin{equation}{section}
\newcommand{\mint}{- \mskip-19,5mu \int}
\newcommand{\tmint}{- \mskip-16,5mu \int}
\def\N{\mathbb{N}}
\def\R{\mathbb{R}}
\newcommand{\dx}{\mathrm{d}x}
\newcommand{\dy}{\mathrm{d}y}
\newcommand{\dz}{\mathrm{d}z}
\newcommand{\dt}{\mathrm{d}t}
\renewcommand{\epsilon}{\varepsilon}
\newcommand{\al}{\alpha}
\newcommand{\be}{\beta}
\newcommand{\gm}{\gamma}
\newcommand{\dl}{\delta}
\newcommand{\lm}{\lambda}
\newcommand{\varep}{\varepsilon}
\newcommand{\sig}{\sigma}
\newcommand{\Om}{\Omega}
\DeclareMathOperator{\spt}{spt}
\DeclareMathOperator{\dist}{dist}
\DeclareMathOperator{\loc}{loc}
\DeclareMathOperator{\Tail}{Tail}
\renewcommand{\epsilon}{\varepsilon}
\newcommand{\eps}{\varepsilon}
\renewcommand{\rho}{\varrho}
\def\eqn#1$$#2$${\begin{equation}\label#1#2\end{equation}}
\newcommand{\btau}{\boldsymbol{\tau}}
\def\Xint#1{\mathchoice
    {\XXint\displaystyle\textstyle{#1}}%
    {\XXint\textstyle\scriptstyle{#1}}%
    {\XXint\scriptstyle\scriptscriptstyle{#1}}%
    {\XXint\scriptscriptstyle\scriptscriptstyle{#1}}%
    \!\int}
\def\XXint#1#2#3{\setbox0=\hbox{$#1{#2#3}{\int}$}
    \vcenter{\hbox{$#2#3$}}\kern-0.5\wd0}
\def\bint{\Xint-}
\def\dashint{\Xint{\raise4pt\hbox to7pt{\hrulefill}}}
\def\tmint{\Xint{\raise0pt\hbox to6pt{\hrulefill}}}
\def\XXiint#1#2#3{\setbox0=\hbox{$#1{#2#3}{\iint}$}
    \vcenter{\hbox{$#2#3$}}\kern-0.5\wd0}
\subjclass[2020]{35B65, 35J70, 35R09, 47G20}
\keywords{Fractional $p$-Laplacian, gradient regularity, H\"older regularity}
\author[V. B\"ogelein]{Verena B\"{o}gelein}
\address{Verena B\"ogelein\\
Fachbereich Mathematik, Universit\"at Salzburg\\
Hellbrunner Str. 34, 5020 Salzburg, Austria}
\email{verena.boegelein@plus.ac.at}
\author[F. Duzaar]{Frank Duzaar}
\address{Frank Duzaar\\
Fachbereich Mathematik, Universit\"at Salzburg\\
Hellbrunner Str. 34, 5020 Salzburg, Austria}
\email{frankjohannes.duzaar@plus.ac.at}
\author[N. Liao]{Naian Liao}
\address{Naian Liao\\
Fachbereich Mathematik, Universit\"at Salzburg\\
Hellbrunner Str. 34, 5020 Salzburg, Austria}
\email{naian.liao@plus.ac.at}
\author[G. Molica Bisci]{Giovanni Molica Bisci}
\address{Giovanni Molica Bisci\\ 
Dipartimento di Scienze Pure e Applicate (DiSPeA), University of Urbino Carlo Bo\\
Piazza della Repubblica, 13, 61029 Urbino, Italy}
\email{giovanni.molicabisci@uniurb.it}
\author[R. Servadei]{Raffaella Servadei}
\address{Raffaella Servadei\\ 
Dipartimento di Scienze Pure e Applicate (DiSPeA), University of Urbino Carlo Bo\\
Piazza della Repubblica, 13, 61029 Urbino, Italy}
\email{raffaella.servadei@uniurb.it}
\begin{document}

\title[Gradient regularity for $(s,p)$-harmonic functions]{Gradient regularity for $(s,p)$-harmonic functions}

\date{\today}

\begin{abstract}
We study the local regularity properties of $(s,p)$-harmonic functions, i.e.~local weak solutions to the fractional $p$-Laplace equation of order $s\in (0,1)$ in the case $p\in (1,2]$. It is shown that $(s,p)$-harmonic functions are weakly differentiable and that the weak gradient is locally integrable to any power $q\ge 1$.
As a result, $(s,p)$-harmonic  functions are H\"older continuous to arbitrary H\"older exponent in $(0,1)$.
In addition, the weak gradient of $(s,p)$-harmonic functions has certain fractional differentiability. All estimates are stable when $s$ reaches $1$, and the known regularity properties of $p$-harmonic functions are formally recovered, in particular the local $W^{2,2}$-estimate.

\end{abstract}
\maketitle
\tableofcontents

\section{Introduction}
In this manuscript we complete our program on the higher Sobolev regularity of solutions to the fractional $p$-Laplace equation with order $s$:
\begin{equation}\label{PDE}
    (-\Delta_p)^s u = 0 \quad\text{in}\>\>\Om.
\end{equation}
Here, we denoted a bounded domain $\Om$ in $\R^N$, $N\ge2$.
Local solutions to \eqref{PDE} are termed $(s,p)$-harmonic functions in $\Om$; see~Definition~\ref{def:loc-sol}.
In our previous work \cite{BDLMS}, we considered the case $p\in[2,\infty)$, whereas in this manuscript we focus on the case $p\in(1,2]$.

The \textbf{main result} states that when $p\in(1,2]$ and $s\in(0,1)$, the gradient of $(s,p)$-harmonic functions in $\Om$ exists and belongs to $L^q_{\loc}(\Om)$ for any $q\in[p,\infty)$. As a result, $(s,p)$-harmonic functions are locally H\"older continuous in $\Om$ with an arbitrary H\"older exponent $\gm\in(0,1)$. In addition, we establish the fractional differentiability of the gradient in any $L^q$-scale, namely, $\nabla u\in W^{\al,q}_{\loc}(\Om)$ for any $q\in[2,\infty)$ and any $\al\in(0,\max\{\frac{sp}{q}, \frac{1-(1-s)p}{q-1}\})$. In particular, letting $q=2$ and $s\uparrow1$, we formally recover the well-known $W^{2,2}$-regularity of $p$-harmonic functions, cf.~\cite{Manfredi-Weitsman, Acerbi-Fusco}. All claimed regularity properties are confirmed by explicit local estimates that are stable as $s\uparrow1$.

A first decisive result on the gradient regularity for $(s,p)$-harmonic functions belongs to Brasco \& Lindgren \cite{Brasco-Lindgren}. They established $\nabla u\in L^p_{\loc}(\Om)$ in the case $p\in[2,\infty)$ and $s\in(\frac{p-1}{p},1)$ for globally bounded $(s,p)$-harmonic functions. Recently, we have substantially improved this result in \cite{BDLMS}. As a matter of fact, the same gradient regularity holds for a wider range of $s$, namely, $s\in(\frac{p-2}{p},1)$, and furthermore, global boundedness can also be dispensed with. More importantly, what really sets our contribution apart from others is that the integrability exponent of $\nabla u$ has been upgraded to any finite number $q$. 

In this work we make an important step forward and extend the gradient regularity for $(s,p)$-harmonic functions to the whole range $p\in(1,2]$ and $s\in(0,1)$. Therefore, summarizing the achievement in \cite{BDLMS} and in this work, we have obtained a rather complete picture regarding the higher Sobolev regularity for $(s,p)$-harmonic functions, see the ranges of $s$ and $p$ shown by the blue part in Figure~\ref{figure:new}. In addition, for the same ranges of $s$ and $p$, we have concluded the ``almost" Lipschitz regularity. Prior to our works, such regularity was only known for $s\in (\frac{p-1}{p},1)$; see~\cite{Brasco-Lindgren-Schikorra} for $p\in[2,\infty)$ and \cite{Lindgren-Garain} for $p\in(1,2]$.

\begin{figure}[h]
\begin{tikzpicture}[scale=1.3
](160,155)(0,0)
    \draw[dashdotted] (2,0) -- (2,1.5);
    \draw[dashdotted] (1,0) -- (1,1.5);
    \draw (2,-0.3) node[anchor=mid] {$2$};
    \draw (-0.3,1) node[anchor=mid] {$1$};
    \draw (-0.3,0) node[anchor=mid] {$0$};
    \draw (1,-0.3) node[anchor=mid] {$1$};
    \draw (1,-0.1) -- (1,0.1);
    \draw (-0.1,1) -- (0.1,1);
    \draw (2,-0.1) -- (2,0.1);
    \draw[->] (0, 0) -- (7, 0) node[right] {$p$};
    \draw[->] (0, 0) -- (0, 1.5) node[above] {$s$};
    \draw[dashdotted] (0, 1) -- (2, 1);
    \draw[dashdotted, name path=A] (2, 1) -- (7, 1);
    \draw[scale=1, thick, domain=2:7, smooth, variable=\p, blue, name path=B] plot ({\p}, {(\p-2)/\p});
    \tikzfillbetween[of=A and B]{blue, opacity=0.1};
    \filldraw [fill=blue, opacity=0.1] (1,0) rectangle (2,1);
    \draw[name path=C] (2, 0) -- (7, 0);
    \tikzfillbetween[of=B and C]{green, opacity=0.05};
\end{tikzpicture}
\caption{ Threshold: $s=\frac{(p-2)_+}{p}$ }\label{figure:new}
\end{figure}
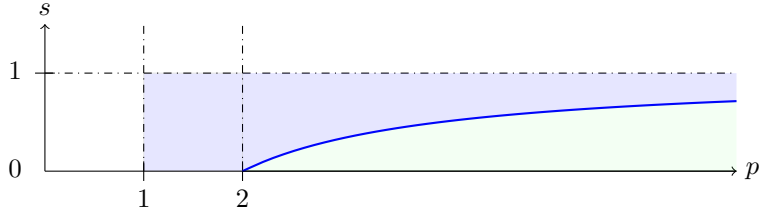

For simplicity, we choose to present and prove all our results for locally bounded $(s,p)$-harmonic functions. However, once this is done, we can employ proper freezing-and-comparison arguments, consider general inhomogeneous terms on the right-hand side of \eqref{PDE}, and also inserting appropriate coefficients to the equation. The estimates could also be modified in such a way that the $L^\infty$-norm is replaced by the $L^p$-norm of $u$. Conceivably, our higher Sobolev regularity for $(s,p)$-harmonic functions opens the way for a Calder\'on-Zygmund-type theory of $\nabla u$.

As in our previous work, we explicitly trace the dependence of the constants in estimates. We believe this is important and necessary in order to justify the stability of our regularity estimates as $s\uparrow1$. Likewise, we have also traced how constants grow in various iteration schemes. This task often involves great care. However, the reader is advised to skip these details on first reading.

Independent of us, the interesting preprint \cite{DKLN} established that $\nabla u\in L_{\loc}^{p}(\Om)$ for the same range of $s$ and $p$ as ours. In addition, it is shown that when $p\in(1,2)$, $\nabla u$ possesses any order of fractional differentibility less than $\max\{ \frac12 sp, 1-(1-s)p\}$ in the $L^p$-scale.

\subsection{Results}

We state the main results here. The first one concerns the existence of the gradient of $(s,p)$-harmonic functions in $L^p$-scale.

\begin{theorem}[$L^{p}$-gradient regularity]\label{thm:W1p-p<2}
Let $p\in (1, 2]$ and $s\in(0,1)$. Then, for any locally bounded $(s,p)$-harmonic function $u$ in the sense of Definition~\ref{def:loc-sol}, we have
$$
    u\in W^{1,p}_{\rm loc}(\Omega).
$$
Moreover, there exists a  constant $C=C(N,p,s)$, such that for any ball $B_{R}\equiv B_{R}(x_o)\Subset \Omega$ we have
\begin{align*}
    \|\nabla u\|_{L^p(B_{\frac12 R})}
    &\le 
    \frac{C}{R} \Big[  R^{s}(1-s)^{\frac1p}[u]_{W^{s,p}(B_{R})} +
    R^{\frac{N}{p}} \big(\|u\|_{L^\infty(B_{R})} + \Tail(u;R)\big)\Big].
\end{align*}
The constant $C$ is stable as  $s\uparrow 1$ and blows up as $p\downarrow 1$. 
\end{theorem}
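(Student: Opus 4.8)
The plan is to establish the differentiability and the quantitative estimate via a finite-difference scheme combined with a Caccioppoli-type inequality adapted to the nonlocal nonlinearity. Write $\delta_h u(x) = u(x+h)-u(x)$ for small $h \in \R^N$, and let $\eta$ be a standard cut-off function supported in $B_{3R/4}$ with $\eta \equiv 1$ on $B_{R/2}$ and $|\nabla\eta| \lesssim 1/R$. The natural test function in the weak formulation of \eqref{PDE} is a discrete second difference of $u$ localized by $\eta^p$, i.e.\ something of the form $\varphi = \delta_{-h}\big(\eta^p\,|\delta_h u|^{\beta}\,\delta_h u\big)$ for a suitable exponent $\beta \ge 0$; choosing $\beta=0$ should already suffice to reach $L^p$. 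Testing the equation and moving the difference operator onto the kernel quantity $J_p(u(x)-u(y)) := |u(x)-u(y)|^{p-2}(u(x)-u(y))$, one is led to estimate the nonlocal bilinear form
\begin{equation*}
  \iint \frac{\big(J_p(u(x)-u(y)) - J_p(u(x+h)-u(y+h))\big)\big(\varphi(x)-\varphi(y)\big)}{|x-y|^{N+sp}}\,\dx\,\dy .
\end{equation*}
The first main step is the algebraic monotonicity inequality for $J_p$ in the degenerate/singular range $p\in(1,2]$: for all $a,b\in\R$,
\begin{equation*}
  \big(J_p(a)-J_p(b)\big)(a-b) \;\gtrsim_p\; \big(|a|+|b|\big)^{p-2}|a-b|^2 ,
\end{equation*}
which produces, on the diagonal block of the bilinear form, a coercive term of the form $\iint \eta(x)^p\,(|U|+|U_h|)^{p-2}\,|\delta_h U|^2\,|x-y|^{-N-sp}\,\dx\,\dy$ where $U = u(x)-u(y)$ and $\delta_h U = \delta_h u(x)-\delta_h u(y)$. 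The off-diagonal ("commutator") terms, coming from $\delta_{-h}$ hitting $\eta^p$ rather than $u$, are controlled using $|\delta_h(\eta^p)| \lesssim |h|/R$ together with the Gagliardo seminorm $[u]_{W^{s,p}(B_R)}$ and, for the long-range interaction $|y-x_o|\ge R$, the nonlocal tail $\Tail(u;R)$; here local boundedness of $u$ enters to bound the mixed terms by $\|u\|_{L^\infty(B_R)}$.

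The second main step is to convert the coercive quadratic-type quantity into an estimate for the $p$-th power of the difference quotient. Since $p\le 2$, by Hölder's inequality with exponents $2/p$ and $2/(2-p)$,
\begin{equation*}
  |\delta_h U|^{p} = \Big((|U|+|U_h|)^{p-2}|\delta_h U|^2\Big)^{p/2}\big(|U|+|U_h|\big)^{p(2-p)/2},
\end{equation*}
so one gains $\int_{B_{R/2}} \big|\delta_h u(x)\big|^{p}\,\dx \lesssim |h|^{?}$ after absorbing the factor $(|U|+|U_h|)^{p(2-p)/2}$ against $[u]_{W^{s,p}(B_R)}^{\,p(2-p)/2}$ — this is exactly the place where the case $p\le 2$ is handled differently from $p\ge 2$, and where the constant blows up as $p\downarrow 1$. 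The scaling bookkeeping must be tracked so that the final bound reads $\int_{B_{R/2}}|\delta_h u|^p\,\dx \le C\,|h|^{p}\,R^{-p}\big[R^s(1-s)^{1/p}[u]_{W^{s,p}(B_R)} + R^{N/p}(\|u\|_\infty + \Tail)\big]^p$; the factor $(1-s)^{1/p}$ appears naturally from the normalization of the Gagliardo kernel and is what makes the estimate stable as $s\uparrow 1$. By the standard characterization of $W^{1,p}$ via uniformly bounded difference quotients (Nikolskii/Riesz), passing $h\to 0$ yields $u\in W^{1,p}_{\loc}(\Omega)$ together with the displayed bound for $\|\nabla u\|_{L^p(B_{R/2})}$.

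The main obstacle I anticipate is \emph{not} the algebra but the careful splitting and estimation of the nonlocal error terms produced by the cut-off, especially the interaction between the near region $\{|x-y|\lesssim R\}$ and the far region: one must simultaneously (i) keep the $|h|^p$ homogeneity clean so that the difference-quotient characterization applies, (ii) extract precisely the factor $(1-s)^{1/p}$ and no hidden powers of $(1-s)^{-1}$, so that the $s\uparrow 1$ limit is uniform, and (iii) ensure all tail contributions collapse into $\Tail(u;R)$ and $\|u\|_{L^\infty(B_R)}$ with the correct power $R^{N/p}$. A secondary subtlety is that, a priori, $u$ is only known to lie in $W^{s,p}_{\loc}$, so the second-difference test function must be justified by first working with a regularized equation or by a limiting argument, and one should verify that the right-hand side is finite before claiming differentiability — the Caccioppoli inequality itself, applied to the \emph{first} difference quotient uniformly in $h$, is what closes this gap.
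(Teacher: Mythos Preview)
Your overall architecture is right and matches the paper: test the equation with a discrete difference, use the monotonicity inequality for $J_p$ to extract a coercive quadratic-type quantity, then for $p\le 2$ apply the H\"older splitting with exponents $\tfrac{2}{p}$ and $\tfrac{2}{2-p}$ to land on a $p$-th power. The tail/commutator bookkeeping you describe is also essentially what the paper does. However, there is a genuine gap at the step you yourself flagged with the question mark ``$|h|^{?}$'': a \emph{single} application of this scheme does \emph{not} produce the exponent $|h|^{p}$.

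Concretely, the coercive term you obtain is a fractional Gagliardo seminorm of $\btau_h u$, not the single integral $\int_{B_{R/2}}|\btau_h u|^p$. Tracking the exponents (as in the paper's Proposition~\ref{prop:energy-p} and Lemma~\ref{lem:Nikol-est-p<2}), one pass yields only
\[
   \int_{B_r}|\btau_h(\btau_h u)|^p\,\dx \;\lesssim\; |h|^{sp(1+\frac{p}{2})}\cdot(\cdots),
\]
and $s(1+\tfrac{p}{2})<1$ whenever $s<\tfrac{2}{2+p}$. So for small $s$ you cannot invoke the difference-quotient characterization of $W^{1,p}$ after one step. The paper closes this gap by an \emph{iteration} (Lemma~\ref{lem:diff-quot-p<2}): assuming $u\in W^{s+\theta,p}_{\rm loc}$ one shows a second-difference bound with exponent $(s+\theta(1-\tfrac{p}{2}))p$, feeds this through Lemmas~\ref{lem:Domokos} and~\ref{lem:FS-N} to upgrade to $u\in W^{s+\theta',p}_{\rm loc}$ with $\theta'>\theta$, and repeats roughly $\lceil \tfrac{4(1-s)}{sp}\rceil$ times until the exponent of $|h|$ exceeds $1$. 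Only then does the Domokos-type lemma (case $\gamma>1$) combined with Lemma~\ref{lem:diff-quot-1} give $u\in W^{1,p}_{\rm loc}$. Your ``scaling bookkeeping'' sentence hides precisely this bootstrap, and it is also where the careful tracking of the $(1-s)$-factor and the stability as $s\uparrow 1$ actually takes place (the number of iterations stays bounded, but the constants in each step must not accumulate a bad power of $(1-s)^{-1}$).
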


The second result upgrades the integrability of $\nabla u$ to any exponent $q>p$.

\begin{theorem}[$L^q$-gradient regularity]\label{thm:W1q-p<2}
Let $p\in (1,2]$ and  $s\in(0,1)$. Then, for any locally bounded $(s,p)$-harmonic function $u$ in the sense of Definition~\ref{def:loc-sol}, we have
$$
    u\in W^{1,q}_{\rm loc}(\Omega)\qquad \mbox{for any $q\in [p,\infty)$.}
$$
Moreover, there exists a  constant  $C=C(N,p,s,q)$, such that for any ball $B_{R}\equiv B_{R}(x_o)\Subset \Omega$ the quantitative $L^q$-gradient estimate 
\begin{align*}
    &\|\nabla u\|_{L^q(B_{\frac12 R})} \le
    C R^{\frac{N}{q}-1} 
    \Big[R^{s-\frac{N}{p}} (1-s)^{\frac1p}[u]_{W^{s,p}(B_{R})} +
    \|u\|_{L^{\infty}(B_{R})}+\mathrm{Tail}(u;R)\Big]
\end{align*}
holds true. The constant $C$ is stable in the limit  $s\uparrow 1$  and blows up as $p\downarrow 1$.
\end{theorem}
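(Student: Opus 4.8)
The plan is to bootstrap from the $L^p$-gradient estimate of Theorem~\ref{thm:W1p-p<2} via a Moser-type iteration performed on fractional difference quotients. The natural starting point is a Caccioppoli-type inequality for the equation tested against powers of the solution (or of truncations of it) multiplied by a cutoff; this is the standard device for the local fractional $p$-Laplacian, but here it must be applied to difference quotients $\tau_h u(x) = u(x+h) - u(x)$ in order to capture gradient integrability rather than mere boundedness. Concretely, I would test the weak formulation with a function of the form $\eta^q\,\Phi(\tau_h u)$ where $\Phi$ is a suitably regularized power and $\eta$ a spatial cutoff, and exploit the monotonicity inequality for the vector field $a\mapsto |a|^{p-2}a$ in the regime $p\in(1,2]$ (where one has the lower bound in terms of $(|a|+|b|)^{p-2}|a-b|^2$). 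The key structural gain, as in \cite{BDLMS}, is that each iteration step trades a power of the $W^{s,p}$-seminorm of $\tau_h u$ for a higher power together with a loss in the cutoff radius and an $L^\infty$-contribution coming from the nonlocal tail; summing a geometric series of radii $R/2 + 2^{-k}R/4$ and tracking the constants yields the finite-$q$ estimate for every $q\in[p,\infty)$.

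First I would establish a single-step fractional improvement: given that $\nabla u\in L^{q}_{\loc}$ with a quantitative bound on a ball $B_\rho$, deduce $\nabla u\in L^{q'}_{\loc}$ on a slightly smaller ball $B_{\rho'}$ for some $q' = q'(N,p,s,q) > q$, with a constant that depends only on $N,p,s,q$ and on $(\rho-\rho')^{-1}$. This is the ``one gain'' lemma. For the starting point one uses Theorem~\ref{thm:W1p-p<2}, which gives the $L^p$-bound; then one iterates the gain finitely many times to reach any prescribed target exponent $q$. Because each application of the gain lemma shrinks the ball, I would allocate the radii as a summable sequence $R_k = \tfrac12 R + 2^{-k-2}R$, so that the final ball is $B_{R/2}$ and the total loss in the radius factors is controlled by a convergent series; the product of the finitely many constants is a new constant $C(N,p,s,q)$, and its blow-up as $p\downarrow 1$ is inherited from Theorem~\ref{thm:W1p-p<2} together with the degeneracy of the monotonicity constant. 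Scaling considerations (replacing $u$ by $u(x_o + R\cdot)$ and normalizing) fix the powers $R^{N/q - 1}$, $R^{s-N/p}$ and the dimensional exponents in the statement, so that the estimate is dimensionally consistent and the constant is dimension-free; stability as $s\uparrow1$ follows by keeping the factor $(1-s)^{1/p}$ attached to the $W^{s,p}$-seminorm and checking that none of the constants in the gain lemma degenerate at $s=1$ (they reduce to the classical local higher-integrability constants for the $p$-Laplacian).

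I expect the main obstacle to be the self-improving step: making the fractional difference-quotient Caccioppoli inequality actually produce a genuine \emph{gain} in integrability rather than just reproducing the same exponent. In the degenerate/singular range $p\le 2$ the vector field $|a|^{p-2}a$ is only H\"older continuous, and the natural energy quantity one controls is $(|\nabla u(x)| + |\nabla u(y)|)^{p-2}|\tau_h\nabla u|^2$-type, which is not directly an $L^{q'}$-norm of $\tau_h\nabla u$; one has to interpolate, using H\"older's inequality against the already-known $L^q$-bound on $\nabla u$, to convert this mixed quantity into a clean higher power of $|\nabla u|$. Getting the bookkeeping right — which exponent improves, by how much, and with what constant — is delicate, and the correct choice of the regularized test function $\Phi$ and of the exponent in the cutoff power $\eta^q$ is what makes the iteration close. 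A secondary technical point is controlling the nonlocal tail term uniformly through the iteration: the tail must be estimated in $L^\infty$ (which is why local boundedness is assumed), and one must verify that the tail contribution does not accumulate an unbounded constant as the number of iteration steps grows with $q$. Once the gain lemma and the tail control are in place, the theorem follows by the finite iteration and a routine collection of the radius-dependent factors.
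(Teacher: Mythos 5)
Your plan is sound at the level of strategy — begin from the $L^p$ estimate of Theorem~\ref{thm:W1p-p<2}, prove a ``one-gain'' lemma under the hypothesis $u\in W^{1,q}_{\loc}$, and iterate finitely many times with a summable radius sequence; this is indeed how the paper is organized. But your proposed mechanism for the gain does not close the loop, and that is exactly where the argument lives or dies.

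You write that the energy inequality controls a mixed quantity of the type $(|\nabla u(x)|+|\nabla u(y)|)^{p-2}|\btau_h\nabla u|^2$, and that ``one has to interpolate, using H\"older's inequality against the already-known $L^q$-bound on $\nabla u$, to convert this mixed quantity into a clean higher power of $|\nabla u|$.'' This cannot work as stated. H\"older's inequality and interpolation between known $L^q$ norms can never produce integrability for $\nabla u$ beyond the $L^q$ you already assume; the output of any interpolation scheme is confined between the exponents you feed in. What actually produces the gain is an embedding, not an interpolation. The paper's chain is: the energy inequality (Proposition~\ref{prop:energy-q-1}, with test function $V_{\delta}(\btau_h u)\eta^p$, $\delta=q-p+1$ — note the cutoff power is $p$, not $q$) controls a Gagliardo seminorm of $V_{q/p}(\btau_hu)$; H\"older is used only to \emph{split off} the known $L^q$ piece, not to create a gain; passing through Lemma~\ref{lem:N-FS} and the pointwise inequality $|\btau_\lambda(V_{q/p}(\btau_hu))|\ge|\btau_\lambda\btau_hu|^{q/p}$ converts this into a second-difference estimate $\int_{B_r}|\btau_h\btau_h u|^q\lesssim|h|^{q+\sigma}$ for some $\sigma>0$ (Lemma~\ref{lem:second-diff-theta-p<2-new}); Lemma~\ref{lem:2nd-Ni-FS} then gives $\nabla u\in W^{\alpha,q}_{\loc}$ for some quantified $\alpha>0$; and only at this point does the fractional Sobolev embedding of Lemma~\ref{lem:frac-Sob-2} upgrade $L^q$ to $L^{Nq/(N-\alpha q)}$, which is the genuine gain $q\mapsto q'>q$ (Lemma~\ref{lem:increase-exp-p<2}). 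The improvement is Sobolev in nature — trading fractional differentiability of $\nabla u$ for integrability — exactly as in the local $p$-harmonic case where one uses $W^{1,2}\hookrightarrow L^{2N/(N-2)}$ applied to $|\nabla u|^{q/2}$; an interpolation/H\"older argument of the sort you describe has no such source of improvement.

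Two smaller inaccuracies worth flagging. First, the energy quantity controlled at this stage involves \emph{differences of $u$ itself}, namely $\mathcal U(x,y)^{p-2}(|\btau_hu(x)|+|\btau_hu(y)|)^{q-p}|\btau_hu(x)-\btau_hu(y)|^2$ with $\mathcal U(x,y)=|u_h(x)-u_h(y)|+|u(x)-u(y)|$, not differences of $\nabla u$; gradients only appear after the passage through the second-difference lemmas. Second, the singular weight $\mathcal U^{p-2}$ is handled in this step by the H\"older split $\boldsymbol i_1^{p/2}\cdot\boldsymbol j^{1-p/2}$ (see~\eqref{est:J-q}--\eqref{est:J-final-q}), which generates an extra factor controlled by $[u]_{W^{s,p}}$ rather than a ``clean higher power of $|\nabla u|$.'' Your intuition that a careful choice of test function and exponent bookkeeping is what makes the iteration close is correct; the missing idea is that the closing step is a fractional Sobolev embedding, not an interpolation.
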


The classical  Morrey-type embedding states that any $W^{1,q}$ function with $q>N$ is locally Hölder continuous with exponent $1-\frac{N}{q}$. Therefore, a direct consequence of Theorem~\ref{thm:W1q-p<2} is that any $(s,p)$-harmonic function is H\"older continuity for any H\"older exponent in $ (0,1)$. 

\begin{theorem}[Almost Lipschitz continuity]\label{thm:Hoelder-p<2}
Let $p\in (1,2]$ and $s\in(0,1)$.  Then,  for any locally bounded $(s,p)$-harmonic function $u$ in the sense of Definition~\ref{def:loc-sol}, we have
$$
    u\in C_{\loc}^{0,\gamma}(\Omega)\qquad\mbox{for any $\gamma\in(0,1)$.}\
$$
Moreover,  there exists a constant  $C=C(N,p,s,\gamma)$, such for any ball $B_R\equiv B_R(x_o)\Subset\Omega$ we have
\begin{align*}
     [u]_{C^{0,\gamma}(B_{\frac12 R})}
     \le 
     \frac{C}{R^{\gamma}}
    \Big[ R^{s-\frac{N}p}(1-s)^{\frac1p}[u]_{W^{s,p} (B_{R})} +
    \|u\|_{L^{\infty}(B_{R})} +
    \mathrm{Tail}(u;R)\Big] .
\end{align*}
The constant $C$ is stable as $s\uparrow1$ and blows up as $p\downarrow 1$.  
\end{theorem}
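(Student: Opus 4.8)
The plan is to deduce Theorem~\ref{thm:Hoelder-p<2} directly from the quantitative $L^q$-gradient estimate of Theorem~\ref{thm:W1q-p<2} together with the Morrey embedding $W^{1,q}(B)\hookrightarrow C^{0,1-N/q}(B)$ for $q>N$. Fix $\gamma\in(0,1)$ and choose $q=q(N,\gamma):=\frac{N}{1-\gamma}>N$, so that the Morrey exponent $1-\frac{N}{q}$ equals $\gamma$. By Theorem~\ref{thm:W1q-p<2} we already know $u\in W^{1,q}_{\loc}(\Omega)$ for this $q$, hence $u\in C^{0,\gamma}_{\loc}(\Omega)$; the only real content left is to track the scaling of the constant so as to obtain the asserted estimate on $B_{R/2}$.

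First I would record the scaled Morrey inequality on balls. On the unit ball $B_1$ one has $[v]_{C^{0,1-N/q}(B_1)}\le c(N,q)\,\|\nabla v\|_{L^q(B_1)}$ for $v\in W^{1,q}(B_1)$. Applying this to $v(y):=u(x_o+\tfrac{R}{2}y)$ on $B_1$ and changing variables gives
\begin{equation*}
  [u]_{C^{0,\gamma}(B_{R/2})}
  \le \frac{c(N,q)}{(R/2)^{\gamma}}\,(R/2)^{1-\frac{N}{q}}\,(R/2)^{\frac{N}{q}-1}\|\nabla u\|_{L^{q}(B_{R/2})}
  = \frac{c(N,\gamma)}{R^{\gamma}}\,\Big(\tfrac{R}{2}\Big)^{1-\frac{N}{q}}(R/2)^{\frac{N}{q}-1}\|\nabla u\|_{L^q(B_{R/2})},
\end{equation*}
which after simplification reads $[u]_{C^{0,\gamma}(B_{R/2})}\le c(N,\gamma)\,R^{\frac{N}{q}-\gamma-1}\,\|\nabla u\|_{L^q(B_{R/2})}$; since $\frac{N}{q}=1-\gamma$, the exponent is $-\gamma$, i.e.\ $[u]_{C^{0,\gamma}(B_{R/2})}\le c(N,\gamma)\,R^{-\gamma}\,R^{1-\frac{N}{q}}\,\|\nabla u\|_{L^q(B_{R/2})}$. (I would simply do the homogeneity bookkeeping once and carefully; the point is that the left side scales like $R^{-\gamma}$ times a dimensionless gradient average.)

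Second, I would insert the $L^q$-gradient bound from Theorem~\ref{thm:W1q-p<2}. That estimate can be rewritten as
\begin{equation*}
  R^{1-\frac{N}{q}}\|\nabla u\|_{L^q(B_{R/2})}
  \le C(N,p,s,q)\Big[R^{s-\frac{N}{p}}(1-s)^{\frac1p}[u]_{W^{s,p}(B_R)}+\|u\|_{L^\infty(B_R)}+\Tail(u;R)\Big],
\end{equation*}
i.e.\ the bracketed quantity is exactly the dimensionless gradient scale appearing above. Combining the two displays, and absorbing $c(N,\gamma)$ and $C(N,p,s,q(N,\gamma))$ into a single constant $C=C(N,p,s,\gamma)$, yields precisely
\begin{equation*}
  [u]_{C^{0,\gamma}(B_{R/2})}\le \frac{C}{R^{\gamma}}\Big[R^{s-\frac{N}{p}}(1-s)^{\frac1p}[u]_{W^{s,p}(B_R)}+\|u\|_{L^\infty(B_R)}+\Tail(u;R)\Big].
\end{equation*}
Stability as $s\uparrow1$ and blow-up as $p\downarrow1$ are inherited verbatim from the corresponding properties of the constant in Theorem~\ref{thm:W1q-p<2}, since $q=q(N,\gamma)$ does not depend on $s$ or $p$.

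There is essentially no serious obstacle here: the statement is a soft corollary, and the main (very minor) point requiring care is the homogeneity/scaling bookkeeping in the Morrey step so that the powers of $R$ match the normalization used in Theorems~\ref{thm:W1p-p<2}--\ref{thm:W1q-p<2}. One small subtlety worth a sentence in the write-up is that the Morrey embedding as usually stated controls the H\"older seminorm on a ball by the full $W^{1,q}$-norm including $\|v\|_{L^q}$; on a ball one may either use the seminorm form of the embedding (valid since constants have zero H\"older seminorm) or subtract the mean of $u$ on $B_{R/2}$ and use the Poincar\'e inequality to dispose of the $L^q$-term — either way it costs only a dimensional constant and does not affect the final estimate.
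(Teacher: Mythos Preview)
Your proposal is correct and follows essentially the same route as the paper: fix $q=\frac{N}{1-\gamma}$, apply the Morrey embedding (the paper's Lemma~\ref{Lem:morrey-classic}) to control $[u]_{C^{0,\gamma}(B_{R/2})}$ by $\|\nabla u\|_{L^q(B_{R/2})}$, and then invoke the quantitative $L^q$-gradient bound of Theorem~\ref{thm:W1q-p<2}. Your scaling bookkeeping is a bit tangled (the intermediate expression $R^{\frac{N}{q}-\gamma-1}$ is not right), but since $1-\tfrac{N}{q}=\gamma$ the correct outcome is simply $[u]_{C^{0,\gamma}(B_{R/2})}\le c(N,\gamma)\|\nabla u\|_{L^q(B_{R/2})}$, exactly as the paper writes it.
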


The gradient of $(s,p)$-harmonic functions also possesses certain fractional differentiability.

\begin{theorem}[Fractional differentiability of the gradient]\label{thm:grad-frac}
Let $p\in (1,2]$ and  $s\in(0,1)$ and $q\in[2,\infty)$. Then, for any locally bounded $(s,p)$-harmonic function $u$ in the sense of Definition~\ref{def:loc-sol}, we have
$$
    \nabla u\in W^{\alpha,q}_{\rm loc}(\Omega)\qquad 
    \mbox{for any $\alpha\in(0,\beta)$, where $\beta:=\max\Big\{\frac{sp}{q}, \frac{1-(1-s)p}{q-1}\Big\}$.}
$$
Moreover, there exists a constant $C=C(N,p,s,q,\alpha)$ such that for any ball $B_{R}\equiv B_{R}(x_o)\Subset \Omega$, we have
\begin{align*}
    [u]_{W^{\al,q}(B_{\frac12 R})}
    &\le 
    \frac{C R^{\frac{N}{q}}}{R^{1+\alpha}}
    \Big[R^{s-\frac{N}{p}}(1-s)^{\frac1p}[u]_{W^{s,p}(B_{R})} +
    \|u\|_{L^{\infty}(B_{R})}+\mathrm{Tail}(u; R)\Big].
\end{align*}
The constant $C$ is stable as $s\uparrow1$ and blows up as $p\downarrow 1$.
\end{theorem}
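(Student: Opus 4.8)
The plan is to derive the fractional differentiability of $\nabla u$ from a finite-difference estimate for the vector field associated to the fractional $p$-Laplacian, exploiting the structure of the equation together with the already-established $L^q$-gradient bound of Theorem~\ref{thm:W1q-p<2}. Concretely, I would test the (weak formulation of the) equation $(-\Delta_p)^s u=0$ with a difference quotient of the type $\tau_{-h}(\eta^q \tau_h u)$, where $\tau_h v(x)=v(x+h)-v(x)$ and $\eta$ is a cut-off, and then use the monotonicity of $a\mapsto |a|^{p-2}a$ in the form of the standard vector inequality (for $p\in(1,2]$)
\begin{equation*}
    \big(|a|^{p-2}a-|b|^{p-2}b\big)\cdot(a-b) \ge c(p)\,\frac{|a-b|^2}{(|a|+|b|)^{2-p}},
\end{equation*}
applied to $a=\tau_h u(x)-\tau_h u(y)$ (roughly) and the corresponding undifferenced quantity, to control a Gagliardo-type double integral of $\tau_h(\text{something like }|D_s u|^{\frac{p-2}{2}}D_s u)$ by $|h|$ to a suitable power times data. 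This is exactly the scheme behind Nikolskii-type estimates for nonlocal equations.

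The key steps, in order, would be: (i) set up the tested equation and split the double integral over $\R^N\times\R^N$ into a ``local'' part (where $|x-y|\lesssim R$) and a ``tail'' part, estimating the latter using the a priori $L^\infty$ bound and the $\Tail$ quantity; (ii) in the local part, apply the monotonicity inequality above to produce a positive quadratic term $\iint \eta^q\,\frac{|\tau_h u(x)-\tau_h u(y)|^2}{(\,|u(x)-u(y)|+|u(x+h)-u(y+h)|\,)^{2-p}}\,\frac{\dx\dy}{|x-y|^{N+sp}}$ on the good side and absorbing error terms coming from where the cut-off is differentiated; (iii) convert this weighted Gagliardo seminorm of $\tau_h u$ into a genuine Nikolskii bound $\|\tau_h \nabla u\|_{L^q}\lesssim |h|^{\alpha}$ — this is where Hölder's inequality in the $(x,y)$ integral, the $L^q$-gradient estimate of Theorem~\ref{thm:W1q-p<2}, and the elementary interpolation between the $L^q$-norm of $\nabla u$ and the weighted quadratic quantity all enter, yielding the exponent $\frac{sp}{q}$ from one route and $\frac{1-(1-s)p}{q-1}$ from the other (the two arguments in the $\max$ defining $\beta$ reflect these two ways of distributing the ``bad'' weight $(2-p)$); (iv) finally, embed Nikolskii $N^{\beta,q}$ into $W^{\alpha,q}$ for every $\alpha<\beta$ and track constants to confirm stability as $s\uparrow1$ and blow-up as $p\downarrow1$, and rescale to balls $B_R$.

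I would carry out the estimates first on a fixed ball with unit-type scaling, keeping all constants explicit in $N,p,s,q$, and only at the end perform the dilation $x\mapsto x_o+Rx$ that produces the factors $R^{N/q-1-\alpha}$ and $R^{s-N/p}$ in the stated inequality; care is needed because the $\Tail$ term and the $[u]_{W^{s,p}}$ term scale differently, so the rescaled estimate must be matched against the definition of $\Tail(u;R)$ used in Definition~\ref{def:loc-sol}.

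The main obstacle I anticipate is controlling the degeneracy weight $(|u(x)-u(y)|+|u(x+h)-u(y+h)|)^{2-p}$ in step (iii): unlike the case $p\ge2$, here $2-p\in[0,1)$ and the weight appears in the \emph{denominator} of the good term, so to pass from the weighted quadratic quantity to an unweighted $L^q$ difference-quotient bound one must reintroduce this weight via Hölder's inequality, which forces the loss of fractional order encoded in $\beta$ and couples the admissible $\alpha$ to $q$ in a nontrivial way. Getting the sharp exponent $\max\{\frac{sp}{q},\frac{1-(1-s)p}{q-1}\}$ — rather than something weaker — requires optimizing the Hölder split and carefully using the already-available higher integrability $\nabla u\in L^q_{\loc}$ for \emph{all} finite $q$, which is precisely what Theorem~\ref{thm:W1q-p<2} supplies; a secondary technical point will be ensuring the constant does not degenerate as $s\uparrow1$, which requires keeping the $(1-s)$ normalization attached to the Gagliardo seminorm throughout and using it to absorb the $\frac{1}{1-sp}$-type factors that appear when integrating $|x-y|^{-N-sp+\text{something}}$ over annuli.
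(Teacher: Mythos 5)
Your plan correctly identifies the starting point (discretely differentiate the equation, split the resulting double integral into local and tail parts, invoke monotonicity of $V_{p-1}$) and correctly names the central obstacle, namely the degenerate weight $\mathcal U(x,y)^{2-p}=(|u(x)-u(y)|+|u_h(x)-u_h(y)|)^{2-p}$ sitting in the denominator of the good term. But the resolution you propose for it — reintroducing the weight through H\"older's inequality and optimizing the split, using that $\nabla u\in L^q_{\loc}$ for all finite $q$ — is in essence the mechanism of Proposition~\ref{prop:grad-frac-1}, and it only yields $\beta<\tfrac{sp^2}{2q}$, which is strictly smaller than $\tfrac{sp}{q}$ when $p<2$ and cannot approach $\tfrac{1-(1-s)p}{q-1}$ at all. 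What the paper actually does for the first argument of the max is different in kind: it invokes the already established almost-Lipschitz regularity $u\in C^{0,\gamma}_{\loc}$ to bound the weight pointwise from below, $\mathcal U(x,y)^{p-2}\ge c\,[u]_{C^{0,\gamma}}^{p-2}|x-y|^{\gamma(p-2)}$ (see the proof of Corollary~\ref{cor:energy-q-1}), so that the loss becomes a deficit in the power of $|x-y|$ rather than an auxiliary integral to H\"older away. With $\gamma$ close to $1$ this is strictly stronger than any H\"older split and is precisely what pushes the differentiability from $\tfrac{sp^2}{2q}$ up to $\tfrac{sp}{q}$. In addition, your linear test function $\tau_{-h}(\eta^q\tau_h u)$ is replaced in the paper by the nonlinear choice $V_\delta(\btau_h u)\eta^p$ (resp.\ $\eta^2$), $\delta=q-p+1$; without this nonlinearity one does not obtain a second-order finite difference estimate on the $L^q$-scale for $q>p$.

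More importantly, the second argument $\tfrac{1-(1-s)p}{q-1}$ of the max does not arise from ``a second way of distributing the bad weight'': it is the fixed point of a bootstrap. The paper assumes $\nabla u\in W^{1+\theta,q}_{\loc}$ and then sharpens both the tail estimate (Lemma~\ref{Lm:tail-2}) and the cut-off error (Lemma~\ref{Lm:I-2}) via a discrete integration by parts formula (Lemma~\ref{lem:int-parts}), upgrading the increment power from $|h|^{q+p-2}$ to $|h|^{q-1+\theta}$. Feeding this into the energy inequality (Proposition~\ref{prop:energy-q-2}) improves the fractional differentiability of $\nabla u$, and iterating the improvement (Step~2 of the proof of Theorem~\ref{thm:grad-frac}) drives $\theta$ to the fixed point $\tfrac{1-(1-s)p}{q-1}$. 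No single H\"older split, however cleverly optimized, produces this exponent; it is the limit of a recursion, and the discrete integration by parts is an indispensable ingredient that your outline does not contain. So the two members of the max reflect two genuinely different proof mechanisms — a pointwise H\"older bound on the weight in one case, an iterated integration-by-parts bootstrap in the other — and neither is reachable from the scheme you sketch.
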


\begin{remark}\label{rem:diff-p}\upshape
The application of Theorem \ref{thm:grad-frac} with $q=2$ ensures that any locally bounded $(s,p)$-harmonic function satisfies
$$
    \nabla u\in W^{\alpha,2}_{\rm loc}(\Omega)\qquad 
    \mbox{for any $\alpha\in(0,\beta)$, where $\beta:=\max\Big\{\frac{sp}{2}, 1-(1-s)p\Big\}$.}
$$
In turn, we conclude that  
$$
    \nabla u\in W^{\alpha,q}_{\rm loc}(\Omega)\qquad 
    \mbox{for any $\alpha\in(0,\beta)$ and any $q\in[1,2]$.}
$$
\end{remark}

\subsection{Brief review of the literature}\label{state-of-art}
Nonlocal equations have attracted a lot of attention in recent years, and interest in them continues to grow. Regarding regularity theory, the investigations started with linear fractional differential operators; see \cite{Bass-1,Bass-2,Cozzi-1,Dipiero-Savin-Valdinoci,Dipiero-Ros Oton-Serra-Valdinoci,Dyda-Kassmann,Fall,Imbert-Silvestre,Jin-Xiong,Kassmann,Ros-Oton-Serra,Serra-1,Serra-2,Silvestre-06} and the references therein. For global and boundary regularity we refer to \cite{Abatangelo-Ros-Oton, Abels-Grubb, Grubb-1,Grubb-2,Ros-Oton-Serra:bd1,Ros-Oton-Serra:bd2}. In the framework of fractional Sobolev spaces, self-improving properties of Gehring-type were studied in \cite{Auscher-Bortz-Saari, Bass-Ren, Kuusi-Mingione-Sire-1, Kuusi-Mingione-Sire-2}, while estimates of Calder\'on-Zygmund-type were obtained in \cite{Fall-Mengesha-Schikorra-Yeepo, Mengesha-Schikorra-Yeepo}. Measure data problems and pointwise gradient potential estimates were considered in \cite{Diening-Kim-Lee-Nowak, Kuusi-Nowak-Sire}.

The regularity theory for $(s,p)$-harmonic fuctions is still fragmented, and many of the fundamental questions are unanswered. 
Boundedness and Hölder continuity of weak solutions in the interior and at the boundary were proved in \cite{Cozzi-1, DiCastro-Kuusi-Palatucci, Iannizzotto-Mosconi, Iannizzotto-Mosconi-Squassina-2016, Iannizzotto-Mosconi-Squassina, Korvenpää-Kuusi-Palatucci}; see \cite{Lindgren-1} for a similar result in the framework of viscosity solutions. Harnack's inequality was established in \cite{DKP-2}; see also~\cite{Cozzi-1}. 
A Wiener-type criterion for boundary regularity was obtained in \cite{Kim-Lee-Lee}. Regularity at the gradient level is less developed. However, there are some results on level of fractional derivatives. A self-improving Gehring-type property was established for $p\ge2$ in \cite{Schikorra-1}, while a Calder\'on-Zygmund theory was developed in \cite{Byun-Kim, Diening-Nowak}. Finally, in \cite{Kuusi-Mingione-Sire-3} measure data problems and potential estimates were considered.
Recently, the study of mixed local and non-local problems has gained much interest; see the non-exhaustive list \cite{BDVV-mixed, Byun-Song-mixed, DM-mixed, GL-mixed} and the references therein. 

The above mentioned results guarantee Hölder continuity of $(s,p)$-harmonic functions with a qualitative Hölder exponent.
The proofs in \cite{Cozzi-1, DiCastro-Kuusi-Palatucci} are based on either De Giorgi's or Moser's approach. Similar to the local case, Hölder continuity with a quantitative H\"older exponent or gradient regularity requires other techniques such as second-order finite difference estimates in combination with a Moser-type iteration scheme. Roughly speaking, the second-order finite difference estimates in the fractional context replace the second weak derivative estimates from the local case. The latter  can be derived by the difference quotient method.
This technique has proven to be very efficient for local operators; we refer to  \cite{Acerbi-Fusco, Uhlenbeck, Uraltseva}. 

For $(s,p)$-harmonic functions, Brasco \& Lindgren \cite{Brasco-Lindgren} established  higher Sobolev regularity for the case $p\ge2$ when $s>\frac{p-1}{p}$. 
Later, higher H\"older continuity was proved in \cite{Brasco-Lindgren-Schikorra} by Brasco \& Lindgren \& Schikorra. More recently, in~\cite{BDLMS} we were able to improve the threshold for $s$ to $s>\frac{p-2}{p}$.  At the same time we improved the level of integrability for the gradient to any exponent $q\ge p$, together with the fractional differentiability $W^{\alpha,q}_{\rm loc}$ for any $\alpha \in\big(0,\tfrac{sp-(p-2)}{q}\big)$. As a consequence, we also improved the higher H\"older regularity. Shortly afterwards, Diening \& Kim \& Lee \& Nowak~\cite{DKLN} were able to establish the same amount of fractional differentiability for the inhomogeneous problem at the level $q=p$.

For the case $p<2$,   Garain \& Lindgren obtained a quantitative higher Hölder estimate in~\cite{Lindgren-Garain}. When it comes to higher Sobolev regularity, \cite{DKLN} showed that the weak gradient exists in $L^p_{\rm loc}$, and meanwhile admits a certain fractional differentiability. However, the quantitative integrability of the weak gradient does not go beyond $L^p$.
The present paper -- independent of \cite{DKLN} -- establishes the existence of the weak gradient, its higher integrability  at any $L^q$ level with $q\ge 1$ as well as a certain fractional differentiability.

\subsection{Master plan}
The so-called difference quotient technique, even though known to be part of the repertoire in the analysis of partial differential equations for decades, is fairly recent on the stage of integro-differential equations. For $(s,p)$-harmonic functions, it was implemented by Brasco \& Lindgren \cite{Brasco-Lindgren}, Brasco  \& Lindgren \& Schikorra \cite{Brasco-Lindgren-Schikorra}, and Garain \& Lindgren \cite{Lindgren-Garain}; see also \cite{Caffarelli-Silvestre, Cozzi-2} under different settings. Our approach also relies on this barehanded technique.

The thrust is quite simple, namely, we differentiate the equation at a discrete level, coupled with a careful choice of testing functions that cope with the discretized nonlinear structure. However, in contrast to the classical treatment of $p$-harmonic functions, some challenging, new features appear in the fractional setting. To successfully implement the technique to analyze regularity properties of $(s,p)$-harmonic functions, one needs to skillfully balance their local differentiability and integrability, and to precisely capture the long-range behavior of their finite differences. Once this is done, we employ various iteration schemes to upgrade the differentiability and subsequently the integrability.

In this program, the first important step lies in raising the preset $W^{s,p}$-regularity to $W^{1,p}$-regularity as stated in Theorem~\ref{thm:W1p-p<2}.  
This step was also carried out in~\cite{DKLN}. 

The next step is to improve the integrability of the gradient via an iteration scheme of Moser-type. Recall that for $p$-harmonic functions, one first derives proper second order estimates (i.e. energy estimate for $|\nabla u|$) and exploits the Sobolev embedding to acquire a reverse H\"older estimate:
\[
\bigg(\bint_{B_{1}}|\nabla u|^{\frac{Nq}{N-2}}\,\dx\bigg)^{\frac{N-2}{Nq}}\le C(q)  \bigg(\bint_{B_{2}}|\nabla u|^{q}\,\dx\bigg)^{\frac1q}
\]
provided $\nabla u\in L^{q}_{\loc}$ for some $q\ge p$. Then, starting from $\nabla u\in L^{p}_{\loc}$ one iteratively improves the integrability of $\nabla u$.  
 When it comes to $(s,p)$-harmonic functions, we exploit a similar idea at a discrete level, derive $\nabla u\in W^{\al, q}_{\loc}$ for some quantified $\al=\al(s,p,q)$ assuming that $\nabla u\in L^{q}_{\loc}$ for some $q\ge p$, and use the fractional Sobolev embedding to obtain a reverse H\"older estimate:
 \[
\bigg(\bint_{B_{1}}|\nabla u|^{\frac{Nq}{N-\al q}}\,\dx\bigg)^{\frac{N-\al q}{Nq}}\le C(q)  \bigg[\bigg(\bint_{B_{2}}|\nabla u|^{q}\,\dx\bigg)^{\frac1q} +\boldsymbol{\mathfrak{T}}(u;2)\bigg].
\]
The quantity $\boldsymbol{\mathfrak{T}}$, which encodes the nonlocal behavior of $u$, turns out to be harmless in the iteration scheme.  Whereas the most notable difference from the case of $p$-harmonic functions lies in the way the integral exponent increments. Arguably, such difference becomes magnified when one runs the iteration in a quantified manner.

The higher Sobolev regularity acquired in the previous step can be translated into quantitative H\"older estimates thanks to the Morrey embedding. We stress that, this way of obtaining higher H\"older regularity is different from previous works as all of them circumvented the higher Sobolev regularity. 

In the pursuit of the higher Sobolev regularity, we are also clued in on a gain of fractional differentiability for $\nabla u$. Conceivably, one should be able to formally recover the classical $W^{2,2}$-regularity of $p$-harmonic functions as $s\uparrow 1$. Indeed, we improve and establish that $\nabla u\in W^{\al,q}_{\loc}$ for any $q\ge2$ and $\al<\be:=\max\big\{\frac{sp}{q}, \frac{1-(1-s)p}{q-1}\big\}$.
This task is taken on in the final part of the paper. In particular, we exploit the Hölder continuity established in Theorem~\ref{thm:Hoelder-p<2} 
and perform a {\it discrete  integration by parts}; see Lemma \ref{lem:int-parts}. This is inspired by an argument from the $p$-harmonic functions; see \cite{Acerbi-Fusco}.

We emphasize that throughout the proofs we take great care on the structure of the constants in order to justify the stability as $s\uparrow 1$.  
Finally, we note that in~\cite{DKLN} the authors established $\nabla u\in W^{\alpha,p}_{\rm loc}(\Omega)$ for any $\alpha\in(0,\max\{\frac{sp}{2}, 1-(1-s)p\})$,  
which follows from Theorem~\ref{thm:grad-frac} as described in Remark~\ref{rem:diff-p}.

\medskip
 
\noindent
{\bf Acknowledgments.}  N.~Liao is supported by the FWF-project P36272-N \emph{On the Stefan type problems.} 
G.~Molica Bisci and ~R.~Servadei have
been funded by the European Union - NextGenerationEU within the framework
of PNRR  Mission 4 - Component 2 - Investment 1.1 under the Italian
Ministry of University and Research (MUR) program PRIN 2022 - grant number
2022BCFHN2 - Advanced theoretical aspects in PDEs and their applications -
CUP: H53D23001960006 and partially supported by the INdAM-GNAMPA Research
Project 2024: Aspetti geometrici e analitici di alcuni problemi locali e
non-locali in mancanza di compattezza - CUP E53C23001670001.

\section{Preliminaries}

\subsection{Notation}
Throughout the paper, $C$ stands for a generic constant that can change from line to line. In the statements and also in the proofs, we trace the dependencies of the constants with respect to the data.    We indicate the dependencies by writing, for example, $C=C(N,p,s)$ if $C$ depends on $N,p$ and $s$. Next, we denote $B_R(x_o)\Subset \R^N$ as a ball with radius $R$  centered at $x_o$ in $\R^N$. We also define
$$
    K_R(x_o):=B_R(x_o)\times B_R(x_o)\subset \R^N\times\R^N.
$$
We use this notation at various points to give the occurring double integrals a more compact form.

Let $\Om\subset\R^N$ be a bounded  open set and $\al\in(0,1)$. For a function $w\colon\Om\to \R$ we understand by $w\in C_{\loc}^{0,\al}(\Om)$ that for any ball $B_R(x_o)
\Subset \Om$ we have that $w\in C^{0,\al}\big(\overline{B_R(x_o)}\big)$ holds. 
Moreover, we denote the semi-norm
\[
[w]_{C^{0,\al}(B_R(x_o))}:=\sup_{x\not=y\in B_R(x_o)}\frac{|w(x)-w(y)|}{|x-y|^\al}.
\]
We also introduce the {\bf fractional Sobolev space} $W^{\gamma,q}(\Omega,\R^k)$, $k\in\N$, with some $q\in [1,\infty )$ and
$\gamma\in (0,1)$. A measurable function $w\colon\Omega\to\R^k$ belongs to the fractional Sobolev space $W^{\gamma, q}(\Omega,\R^k)$ if and only if
\begin{align*}
    \| w\|_{W^{\gamma ,q}(\Omega,\R^k)}
    &:=
    \| w\|_{L^q(\Omega,\R^k)} +
    [w]_{W^{\gamma ,q}(\Omega,\R^k)}
    <\infty,
\end{align*}
where the semi-norm is defined as 
\begin{align*}
    [w]_{W^{\gamma ,q}(\Omega,\R^k)}
    &:=
    \bigg[
    \iint_{\Omega\times\Omega}
    \frac{|w(x)-w(y)|^q}{|x-y|^{N+\gamma q}}\,\dx\dy
    \bigg]^\frac{1}{q} .
\end{align*}
If the dimension $k$ is clear from the context, we write $[w]_{W^{\gamma ,q}(\Omega)}$ instead of $[w]_{W^{\gamma ,q}(\Omega,\R^k)}$ for the sake of simplicity. In particular, if $w=\nabla v$ is the gradient of a scalar function $v\colon\Om\to \R$, we will write $[\nabla w]_{W^{\gamma ,q}(\Omega)}$ instead of 
$[\nabla w]_{W^{\gamma ,q}(\Omega,\R^N)}$.
Some useful results concerning fractional Sobolev spaces are collected in \S ~\ref{sec:fractional};
for more information we refer to \cite{Hitchhikers-guide}.

\begin{definition}[$(s,p)$-harmonic functions]\label{def:loc-sol}
Let $\Omega\subset\mathbb R^N$ be a bounded open set, $p\in(1,\infty)$ and $s\in(0,1)$. A function $u\in W^{s,p}_{\rm loc}(\Omega)$ is called $(s,p)$-harmonic in $\Omega$ if and only if
\begin{equation}\label{Lebesgue-weight}
\int_{\mathbb R^N}\frac{|u(x)|^{p-1}}{(1+|x|)^{N+sp}}\, \mathrm{d}x <\infty,
\end{equation}
and
\begin{equation}\label{weak-sol}
        \iint_{\mathbb R^N\times\mathbb R^N}\frac{|u(x)-u(y)|^{p-2}(u(x)-u(y))(\varphi(x)-\varphi(y))}{|x-y|^{N+sp}}\,\mathrm{d}x \mathrm{d}y =0
\end{equation}
for every $\varphi\in W^{s,p}(\Omega)$ compactly supported in $\Omega$ and extended to $0$ outside $\Omega$.
\end{definition}
Whenever $u$ satisfies \eqref{Lebesgue-weight}, we say $u$ belongs to the weighted Lebesgue space $L^{p-1}_{sp}(\R^N)$. For $x_o\in\Omega$ and $\rho>0$ such that $B_R(x_o)\subset\Omega$ use the nonlocal quantity 
\begin{equation*}
{\rm Tail}(u; x_o, R):= \bigg(R^{sp}\int_{\R^N\setminus B_R(x_o)}\frac{|u(x)|^{p-1}}{|x-x_o|^{N+sp}}\,\dx \bigg)^{\frac1{p-1}}.
\end{equation*}
Moreover, we abbreviate
\begin{equation}\label{def-T}
    \boldsymbol{\mathfrak T}(u; x_o, R)
    :=
    \|u\|_{L^{\infty}(B_{R}(x_o))}+\mathrm{Tail}(u; x_o, R).
\end{equation}
When $x_o=0$ or when the context is clear, we omit it from the notation.

\subsection{Algebraic inequalities}
For $\gamma\in(0,\infty)$ define
$$
    V_\gamma(a)
    :=
    |a|^{\gamma-1}a,
    \qquad\mbox{for $a\in\R$.}
$$
If $a=0$ we set $V_\gamma(a)=0$ also for $\gamma\in(0,1)$.
The first algebraic lemma  concerning bounds from above and below
for the difference of $V_\gamma$ can be retrieved from \cite[Lemma~2.1]{Acerbi-Fusco} for the case $\gamma\in(0,1)$, respectively in \cite[Lemma~2.2]{GiaquintaModica:1986} for the case $\gamma\in(1,\infty)$. 

\begin{lemma}\label{lem:Acerbi-Fusco}
For any $\gamma>0$, and for all $a,b\in\R$, we have
\begin{align*}
	C_1(|a| + |b|)^{\gamma-1}|b-a|
	\le
	|V_\gamma(b) - V_\gamma(a)|
	\le
	C_2(|a| + |b|)^{\gamma-1}|b-a|,
\end{align*}
where 
\begin{equation*}
    C_1
    =
    \left\{
    \begin{array}{ll}
        \gamma, & \mbox{if $\gamma\in(0,1)$,} \\[5pt]
        2^{1-\gamma}, & \mbox{if $\gamma\in(1,\infty)$,}
    \end{array}
    \right.
    \qquad
    C_2
    =
    \left\{
    \begin{array}{ll}
         2^{1-\gamma} & \mbox{if $\gamma\in(0,1]$} \\[5pt]
         \gamma & \mbox{if $\gamma\in[1,\infty)$.}
    \end{array}
    \right.
\end{equation*}
\end{lemma}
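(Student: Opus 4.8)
The statement is a bundle of elementary one-variable inequalities, so the plan is to normalize first and then dispatch the few remaining cases by hand.

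\emph{Step 1 (reductions).} If $a=b$ both sides vanish, so one may assume $a\neq b$. Each of the three quantities $|V_\gamma(b)-V_\gamma(a)|$, $(|a|+|b|)^{\gamma-1}$ and $|b-a|$ is invariant under the transposition $(a,b)\mapsto(b,a)$ and under the simultaneous sign change $(a,b)\mapsto(-a,-b)$ — for the latter one uses that $V_\gamma$ is odd. Hence one may assume $b>0$ and $|a|\le b$. Moreover, both sides of the claimed chain are positively $\gamma$-homogeneous in $(a,b)$, since $V_\gamma(\lambda t)=\lambda^{\gamma}V_\gamma(t)$ for $\lambda>0$; rescaling by $1/b$ one may take $b=1$. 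It then remains to prove, for every $a\in[-1,1]$,
\[
  C_1(1+|a|)^{\gamma-1}|1-a|\le|1-V_\gamma(a)|\le C_2(1+|a|)^{\gamma-1}|1-a|.
\]

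\emph{Step 2 (equal signs: $0\le a\le1$).} Here $V_\gamma(a)=a^{\gamma}$, and the decisive remark is the identity $2^{1-\gamma}(1+a)^{\gamma-1}=\bigl(\tfrac{1+a}{2}\bigr)^{\gamma-1}$. Dividing by $1-a>0$ (the case $a=1$ is trivial), everything collapses to comparing the difference quotient $\tfrac{1-a^{\gamma}}{1-a}=\tfrac{1}{1-a}\int_a^1\gamma\,t^{\gamma-1}\dt$ with $\bigl(\tfrac{1+a}{2}\bigr)^{\gamma-1}$ from below and with $\gamma(1+a)^{\gamma-1}$ from above. For $\gamma\ge1$ one has $a^{\gamma}\le a$, hence $\tfrac{1-a^{\gamma}}{1-a}\ge1\ge\bigl(\tfrac{1+a}{2}\bigr)^{\gamma-1}$ (base $\le1$, exponent $\ge0$) — the lower bound with $C_1=2^{1-\gamma}$ — while $\gamma t^{\gamma-1}\le\gamma$ on $[a,1]$ gives $\tfrac{1-a^{\gamma}}{1-a}\le\gamma\le\gamma(1+a)^{\gamma-1}$ — the upper bound with $C_2=\gamma$. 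For $\gamma\in(0,1)$ the same two remarks run in reverse: $a\le a^{\gamma}$ forces $\tfrac{1-a^{\gamma}}{1-a}\le1\le\bigl(\tfrac{1+a}{2}\bigr)^{\gamma-1}$ (upper bound, $C_2=2^{1-\gamma}$), and $\gamma t^{\gamma-1}\ge\gamma$ on $[a,1]$ gives $\tfrac{1-a^{\gamma}}{1-a}\ge\gamma\ge\gamma(1+a)^{\gamma-1}$ (lower bound, $C_1=\gamma$).

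\emph{Step 3 (opposite signs: $a=-c$, $0\le c\le1$).} Now $V_\gamma(a)=-c^{\gamma}$, so the chain becomes $C_1(1+c)^{\gamma}\le1+c^{\gamma}\le C_2(1+c)^{\gamma}$, which I would read off from the convexity/concavity of $t\mapsto t^{\gamma}$ on $[0,\infty)$: for $\gamma\ge1$, super-additivity gives $1+c^{\gamma}\le(1+c)^{\gamma}\le\gamma(1+c)^{\gamma}$ (upper, $C_2=\gamma$), while Jensen at the midpoint gives $2^{1-\gamma}(1+c)^{\gamma}=2\bigl(\tfrac{1+c}{2}\bigr)^{\gamma}\le1+c^{\gamma}$ (lower, $C_1=2^{1-\gamma}$); for $\gamma\in(0,1)$, sub-additivity gives $\gamma(1+c)^{\gamma}\le(1+c)^{\gamma}\le1+c^{\gamma}$ (lower, $C_1=\gamma$), and concavity gives $\tfrac{1+c^{\gamma}}{2}\le\bigl(\tfrac{1+c}{2}\bigr)^{\gamma}$, i.e.\ $1+c^{\gamma}\le2^{1-\gamma}(1+c)^{\gamma}$ (upper, $C_2=2^{1-\gamma}$). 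The borderline $\gamma=1$ is immediate with $C_1=C_2=1$.

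I do not expect a genuine obstacle here: the content is elementary calculus together with the convexity of power functions. The only points demanding a bit of care are the bookkeeping of the two regimes $\gamma\le1$ and $\gamma\ge1$ for the constants, and the use of the small but decisive identity $2^{1-\gamma}(1+t)^{\gamma-1}=\bigl(\tfrac{1+t}{2}\bigr)^{\gamma-1}$, which is exactly what reduces the equal-sign case to the single comparison of $\tfrac{1-a^{\gamma}}{1-a}$ with $1$. If one is content with non-sharp constants, there is a one-shot alternative via the identity $V_\gamma(b)-V_\gamma(a)=\gamma(b-a)\int_0^1|(1-t)a+tb|^{\gamma-1}\dt$ and a crude estimate of the integral, but this does not recover the precise $C_1,C_2$ quoted in the statement.
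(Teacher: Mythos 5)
Your proof is correct: the reduction by symmetry, oddness of $V_\gamma$ and $\gamma$-homogeneity to $b=1$, $a\in[-1,1]$ is legitimate, and the two remaining cases are settled exactly as you say — the equal-sign case by comparing the difference quotient $\tfrac{1-a^\gamma}{1-a}=\tfrac1{1-a}\int_a^1\gamma t^{\gamma-1}\,\dt$ with $1$ (using $a^\gamma\lessgtr a$ and the identity $2^{1-\gamma}(1+a)^{\gamma-1}=(\tfrac{1+a}2)^{\gamma-1}$), the opposite-sign case by super-/sub-additivity and the midpoint convexity/concavity of $t\mapsto t^\gamma$; each step produces precisely the constants $C_1,C_2$ in the statement, and the borderline $\gamma=1$ and trivial cases ($a=b$, $a=1$, $c=0$) are harmless. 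Note, however, that the paper does not prove this lemma at all: it is quoted from \cite[Lemma~2.1]{Acerbi-Fusco} for $\gamma\in(0,1)$ and \cite[Lemma~2.2]{GiaquintaModica:1986} for $\gamma\in(1,\infty)$, so you are supplying an argument where the paper only cites. The classical proofs in those references proceed essentially via the integral representation you mention at the end, $V_\gamma(b)-V_\gamma(a)=\gamma(b-a)\int_0^1|a+t(b-a)|^{\gamma-1}\,\dt$, followed by a case analysis to bound the (possibly singular, when $\gamma<1$) integral from above and below by $(|a|+|b|)^{\gamma-1}$; your normalization-plus-convexity route avoids handling that singular integral and reads off the stated constants more transparently, at the price of a slightly longer case bookkeeping. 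The only cosmetic point worth a half-sentence in a write-up is the degenerate instance $a=b=0$ with $\gamma<1$, where $(|a|+|b|)^{\gamma-1}$ is not defined and the inequality is to be read with the convention $V_\gamma(0)=0$ and both extreme sides interpreted as $0$; this is exactly the convention the paper fixes and does not affect your argument.
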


\begin{lemma}\label{lem:algebraic-1}
Let $p\in (1,2]$ and $\delta\ge1$. Then,  for any    $a,b,c,d\in \R$ and any $e,f\in\R_{\ge 0}$, we have
\begin{align*}
    &\big( V_{p-1}(a-b)- V_{p-1}(c-d)\big)
    \big( V_\delta (a-c)e^p -V_\delta (b-d)f^p\big)\\
    &\quad\ge 
    \tfrac{p-1}{2^{\delta+1}} 
    \big( |a-b|+|c-d|\big)^{p-2}
    \big( |a-c|+|b-d|\big)^{\delta-1}
    \big| (a-c)-(b-d)\big|^2 \big(e^p+f^p\big) \\
    &\qquad -
    \big(\tfrac{2^{\delta+1}}{p-1}\big)^{p-1}
    \big( |a-c| +|b-d|\big)^{p+\delta-1} |e-f|^p.
\end{align*}
\end{lemma}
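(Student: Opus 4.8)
The plan is to reduce the two-term, four-variable inequality to a scalar one-variable estimate by freezing the ``difference of differences''. Set $\xi := (a-c)-(b-d)$, so that $a-c$ and $b-d$ differ by $\xi$. The left-hand side naturally splits as a sum of a ``diagonal'' part, where one pairs $V_{p-1}(a-b)$ with $V_\delta(a-c)e^p$ and $V_{p-1}(c-d)$ with $V_\delta(b-d)f^p$, and a ``cross'' part. First I would handle the case $e=f$: then the bracket $V_\delta(a-c)e^p - V_\delta(b-d)f^p = (V_\delta(a-c)-V_\delta(b-d))e^p$, and one is left with estimating $(V_{p-1}(a-b)-V_{p-1}(c-d))(V_\delta(a-c)-V_\delta(b-d))e^p$ from below. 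Using the lower bound in Lemma~\ref{lem:Acerbi-Fusco} for $V_\delta$ (with $\gamma=\delta$, constant $2^{1-\delta}$) gives $(V_\delta(a-c)-V_\delta(b-d)) \ge 2^{1-\delta}(|a-c|+|b-d|)^{\delta-1}$ times $|\xi|$ with the correct sign, and then the monotonicity of $V_{p-1}$ together with the companion estimate controlling $|V_{p-1}(a-b)-V_{p-1}(c-d)|$ from below by $(p-1)(|a-b|+|c-d|)^{p-2}|(a-b)-(c-d)|$; finally $(a-b)-(c-d) = \xi$ as well. This produces a term of the shape $c_{p,\delta}(|a-b|+|c-d|)^{p-2}(|a-c|+|b-d|)^{\delta-1}|\xi|^2 e^p$, which is exactly the first term on the right with $f=e$. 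The precise bookkeeping of the sign of the product (both factors change sign together because $V_{p-1}$ and $V_\delta$ are monotone) is the routine part.

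The core difficulty is the genuinely off-diagonal case $e\ne f$. Here I would write $e^p = f^p + (e^p - f^p)$ in the first bracket (or symmetrize), so that
\begin{align*}
    V_\delta(a-c)e^p - V_\delta(b-d)f^p
    &= \big(V_\delta(a-c)-V_\delta(b-d)\big)\tfrac{e^p+f^p}{2}
    + \big(V_\delta(a-c)+V_\delta(b-d)\big)\tfrac{e^p-f^p}{2},
\end{align*}
the first summand feeding the good term (now with $e^p+f^p$ as desired, up to the factor $\tfrac12$ which is absorbed in the constant $2^{\delta+1}$), and the second being an error term. The error term, after multiplying by $(V_{p-1}(a-b)-V_{p-1}(c-d))$, is bounded in absolute value using $|V_{p-1}(a-b)-V_{p-1}(c-d)| \le C_2 (|a-b|+|c-d|)^{p-2}|\xi|$, $|V_\delta(a-c)+V_\delta(b-d)| \le (|a-c|+|b-d|)^\delta$, and $|e^p - f^p| \le p(\max\{e,f\})^{p-1}|e-f| \le p(e+f)^{p-1}|e-f|$. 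At this stage one faces a product of the form $(\cdots)^{p-2}|\xi| \cdot (\cdots)^{\delta-1} \cdot |e-f|$ against which one wants to absorb half the good term $(\cdots)^{p-2}(\cdots)^{\delta-1}|\xi|^2(e^p+f^p)$ and leave over only the stated remainder $(\cdots)^{p+\delta-1}|e-f|^p$. This is a Young-type inequality: split the exponents so that $|\xi|\cdot|e-f|$ is compared with $|\xi|^2(e^p+f^p)^{1/p} + (\text{stuff})$ — more precisely apply Young with exponents $p$ and $p/(p-1)$ in the form
\begin{align*}
    XY \le \tfrac{\varepsilon}{p'}X^{p'} + \tfrac{1}{p\varepsilon^{p-1}}Y^{p},
\end{align*}
choosing $X$ to carry a factor $|\xi|$ and a fractional power of $(e^p+f^p)$ (so that $X^{p'}$ reproduces $|\xi|^2(e^p+f^p)$ after using $p'\le 2$ and $(e+f)^{(p-1)p'}\lesssim e^p+f^p$) and $Y$ to carry $|e-f|$ so that $Y^p = |e-f|^p$. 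The constant $\varepsilon$ is fixed to make the absorbed piece at most $\tfrac{p-1}{2^{\delta+2}}(\cdots)$, and the resulting coefficient of $|e-f|^p$ is of order $(2^{\delta+1}/(p-1))^{p-1}$, matching the claimed bound.

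A few technical points need care. One must treat the degenerate situations $a-b=c-d$ or $a-c=b-d$ (i.e.\ $\xi=0$) and the vanishing of the base quantities $|a-b|+|c-d|$, where the factor $(|a-b|+|c-d|)^{p-2}$ with $p-2<0$ is singular; in those cases either the left-hand side is manifestly $\ge$ the (then negative or zero) right-hand side, or one argues by a limiting/continuity procedure, or one first proves the inequality with $(|a-b|+|c-d|+\eta)^{p-2}$ and lets $\eta\downarrow 0$. Also, since $p\le 2$ one has $(e+f)^{(p-1)p'} = (e+f)^p \le 2^{p-1}(e^p+f^p)$, which is the elementary inequality that makes the absorption dimensionally consistent; and one uses $|\xi| \le |a-c|+|b-d|$ freely to trade a power of $|\xi|$ for a power of the base when needed. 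The main obstacle, as indicated, is organizing the Young inequality so that exactly the two stated terms survive with the stated constants — everything else is the standard monotonicity-plus-Lemma~\ref{lem:Acerbi-Fusco} machinery.
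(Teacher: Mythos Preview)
Your proposal is correct and follows essentially the same approach as the paper: the same symmetric splitting of $V_\delta(a-c)e^p - V_\delta(b-d)f^p$ into a ``good'' difference part times $\tfrac12(e^p+f^p)$ and an ``error'' sum part times $\tfrac12(e^p-f^p)$, the same use of Lemma~\ref{lem:Acerbi-Fusco} on both factors, the bound $|e^p-f^p|\le p(e^p+f^p)^{(p-1)/p}|e-f|$, and Young's inequality with exponents $p$ and $p'=p/(p-1)$ followed by absorption with $\varepsilon=\tfrac{p-1}{2^{\delta+1}}$. One small slip: you wrote ``using $p'\le 2$'', but in fact $p'\ge 2$ here; this does not affect the argument. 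Also note that in the final clean-up the paper uses $|\xi|\le |a-b|+|c-d|$ (from $\xi=(a-b)-(c-d)$) to cancel $(|a-b|+|c-d|)^{p-2}|\xi|^{2-p}\le 1$, not the other triangle inequality you mention.
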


\begin{proof}
We first rewrite the left-hand side of the desired inequality in the form
\begin{align*}
    &
    \tfrac12
    \big( V_{p-1}(a-b)- V_{p-1}(c-d)\big) \big( V_\delta (a-c)-V_\delta (b-d)\big)
    \big(e^p+f^p\big)\\
    &\qquad
    +\tfrac12
    \big( V_{p-1}(a-b)- V_{p-1}(c-d)\big) \big( V_\delta (a-c)+V_\delta (b-d)\big)
    \big(e^p-f^p\big)
    =:
    \mathbf I + \mathbf{II}.
\end{align*}
The first summand  is non-negative due to the monotonicity; cf.~\cite[Lemma A.5]{Brasco-Lindgren-Schikorra}. This means that the multiplicative factors $ V_{p-1}(a-b)- V_{p-1}(c-d)$ and $V_\delta (a-c)-V_\delta (b-d)$ must have the same sign. Without loss of generality, we assume that both are non-negative. Indeed, if they are both negative, we simply switch the summands in the two factors. To each factor we apply Lemma \ref{lem:Acerbi-Fusco} (first with exponent $p-1$ and then with exponent $\delta$) to conclude 
\begin{align}\label{alg-1}
    &(p-1) \big( |a-b|+|c-d|\big)^{p-2} | (a-c)-(b-d)| 
    \le   V_{p-1}(a-b)- V_{p-1}(c-d)\nonumber\\
    &\qquad 
    \le  2^{2-p} \big( |a-b|+|c-d|\big)^{p-2}| (a-c)-(b-d)|
\end{align}
and
\begin{align}\label{alg-2}
    &2^{1-\delta} \big( |a-c|+|b-d|\big)^{\delta-1}| (a-c)-(b-d)| 
    \le 
     V_{\delta}(a-c)- V_{\delta}(b-d)
     \nonumber\\
     &\qquad \le 
     \delta \big( |a-c|+|b-d|\big)^{\delta-1}| (a-c)-(b-d)|. 
\end{align}
This leads to the following lower bound for the  first term
\begin{align*}
   \mathbf I 
    &\ge
    \tfrac{p-1}{2^\delta}
    \big( |a-b|+|c-d|\big)^{p-2}
    \big( |a-c|+|b-d|\big)^{\delta-1}
    \big| (a-c)-(b-d)\big|^2 \big(e^p+f^p\big) \\
    &=:
    \tfrac{p-1}{2^\delta}\, \widetilde{\mathbf I}.
\end{align*}
In order to bound the second term from above, we use the upper bound for $V_{p-1}$ and obtain
\begin{align*}
    |\mathbf{II}|
    &\le
     2^{1-p} \big( |a-b|+|c-d|\big)^{p-2}| (a-c)-(b-d)|
     \big( |a-c|^\delta +|b-d|^\delta\big)|e^p-f^p|\\
     & \le
     \big( |a-b|+|c-d|\big)^{p-2}| (a-c)-(b-d)|
     \big( |a-c| +|b-d|\big)^\delta|e^p-f^p|.
\end{align*}
By Lemma \ref{lem:Acerbi-Fusco} applied with $\gamma =p$ we have
\begin{align*}
    |e^p-f^p|
    \le 
    p (e+f)^{p-1}|e-f|
    &\le 
    p (e^p+f^p)^{\frac{p-1}{p}}|e-f|.
\end{align*}
We use this inequality and rewrite the exponents appearing in the right-hand side of the preceding inequality and subsequently apply Young's inequality with exponents $\frac{p}{p-1}$ and $p$. In this way we obtain
\begin{align*}
    |\mathbf{II}|
    &\le 
    p\big( |a-b|+|c-d|\big)^{(p-2)(\frac{p-1}{p} +\frac1{p})}
    | (a-c)-(b-d)|^{2\frac{p-1}{p}+\frac{2-p}{p}}\\
    &\qquad\qquad\cdot 
    \big( |a-c| +|b-d|\big)^{(\delta -1)\frac{p-1}{p}+\frac{p+\delta-1}{p}}
    (e^p+f^p)^{\frac{p-1}{p}}|e-f| \\
    &\le 
    \eps\, \widetilde{\mathbf I} +
    \eps^{-(p-1)} \big( |a-b|+|c-d|\big)^{p-2}| (a-c)-(b-d)|^{2-p}\\
    &\qquad\qquad\qquad\cdot \big( |a-c| +|b-d|\big)^{p+\delta-1}|e-f|^p\\
    &\le 
    \eps\, \widetilde{\mathbf I} + 
    \eps^{-(p-1)}\big( |a-c| +|b-d|\big)^{p+\delta-1}|e-f|^p.
\end{align*}
Choosing $\epsilon=\tfrac{p-1}{2^{\delta+1}}$ we conclude the claimed inequality. 
\end{proof}

\begin{lemma}\label{lem:algebraic-1-2}
Let $p\in (1,2]$ and $\delta\ge1$. Then,  for any    $a,b,c,d\in \R$ and any $e,f\in\R_{\ge 0}$, we have
\begin{align*}
    \big( V_{p-1}&(a-b)- V_{p-1}(c-d)\big)
    \big( V_\delta (a-c)e^2 -V_\delta (b-d)f^2\big)\\
    &\ge 
    \tfrac{2(p-1)}{2^\delta} \big( |a-b|+|c-d|\big)^{p-2}
    \big( |a-c|+|b-d|\big)^{\delta-1}
    \big| (a-c)-(b-d)\big|^2 ef \\
    &\quad +
    \big( V_{p-1}(a-b)- V_{p-1}(c-d)\big) \big( V_\delta (a-c)e+V_\delta (b-d)f\big)
    (e-f)
\end{align*}
\end{lemma}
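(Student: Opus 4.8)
The plan is to reduce the claimed estimate to the lower bound already obtained in the proof of Lemma~\ref{lem:algebraic-1} by isolating an exact algebraic identity. Abbreviate $A:=V_{p-1}(a-b)-V_{p-1}(c-d)$, $X:=V_\delta(a-c)$ and $Y:=V_\delta(b-d)$, so that the left-hand side of the lemma is $A(Xe^2-Yf^2)$. The key observation is the elementary identity
\[
    Xe^2-Yf^2=(X-Y)\,ef+(Xe+Yf)(e-f),
\]
valid for all real numbers $X,Y,e,f$ (just expand the right-hand side). Multiplying by $A$, the left-hand side becomes $A(X-Y)\,ef+A(Xe+Yf)(e-f)$, and the second summand is exactly the second term on the right-hand side of the asserted inequality. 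Hence it remains only to prove
\[
    A(X-Y)\,ef
    \ge
    \tfrac{2(p-1)}{2^\delta}\big(|a-b|+|c-d|\big)^{p-2}\big(|a-c|+|b-d|\big)^{\delta-1}\big|(a-c)-(b-d)\big|^2\,ef ,
\]
and since $e,f\ge0$ we have $ef\ge0$, so it is enough to prove this with the common factor $ef$ dropped.

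For the remaining scalar inequality I would use the monotonicity of $t\mapsto V_\gamma(t)$ for $\gamma>0$. The factor $A=V_{p-1}(a-b)-V_{p-1}(c-d)$ has the same sign as $(a-b)-(c-d)=(a-c)-(b-d)$, while $X-Y=V_\delta(a-c)-V_\delta(b-d)$ has the same sign as $(a-c)-(b-d)$; thus $A$ and $X-Y$ share the same sign and $A(X-Y)=|A|\,|X-Y|\ge0$. Now invoke Lemma~\ref{lem:Acerbi-Fusco}: with exponent $\gamma=p-1\in(0,1]$ (the lower constant being $C_1=p-1$, which matches both the branch $\gamma\in(0,1)$ and the boundary case $\gamma=1$, i.e.\ $p=2$) one gets
\[
    |A|\ge (p-1)\big(|a-b|+|c-d|\big)^{p-2}\big|(a-c)-(b-d)\big| ,
\]
and with exponent $\gamma=\delta\ge1$ (lower constant $C_1=2^{1-\delta}$) one gets
\[
    |X-Y|\ge 2^{1-\delta}\big(|a-c|+|b-d|\big)^{\delta-1}\big|(a-c)-(b-d)\big| .
\]
Multiplying the two bounds gives precisely $A(X-Y)\ge\tfrac{2(p-1)}{2^\delta}(\cdots)\big|(a-c)-(b-d)\big|^2$, and multiplying back by $ef\ge0$ concludes the proof.

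I do not expect a genuine obstacle: the statement is elementary once the identity $Xe^2-Yf^2=(X-Y)ef+(Xe+Yf)(e-f)$ is spotted, and the only places needing a little care are recording that $C_1=p-1$ is the correct lower constant for the exponent $p-1\le 1$ (consistent with the two cases of Lemma~\ref{lem:Acerbi-Fusco} at $\gamma=1$), and using that $ef\ge0$ precisely because $e,f\in\R_{\ge0}$ — which is why, in contrast to Lemma~\ref{lem:algebraic-1}, no upper bound on a $\mathbf{II}$-type remainder is required here. Indeed, the lemma is tailored so that the term $A(Xe+Yf)(e-f)$ is left untouched and absorbed by other means in the applications.
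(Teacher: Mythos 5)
Your proposal is correct and follows the same route as the paper: rewrite $Xe^2-Yf^2=(X-Y)ef+(Xe+Yf)(e-f)$ (the paper does exactly this), leave the second piece untouched, and bound the first piece $A(X-Y)ef$ from below by combining the lower bounds for $|V_{p-1}(\cdot)-V_{p-1}(\cdot)|$ and $|V_\delta(\cdot)-V_\delta(\cdot)|$ from Lemma~\ref{lem:Acerbi-Fusco} together with the monotonicity/sign argument and $ef\ge0$. The only cosmetic difference is that you spell out the sign reasoning and the $\gamma=1$ boundary case explicitly, whereas the paper simply cites inequalities~\eqref{alg-1} and~\eqref{alg-2} already established (under the same WLOG sign convention) inside the proof of Lemma~\ref{lem:algebraic-1}.
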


\begin{proof}
We re-write the left-hand side of the claimed inequality and subsequently use inequalities~\eqref{alg-1} and~\eqref{alg-2} from the proof of the preceding lemma 
\begin{align*}
    \big( V_{p-1}&(a-b)- V_{p-1}(c-d)\big)
    \big( V_\delta (a-c) -V_\delta (b-d)\big)ef\\
    &\quad +
    \big( V_{p-1}(a-b)- V_{p-1}(c-d)\big)
    \big( V_\delta (a-c)e +V_\delta (b-d)f\big)(e-f) \\
    &\ge 
    \tfrac{p-1}{2^{\delta-1}} \big( |a-b|+|c-d|\big)^{p-2}
    \big( |a-c|+|b-d|\big)^{\delta-1}
    \big| (a-c)-(b-d)\big|^2 ef \\
    &\quad +
    \big( V_{p-1}(a-b)- V_{p-1}(c-d)\big) \big( V_\delta (a-c)e+V_\delta (b-d)f\big)
    (e-f).
\end{align*}
This is the claimed inequality.
\end{proof}

\subsection{Some integral estimates}
The first result ensures that a certain integral exists.
\begin{lemma}\label{int-sing}
Let $0<\beta<N$.  Then, for any $B_R(x_o)\subset\R^N$ and any $x\in\R^N$ we have
\begin{equation*}
    \int_{B_R(x_o)}\frac{1}{|x-y|^{N-\beta}}\dy
    \le 
    \frac{N|B_1|}{\beta}R^\beta.
\end{equation*}
If $\beta\ge N$,  then for any $x\in B_R(x_o)$
 we have
\begin{align*}
        \int_{B_R(x_o)}\frac{1}{|x-y|^{N-\beta}}\dy
        \le
        \frac{N|B_1|}{\beta} (2R)^{\beta}.
\end{align*}
\end{lemma}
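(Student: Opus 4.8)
The statement is the elementary integral estimate in Lemma~\ref{int-sing}, and the plan is to reduce everything to a radial integration after comparing the singularity $|x-y|^{-(N-\beta)}$ to the singularity centered at the point $x$ itself. First, I would treat the case $0<\beta<N$. Since the integrand $y\mapsto |x-y|^{-(N-\beta)}$ is nonnegative and radially decreasing about $x$, among all balls of radius $R$ the integral $\int_{B_R(x_o)}|x-y|^{-(N-\beta)}\,\dy$ is maximized when the ball is centered at $x$; this is a standard bathtub/rearrangement fact, or can be seen directly by noting that $B_R(x_o)$ and $B_R(x)$ differ by a set $A=B_R(x_o)\setminus B_R(x)$ that lies at distance $\ge$ than the set $B_R(x)\setminus B_R(x_o)$ it replaces. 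Hence
\[
\int_{B_R(x_o)}\frac{1}{|x-y|^{N-\beta}}\,\dy
\le
\int_{B_R(x)}\frac{1}{|x-y|^{N-\beta}}\,\dy.
\]
Then I pass to polar coordinates around $x$: the last integral equals $N|B_1|\int_0^R r^{N-1}r^{-(N-\beta)}\,\dr = N|B_1|\int_0^R r^{\beta-1}\,\dr = \frac{N|B_1|}{\beta}R^{\beta}$, which is finite precisely because $\beta>0$, giving the first claim.

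For the case $\beta\ge N$, the exponent $N-\beta\le 0$, so the "singularity" is in fact a bounded (even vanishing) factor and the only issue is to bound $|x-y|^{\beta-N}$ uniformly. Here we use the hypothesis $x\in B_R(x_o)$: for $y\in B_R(x_o)$ we have $|x-y|\le \diam B_R(x_o)=2R$, so $|x-y|^{\beta-N}\le (2R)^{\beta-N}$ when $\beta\ge N$ (using $t\mapsto t^{\beta-N}$ nondecreasing on $(0,\infty)$; if $\beta=N$ the bound is simply $1=(2R)^0$). Therefore
\[
\int_{B_R(x_o)}\frac{1}{|x-y|^{N-\beta}}\,\dy
=
\int_{B_R(x_o)}|x-y|^{\beta-N}\,\dy
\le
(2R)^{\beta-N}\,|B_R(x_o)|
=
(2R)^{\beta-N}|B_1|R^N
\le
\frac{N|B_1|}{\beta}(2R)^{\beta},
\]
where in the last step we used $|B_1|R^N (2R)^{\beta-N}=|B_1|2^{\beta-N}R^{\beta}\le |B_1|2^{\beta}R^{\beta}\le \frac{N|B_1|}{\beta}(2R)^{\beta}$ since $\frac{N}{\beta}\ge 1$ is false in general — so instead I would keep the cleaner bound $|B_1|2^{\beta-N}R^\beta$ and observe $2^{\beta-N}\le 2^\beta$ and $1\le \frac{N}{\beta}\cdot\frac{\beta}{N}$, i.e.\ simply note $|B_1|2^{\beta-N}R^\beta\le \frac{N|B_1|}{\beta}(2R)^\beta$ is equivalent to $\frac{\beta}{N}\le 2^N$, which holds whenever $\beta\le N2^N$; to avoid such a restriction one alternatively bounds directly by monotonicity in $\beta$ or simply records the sharper constant $|B_1|2^{\beta-N}$.

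\textbf{Main obstacle.} There is no real obstacle: the only point requiring a little care is the centering reduction in the case $\beta<N$ (justifying that recentering the ball at the singularity increases the integral), and the matching of constants in the case $\beta\ge N$, where one should be slightly careful that the crude bound $(2R)^{\beta-N}|B_R|$ is indeed dominated by $\frac{N|B_1|}{\beta}(2R)^{\beta}$; this reduces to the numerical inequality $\beta\,2^{-N}\le N$... which is \emph{not} universally true, so the honest statement is that the constant $\frac{N|B_1|}{\beta}$ should be read as a convenient (not optimal) bound and one instead uses the self-evident estimate
\[
\int_{B_R(x_o)}|x-y|^{\beta-N}\,\dy \le (2R)^{\beta-N}|B_1|R^{N} = |B_1|2^{\beta-N}R^{\beta}.
\]
In the sequel the only feature used is the power $R^\beta$ together with a constant depending on $N$ and $\beta$, so either form suffices; I would present the radial computation for $\beta<N$ and the trivial diameter bound for $\beta\ge N$, and reconcile the constant as above.
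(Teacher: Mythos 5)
Your argument for the range $0<\beta<N$ is correct: recentering the ball at the singularity $x$ only increases the integral (this follows from the rearrangement comparison you sketch, since the set $B_R(x_o)\setminus B_R(x)$ has the same measure as $B_R(x)\setminus B_R(x_o)$ but lies entirely at distance $\ge R$ from $x$, whereas the latter lies at distance $<R$), and the polar-coordinate computation on $B_R(x)$ gives exactly $\frac{N|B_1|}{\beta}R^\beta$.

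Your treatment of the case $\beta\ge N$ has a genuine gap, which you yourself flag but do not close. Bounding the integrand by its supremum yields $|B_1|\,2^{\beta-N}R^\beta$, and, as you note, the inequality $|B_1|\,2^{\beta-N}R^\beta\le\frac{N|B_1|}{\beta}(2R)^\beta$ is equivalent to $\beta\le N2^N$ — it fails for large $\beta$. Since the lemma is stated for all $\beta\ge N$, the proof as written does not establish it, and retreating to a ``sharper constant $|B_1|2^{\beta-N}$'' proves a different statement, not the one in the lemma. The fix is to treat the second case exactly like the first, by polar coordinates on an enlarged ball rather than by a pointwise bound: since $x\in B_R(x_o)$ implies $B_R(x_o)\subset B_{2R}(x)$, and the integrand is nonnegative,
\begin{equation*}
\int_{B_R(x_o)}\frac{\dy}{|x-y|^{N-\beta}}
\le
\int_{B_{2R}(x)}\frac{\dy}{|x-y|^{N-\beta}}
=
N|B_1|\int_0^{2R} r^{\beta-1}\,\dr
=
\frac{N|B_1|}{\beta}(2R)^\beta,
\end{equation*}
which is exactly the claimed bound for every $\beta\ge N$ (indeed for every $\beta>0$), with no restriction. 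The supremum bound is both less precise than this integral bound when $\beta$ is just slightly above $N$, and not precise enough when $\beta$ is large; the polar integral on $B_{2R}(x)$ resolves both issues at once.
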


The following lemma can be inferred from \cite[Lemma 2.3]{Brasco-Lindgren-Schikorra}; for the precise statement see~\cite[Lemma~2.7]{BDLMS}.
\begin{lemma}\label{lem:t}
Let $p\in(1,\infty)$ and $s\in(0,1)$. Then, for any $u\in L^{p-1}_{sp}(\R^N)$, any ball $B_{R}\equiv B_{R}(x_o)$, and any $r\in(0,R)$, we have 
\begin{align*}
    \Tail (u;r)^{p-1}
    \le
    C(N)\Big(\frac{R}{r}\Big)^N
    \big( \Tail (u, R)+ \| u\|_{L^\infty (B_R)}\big)^{p-1}.
\end{align*}
\end{lemma}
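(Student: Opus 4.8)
The final statement to be proved is Lemma~\ref{lem:t}, which asserts a comparison between the nonlocal tail at a small radius $r$ and the tail plus sup-norm at a larger radius $R$.

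\medskip

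\noindent\textbf{The approach.} The plan is to split the tail integral $\int_{\R^N\setminus B_r(x_o)}|u(x)|^{p-1}|x-x_o|^{-N-sp}\,\dx$ at radius $R$ into the annular piece $B_R(x_o)\setminus B_r(x_o)$ and the far piece $\R^N\setminus B_R(x_o)$. On the annulus the singular kernel is bounded below by $|x-x_o|^{-N-sp}\ge R^{-N-sp}$ only in the wrong direction; instead one bounds $|x-x_o|^{-N-sp}\le r^{-N-sp}$ there, so that the annular contribution is at most $r^{-N-sp}\|u\|_{L^\infty(B_R)}^{p-1}|B_R\setminus B_r|\le C(N)r^{-N-sp}R^N\|u\|_{L^\infty(B_R)}^{p-1}$. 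After multiplying by the prefactor $r^{sp}$ this yields a term controlled by $C(N)(R/r)^N\|u\|_{L^\infty(B_R)}^{p-1}$, which sits inside the claimed right-hand side. For the far piece, one simply has $r^{sp}\int_{\R^N\setminus B_R}|u|^{p-1}|x-x_o|^{-N-sp}\,\dx = (r/R)^{sp}\,\Tail(u;R)^{p-1}\le (R/r)^N\,\Tail(u;R)^{p-1}$, since $r<R$ forces $(r/R)^{sp}\le 1\le (R/r)^N$.

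\medskip

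\noindent\textbf{Key steps, in order.} First, I would write $\Tail(u;r)^{p-1}=r^{sp}\big(\int_{B_R\setminus B_r} + \int_{\R^N\setminus B_R}\big)|u(x)|^{p-1}|x-x_o|^{-N-sp}\,\dx$. Second, on the annulus estimate $|x-x_o|^{-N-sp}\le r^{-N-sp}$ and $|u(x)|\le\|u\|_{L^\infty(B_R)}$, and bound the volume of $B_R\setminus B_r$ by $|B_R|=|B_1|R^N$; this gives the first term $\le |B_1|(R/r)^N\|u\|_{L^\infty(B_R)}^{p-1}$ after incorporating $r^{sp}$ and noting $r^{sp}\cdot r^{-N-sp}\cdot R^N=(R/r)^N r^{-N}\cdot r^N$... more carefully: $r^{sp}\cdot r^{-N-sp}=r^{-N}$, times $R^N$ gives $(R/r)^N$. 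Third, on $\R^N\setminus B_R$ factor out to recover $\Tail(u;R)^{p-1}$ with the harmless extra factor $(r/R)^{sp}\le 1\le (R/r)^N$. Fourth, add the two pieces, use $\|u\|_{L^\infty(B_R)}^{p-1}+\Tail(u;R)^{p-1}\le\big(\|u\|_{L^\infty(B_R)}+\Tail(u;R)\big)^{p-1}$ when $p-1\ge1$, or otherwise absorb the elementary constant $2^{(p-2)_+}$ into $C(N)$, to arrive at the stated bound.

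\medskip

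\noindent\textbf{Main obstacle.} There is essentially no analytic difficulty here; the only point requiring a little care is the bookkeeping of the exponents $sp$ versus $N$ — one must check that each occurrence of $(r/R)^{sp}$ or its reciprocal is dominated by the stated power $(R/r)^N$ uniformly in $s\in(0,1)$ and $p\in(1,\infty)$, which follows from $0<sp<p$ combined with $r<R$ (so $(r/R)^{sp}\le 1$) and $N\ge1$ — and the mild subtlety of combining the $L^\infty$-term and the tail term under a single $(p-1)$-th power, which is handled by the elementary inequality $(X+Y)^{p-1}\le X^{p-1}+Y^{p-1}$ for $p\in(1,2]$ (and by a dimensional constant in general). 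Since the statement only claims a constant $C(N)$, all such elementary factors are absorbed without tracking, which is consistent with the cited references \cite[Lemma 2.3]{Brasco-Lindgren-Schikorra} and \cite[Lemma~2.7]{BDLMS}.
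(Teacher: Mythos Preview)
Your argument is correct and is exactly the standard tail-splitting proof given in the cited references \cite[Lemma~2.3]{Brasco-Lindgren-Schikorra} and \cite[Lemma~2.7]{BDLMS}; the paper itself does not reprove the lemma but simply quotes it. The only imprecision is the constant $2^{(p-2)_+}$ in your last step (for $p\in(1,2)$ one needs a factor $2$, not $1$), but since the bound $a^{p-1}+b^{p-1}\le 2(a+b)^{p-1}$ holds for all $p>1$ with a universal constant, this is harmless and your conclusion $C=C(N)$ stands.
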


\begin{remark}\label{rem:t}\upshape
Let the assumptions of Lemma~\ref{lem:t} be satisfied. Recalling the definition of $\boldsymbol{\mathfrak T}(\cdot)$ in \eqref{def-T}, Lemma~\ref{lem:t} ensures that 
$$
    \boldsymbol{\mathfrak T}(u;r)^{p-1}
    \le 
    C(N)\Big(\frac{R}{r}\Big)^N 
    \boldsymbol{\mathfrak T}(u;R)^{p-1}.
$$    
\end{remark}

\subsection{Fractional Sobolev spaces}\label{sec:fractional}
In the following we summarize some statements about fractional Sobolev spaces. We avoid introducing further functional spaces, such as Nikol'skii and Besov spaces.  Rather, we restrict ourselves to the main functional estimates. For more information on this topic, we refer to \cite{Adams, Brasco-Lindgren, Brasco-Lindgren-Schikorra}. We start with the \emph{embedding} $W^{1,q}\hookrightarrow W^{\gamma,q}$.
\begin{lemma}
\label{lem:FS-S}
Let $q\ge 1$ and $\gamma\in (0,1)$. Then  for any  $w\in W^{1,q}(B_R)$ we have
\begin{align*}
    (1-\gamma)\iint_{B_R\times B_R} 
    \frac{|w(x)-w(y)|^q}{|x-y|^{N+\gamma q}} \,\dx\dy
    \le 
    \frac{8\omega_N}{q}R^{(1-\gamma)q} 
    \int_{B_R} |\nabla w|^q \,\dx. 
\end{align*}
\end{lemma}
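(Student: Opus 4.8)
\textbf{Proof plan for Lemma~\ref{lem:FS-S} (the embedding $W^{1,q}\hookrightarrow W^{\gamma,q}$ on a ball).}

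The plan is to split the double integral over $B_R\times B_R$ according to whether $|x-y|$ is small (the ``near-diagonal'' region, where we use the derivative of $w$) or comparable to $R$ (the ``far'' region, where we simply bound $|w(x)-w(y)|$ crudely and use the integrability of the kernel). Concretely, fix a threshold $\lambda\in(0,R)$, to be optimized at the end, and write
\begin{align*}
    \iint_{B_R\times B_R}\frac{|w(x)-w(y)|^q}{|x-y|^{N+\gamma q}}\,\dx\dy
    =
    \iint_{\{|x-y|<\lambda\}}\!\!(\cdots)
    +
    \iint_{\{|x-y|\ge\lambda\}}\!\!(\cdots)
    =:
    \mathbf I+\mathbf{II}.
\end{align*}
For $\mathbf{II}$ I would discard the oscillation, writing $|w(x)-w(y)|^q\le 2^{q-1}(|w(x)|^q+|w(y)|^q)$, integrate the kernel $|x-y|^{-N-\gamma q}$ over $\{|y-x|\ge\lambda\}$ — which gives a factor $\sim \lambda^{-\gamma q}/(\gamma q)$ up to a dimensional constant — and thereby bound $\mathbf{II}$ by $C(N)\gamma^{-1}\lambda^{-\gamma q}\|w\|_{L^q(B_R)}^q$; however, to land on $\int_{B_R}|\nabla w|^q$ on the right-hand side rather than $\|w\|_{L^q}^q$, it is cleaner to first subtract the mean: replace $w$ by $w-(w)_{B_R}$ throughout (the double integral is insensitive to additive constants) and invoke the Poincar\'e inequality $\|w-(w)_{B_R}\|_{L^q(B_R)}\le C(N,q)\,R\,\|\nabla w\|_{L^q(B_R)}$, giving $\mathbf{II}\le C(N,q)\,R^{-\gamma q}R^q\|\nabla w\|_{L^q(B_R)}^q$ after choosing $\lambda\sim R$.

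For $\mathbf I$, the near-diagonal piece, I would use the standard bound coming from the fundamental theorem of calculus along the segment from $x$ to $y$: for a.e.\ $x,y$,
\begin{align*}
    |w(x)-w(y)|\le |x-y|\int_0^1 |\nabla w(x+t(y-x))|\,\dt,
\end{align*}
so by Jensen $|w(x)-w(y)|^q\le |x-y|^q\int_0^1|\nabla w(x+t(y-x))|^q\,\dt$. Substituting $h=y-x$, integrating in $x$ over (a slightly enlarged) $B_R$ and translating the variable inside, one gets
\begin{align*}
    \mathbf I
    \le
    \int_{|h|<\lambda}|h|^{q-N-\gamma q}\!\int_0^1\!\int_{B_R}|\nabla w(x+th)|^q\,\dx\,\dt\,\dh
    \le
    \|\nabla w\|_{L^q(B_R)}^q\int_{|h|<\lambda}|h|^{q-N-\gamma q}\,\dh,
\end{align*}
where the inner spatial integral is controlled by $\int_{B_R}|\nabla w|^q$ uniformly in $t$ (extending $w$ or enlarging the domain as needed — this cosmetic point I would handle either by the standard extension/reflection or by a density argument). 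Since $q-N-\gamma q>-N$, i.e.\ the exponent $q-\gamma q = (1-\gamma)q>0$ keeps the integral convergent at $0$, one has $\int_{|h|<\lambda}|h|^{q-N-\gamma q}\,\dh = \frac{N\omega_N}{(1-\gamma)q}\,\lambda^{(1-\gamma)q}$ (this is where the factor $(1-\gamma)^{-1}$ — equivalently, after multiplying through, the factor $(1-\gamma)$ on the left of the statement — enters, and also where the factor $\frac1q$ and $\omega_N$ come from). Choosing $\lambda=R$ then makes both pieces scale like $R^{(1-\gamma)q}\int_{B_R}|\nabla w|^q$, and multiplying by $(1-\gamma)$ absorbs the singular constant from $\mathbf I$ while leaving $\mathbf{II}$'s contribution bounded; collecting constants yields the stated inequality with the explicit constant $\frac{8\omega_N}{q}$ (possibly after mildly adjusting how $\mathbf{II}$ is estimated — e.g.\ extending the near-diagonal estimate all the way to $\lambda=R$ and noting that on a ball $|x-y|<2R$ always, so in fact the splitting is unnecessary and one can take $\lambda=2R$ directly, which is the slickest route and avoids Poincar\'e altogether).

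The main obstacle — really the only subtlety — is keeping the constant explicit and $(1-\gamma)$-sharp: the naive estimate over $\{|x-y|<2R\}$ produces $\int_{|h|<2R}|h|^{(1-\gamma)q-N}\,\dh = \frac{N\omega_N}{(1-\gamma)q}(2R)^{(1-\gamma)q}$, so after multiplying by $(1-\gamma)$ one gets $\frac{N\omega_N}{q}2^{(1-\gamma)q}R^{(1-\gamma)q}\int|\nabla w|^q$; one then needs $N\,2^{(1-\gamma)q}\le 8$-type bookkeeping, which holds in the relevant regime or can be absorbed by enlarging the stated constant — in any case the structural dependence $(1-\gamma)$ on the left, $\frac1q$ and $R^{(1-\gamma)q}$ on the right is exactly what the segment integral delivers. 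The one technical point to address carefully is the translation argument $\int_{B_R}|\nabla w(x+th)|^q\,\dx\le \int_{B_R}|\nabla w|^q\,\dx$: since $x+th$ need not lie in $B_R$, I would either replace $B_R$ by $B_{2R}$ on the right in an intermediate step and then note that for the clean statement one should first reduce to smooth $w$ and argue on a slightly smaller ball, or — cleaner — integrate $x$ over the set where both $x$ and $x+th\in B_R$, which only shrinks the domain and hence the integral; this costs nothing.
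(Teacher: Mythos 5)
The paper states Lemma~\ref{lem:FS-S} as background without giving a proof, so there is no in-paper argument to compare against; I will evaluate your plan on its own terms.

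Your ``slickest route'' is indeed the standard proof and is structurally sound. The segment/Jensen bound $|w(x)-w(y)|^q\le |x-y|^q\int_0^1|\nabla w(x+t(y-x))|^q\,\dt$ is correct for $w\in W^{1,q}$ after reducing to smooth functions, and your treatment of the translation step is the right one: for $x\in B_R\cap(B_R-h)$ one has $x+th\in B_R$ for every $t\in[0,1]$ by convexity, so the $z=x+th$ substitution lands inside $B_R$ and $\int|\nabla w(x+th)|^q\,\dx\le\int_{B_R}|\nabla w|^q\,\dz$ uniformly in $t,h$ --- no extension or reflection is needed. The radial computation $\int_{|h|<2R}|h|^{(1-\gamma)q-N}\,\dh=\tfrac{N\omega_N}{(1-\gamma)q}(2R)^{(1-\gamma)q}$ is also right with $\omega_N=|B_1|$.

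The one genuine concern --- which you correctly flag yourself --- is the numerical constant. Your argument delivers, after multiplying by $(1-\gamma)$, the bound $\tfrac{N\omega_N}{q}\,2^{(1-\gamma)q}\,R^{(1-\gamma)q}\int_{B_R}|\nabla w|^q$, and the factor $N\,2^{(1-\gamma)q}$ is not in general $\le 8$ (e.g.\ $q$ large and $\gamma$ small). So the stated constant $\tfrac{8\omega_N}{q}$ does not follow verbatim from this route; what does follow, and what the paper actually uses downstream (it only needs a $C(N,q)$), is the correct structural dependence $(1-\gamma)$ on the left, $R^{(1-\gamma)q}/q$ on the right. You could also avoid the detour through the split at threshold $\lambda$ and the Poincar\'e inequality entirely, as you note at the end --- mean-subtraction buys nothing here since the double integral is already invariant under adding a constant, and the far-diagonal region is automatically covered once you integrate the kernel up to $|h|<2R$. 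In short: the proof is correct modulo the exact numerical prefactor, which you identify as the only loose end, and that looseness is immaterial for every application of the lemma in the paper.
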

Next, we provide a \emph{fractional Sobolev-Poincar\'e inequality}, which can be retrieved from \cite[Theorem 6.7]{Hitchhikers-guide}.  To trace the stability with $\gamma\uparrow 1$, the exact dependence of $C$ with respect to $\gamma$ is crucial.  
The result can be found in \cite[Theorem 1]{BBM-1}.

\begin{lemma}\label{lem:frac-Sob-2}
Let $q\ge 1$, $\gamma\in (0,1)$, such that $\gamma q<N$.
Then, for any  $w\in W^{\gamma,q}(B_R)$ we have 
\begin{align*}
    \bigg[\mint_{B_R} & |w|^\frac{Nq}{N-q\gamma} \,\dx
    \bigg]^\frac{N-q\gamma}{Nq}
    \!\!\le
    2^{q-1}\bigg[ C^q R^{q\gamma}  \int_{B_R}\mint_{B_R}
    \frac{|w(x)-w(y)|^q}{|x-y|^{N+q\gamma}} \dx\dy +
    \mint_{B_R} |w|^q \dx\bigg]^\frac1q
\end{align*}
where  $C=C(N,q,\gamma)$. Moreover, for $\gamma\in [\frac12 ,1)$ we have
$$
    C(N,q,\gamma)= \bigg[\frac{C(N)(1-\gamma)}{(N-\gamma q)^{q-1}}\bigg]^\frac1q.
$$
\end{lemma}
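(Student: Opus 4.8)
The plan is to reduce everything to the classical fractional Sobolev–Poincaré inequality on a ball (as in \cite[Theorem~6.7]{Hitchhikers-guide}), and then to track the dependence of the constant on $\gamma$ via the Bourgain–Brezis–Mironescu-type asymptotics recorded in \cite[Theorem~1]{BBM-1}. First I would normalize: by scaling $x\mapsto x_o+Rx$ (noting that the Gagliardo seminorm with exponent $\gamma q$ and the $L^{q^*}$-norm on the left, $q^*=\tfrac{Nq}{N-q\gamma}$, scale consistently, as do the averaged integrals $\mint$), it suffices to prove the estimate on the unit ball $B_1$, and the powers of $R$ in the statement are exactly the ones dictated by this scaling. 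On $B_1$ one writes $w = (w-\overline w) + \overline w$ where $\overline w := \mint_{B_1} w$, applies the triangle inequality in $L^{q^*}(B_1)$ together with $\|\overline w\|_{L^{q^*}(B_1)} = |\overline w|\,|B_1|^{1/q^*} \le |B_1|^{1/q^*-1/q}\|w\|_{L^q(B_1)}$, and then estimates $\|w-\overline w\|_{L^{q^*}(B_1)}$ by the Gagliardo seminorm using the zero-average Sobolev–Poincaré inequality. The factor $2^{q-1}$ in front arises from the elementary inequality $(A+B)^q \le 2^{q-1}(A^q+B^q)$ used to combine the two contributions after raising to the $q$-th power.

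The core analytic input is the inequality $\|w-\overline w\|_{L^{q^*}(B_1)} \le C(N,q,\gamma)\,[w]_{W^{\gamma,q}(B_1)}$ for $\gamma q<N$, and the whole point of the lemma is the sharp behavior of $C(N,q,\gamma)$ as $\gamma\uparrow 1$. Here I would invoke the BBM framework: the correct normalization makes the seminorm $(1-\gamma)[w]_{W^{\gamma,q}(B_1)}^q$ converge (up to a dimensional constant) to $\|\nabla w\|_{L^q(B_1)}^q$ as $\gamma\uparrow 1$, so the constant in the un-normalized inequality carries a factor $(1-\gamma)^{1/q}$; the remaining $(N-\gamma q)^{-(q-1)/q}$ factor is the blow-up of the Sobolev constant as the embedding exponent $q^* = \tfrac{Nq}{N-q\gamma}$ degenerates (this is the standard $\tfrac{N-1}{N-q\gamma}$-type loss, raised to a power adjusted by $q$). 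Restricting to $\gamma\in[\tfrac12,1)$ lets us absorb all remaining $\gamma$-dependence — in particular any $\gamma$-powers of $R$ or of $|B_1|$ — into a purely dimensional constant $C(N)$, which is why the stated form of $C(N,q,\gamma)$ is valid only on that range.

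The main obstacle is making the constant in the zero-average estimate explicit with the claimed $\gamma$-dependence while keeping the argument self-contained: the version in \cite[Theorem~6.7]{Hitchhikers-guide} is qualitative, so one really must go through \cite[Theorem~1]{BBM-1} (or reprove the relevant extension/truncation step) to see that the constant degenerates precisely like $(1-\gamma)^{1/q}(N-\gamma q)^{-(q-1)/q}$ and not worse. A secondary technical point is bookkeeping the passage from a general ball $B_R(x_o)$ back to $B_1$: one must check that the averaged quantities in the statement are exactly scale-invariant so that no spurious powers of $R$ survive beyond the displayed $R^{q\gamma}$ inside the seminorm term. Once these two points are handled, the remaining steps — triangle inequality, the $(A+B)^q\le 2^{q-1}(A^q+B^q)$ split, and bounding the average — are routine.
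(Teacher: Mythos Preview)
The paper does not prove this lemma; it simply records it as a known result, citing \cite[Theorem~6.7]{Hitchhikers-guide} for the qualitative statement and \cite[Theorem~1]{BBM-1} for the explicit form of the constant in the range $\gamma\in[\tfrac12,1)$. Your outline---scale to the unit ball, split off the mean, apply the zero-average fractional Sobolev--Poincar\'e inequality, and track the $\gamma$-dependence via the Bourgain--Brezis--Mironescu asymptotics---is exactly the route one would take to extract the stated inequality from those references, and is correct in spirit.

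One small remark: you attribute the factor $2^{q-1}$ to the inequality $(A+B)^q\le 2^{q-1}(A^q+B^q)$, but that would only give $2^{(q-1)/q}$ in front of the bracket (since the bracket is raised to the $1/q$). The displayed $2^{q-1}$ is simply a cruder constant, so this is harmless, but it is worth being precise about where factors come from if you intend to write out the argument in full.
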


\subsection{Finite differences and fractional Sobolev spaces}
For an open set $\Omega\subset \R^N$, and a  vector $h\in\R^N$, define $\Omega_{|h|}:=\{ x\in\Omega\colon {\rm dist}(x,\partial\Omega)>|h|\}$. By $\btau_h\colon  L^1(\Omega,\R^k)\to L^1(\Omega_{|h|},\R^k) $ we denote the finite difference operator
\begin{equation*}
    \btau_hw(x):=w(x+h)-w(x)\equiv w_h(x)-w(x)\quad\mbox{for $x\in \Omega_{|h|}$.}
\end{equation*}
If the direction is a fixed  unit  vector
$e\in\R^N$, we write 
$$
    \btau_h^{(e)}w(x):=w(x+he)-w(x),
$$ 
where $h\in\R$ now is a real number. At several points we will use two elementary properties of finite differences. These are summarized in \cite[Lemma 7.23 \& 7.24]{Gilbarg-Trudinger}
and \cite[Chap.~5.8.2]{Evans:book}. 

\begin{lemma}\label{lem:diff-quot-1}
Let $1<q<\infty$, $M>0$, and $0<d<R$.  Then, any $w\in L^q(B_R)$ that satisfies 
\begin{equation*}
    \int_{B_{R-d}} \big|\btau_h^{(e)}w\big|^q\,\dx
    \le
    M^q|h|^q \quad\mbox{for any $0<|h|\le d$}
\end{equation*}
is weakly differentiable in direction $e$ on $B_{R-d}$.  Moreover, we have
$$
    \int_{B_{R-d}} |D_ew|^q\,\dx\le M^q.
$$
\end{lemma}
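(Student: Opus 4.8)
\textbf{Proof plan for Lemma~\ref{lem:diff-quot-1}.}

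The plan is to reduce the claim to a weak-compactness argument for difference quotients. First I would fix the unit vector $e$ and, for $0<|h|\le d$, introduce the difference quotient $D_e^h w(x):=\tfrac1h\btau_h^{(e)}w(x)=\tfrac1h\big(w(x+he)-w(x)\big)$, which by hypothesis satisfies the uniform bound $\int_{B_{R-d}}|D_e^h w|^q\,\dx\le M^q$ for all such $h$. Since $1<q<\infty$, the space $L^q(B_{R-d})$ is reflexive, so along a suitable sequence $h_j\to0$ we have $D_e^{h_j}w\rightharpoonup v$ weakly in $L^q(B_{R-d})$ for some $v\in L^q(B_{R-d})$, and by weak lower semicontinuity of the norm $\|v\|_{L^q(B_{R-d})}\le\liminf_j\|D_e^{h_j}w\|_{L^q(B_{R-d})}\le M$, which will give the asserted bound on $\|D_e w\|_{L^q(B_{R-d})}$ once we identify $v=D_ew$.

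The second step is to identify the weak limit $v$ with the weak derivative $D_e w$. For this I would test against an arbitrary $\varphi\in C_c^\infty(B_{R-d})$; choosing $|h|$ smaller than $\dist(\spt\varphi,\partial B_{R-d})$, the discrete integration-by-parts identity
\[
\int_{B_{R-d}} \big(D_e^h w\big)\,\varphi\,\dx
=
-\int_{B_{R-d}} w\,\big(D_e^{-h}\varphi\big)\,\dx
\]
holds by a change of variables. On the right-hand side, $D_e^{-h}\varphi\to -\partial_e\varphi$ uniformly as $h\to0$ (since $\varphi$ is smooth with compact support) and $w\in L^q\subset L^1_{\loc}$, so the right-hand side converges to $\int_{B_{R-d}} w\,\partial_e\varphi\,\dx$; on the left-hand side, weak convergence $D_e^{h_j}w\rightharpoonup v$ against the fixed test function $\varphi\in L^{q'}(B_{R-d})$ yields $\int v\,\varphi\,\dx$. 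Hence $\int_{B_{R-d}} v\,\varphi\,\dx=\int_{B_{R-d}} w\,\partial_e\varphi\,\dx$ for all $\varphi\in C_c^\infty(B_{R-d})$, which is precisely the statement that $w$ has weak derivative $D_e w=-v$ (up to the harmless sign convention) in direction $e$ on $B_{R-d}$; combined with the bound from the first step this finishes the proof.

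I do not expect a serious obstacle here, as this is the standard Gilbarg--Trudinger / Evans argument; the only points requiring a little care are making sure $|h|$ is chosen small enough (relative to $\dist(\spt\varphi,\partial B_{R-d})$) so that all the translated functions are defined on the relevant set, and correctly passing to a subsequence so that reflexivity applies. The mild subtlety worth flagging is that the weak limit $v$ is a priori only extracted along a subsequence, but once $v=D_ew$ is identified the limit is unique, so in fact the full family $D_e^h w$ converges weakly as $h\to0$; this is not needed for the statement but is reassuring.
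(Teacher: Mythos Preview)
Your proposal is correct and is precisely the standard weak-compactness argument from the references the paper cites (Gilbarg--Trudinger, Evans); the paper itself does not supply a proof of this lemma but simply quotes these textbooks. The only cosmetic slip is a sign: with the convention $D_e^{h}\varphi(x)=\tfrac1h(\varphi(x+he)-\varphi(x))$ one has $D_e^{-h}\varphi\to\partial_e\varphi$ (not $-\partial_e\varphi$), so the limiting identity is $\int v\,\varphi=-\int w\,\partial_e\varphi$ and hence directly $v=D_ew$; this does not affect the argument or the final bound.
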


\begin{lemma}\label{lem:diff-quot-2}
Let  $1\le q<\infty$ and $0<d<R$. Then, for any 
$w\in W^{1,q}(B_R)$, and any $0<|h|\le d$, we have
$$
    \big\| \btau_h^{(e)} w\big\|_{L^q(B_{R-d})} 
    \le 
    |h|\,\| D_e w\|_{L^q(B_R)}.
$$
\end{lemma}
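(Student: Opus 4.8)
The plan is to reduce to the case of smooth functions and then treat the general case by density. Since $B_R$ is a ball, hence a bounded Lipschitz domain, the space $C^\infty(\overline{B_R})$ is dense in $W^{1,q}(B_R)$; thus it suffices to prove the estimate for $w\in C^\infty(\overline{B_R})$ and to pass to the limit afterwards.

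For smooth $w$, fix the unit vector $e$ and $h\in\R$ with $0<|h|\le d$. The key observation is that for every $x\in B_{R-d}$ the whole segment $\{x+\sigma h e\colon\sigma\in[0,1]\}$ stays inside $B_R$, since its points lie within distance $|h|\le d$ of $x$. I would then write, by the fundamental theorem of calculus,
\begin{equation*}
    \btau_h^{(e)}w(x)=w(x+he)-w(x)=h\int_0^1 D_ew(x+\sigma h e)\,\dsigma,
\end{equation*}
apply Jensen's inequality with respect to the probability measure $\dsigma$ on $[0,1]$ to get $\big|\btau_h^{(e)}w(x)\big|^q\le|h|^q\int_0^1\big|D_ew(x+\sigma h e)\big|^q\,\dsigma$, integrate over $x\in B_{R-d}$, use Fubini, and for each fixed $\sigma$ perform the translation $y=x+\sigma h e$, which maps $B_{R-d}$ into $B_{R-d+|h|}\subseteq B_R$. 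This chain of elementary steps yields
\begin{equation*}
    \int_{B_{R-d}}\big|\btau_h^{(e)}w\big|^q\,\dx\le|h|^q\int_{B_R}|D_ew|^q\,\dy,
\end{equation*}
which is the asserted inequality for smooth $w$.

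To conclude, I would take $w_k\in C^\infty(\overline{B_R})$ with $w_k\to w$ in $W^{1,q}(B_R)$, note that translation is an $L^q$-isometry so that $\btau_h^{(e)}w_k\to\btau_h^{(e)}w$ in $L^q(B_{R-d})$ while $D_ew_k\to D_ew$ in $L^q(B_R)$, and let $k\to\infty$ in the estimate for $w_k$. There is no genuine obstacle here: the argument is entirely classical, being precisely the difference-quotient estimate recorded in \cite[Lemma 7.23 \& 7.24]{Gilbarg-Trudinger} and \cite[Chap.~5.8.2]{Evans:book}. The only points deserving a moment's care are the density of smooth functions up to the boundary — available because $B_R$ is a ball — and the simple bookkeeping that the hypothesis $|h|\le d$ is exactly what keeps all the translated balls $B_{R-d}+\sigma h e$ inside $B_R$ throughout the change of variables.
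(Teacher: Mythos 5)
Your proof is correct and is exactly the classical difference-quotient argument (fundamental theorem of calculus along the segment, Jensen, Fubini, translation, then density) that the paper itself does not reprove but simply cites from \cite[Lemma 7.23 \& 7.24]{Gilbarg-Trudinger} and \cite[Chap.~5.8.2]{Evans:book}. All the details you flag — the segment staying in $B_R$ because $|h|\le d$, the translated ball landing in $B_{R-d+|h|}\subseteq B_R$, and the density of $C^\infty(\overline{B_R})$ in $W^{1,q}(B_R)$ for a ball — are handled correctly, so there is nothing to add.
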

Functions $w$ belonging to a fractional Sobolev space $W^{\gamma ,q}$ fulfill  an estimate for finite differences similar to the one from Lemma \ref{lem:diff-quot-2}; see \cite[Proposition\,2.6]{Brasco-Lindgren}.

\begin{lemma}\label{lem:N-FS}
Let $q\in(1,\infty)$, $\gamma \in(0,1)$, and $0<d<R$. Then, there exists a constant $C=C(N,q)$ such that for any $w\in W^{\gamma,q}(B_{R})$, we have
\begin{align*}
    \int_{B_{R-d}} |\btau_h w|^q \,\dx 
    &\le
    C\,|h|^{\gamma q} 
    \Bigg[(1-\gamma)[w]^q_{W^{\gamma,q}(B_{R})} +
    \bigg(\frac{R^{(1-\gamma )q}}{d^q} + \frac{1}{\gamma d^{\gamma q}}\bigg)\|w\|^q_{L^q(B_{R})}
    \Bigg]
\end{align*}
for any  $h\in\R^N\setminus\{0\}$ that satisfies  $|h|\le d$.
\end{lemma}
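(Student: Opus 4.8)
## Proof Plan for Lemma~\ref{lem:N-FS}

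The plan is to reduce the estimate for the finite difference $\btau_h w$ to the Gagliardo seminorm by a standard averaging-over-a-ball trick, treating separately the range where the reference ball used for averaging stays inside $B_R$ and the complementary region near $\partial B_{R-d}$ where it does not. More precisely, fix $h$ with $0<|h|\le d$ and write $\ell:=|h|$. For $x\in B_{R-d}$ consider the ball $B_\ell(x+h)\subset B_R$ (this inclusion uses $|h|\le d$, so that $\dist(x+h,\partial B_R)\ge d-|h|\ge 0$; if one needs strict interior one replaces $d$ by a slightly smaller number, or uses $B_{2\ell}$ — a routine adjustment) and estimate, for a.e.\ $z\in B_\ell(x+h)$,
\begin{align*}
    |\btau_h w(x)|
    &=
    |w(x+h)-w(x)|
    \le
    |w(x+h)-w(z)| + |w(z)-w(x)|.
\end{align*}
Averaging this over $z\in B_\ell(x+h)$, raising to the $q$-th power, and using that $|x+h-z|\le\ell$ and $|x-z|\le 2\ell$ gives
\begin{align*}
    |\btau_h w(x)|^q
    &\le
    \frac{C(N,q)}{\ell^N}\int_{B_{2\ell}(x)}|w(z)-w(x)|^q\,\dz
    \le
    C(N,q)\,\ell^{\gamma q}\int_{B_{2\ell}(x)}\frac{|w(z)-w(x)|^q}{|x-z|^{N+\gamma q}}\,\dz,
\end{align*}
where in the last step I bounded $\ell^{-N}$ by $\ell^{\gamma q}|x-z|^{-N-\gamma q}$ on the shell $\ell\le|x-z|\le 2\ell$; the part $|x-z|<\ell$ is handled the same way after noting $\ell^{-N}\le \ell^{\gamma q}|x-z|^{-N-\gamma q}$ fails there, so instead one keeps $\ell^{-N}\int_{B_\ell(x)}|w(z)-w(x)|^q\,\dz$ and later absorbs it — in fact the cleanest route is to average over the annulus $B_{2\ell}(x)\setminus B_\ell(x)$ rather than a ball, on which $|x-z|\simeq\ell$ uniformly, so that the single clean bound $|\btau_hw(x)|^q\le C(N,q)\ell^{\gamma q}\int_{B_{2\ell}(x)}\frac{|w(z)-w(x)|^q}{|x-z|^{N+\gamma q}}\,\dz$ holds outright.

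Next I would integrate over $x\in B_{R-d}$ and split the inner integration domain according to whether $z\in B_R$ or $z\notin B_R$. The contribution of $z\in B_R$ is bounded directly by $C(N,q)\,\ell^{\gamma q}\,[w]_{W^{\gamma,q}(B_R)}^q$, and to make the factor $(1-\gamma)$ appear one observes that $\ell\le d\le R$ forces $B_{2\ell}(x)\cap(B_R)^c\ne\emptyset$ only when $x$ lies in the shell $\{x\in B_{R-d}:\dist(x,\partial B_R)<2\ell\}$; on the complementary set the whole annulus $B_{2\ell}(x)$ already lies in $B_R$ and no boundary term arises. So I may assume $\gamma$ close to $1$ is the only delicate regime and then insert $(1-\gamma)$ by hand via the trivial bound $1\le (1-\gamma)^{-1}$ combined with Lemma~\ref{lem:FS-S}-type comparisons — actually the factor $(1-\gamma)$ in front of $[w]_{W^{\gamma,q}}^q$ in the statement is an upper bound that is only improved by multiplying the seminorm, so $(1-\gamma)[w]_{W^{\gamma,q}(B_R)}^q\le [w]_{W^{\gamma,q}(B_R)}^q$ and it suffices to prove the estimate with $(1-\gamma)$ deleted; I will keep $(1-\gamma)$ in the final line only to match the form used later, noting the bound is monotone in that coefficient. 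For the term with $z\notin B_R$: here $|x-z|\le 2\ell\le 2d$, and one estimates $|w(z)-w(x)|^q\le 2^{q-1}(|w(z)|^q+|w(x)|^q)$; but $z\notin B_R$ forces $|x-z|\ge\dist(x,\partial B_R)$ and, crucially, $z\in B_{2\ell}(x)$ with $x\in B_{R-d}$ means $z$ stays within bounded distance of $B_R$ — in fact one can simply redefine $w$ to be $0$ outside $B_R$ for the purpose of this term and then $|w(z)|=0$, leaving $\ell^{-N}\int_{B_{2\ell}(x)\setminus B_R}|w(x)|^q\,\dz\le C(N,q)\,|w(x)|^q\,\mathbf 1_{\{\dist(x,\partial B_R)<2\ell\}}$. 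Integrating in $x$ over the shell of width $2\ell$ and using $\ell\le d$ gives a bound of the form $C(N,q)\big(\tfrac{\ell}{d}\big)^{?}\|w\|_{L^q(B_R)}^q$, and matching powers of $\ell$ (we want $\ell^{\gamma q}$ out front) produces exactly the two competing terms $\ell^{\gamma q}\big(R^{(1-\gamma)q}d^{-q}+\gamma^{-1}d^{-\gamma q}\big)\|w\|_{L^q(B_R)}^q$ after writing $\ell^{q}=\ell^{\gamma q}\ell^{(1-\gamma)q}\le\ell^{\gamma q}R^{(1-\gamma)q}$ in one range and $1\le\ell^{\gamma q}\ell^{-\gamma q}$ with $\ell\ge$ (something comparable to the shell width) in the other; the $\gamma^{-1}$ comes from integrating $r^{-\gamma q}$-type weights, i.e.\ from $\int_0^{2\ell}r^{\gamma q-1}\,dr=\tfrac{(2\ell)^{\gamma q}}{\gamma q}$ after the boundary-shell volume estimate.

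The \textbf{main obstacle} is bookkeeping rather than conceptual: one must carefully handle the region of $x\in B_{R-d}$ that is within distance $2|h|$ of $\partial B_R$, because there the averaging ball $B_{2\ell}(x)$ pokes outside $B_R$ and the integrand can only be controlled by $\|w\|_{L^q(B_R)}$ (not by the seminorm), and getting the \emph{sharp} powers of $|h|$, $d$, $R$ and the sharp $\gamma$-dependence $(1-\gamma)$ and $1/\gamma$ requires splitting this boundary contribution by the size of $|h|$ relative to $d$ and relative to $R$, and estimating the measure of the boundary shell as $\le C(N)R^{N-1}\cdot 2|h|$. A secondary technical point is to verify that the constant in the Gagliardo-term bound is independent of $\gamma$ (so that the advertised $(1-\gamma)$ is the only $\gamma$-weight there), which follows because the annulus $B_{2\ell}(x)\setminus B_\ell(x)$ has $|x-z|$ uniformly comparable to $\ell$, so the exponent $N+\gamma q$ never causes a blow-up. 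Once these two points are dispatched, the claimed inequality follows by collecting terms, absorbing $2^{q-1}$-type constants into $C(N,q)$, and using the monotonicity remark to restore the factor $(1-\gamma)$ in front of the seminorm.
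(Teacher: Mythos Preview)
The paper does not prove this lemma; it simply cites \cite[Proposition~2.6]{Brasco-Lindgren}. Regarding your proposal, there is a genuine gap concerning the factor $(1-\gamma)$. You write that ``it suffices to prove the estimate with $(1-\gamma)$ deleted'' since $(1-\gamma)[w]^q_{W^{\gamma,q}}\le[w]^q_{W^{\gamma,q}}$. But this runs the wrong way: $(1-\gamma)$ multiplies a term on the \emph{upper-bound} side, so deleting it yields a \emph{larger} right-hand side and hence a strictly weaker inequality. The factor cannot be ``restored'' afterward, and it is not decorative --- the paper uses it essentially for the stability of all estimates as $s\uparrow 1$ (see e.g.\ the proofs of Lemma~\ref{lem:Nikol-est-p<2} and Lemma~\ref{lem:tauhh-inproved}, where one needs $(1-\widetilde\gamma)\le 1-s$ in front of the seminorm). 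Your annulus-averaging argument produces at best a $\gamma$-independent constant in front of $[w]^q_{W^{\gamma,q}}$, not the factor $(1-\gamma)$; indeed, testing your bound against functions concentrated near a single scale shows no improvement is possible from that step alone.

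The missing ingredient is the \emph{subadditivity} of finite differences in the step direction: for a unit vector $e$ and $0<\tau<t$ one has $\|\btau_{te}v\|_{L^q}\le\lceil t/\tau\rceil\,\|\btau_{\tau e}v\|_{L^q}\le(2t/\tau)\|\btau_{\tau e}v\|_{L^q}$, hence $\|\btau_{\tau e}v\|_{L^q}^q\ge 2^{-q}(\tau/t)^q\|\btau_{te}v\|_{L^q}^q$. Integrating $\tau^{-1-\gamma q}$ over $(0,t)$ then gives
\[
    \|\btau_{te}v\|_{L^q(\R^N)}^q
    \le
    2^q(1-\gamma)q\,t^{\gamma q}\int_0^\infty \tau^{-1-\gamma q}\|\btau_{\tau e}v\|_{L^q(\R^N)}^q\,\mathrm{d}\tau.
\]
Combining this with your initial averaging step (which replaces a fixed $h$ by an average over $|h'|\le 2|h|$) and integrating over directions $e\in S^{N-1}$ yields the whole-space estimate $\|\btau_h v\|_{L^q(\R^N)}^q\le C(N,q)(1-\gamma)|h|^{\gamma q}[v]^q_{W^{\gamma,q}(\R^N)}$. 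The localized statement follows by applying this to $\eta w$ with a cutoff $\eta\in C^1_0(B_R)$ equal to $1$ on $B_{R-d/2}$ and satisfying $|\nabla\eta|\le C/d$: the two lower-order contributions $R^{(1-\gamma)q}/d^q$ and $1/(\gamma d^{\gamma q})$ arise, respectively, from the piece of $[\eta w]^q_{W^{\gamma,q}(\R^N)}$ containing $|\eta(x)-\eta(y)|^q$ (via Lemma~\ref{int-sing} with $\beta=(1-\gamma)q$) and from the piece with $x\in\spt\eta$, $y\notin B_R$.
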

As in the case of Sobolev spaces, quantitative properties  of finite differences on the $L^q$-scale in terms of a power of the step size $h$ can be coverted  into fractional differentiability;  
cf.~\cite[7.73]{Adams}. The version given here is taken from \cite[Lemma 3.1]{DeFilippis-Mingione-Invent}.

\begin{lemma}\label{lem:FS-N}
Let $q\in [1,\infty)$, $\gamma \in(0,1]$, $M\in[ 1,\infty)$, and $d\in(0,R)$. Then, there exists a constant $C=C(N,q)$ such that whenever $w\in L^{q}(B_{R+d})$ satisfies 
\begin{align*}
    \int_{B_R} |\btau_h w|^q \,\dx 
    \le 
    M^q|h|^{\gamma q} 
    \qquad \mbox{for any $h\in\R^N\setminus\{0\}$ with $|h|\le d$,}
\end{align*}
then $w\in W^{\beta ,q}(B_R)$ whenever $\be\in(0,\gamma)$. Moreover, we have
\begin{align*}
      \iint_{B_R\times B_R}  
    \frac{|w(x)-w(y)|^q}{|x-y|^{N+\beta q}} \,\dx\dy
    \le 
    C\bigg[\frac{d^{(\gamma-\beta)q}}{\gamma-\beta} M^q +
    \frac{1}{\beta d^{\beta q}} \|w\|_{L^q(B_R)}^q\bigg].
\end{align*}
\end{lemma}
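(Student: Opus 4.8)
The plan is to split the double integral defining the seminorm $[w]_{W^{\beta,q}(B_R)}^q$ into the region where the two points lie at distance at most $d$ and the region where they are farther apart. On the near-diagonal region the finite-difference hypothesis does all the work, while on the far region only the $L^q$-bound on $w$ is used; the splitting radius is forced to be $d$, since that is the range of increments for which the hypothesis is available.

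\textbf{Near-diagonal part.} I would first bound
\[
    I_{\rm near}:=\iint_{(B_R\times B_R)\cap\{|x-y|\le d\}}\frac{|w(x)-w(y)|^q}{|x-y|^{N+\beta q}}\,\dx\dy.
\]
Substituting $h=y-x$ and integrating first in $x$ (permissible by Tonelli's theorem, the integrand being nonnegative), one has $w(x)-w(y)=-\btau_hw(x)$ with $|h|\le d$; since $w$ is defined on $B_{R+d}$ the inner integral $\int_{B_R}|\btau_hw|^q\,\dx$ is meaningful and, by hypothesis, bounded by $M^q|h|^{\gamma q}$. Hence
\[
    I_{\rm near}\le\int_{\{|h|\le d\}}\frac{M^q|h|^{\gamma q}}{|h|^{N+\beta q}}\,\d h
    =M^qN|B_1|\int_0^d r^{(\gamma-\beta)q-1}\,\dr
    =\frac{N|B_1|}{q}\cdot\frac{d^{(\gamma-\beta)q}}{\gamma-\beta}\,M^q,
\]
the radial integral converging precisely because $\beta<\gamma$; the factor $(\gamma-\beta)^{-1}$ is retained, while the rest is absorbed into a constant $C=C(N,q)$.

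\textbf{Far part.} Next I would bound
\[
    I_{\rm far}:=\iint_{(B_R\times B_R)\cap\{|x-y|>d\}}\frac{|w(x)-w(y)|^q}{|x-y|^{N+\beta q}}\,\dx\dy
\]
using $|w(x)-w(y)|^q\le2^{q-1}\big(|w(x)|^q+|w(y)|^q\big)$ together with the $x\leftrightarrow y$ symmetry of the domain, which gives
\[
    I_{\rm far}\le2^q\int_{B_R}|w(x)|^q\bigg(\int_{\{|z|>d\}}\frac{\dz}{|z|^{N+\beta q}}\bigg)\dx
    =\frac{2^qN|B_1|}{q}\cdot\frac1{\beta\,d^{\beta q}}\,\|w\|_{L^q(B_R)}^q,
\]
with $\beta^{-1}$ kept explicit. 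Adding the two estimates yields the asserted inequality; since $\|w\|_{L^q(B_R)}<\infty$, the right-hand side is finite, whence $[w]_{W^{\beta,q}(B_R)}<\infty$ and $w\in W^{\beta,q}(B_R)$. The same computation covers the endpoint $\gamma=1$ verbatim.

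This argument is routine; the only points requiring care are the use of Tonelli's theorem (valid by nonnegativity of the integrand), the bookkeeping ensuring that the $\beta$- and $(\gamma-\beta)$-dependence appears exactly as stated while every other constant collapses into $C(N,q)$, and the observation that the hypothesis is invoked only for $|h|\le d$, which is what fixes the splitting radius. I do not anticipate any genuine obstacle — the statement is essentially Lemma~\ref{lem:N-FS} read in the opposite direction, with the unavoidable loss $\gamma\rightsquigarrow\beta<\gamma$ responsible for the divergent factor $(\gamma-\beta)^{-1}$.
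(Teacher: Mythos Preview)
Your argument is correct and is precisely the standard proof of this fact. The paper does not prove the lemma itself but cites it from \cite[7.73]{Adams} and \cite[Lemma~3.1]{DeFilippis-Mingione-Invent}; your near/far splitting at distance $d$ is exactly the argument one finds there.
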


The following lemma proves useful estimates in a the fractional context when dealing with second-order finite differences.  Essentially, the proof can be found in \cite[Chapter 5]{Stein}; see also \cite[Proposition 2.4]{Brasco-Lindgren}.  An alternative proof  has been given in \cite[Lemma 2.2.1]{Domokos-1};see also \cite[Theorem 1.1]{Domokos-2}.
\begin{lemma} \label{lem:Domokos}
Let $q\in [1,\infty)$, $\gamma>0$, $M\ge 0$, $0<r<R$, and $0<d\le \tfrac1{2}(R-r)$. Then, there exists a constant $C=C(q)$ such that whenever  $w\in L^q(B_R)$ satisfies
\begin{align}\label{ass:Domokos}
    \int_{B_r} |\btau_h(\btau_h w)|^q \,\dx 
    \le 
    M^q|h|^{\gamma q},
    \qquad \mbox{for any $h\in\R^N\setminus\{0\}$ with $|h|\le d$,}
\end{align}
then in the  case $\gamma\in (0,1)$ we have for any $ 0<|h|\le \tfrac12 d$ that
\begin{align}\label{est-1st-diffquot<1}
     \int_{B_r} |\btau_hw|^q  \,\dx
     &\le
     C(q) |h|^{q\gamma}
     \Bigg[\Big( \frac{M}{1-\gamma}\Big)^q +\frac{1}{d^{q\gamma }}
     \int_{B_R}|w|^q\,\dx
     \Bigg]
     ,
\end{align}
while in the case $\gamma >1$ there holds
\begin{align}\label{est-1st-diffquot>1}  
     \int_{B_r} |\btau_hw|^q  \,\dx
     &\le
     C(q)|h|^q\bigg[
      \Big(\frac{M }{\gamma -1}\Big)^qd^{(\gamma-1)q}
      +
        \frac1{d^q}
        \int_{B_R}|w|^q\,\dx
      \bigg].
\end{align}
In the limiting case $\gamma =1$  we have for any $0<\beta<1$ that
\begin{align*}
     \int_{B_r} |\btau_hw|^q  \,\dx
     &\le
      C(q) |h|^{q\beta}
     \bigg[
     \Big(\frac{ M}{1-\beta}\Big)^q d^{(1-\beta)q}+\frac{1}{d^{\beta q}} \int_{B_R}|w|^q\,\dx\bigg].
\end{align*}
\end{lemma}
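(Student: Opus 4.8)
\textbf{Proof plan for Lemma~\ref{lem:Domokos}.}

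The strategy is a classical reduction from second-order finite differences to first-order ones via a telescoping/interpolation argument, combined with the identity that relates $\btau_h(\btau_hw)$ to a ``midpoint'' second difference. First I would record the algebraic identity
\[
    \btau_{2h}w(x)=2\btau_hw(x)+\btau_h(\btau_hw)(x),
\]
which lets us control $\btau_{2h}w$ in terms of $\btau_hw$ and the hypothesis \eqref{ass:Domokos}. Iterating this, for a dyadic chain $h,2h,4h,\dots,2^kh$ one obtains
\[
    \btau_{2^kh}w=2^k\btau_hw+\sum_{j=0}^{k-1}2^{k-1-j}\btau_{2^jh}\big(\btau_{2^jh}w\big),
\]
valid on a slightly shrunken ball as long as $2^kh$ stays admissible. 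Taking $L^q$-norms on $B_r$, using \eqref{ass:Domokos} at scale $2^jh$ (which requires $2^jh\le d$, hence $2^k|h|\lesssim d$), and the triangle inequality gives
\[
    2^k\|\btau_hw\|_{L^q(B_r)}
    \le
    \|\btau_{2^kh}w\|_{L^q(B_R)}+C\sum_{j=0}^{k-1}2^{k-1-j}M(2^j|h|)^{\gamma}.
\]
The trivial bound $\|\btau_{2^kh}w\|_{L^q(B_R)}\le 2\|w\|_{L^q(B_R)}$ handles the first term, contributing the $\|w\|_{L^q}$-piece with the factor $d^{-\beta q}$ or $d^{-\gamma q}$ after choosing $2^k|h|\sim d$. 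The geometric sum $\sum_{j=0}^{k-1}2^{k-1-j}(2^j|h|)^{\gamma}=2^{k-1}|h|^\gamma\sum_{j=0}^{k-1}2^{j(\gamma-1)}$ is where the three regimes split.

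The case distinction is the heart of the matter. When $\gamma\in(0,1)$ the series $\sum_{j\ge0}2^{j(\gamma-1)}$ converges with sum $\frac{1}{1-2^{\gamma-1}}$, and one checks $1-2^{\gamma-1}\ge c(1-\gamma)$ so this contributes the factor $\frac{1}{1-\gamma}$; dividing by $2^k$ and optimizing $2^k|h|\sim d$ yields \eqref{est-1st-diffquot<1}. When $\gamma>1$ the partial sum is comparable to $2^{k(\gamma-1)}/(2^{\gamma-1}-1)$, and $2^{\gamma-1}-1\ge c(\gamma-1)$, giving after division by $2^k$ a bound $\lesssim |h|^\gamma 2^{k(\gamma-1)}/(\gamma-1)\sim |h|\,d^{\gamma-1}/(\gamma-1)$ once $2^k|h|\sim d$; combined with the $\|w\|_{L^q}$-term with weight $d^{-q}$ this is \eqref{est-1st-diffquot>1}. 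In the borderline case $\gamma=1$ the sum $\sum_{j=0}^{k-1}1=k\sim\log(d/|h|)$, which is not a clean power; to absorb the logarithm one replaces the exponent $1$ by any $\beta<1$, writing $|h|^{1}=|h|^{\beta}|h|^{1-\beta}$ and $k\,|h|^{1-\beta}\lesssim d^{1-\beta}/(1-\beta)$ (since $t\mapsto t^{1-\beta}\log(1/t)$ is bounded on $(0,1]$ by $\frac{C}{1-\beta}$ up to rescaling by $d$), which produces the stated $\frac{1}{1-\beta}$ and $d^{(1-\beta)q}$ factors.

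I would expect the main obstacle to be purely bookkeeping rather than conceptual: keeping the ball radii consistent throughout the dyadic iteration (the hypothesis holds on $B_r$, the telescoped differences live on progressively smaller balls, but the constraint $0<d\le\tfrac12(R-r)$ is exactly what guarantees $B_{r}$-estimates survive after shifting by steps of total length $\le 2^k|h|\le d$ — this needs to be checked), and extracting the sharp $\gamma$- and $\beta$-dependence of the constants in each regime so that stability as $\gamma\uparrow1$ (resp.\ $\beta\uparrow1$) is transparent. The elementary inequalities $1-2^{\gamma-1}\ge c(1-\gamma)$ for $\gamma\in(0,1)$ and $2^{\gamma-1}-1\ge c(\gamma-1)$ for $\gamma>1$, together with $\sup_{0<t\le1}t^{1-\beta}\log\frac1t\le \frac{1}{e(1-\beta)}$, are the small lemmas doing the quantitative work, and I would verify these carefully since the whole point of the paper is stable constants. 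One could alternatively invoke the cited references (\cite[Chapter 5]{Stein}, \cite[Lemma 2.2.1]{Domokos-1}) for the qualitative statement and only do the dyadic computation to pin down the constants, which is the route I would actually take in the write-up.
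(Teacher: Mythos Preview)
The paper does not prove this lemma; it only cites \cite[Chapter~5]{Stein}, \cite[Proposition~2.4]{Brasco-Lindgren}, and \cite[Lemma~2.2.1]{Domokos-1} for the argument. Your dyadic telescoping scheme based on the identity $\btau_{2h}w=2\btau_hw+\btau_h(\btau_hw)$ is exactly the standard proof found in those references, and your case analysis of the geometric sum $\sum_{j=0}^{k-1}2^{j(\gamma-1)}$ correctly isolates the three regimes together with the right blow-up rates $\frac{1}{1-\gamma}$, $\frac{1}{\gamma-1}$, and (via the $\log$-to-power trade) $\frac{1}{1-\beta}$.

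One small slip: when you write $\|\btau_{2^kh}w\|_{L^q(B_R)}\le 2\|w\|_{L^q(B_R)}$, the left-hand norm should be on $B_r$, not $B_R$ (on $B_R$ you would need $w$ on a larger ball). The inequality you need is $\|\btau_{2^kh}w\|_{L^q(B_r)}\le 2\|w\|_{L^q(B_R)}$, which holds once $2^k|h|\le R-r$; since you choose $k$ with $2^{k-1}|h|\le d$ and $d\le\tfrac12(R-r)$, this gives $2^k|h|\le 2d\le R-r$, so the bookkeeping closes. Otherwise the plan is sound and matches the cited literature.
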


In the case $\gamma >1$, Lemma \ref{lem:Domokos} in combination with Lemma \ref{lem:diff-quot-1} guarantees that functions $w$ that fulfill \eqref{ass:Domokos}, 
are actually weakly differentiable. However, on the scale of oscillations of the gradient, a contribution of order $|h|^{\gamma -1}$ is lost. This part can be used to show that the gradient $\nabla w$ itself still has a certain fractional differentiability. The version presented here 
can be easily derived from 
\cite[Lemma 2.9]{Diening-Kim-Lee-Nowak}: see also \cite[Proposition 2.4]{Brasco-Lindgren}, \cite[Lemma 2.6]{Brasco-Lindgren-Schikorra}, and \cite{BDLMS}.

\begin{lemma}\label{lem:2nd-Ni-FS}
Let $q\in [1,\infty)$, $\gamma \in (0,1)$, $M>0$, $R>0$, and $d \in (0,R)$. Then, for any 
 $w\in W^{1,q}(B_{R+6d })$ that satisfies
\begin{equation}\label{ass:W^beta,q-second-diff}
    \int_{B_{R+4d }}\big|\btau_h(\btau_h w)\big|^q\,\dx
    \le
    M^q|h|^{q (1+\gamma )}\qquad \forall\, 0<|h|\in (0,d ],
\end{equation}
we have 
$$
\nabla u\in W^{\beta,q}(B_R)\quad \mbox{for any $\beta\in (0,\gamma )$.}
$$
Moreover, there  exists a constant $C$ depending only on $N$ and $q$,
such that 
\begin{align*}
     [\nabla w]_{W^{\beta,q}(B_R)}^q
    &\le
    \frac{Cd ^{q(\gamma -\beta)}}{(\gamma -\beta) \gamma ^q (1-\gamma )^q}
    \bigg[ M^q + \frac{(R+4d )^{q}}{\beta d ^{q(1+\gamma )}} 
    \int_{B_{R+4d }}|\nabla w|^q\,\dx
    \bigg].
\end{align*}
\end{lemma}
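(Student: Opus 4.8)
The plan is to bootstrap in two stages: first use the second-order difference bound \eqref{ass:W^beta,q-second-diff} to conclude that $w$ has a weak gradient with a quantitative first-order difference estimate for $\nabla w$, then feed that first-order estimate into the Nikol'skii-type embedding Lemma \ref{lem:FS-N} to obtain the claimed fractional differentiability of $\nabla w$. Concretely, I would first apply Lemma \ref{lem:Domokos} in the regime $\gamma' := 1 + \gamma > 1$ with $q$ as given, on a slightly shrunk ball (say with $B_r$ replaced by $B_{R+4d}$ inside $B_{R+6d}$, adjusting the step-size cutoff to $\tfrac12 d$ accordingly): inequality \eqref{est-1st-diffquot>1} gives
\begin{align*}
    \int_{B_{R+4d}} |\btau_h w|^q\,\dx
    &\le C(q) |h|^q \bigg[ \Big(\frac{M}{\gamma}\Big)^q d^{\gamma q}
    + \frac{1}{d^q}\int_{B_{R+6d}}|w|^q\,\dx\bigg]
\end{align*}
for $0<|h|\le \tfrac12 d$. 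Since $w\in W^{1,q}$, the left-hand side is comparable to $|h|^q$ times a gradient integral by Lemma \ref{lem:diff-quot-2}, so this step is mainly bookkeeping; its real purpose is not the conclusion $\nabla w$ exists (already assumed) but to set up the next step.

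The second and decisive step is to control the second-order difference \emph{of the gradient}. The idea is that \eqref{ass:W^beta,q-second-diff} says $\btau_h w$ is, up to the factor $|h|^{q\gamma}$, a bounded first-order difference; differentiating (i.e.\ passing to the weak gradient, which commutes with $\btau_h$) one expects
\begin{align*}
    \int_{B_{R+cd}} |\btau_h(\nabla w)|^q\,\dx \le C\, \widetilde M^q |h|^{q\gamma}
\end{align*}
for a suitable $c$ and with $\widetilde M^q$ comparable to $M^q + d^{-q(1+\gamma)}(R+4d)^q\int |\nabla w|^q$. This is exactly where Lemma \ref{lem:Domokos} is used a \emph{second} time, now applied to the vector-valued function $v = \btau_h w$ (for a fixed $h$): \eqref{ass:W^beta,q-second-diff} reads $\int |\btau_k v|^q \le (M|h|^{\gamma})^q |k|^q$, i.e.\ the hypothesis of Lemma \ref{lem:Domokos} with exponent $1$ and constant $M|h|^\gamma$; the limiting case $\gamma=1$ of that lemma then yields, for $0<|k|\le \tfrac12 d$ and any $\beta'<1$,
\begin{align*}
    \int_{B_r'} |\btau_k v|^q\,\dx \le C(q)|k|^{q\beta'}\bigg[\Big(\frac{M|h|^\gamma}{1-\beta'}\Big)^q d^{(1-\beta')q} + \frac{1}{d^{\beta' q}}\int |\btau_h w|^q\bigg].
\end{align*}
Choosing $k=h$ and $\beta'$ close to $1$, and substituting the bound on $\int|\btau_h w|^q$ from the first step, gives $\int |\btau_h(\btau_h w)|^q \lesssim |h|^{q(1+\gamma)}\widetilde M^q$ again — which is circular — so instead I would use this mechanism to extract the difference of the gradient directly: by Lemma \ref{lem:diff-quot-1}, from $\int_{B}|\btau_k(\btau_h w)|^q \le C|k|^q \cdot [\text{stuff}]$ (the $\beta'=1$ borderline applied carefully, or rather \eqref{est-1st-diffquot>1} with $\gamma'=1$ giving the clean $|k|^q$ rate) one deduces $\btau_h w$ is weakly differentiable with $\int_{B}|\nabla(\btau_h w)|^q = \int_B |\btau_h \nabla w|^q \le C|h|^{q\gamma}\widetilde M^q$. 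That is the sought first-order difference estimate for $\nabla w$.

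Finally, with $\int_{B_{R+(\text{small})d}} |\btau_h \nabla w|^q\,\dx \le C \widetilde M^q |h|^{q\gamma}$ in hand for all $|h|\le$ (a fixed fraction of) $d$, I invoke Lemma \ref{lem:FS-N} with the roles $w \rightsquigarrow \nabla w$, $M \rightsquigarrow C^{1/q}\widetilde M$, exponent $\gamma$, on the ball $B_R$: it yields $\nabla w \in W^{\beta,q}(B_R)$ for every $\beta\in(0,\gamma)$ together with
\begin{align*}
    [\nabla w]_{W^{\beta,q}(B_R)}^q \le C\bigg[\frac{d^{(\gamma-\beta)q}}{\gamma-\beta}\widetilde M^q + \frac{1}{\beta d^{\beta q}}\|\nabla w\|_{L^q(B_R)}^q\bigg].
\end{align*}
Expanding $\widetilde M^q = C\big(M^q + \tfrac{(R+4d)^q}{d^{q(1+\gamma)}}\int_{B_{R+4d}}|\nabla w|^q\big)$, absorbing the lower-order $\|\nabla w\|_{L^q}^q$ term into it (the factor $d^{-\beta q}$ is dominated by $d^{(\gamma-\beta)q}d^{-q(1+\gamma)}(R+4d)^q$ up to constants since $d<R$), and carrying through the $(1-\gamma)^{-q}\gamma^{-q}$ factors that appear from the two applications of Lemma \ref{lem:Domokos} in the $\gamma>1$ and $\gamma=1$ regimes, produces the stated estimate with the constant $\dfrac{Cd^{q(\gamma-\beta)}}{(\gamma-\beta)\gamma^q(1-\gamma)^q}$.

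I expect the main obstacle to be the middle step: cleanly deducing the first-order difference bound for $\nabla w$ from the second-order bound on $w$ while keeping the constant's dependence on $\gamma$, $1-\gamma$, $d$, and $R$ sharp. The naive route (difference-quotient in one slot, weak derivative in the other) must be organized so that the $|h|^{q\gamma}$ rate is preserved and not degraded to $|h|^{q\beta'}$ with $\beta'<\gamma$; this is precisely what the cited references (\cite[Lemma 2.9]{Diening-Kim-Lee-Nowak} and the analogues in \cite{Brasco-Lindgren}, \cite{Brasco-Lindgren-Schikorra}, \cite{BDLMS}) handle, and I would follow their scheme, tracking the constants through the two nested applications of Lemma \ref{lem:Domokos}. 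The final embedding via Lemma \ref{lem:FS-N} and the absorption of lower-order terms are then routine.
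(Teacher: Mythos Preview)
The paper does not give its own proof of this lemma; it is quoted from \cite[Lemma~2.9]{Diening-Kim-Lee-Nowak} (with \cite{Brasco-Lindgren,Brasco-Lindgren-Schikorra,BDLMS} as alternative references). Your overall architecture --- bootstrap via Lemma~\ref{lem:Domokos}, then obtain a first-order difference bound for $\nabla w$, then apply Lemma~\ref{lem:FS-N} --- is the correct skeleton and matches those references. But your middle step, as written, has a genuine gap.

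The problem is this: the hypothesis~\eqref{ass:W^beta,q-second-diff} controls $\btau_h(\btau_h w)$ with the \emph{same} increment in both slots. When you fix $h$ and set $v=\btau_h w$, you only know $\|\btau_h v\|_{L^q}\le (M|h|^\gamma)|h|$ at the single step $k=h$; you do not have $\|\btau_k v\|_{L^q}\le (M|h|^\gamma)|k|$ for \emph{all} small $k$, which is what Lemma~\ref{lem:diff-quot-1} (or Lemma~\ref{lem:Domokos}) needs. The trivial bound from $w\in W^{1,q}$ gives $\|\btau_k v\|_{L^q}\le |k|\,\|\btau_h\nabla w\|_{L^q}$, but that carries no $|h|^\gamma$-decay --- so the conclusion $\|\btau_h\nabla w\|_{L^q}\lesssim |h|^\gamma$ cannot be extracted this way. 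Your own remark that the argument becomes ``circular'' is a symptom of exactly this.

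The device that the cited references use (and that you are missing) is the dyadic telescoping identity
\[
    \btau_h(\btau_h w)=\btau_{2h}w-2\,\btau_h w,
    \quad\text{equivalently}\quad
    \frac{\btau_{2h}w}{2|h|}-\frac{\btau_h w}{|h|}=\frac{1}{2|h|}\,\btau_h(\btau_h w).
\]
Applying~\eqref{ass:W^beta,q-second-diff} at scales $h,h/2,h/4,\dots$ and summing the resulting geometric series (convergent since $\gamma>0$) gives, using $w\in W^{1,q}$ to identify the limit,
\[
    \Big\|\frac{\btau_h w}{|h|}-D_{h/|h|}w\Big\|_{L^q}\le \frac{C}{\gamma}\,M|h|^\gamma .
\]
Combining this with the hypothesis once more yields the first-order difference estimate for $\nabla w$, after which your Step~3 via Lemma~\ref{lem:FS-N} goes through. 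This telescoping --- not a second application of Lemma~\ref{lem:Domokos} to $v=\btau_h w$ --- is the key mechanism, and it is also what produces the $\gamma^{-q}$ in the final constant.
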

In the further course we will need an \emph{integration by parts formula for finite differences} at various points. This particular type of partial integration was implicitly used in \cite[Proof of Proposition 3.4]{Acerbi-Fusco} without providing the general formula.
\begin{lemma}\label{lem:int-parts}
Let  $E\subset \R^k$ be an open set, $F\in L^1(E)$, $\xi\in C^1_0(E)$, and $h\in \R^k$ with $0<|h|<\dist(\spt\xi ,\partial E)$. 
Then,  we have
\begin{equation*}
	\int_E \xi(x) \btau_h F(x) \,\dx
	=
	-\int_E\int_0^1 F(x+th)\,\dt\nabla\xi(x)\cdot h \dx.
\end{equation*}
\end{lemma}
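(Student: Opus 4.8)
The plan is to reduce the identity to the ordinary divergence theorem by writing the finite difference $\btau_h F(x) = F(x+h) - F(x)$ as the integral of a directional derivative along the segment $[x, x+h]$, at least formally, and then to exchange the order of integration. Concretely, I would first treat the case $F \in C^1(E)$ (or even $C^\infty_0$) and then pass to general $F \in L^1(E)$ by density, using that $\xi$ is compactly supported and the distance condition $0 < |h| < \dist(\spt \xi, \partial E)$ guarantees all the relevant translates stay inside $E$.

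For smooth $F$, the fundamental theorem of calculus gives $F(x+h) - F(x) = \int_0^1 \frac{d}{dt}F(x+th)\,\dt = \int_0^1 \nabla F(x+th)\cdot h\,\dt$. Multiplying by $\xi(x)$, integrating over $E$, and applying Fubini (legitimate since $\xi$ has compact support and $F \in C^1$), I get
\begin{equation*}
	\int_E \xi(x)\,\btau_h F(x)\,\dx = \int_0^1 \int_E \xi(x)\,\nabla F(x+th)\cdot h\,\dx\,\dt.
\end{equation*}
For fixed $t$ I substitute $y = x+th$ in the inner integral, so $\nabla F(x+th) = \nabla_y F(y)$ and $\dx = \dy$, giving $\int_E \xi(y-th)\,\nabla_y F(y)\cdot h\,\dy$. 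Now I integrate by parts in $y$ (the classical divergence theorem, no boundary term because $y \mapsto \xi(y-th)$ is compactly supported in $E$ by the distance hypothesis), obtaining $-\int_E F(y)\,\nabla\xi(y-th)\cdot h\,\dy$. Substituting back $x = y - th$ turns this into $-\int_E F(x+th)\,\nabla\xi(x)\cdot h\,\dx$. Integrating over $t \in (0,1)$ and exchanging order of integration once more yields exactly the claimed formula.

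The remaining point is the passage from smooth $F$ to $F \in L^1(E)$. Both sides of the identity are continuous linear functionals of $F$ with respect to the $L^1$-norm on a neighborhood of $\spt\xi$ enlarged by $|h|$: the left side is bounded by $\|\xi\|_\infty \int_E |F(x+h)-F(x)|\,\dx \le 2\|\xi\|_\infty \|F\|_{L^1}$ on that neighborhood, and the right side by $\|\nabla\xi\|_\infty |h| \int_0^1 \int_E |F(x+th)|\,\dx\,\dt \le \|\nabla\xi\|_\infty |h|\,\|F\|_{L^1}$ on the same enlarged set. Since the distance hypothesis keeps all translates $x \mapsto x \pm th$ (for $t \in [0,1]$) inside $E$ over the support of $\xi$, mollifying $F$ and passing to the limit gives the identity for all $F \in L^1(E)$. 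The only mild obstacle is bookkeeping the supports and the substitutions carefully so that no boundary terms appear and Fubini applies; there is no analytic difficulty once the distance condition is used.
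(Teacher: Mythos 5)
Your argument is correct: the smooth case is handled properly (FTC on $F$, Fubini, the substitution $y=x+th$, and the classical integration by parts with $\xi(\cdot-th)\in C^1_0(E)$ thanks to the hypothesis $|h|<\dist(\spt\xi,\partial E)$), and the passage to general $F\in L^1(E)$ by mollification is legitimate, since both sides are $L^1$-continuous in $F$ on the compact set $\spt\xi+[0,1]h$, which the strict distance condition keeps inside $E$ with room to spare for the mollification parameter. The paper itself states the lemma without proof (crediting the idea to the argument in \cite{Acerbi-Fusco}), so there is no ``paper proof'' to match; but note that a shorter route is available which avoids regularizing $F$ altogether. Since only $\xi$ is differentiated in the final identity, one can change variables first:
\begin{align*}
	\int_E \xi(x)\,\btau_hF(x)\,\dx
	&=
	\int_E F(y)\big(\xi(y-h)-\xi(y)\big)\,\dy
	=
	-\int_E F(y)\int_0^1 \nabla\xi(y-th)\cdot h\,\dt\,\dy,
\end{align*}
using the fundamental theorem of calculus on the $C^1$-function $\xi$, and then Fubini together with the substitution $x=y-th$ for each fixed $t$ gives the claimed formula directly for every $F\in L^1(E)$. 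This makes transparent why no smoothness of $F$ is needed and dispenses with the density step; your version buys nothing extra here, but it is sound, and the approximation argument you sketch is essentially the same one the paper invokes in Remark~\ref{rem:int-parts-0} to extend the lemma to $\xi\in W^{1,q'}$ with compact support.
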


\begin{remark}\label{rem:int-parts-0}\upshape
If $F\in L^q(E)$ for some  $q>1$, the claim of Lemma \ref{lem:int-parts} also holds  for any $\xi\in W^{1,q'}(E )$ with $\spt\xi\Subset E$, where $q'=\frac{q}{q-1}$denotes the H\"older conjugate of $q$. This can be shown by approximation.
\end{remark}

\begin{remark}\label{rem:int-parts}\upshape
We apply Lemma \ref{lem:int-parts} in the situation where $E=E_1\times E_2\subset \R^N\times \R^N$ and 
$F\in L^1(E_1\times E_2)$. Then, for any $\xi\in C^1_0(E_1\times E_2)$ and any  $h=(h_1,h_2)\in \R^N\times \R^N$ that satisfies
$0<|h|<\dist(\spt\xi ,\partial (E_1\times E_2))$ we have
\begin{align*}
    &\iint_{E_1\times E_2}\xi (x,y)\btau_{(h_1,h_2)}F (x,y)
    \dx\dy\\
    &\quad = -
    \iint_{E_1\times E_2}\int_0^1
    F(x+th_1,y+th_2)\dt 
    \big[ 
    \nabla_x\xi(x,y)\cdot h_1 + \nabla_y\xi(x,y)\cdot h_2
    \big]\dx\dy.
\end{align*}
\end{remark}

Finally, we recall the well known Morrey embedding for Sobolev functions \cite{Morrey}.
\begin{lemma}\label{Lem:morrey-classic}
Let $q\ge 1$ such that $q>N$. Then there exists a constant $C=C(N,q)$  such that for any $w\in W^{1, q}(B_R)$ we have
\begin{align*}
    [w]_{C^{0,1-\frac{N}{q}}(B_R)}\le C\|\nabla w\|_{L^{q}(B_R)}.
\end{align*}
\end{lemma}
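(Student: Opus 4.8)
The plan is to reduce the estimate to a single pointwise representation of $w$ on convex sets and then to choose the comparison set optimally; throughout I write $(w)_E:=|E|^{-1}\int_E w$ for the mean value of $w$ over $E$. \emph{Step 1 (a representation on convex sets).} For a bounded convex set $B'\subset\R^N$ with $D:=\diam B'$, any $x\in B'$, and any $w\in C^1(\overline{B'})$, I would first establish
\begin{equation*}
    \big| w(x) - (w)_{B'}\big|
    \le
    \frac{D^N}{N|B'|}\int_{B'}\frac{|\nabla w(\eta)|}{|x-\eta|^{N-1}}\,\d\eta .
\end{equation*}
This follows by writing $w(x)-w(\xi)=-\int_0^{|x-\xi|}\nabla w(x+\rho\omega)\cdot\omega\,\d\rho$ with $\omega=(\xi-x)/|\xi-x|$, averaging in $\xi$ over $B'$, passing to polar coordinates centred at $x$ (so $\d\xi=t^{N-1}\d t\,\d\omega$ with $t\le D$), extending $\nabla w$ by $0$ outside $B'$, exchanging the $\rho$- and $t$-integrations, and using $\int_\rho^D t^{N-1}\d t\le D^N/N$. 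Since $C^\infty(\overline{B'})$ is dense in $W^{1,q}(B')$, the inequality extends to all $w\in W^{1,q}(B')$ (and this will also furnish the continuous representative of $w$ claimed by the lemma).

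\emph{Step 2 (pointwise Morrey estimate).} Next I would apply H\"older's inequality with exponents $q$ and $q'=q/(q-1)$ on the right-hand side above. The hypothesis $q>N$ is exactly what gives $(N-1)q'<N$, so that, setting $\beta:=N-(N-1)q'\in(0,N)$, Lemma~\ref{int-sing} yields $\int_{B'}|x-\eta|^{-(N-1)q'}\d\eta\le\frac{N|B_1|}{\beta}D^{\beta}$ (using $B'\subset B_D(x)$); since $\beta/q'=1-N/q$, this produces
\begin{equation*}
    \big| w(x) - (w)_{B'}\big|
    \le
    C(N,q)\,\frac{D^N}{|B'|}\,D^{1-\frac Nq}\,\|\nabla w\|_{L^q(B')}.
\end{equation*}
For $B'=B_\rho(z)$ the prefactor $D^N/|B'|$ is a dimensional constant, and one obtains the oscillation bound $|w(x)-(w)_{B_\rho(z)}|\le C(N,q)\,\rho^{1-\frac Nq}\|\nabla w\|_{L^q(B_\rho(z))}$.

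\emph{Step 3 (globalisation).} Given $x,y\in B_R$ with $r:=|x-y|$, I would write $|w(x)-w(y)|\le|w(x)-(w)_{B'}|+|w(y)-(w)_{B'}|$ for a convex set $B'$ containing both points. If $r\ge R$, take $B'=B_R$ and invoke Step 2 together with $R\le r$ and $1-\tfrac Nq>0$. If $r<R$, take $B':=B_{2r}(x)\cap B_R$: it is convex, contains $x$ and $y$, and has $\diam B'\le 4r$. The only point requiring care is the volume bound $|B_{2r}(x)\cap B_R|\ge|B_1|r^N$ for $x\in\overline{B_R}$ and $r\le R$; this follows from convexity of $B_R$, since the point $x'$ on the segment joining $x$ to the origin with $|x'-x|=\min\{|x|,r\}$ satisfies $|x'|\le R-r$ and $|x'-x|\le r$, whence $B_r(x')\subset B_{2r}(x)\cap B_R$. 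With this lower bound, Step 2 applied to $B'$ (controlling the singular integral via $B'\subset B_{2r}(x)$ for the $x$-term and $B'\subset B_{3r}(y)$ for the $y$-term in Lemma~\ref{int-sing}) gives $|w(x)-w(y)|\le C(N,q)\,r^{1-\frac Nq}\|\nabla w\|_{L^q(B_R)}$; taking the supremum over $x\ne y$ in $B_R$ yields the claim.

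I expect the only genuine obstacle to be this last geometric point — the comparison set must shrink like $r$ yet retain a fixed fraction of the volume of $B_{2r}(x)$ even when $x$ lies near $\partial B_R$ — everything else being the routine combination of the ray representation, H\"older's inequality and Lemma~\ref{int-sing}, together with the elementary identity $\frac{N-(N-1)q'}{q'}=1-\frac Nq$.
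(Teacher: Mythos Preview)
Your proof is correct and follows the classical route (the ray representation formula on convex sets, H\"older's inequality, and a careful choice of comparison set near the boundary). The paper, however, does not give its own proof of this lemma at all: it is merely stated as the well-known Morrey embedding with a reference to \cite{Morrey}, so there is nothing to compare against beyond noting that what you have written is essentially the textbook argument (as in \cite[Theorem~7.17]{Gilbarg-Trudinger}), and that your use of the paper's Lemma~\ref{int-sing} for the Riesz-potential bound is a nice way to keep the presentation self-contained.
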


\section{Energy inequality}

At various stages, it is appropriate to consider the corresponding localized variants $\eta\btau_hu$ respectively $\eta V_q(\btau_hu)$ with a cut-off function $\eta$ instead of $\btau_hu $ or $V_q(\btau_hu)$. To avoid having  constantly to explain the choice of
the cut-off function $\eta$, we fix it in advance. 

\begin{definition}\label{Def:Z}\upshape
Given $x_o\in \R^N$ and radii $0<r<R$, by $\mathfrak Z_{r,R}(x_o)$ we denote the class of functions $\eta\in C^1_0\big(B_{\frac12 (R+r)}(x_o),[0,1]\big)$ that satisfy $\eta=1$ in $B_r(x_o)$ and 
$$
\|\nabla\eta\|_{L^\infty(B_{\frac12 (R+r)}(x_o))} \le \frac{C}{R-r}.
$$
Similarly, $\widetilde{\mathfrak Z}_{r,R}(x_o)$ denotes the class of all cut-off functions 
$\eta\in C^2_0\big(B_{\frac12 (R+r)}(x_o),[0,1]\big)$ that satisfy $\eta=1$ in $B_r(x_o)$ and 
$$
    \|\nabla\eta\|_{L^\infty(B_{\frac12 (R+r)}(x_o))} \le \frac{C}{R-r}
    \quad\mbox{and}\quad
    \|\nabla^2\eta\|_{L^\infty(B_{\frac12 (R+r)}(x_o))} \le \frac{C}{(R-r)^2}.
$$
Here $C$ stands for a universal constant.
\end{definition}

Moreover, to simplify the notation, we will use for $u\colon\Omega\to\R$, $x,y\in\Omega$, and $h\in\R^N\setminus\{0\}$ such that $x+h,y+h\in\Omega$ the following abbreviations
\begin{equation}\label{def-U}
    U(x,y)=u(x)-u(y)\quad\mbox{and}\quad
    U_h(x,y)=u_h(x)-u_h(y)\equiv u(x+h)-u(y+h).
\end{equation}
Finally, we recall the definition of $\boldsymbol{\mathfrak T}(\cdot)$ from \eqref{def-T}.

\subsection{Auxiliary estimates}

As we will see, the tail estimates play an important role in the proof of the energy inequalities. In a sense, the exponents of $|h|$ and $|\btau_hu|$ that appear in the tail estimate determine the gain in fractional differentiability for $\btau_hu$.

\begin{lemma}[Tail estimate]\label{Lm:tail}
Let $p\in (1,2]$ and  $s\in(0,1)$. There exists a constant $C=C(N,p,s)$ such that whenever $u\in L^{p-1}_{sp}(\R^N)$, $x_o\in\R^N$, $R>0$, $r\in(0,R)$, and $d\in (0,\frac 14(R-r)]$, we have for any $x\in B_{\frac12 (R+r)}(x_o)$ and any $h\in\R^N\setminus\{0\}$ with $0<|h|\le d$ that
\begin{align*}
    \bigg|\int_{\R^N\setminus B_R(x_o)}&
    \frac{ V_{p-1}(U_h(x,y))- V_{p-1}(U(x,y)) }{|x-y|^{N+sp}}\,\dy \bigg|\\
    &\le 
    \frac{C}{R^{sp}} \Big(\frac{R}{R-r}\Big)^{N+sp+1}
    \bigg[|\btau_h u(x)|^{p-1} + 
    \frac{|h|}{R} 
    \boldsymbol{\mathfrak T}(u; x_o, R+d)^{p-1}\bigg].
\end{align*}
The constant $C$ has the form $C=\widetilde{C}(N,p)/s$.
\end{lemma}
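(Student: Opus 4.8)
The goal is to estimate the nonlocal "tail" contribution obtained by differencing the nonlinear kernel $V_{p-1}(U(x,y))$ against points $y$ far from $x_o$. The plan is to split the difference $V_{p-1}(U_h(x,y)) - V_{p-1}(U(x,y))$ using the algebraic bound from Lemma~\ref{lem:Acerbi-Fusco} with $\gamma = p-1 \in (0,1]$, which gives
\[
\big| V_{p-1}(U_h(x,y)) - V_{p-1}(U(x,y))\big|
\le C_2\,\big(|U_h(x,y)| + |U(x,y)|\big)^{p-2}\,\big|U_h(x,y) - U(x,y)\big|.
\]
Since $p \in (1,2]$, the exponent $p-2 \le 0$; the factor $\big(|U_h|+|U|\big)^{p-2}$ is therefore bounded above by $|U(x,y)|^{p-2}$ and also by $|U_h(x,y)|^{p-2}$ (whichever is convenient), and $|U_h(x,y) - U(x,y)| = |\btau_h u(x) - \btau_h u(y)| \le |\btau_h u(x)| + |\btau_h u(y)|$. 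So morally the integrand is controlled by $\big(|\btau_h u(x)| + |\btau_h u(y)|\big)\big(|U(x,y)|^{p-2} \wedge |U_h(x,y)|^{p-2}\big)|x-y|^{-N-sp}$, and one splits into the contribution of $|\btau_h u(x)|$ and that of $|\btau_h u(y)|$.

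For the $|\btau_h u(x)|$-term: here $x \in B_{\frac12(R+r)}(x_o)$ is fixed and $y$ ranges over $\R^N \setminus B_R(x_o)$, so $|x-y| \gtrsim (R-r)$ and more precisely $|x-y| \ge c\,\frac{R-r}{R}|x - y|$ is comparable to the distance of $y$ from $x_o$ up to the geometric factor $\frac{R}{R-r}$. Using $|U(x,y)|^{p-2} \le$ something controlled — actually, since $p-2\le 0$ one must be careful: we should bound $\big(|U_h|+|U|\big)^{p-2}$ trivially by $(|\btau_h u(x)| + \text{stuff})^{p-2}$? No — the clean route is to write $\big(|U_h|+|U|\big)^{p-2}\,|U_h - U| \le |\btau_h u(x)| \cdot \big(|U_h|+|U|\big)^{p-2} + |\btau_h u(y)|\cdot(\cdots)^{p-2}$ and then, for the $x$-piece, bound $\big(|U_h|+|U|\big)^{p-2} \le |u(x) - u(y)|^{p-2} + |u(x+h)-u(y+h)|^{p-2}$ is wrong since $(\cdot)^{p-2}$ is decreasing; rather $\big(|U_h|+|U|\big)^{p-2} \le \min\{|U|, |U_h|\}^{p-2}$... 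The cleanest is: absorb $|\btau_h u(x)|$ times $\big(|U_h|+|U|\big)^{p-1}/\big(|U_h|+|U|\big)$ and use $|U_h - U| \le |U_h| + |U|$ to get the bound $\le C\big(|U_h|^{p-1} + |U|^{p-1}\big)$ plus a cross term handled by Lemma~\ref{lem:Acerbi-Fusco} directly as $|V_{p-1}(U_h) - V_{p-1}(U)| \le |V_{p-1}(U_h)| + |V_{p-1}(U)| = |U_h|^{p-1} + |U|^{p-1}$ when $|\btau_h u(x)|$ or $|\btau_h u(y)|$ dominates. Then one integrates $|u(x)-u(y)|^{p-1}|x-y|^{-N-sp}$ and $|u(x+h)-u(y+h)|^{p-1}|x-y|^{-N-sp}$ over $y \in \R^N \setminus B_R(x_o)$, using $|u(x)-u(y)|^{p-1} \le 2^{p-2}(|u(x)|^{p-1} + |u(y)|^{p-1})$, the $L^\infty$-bound on $B_R$ for the shifted argument when $|h| \le d$ (so $x+h \in B_{R+d}$), and the definition of $\Tail(u; x_o, R+d)$ for the far part. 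The step-size gain comes from the difference being of order $\frac{|h|}{R}$ once $|\btau_h u(x)|$ is separated out — more precisely, the bound is structured so that when the finite-difference increment is small relative to $|x-y|$, a factor $\frac{|h|}{R}$ is extracted via the mean value / $\int_0^1$ representation $\btau_h u(x) = \int_0^1 \nabla u(x+th)\cdot h\,\dt$ is NOT available here (we only have $W^{s,p}$); instead the gain comes from comparing the tail at scale $R$ to the tail at scale $R+d$ using Remark~\ref{rem:t}, paying the factor $(R/(R-r))^N$ and one extra power $\frac{|h|}{R}$ from $d \le \frac14(R-r) \le \frac14 R$.

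The geometric factors $\big(\frac{R}{R-r}\big)^{N+sp+1}$ arise precisely from (i) replacing $|x-y|$ by $|y - x_o|$ in the singular kernel when $x \in B_{\frac12(R+r)}(x_o)$ and $y \notin B_R(x_o)$, costing $\big(\frac{R}{R-r}\big)^{N+sp}$, and (ii) one further power from turning the telescoped difference into the $\frac{|h|}{R}$-gain, i.e.\ from $\frac{d}{R-r} \le \frac14$ giving $\frac{|h|}{R} \le \frac{|h|}{R-r} \cdot \frac{R-r}{R}$. I expect the main obstacle to be the careful bookkeeping of these constants and exponents — in particular isolating the clean $|\btau_h u(x)|^{p-1}$ term (no geometric loss in the exponent of $|\btau_h u(x)|$, which is crucial since this term later drives the differentiability gain) while routing all the geometric degradation into the harmless $\boldsymbol{\mathfrak T}(u; x_o, R+d)^{p-1}$ term. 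The case analysis in the algebraic splitting, depending on whether the "$x$-increment" $|\btau_h u(x)|$ or the "far tail" dominates, must be done so that no factor of $|\btau_h u(x)|$ to a power exceeding $p-1$ is ever produced; this forces using the upper bound in Lemma~\ref{lem:Acerbi-Fusco} in the form $|V_{p-1}(b) - V_{p-1}(a)| \le |V_{p-1}(b)| + |V_{p-1}(a)|$ in the regime where the increment is large and the refined $(|a|+|b|)^{p-2}|b-a|$ bound only where it genuinely helps. Finally one records $C = \widetilde C(N,p)/s$ by noting all $y$-integrals are of the type in Lemma~\ref{int-sing} or directly in the definition of $\Tail$, the only $s$-dependence entering through $\frac{1}{sp} \le \frac{1}{s}$ from $\int_{\R^N \setminus B_R} |y - x_o|^{-N-sp}\,\dy = \frac{N|B_1|}{sp} R^{-sp}$.
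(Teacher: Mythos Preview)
Your approach has a genuine gap in extracting the factor $\tfrac{|h|}{R}$ in front of the tail term. If you bound the integrand directly via Lemma~\ref{lem:Acerbi-Fusco} as
\[
\big|V_{p-1}(U_h(x,y))-V_{p-1}(U(x,y))\big|
\le C\big(|U_h|+|U|\big)^{p-2}\big(|\btau_h u(x)|+|\btau_h u(y)|\big),
\]
then the $|\btau_h u(x)|$--piece indeed yields $|\btau_h u(x)|^{p-1}$ (using $|\btau_h u(x)|\le|U_h|+|U|$ and $p-2\le 0$). But the $|\btau_h u(y)|$--piece, after the same trick, leaves you with
\[
\int_{\R^N\setminus B_R}\frac{|\btau_h u(y)|^{p-1}}{|x-y|^{N+sp}}\,\dy
\le C\int_{\R^N\setminus B_R}\frac{|u(y+h)|^{p-1}+|u(y)|^{p-1}}{|x-y|^{N+sp}}\,\dy,
\]
which is of order $R^{-sp}\Tail(u;R)^{p-1}$ with \emph{no} factor of $|h|$. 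Your suggestion that the gain comes from ``comparing the tail at scale $R$ to the tail at scale $R+d$'' via Remark~\ref{rem:t} does not work: that remark only controls $\boldsymbol{\mathfrak T}(u;r)$ by $\boldsymbol{\mathfrak T}(u;R)$ up to a power of $R/r$, and produces no smallness in $|h|$.

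The missing idea, which the paper uses, is to first perform the change of variable $y\mapsto y+h$ in the $U_h$--integral. This rewrites the quantity as
\[
\int_{\R^N\setminus B_R(h)}\frac{V_{p-1}(u_h(x)-u(y))}{|x+h-y|^{N+sp}}\,\dy
-\int_{\R^N\setminus B_R}\frac{V_{p-1}(u(x)-u(y))}{|x-y|^{N+sp}}\,\dy,
\]
so that the finite difference in $u$ is traded for (i) a difference of the smooth kernels $|x+h-y|^{-N-sp}$ and $|x-y|^{-N-sp}$ over $\R^N\setminus(B_R\cup B_R(h))$, and (ii) integrals over the thin symmetric difference $B_R\triangle B_R(h)$. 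The kernel difference is handled by the mean value theorem, producing $|h|\,|y|^{-N-sp-1}$ (hence the extra power in $\big(\tfrac{R}{R-r}\big)^{N+sp+1}$), while $|B_R\triangle B_R(h)|\le C|h|R^{N-1}$ supplies the $|h|$--factor for the boundary pieces. Only after this rearrangement does Lemma~\ref{lem:Acerbi-Fusco} enter, applied to $V_{p-1}(u_h(x)-u(y))-V_{p-1}(u(x)-u(y))$ where now $b-a=\btau_h u(x)$ alone, giving the clean $|\btau_h u(x)|^{p-1}$ term with no uncontrolled $|\btau_h u(y)|$.
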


\begin{proof}
Assume $x_o=0$ for simplicity. Like in \cite[Lemma~3.1]{BDLMS}, rewrite the left-hand side integral as
\begin{align*}
    &\int_{\R^N\setminus B_R(h)}
    \frac{V_{p-1}(u_h(x)-u(y))}{|x+h-y|^{N+sp}}\,\dy -
    \int_{\R^N\setminus B_R}
    \frac{V_{p-1}(u(x)-u(y))}{|x-y|^{N+sp}}\,\dy \\
    &\quad=
    \int_{\R^N\setminus (B_R(h)\cup B_R)}
    \bigg[\frac{V_{p-1}(u_h(x)-u(y))}{|x+h-y|^{N+sp}} -
    \frac{V_{p-1}(u(x)-u(y))}{|x-y|^{N+sp}} \bigg]\,\dy \\
    &\quad\phantom{=\,}
    +
    \int_{B_R\setminus B_R(h)}
    \frac{V_{p-1}(u_h(x)-u(y))}{|x+h-y|^{N+sp}}\,\dy -
    \int_{B_R(h)\setminus B_R}
    \frac{V_{p-1}(u(x)-u(y))}{|x-y|^{N+sp}}\,\dy.
\end{align*}
The proof hinges on estimating the absolute value of the three integral on the right, which we denote as $\mathbf{I}_j$, $j=1,2,3$. 

Estimating $\mathbf{I}_2$ and $\mathbf{I}_3$ is exactly the same as in \cite[Lemma~3.1]{BDLMS}. Namely,
\begin{align*}
    \mathbf{I}_2\le \frac{C |h| R^{N-1}}{(R-r)^{N+sp}}\|u\|_{L^{\infty}(B_R)}^{p-1}
\end{align*}
and 
\begin{align*}
    \mathbf{I}_3\le \frac{C |h| R^{N-1}}{(R-r)^{N+sp}}\|u\|_{L^{\infty}(B_{R+d})}^{p-1}
\end{align*}
for some $C=C(N,p)$.

To estimate $\mathbf{I}_1$, we proceed as follows. First, observe that since $|x|<\frac12(R+r)$ and $|y|>R$, for any $\xi\in B_{|h|}$ with $0<|h|\le d\le\frac14 (R-r)$, we have 
\begin{align}\label{est:xi}
    \frac{|x+\xi-y|}{|y|}\ge 1-\frac{|x|}{|y|}-\frac{|\xi|}{|y|}\ge1-\frac{R+r}{2R}-\frac{d}{R}\ge\frac{R-r}{4R}.
\end{align}
Then, the mean value theorem applied to the function $[0,1]\ni t\mapsto |x+t h-y|^{-N-sp}$ gives some $t\in [0,1]$, such that
\begin{align*}
    \bigg|\frac{1}{|x+h-y|^{N+sp}} -
    \frac{1}{|x-y|^{N+sp}} \bigg|
    &\le \frac{(N+sp)|h|}{|x+th-y|^{N+sp+1}}\\
    &\le
    |h| \Big(\frac{4R}{R-r}\Big)^{N+sp+1} \frac{N+p}{|y|^{N+sp+1}}.
\end{align*}
Here, we used \eqref{est:xi}, which is possible since $th\in B_{|h|}$. Now, we use the above observation together with Lemma~\ref{lem:Acerbi-Fusco} (choosing $b=u_h(x)-u(y)$, $a=u(x)-u(y)$, $\gamma =p-1\ge 1$, $C_2=p-1$) to estimate  the integrand of the first integral by
\begin{align}\label{est:diff-V_{p-1}}\nonumber
    \mathbf V_h&:=\bigg|\frac{V_{p-1}(u_h(x)-u(y))}{|x+h-y|^{N+sp}} -
    \frac{V_{p-1}(u(x)-u(y))}{|x-y|^{N+sp}} \bigg|\\\nonumber
    &
    \le
    \big|V_{p-1}(u_h(x)-u(y))\big| \bigg|\frac{1}{|x+h-y|^{N+sp}} -
    \frac{1}{|x-y|^{N+sp}} \bigg| \\\nonumber
    &\phantom{\le\,}
    +
    \frac{1}{|x-y|^{N+sp}}\big|V_{p-1}(u_h(x)-u(y)) -
    V_{p-1}(u(x)-u(y)) \big| \\\nonumber
    &
    \le 
   C|h| \Big(\frac{R}{R-r}\Big)^{N+sp+1} \frac{|u_h(x)-u(y)|^{p-1}}{|y|^{N+sp+1}}\\
    &\phantom{\le\,}
    +
    C\frac{|\btau_h u(x)|}{|x-y|^{N+sp}} \big(|u_h(x)-u(x)|+|u(x)-u(y)|\big)^{p-2}
\end{align}
for a constant $C=C(N,p)$. The second term on the right-hand side of \eqref{est:diff-V_{p-1}} is estimated with the aid of the elementary inequality 
\begin{align*}
    |\btau_h u(x)|\le |u_h(x)-u(y)|+ |u(x)-u(y)|;
\end{align*}
we obtain
\begin{align*}
    \mathbf V_h
    &
    \le C|h| \Big(\frac{R}{R-r}\Big)^{N+sp+1} \frac{|u_h(x)|^{p-1} + |u(y)|^{p-1}}{|y|^{N+sp+1}}
    + 
    C\Big(\frac{R}{R-r}\Big)^{N+sp}\frac{|\btau_h u(x)|^{p-1}}{|y|^{N+sp}}
\end{align*}
for some $C=C(N,p)$.
Plugging this into $\mathbf{I}_1$ we get
\begin{align*}
    \mathbf{I}_1 
    &\le  
    C|h| \Big(\frac{R}{R-r}\Big)^{N+sp+1} \int_{\R^N\setminus (B_R(h)\cup B_R)}\frac{|u_h(x)|^{p-1} + |u(y)|^{p-1}}{|y|^{N+sp+1}}\,\dy\\
    &\phantom{\le\,}
    + 
    C\Big(\frac{R}{R-r}\Big)^{N+sp}|\btau_h u(x)|^{p-1}\int_{\R^N\setminus (B_R(h)\cup B_R)}\frac{1}{|y|^{N+sp}}\,\dy\\
    &\le  
    C|h| \Big(\frac{R}{R-r}\Big)^{N+sp+1} \int_{\R^N\setminus B_R}\frac{\|u\|_{L^{\infty}(B_R)}^{p-1} + |u(y)|^{p-1}}{|y|^{N+sp+1}}\,\dy\\
    &\phantom{\le\,}
    + 
    C\Big(\frac{R}{R-r}\Big)^{N+sp}|\btau_h u(x)|^{p-1}\int_{\R^N\setminus B_R}\frac{1}{|y|^{N+sp}}\,\dy\\
    &\le 
    C\frac{|h|}{R^{sp+1}} \Big(\frac{R}{R-r}\Big)^{N+sp+1}\big[\|u\|_{L^{\infty}(B_R)}+\mathrm{Tail}(u; R)\big]^{p-1}\\
    &\phantom{\le\,}
    +
    \frac{C}{sp}\Big(\frac{R}{R-r}\Big)^{N+sp}
    \frac{|\btau_h u(x)|^{p-1}}{R^{sp}} \\
    &\le 
    C\frac{|h|}{R^{sp+1}} \Big(\frac{R}{R-r}\Big)^{N+sp+1}\boldsymbol{\mathfrak T}^{p-1} +
    \frac{C}{sp}\Big(\frac{R}{R-r}\Big)^{N+sp}
    \frac{|\btau_h u(x)|^{p-1}}{R^{sp}} .
\end{align*}
The last line followed from Remark~\ref{rem:t}.
Collecting all these estimates concludes the proof.
\end{proof}
When additional regularity is known, the tail estimate can be improved. 
The following lemma deals with the case $u\in W^{1+\theta,q}_{\rm loc}(\Omega)$. If $\theta$ is large enough, higher powers of the increment $h$ are obtained compared to Lemma~\ref{Lm:tail}. Due to the additional regularity requirement, a discrete  integration by parts can be carried out using Lemma \ref{lem:int-parts}. This is inspired by an argument from the local case of the singular $p$-Laplacian; see \cite{Acerbi-Fusco}.

\begin{lemma}[Higher order tail estimate]\label{Lm:tail-2}
Let $p\in (1,2]$,  $s\in(0,1)$, $q\in[p,\infty)$,   $\theta\in(0,1)$ and $\dl:=q-p+1$. There exists a constant $C=C(N,s,q)$ such that: whenever
$$
    u\in W^{1+\theta,q}_{\rm loc}(\Omega)\cap L^{p-1}_{sp}(\R^N)\cap L^\infty_{\rm loc}(\Omega),
$$
then for each $0<r<R$, each $d\in (0,\frac 18(R-r)]$,
each $B_{R+d}\equiv B_{R+d}(x_o)\Subset\Omega$,  each $\eta\in \mathfrak Z_{r,R}(x_o)$, and  each  $h\in\R^N\setminus\{0\}$ with $0<|h|\le d$, we have
\begin{align}\label{est:sec-tail-p<2}\nonumber
    \bigg|\iint_{B_{\frac12 (R+r)}\times (\R^N\setminus B_R)} & 
    \frac{\big[V_{p-1}(U_h(x,y))- V_{p-1}(U(x,y))\big]\big[V_{\dl} (\btau_hu)\eta^2\big](x)}{|x-y|^{N+sp}}\,\dx\dy \bigg| \\\nonumber
    &\!\!\!\!\!\!\!\!\!\!\!\!\!\!\!\!\!\!\!\!\le 
    \frac{C}{R^{sp}} \Big(\frac{|h|}{R}\Big)^{\dl+\theta}
    \Big(\frac{R}{R-r}\Big)^{N+sp+1}\\
     &\!\!\!\!\!\!\!\!\!\!\!\!\!\!\!\!\!\!\!\!\phantom{\le\,}\cdot
    \bigg[R^{(1+\theta) q}[\nabla u]^q_{W^{\theta,q}(B_{R})} +
    \frac{R^q}{\theta} \|\nabla u\|^q_{L^q(B_{R})} 
    +
    R^N \boldsymbol{\mathfrak T}(u; R+d)^q
    \bigg].
\end{align}
The constant $C$ has the form $C=\widetilde{C}(N,q)/s$. 
\end{lemma}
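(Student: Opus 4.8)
The plan is to adapt the proof strategy of Lemma~\ref{Lm:tail}, but instead of estimating the finite difference $V_{p-1}(U_h(x,y)) - V_{p-1}(U(x,y))$ pointwise in $x$, I would first integrate against the test weight $[V_\dl(\btau_hu)\eta^2](x)$ over $x\in B_{\frac12(R+r)}$ and then exploit the additional $W^{1+\theta,q}$-regularity of $u$ to perform a discrete integration by parts in the $x$-variable. The starting point is to write, as in Lemma~\ref{Lm:tail},
\begin{align*}
    \iint_{B_{\frac12 (R+r)}\times (\R^N\setminus B_R)} &
    \frac{\big[V_{p-1}(U_h(x,y))- V_{p-1}(U(x,y))\big]\big[V_{\dl}(\btau_hu)\eta^2\big](x)}{|x-y|^{N+sp}}\,\dx\dy\\
    &= \mathbf{J}_1 + \mathbf{J}_2 + \mathbf{J}_3,
\end{align*}
where $\mathbf{J}_1$ comes from the overlapping-domain integral over $\R^N\setminus(B_R(h)\cup B_R)$, and $\mathbf{J}_2,\mathbf{J}_3$ from the thin annular regions $B_R\setminus B_R(h)$ and $B_R(h)\setminus B_R$. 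Because $y$ ranges far from $\spt\eta$, the kernel $(x,y)\mapsto|x-y|^{-N-sp}$ is smooth in $x$ on the support of the test function, and this is exactly what allows an integration by parts to produce extra powers of $|h|$.

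The main step is to treat $\mathbf{J}_1$. The key algebraic identity is that $V_{p-1}(U_h(x,y)) - V_{p-1}(U(x,y))$ is itself a finite difference in $x$ of a suitable function: writing $G(x,y):=V_{p-1}(u(x)-u(y))$ one has $V_{p-1}(U_h(x,y))-V_{p-1}(U(x,y)) = \btau_h^{(x)}G(x,y) - (\text{correction from the }y\text{-shift})$; more precisely $U_h(x,y)=u(x+h)-u(y+h)$, so $V_{p-1}(U_h) = G(x+h,y+h)$ and thus the integrand factor equals $\btau_{(h,h)}G(x,y)$ in the double-shift sense. Applying Lemma~\ref{lem:int-parts} (in the product-space form of Remark~\ref{rem:int-parts}) with $F=G$, with the weight $\xi(x,y) = |x-y|^{-N-sp}[V_\dl(\btau_hu)\eta^2](x)\mathbf{1}_{\R^N\setminus(B_R(h)\cup B_R)}(y)$, throws the difference onto $\xi$. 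The $y$-derivative of the kernel and the $x$-derivative both produce a factor $|h|$, and the $x$-derivative additionally hits $V_\dl(\btau_hu)\eta^2$. Using $|\nabla_x V_\dl(\btau_hu)| \lesssim \dl|\btau_hu|^{\dl-1}|\nabla\btau_hu|$ (Lemma~\ref{lem:Acerbi-Fusco} with $\gamma=\dl$) and $|\nabla\eta|\lesssim(R-r)^{-1}$, one is left with integrals of $|G(x+th,y+th)| = |u(x+th)-u(y+th)|^{p-1}$ against $|y|^{-N-sp-1}$ over the far region, which are controlled by $\boldsymbol{\mathfrak T}(u;R+d)^{p-1}$ exactly as in Lemma~\ref{Lm:tail}, times the $x$-integral of $|\btau_hu|^{\dl-1}|\nabla\btau_hu| + (R-r)^{-1}|\btau_hu|^\dl$ over $B_{\frac12(R+r)}$. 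Finally, Hölder's inequality with exponents $\tfrac{q}{\dl-1}$ and $\tfrac{q}{q-\dl+1}=q$ (recall $\dl = q-p+1 \le q$ since $p\ge 1$), combined with Lemma~\ref{lem:diff-quot-2} applied to $\nabla u$ — giving $\|\btau_h\nabla u\|_{L^q}\le |h|\,\|\nabla^2 u\|_{L^q}$ is too strong, so instead one uses Lemma~\ref{lem:N-FS} on $\nabla u\in W^{\theta,q}$ to get $\|\btau_h\nabla u\|_{L^q(B_R)}^q\lesssim |h|^{\theta q}[\,(1-\theta)[\nabla u]_{W^{\theta,q}(B_R)}^q + (R^{(1-\theta)q}d^{-q} + \theta^{-1}d^{-\theta q})\|\nabla u\|_{L^q(B_R)}^q]$ — and $\|\btau_hu\|_{L^q}\le |h|\|\nabla u\|_{L^q}$ (Lemma~\ref{lem:diff-quot-2}), converts everything into the right-hand side of \eqref{est:sec-tail-p<2}, with the $|h|$-powers adding up to $|h|^{\dl+\theta}$ (one power from $\btau_h u$ raised to... — careful bookkeeping needed: $\dl-1$ powers of $|h|$ from the $|\btau_hu|^{\dl-1}$ factor bounded pointwise via Hölder against $\|\nabla u\|_\infty$ is not available, so instead one keeps $|\btau_hu|^{\dl-1}$ inside the $L^{q/(\dl-1)}$ norm which after Lemma~\ref{lem:diff-quot-2} yields $|h|^{\dl-1}$, plus $|h|^\theta$ from $\btau_h\nabla u$, plus one $|h|$ from the integration by parts, giving total $|h|^{\dl+\theta}$).

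For $\mathbf{J}_2$ and $\mathbf{J}_3$ I would argue as in Lemma~\ref{Lm:tail}: these are integrals over annular shells of width $\sim|h|$, so the $y$-measure contributes a factor $|h| R^{N-1}$; bounding $|V_{p-1}(U_h(x,y))|\le(2\|u\|_{L^\infty(B_{R+d})})^{p-1}$ for $x\in B_{\frac12(R+r)}$, $y$ in the shell, and bounding $|x-y|^{-N-sp}\lesssim(R-r)^{-N-sp}$, then integrating the weight $|V_\dl(\btau_hu)\eta^2|=|\btau_hu|^\dl\eta^2$ over $x$ and estimating it by Hölder against $\|\btau_h u\|_{L^q}^\dl \lesssim |h|^\dl\|\nabla u\|_{L^q(B_R)}^\dl$. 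This gives a bound of the form $|h|^{\dl+1}R^{-1}(\ldots)$, which is dominated by the claimed right-hand side since $\dl+1 \ge \dl+\theta$ and $\theta<1$. The \textbf{main obstacle} is the careful bookkeeping of the $|h|$-powers and the $(R-r)$- and $R$-powers through the integration by parts and the subsequent Hölder/embedding steps — in particular ensuring that the stated form of the constant, $C = \widetilde{C}(N,q)/s$, is respected (the $1/s$ coming from the far-tail integral $\int_{\R^N\setminus B_R}|y|^{-N-sp}\,\dy = \frac{N|B_1|}{sp}R^{-sp}$ as in Lemma~\ref{int-sing}), and that no dependence on $p$ beyond what is allowed sneaks in. A secondary subtlety is handling the degenerate case $\dl=1$ (i.e. $q=p$), where $V_\dl$ is just the identity and $|\btau_hu|^{\dl-1}$ disappears, so the $|h|^{\dl-1}=|h|^0$ contribution must be tracked correctly and the final exponent becomes $|h|^{1+\theta}$, consistent with $\dl+\theta$ when $\dl=1$.
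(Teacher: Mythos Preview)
Your overall strategy—apply the discrete integration by parts of Lemma~\ref{lem:int-parts}/Remark~\ref{rem:int-parts} to throw the diagonal shift $\btau_{(h,h)}$ onto the test weight, then feed in Lemma~\ref{lem:N-FS} for $\btau_h\nabla u$, Lemma~\ref{lem:diff-quot-2} for $\btau_hu$, and the tail bound—is correct and is exactly what the paper does. The $|h|$-bookkeeping you sketch (one power from the integration by parts, $\delta-1$ powers from $|\btau_hu|^{\delta-1}$ via H\"older, $\theta$ powers from the fractional difference of $\nabla u$) matches the paper's line by line.

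Where you differ from the paper is in the \emph{organization}. The paper does \emph{not} decompose into $\mathbf{J}_1,\mathbf{J}_2,\mathbf{J}_3$ \`a la Lemma~\ref{Lm:tail}; instead it introduces a cut-off $\psi_{\epsilon,\rho}(|y|)\in W^{1,\infty}$ that vanishes on $[0,R]\cup[2\rho,\infty)$ and equals $1$ on $[R+\epsilon,\rho]$, applies Remark~\ref{rem:int-parts} to the \emph{smooth} weight $G(x,y)\psi_{\epsilon,\rho}(|y|)$ with $G(x,y)=[V_\delta(\btau_hu)\eta^2](x)/|x-y|^{N+sp}$, and then sends $\epsilon\downarrow0$, $\rho\to\infty$. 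The key structural observation you did not articulate is that
\[
\nabla_x\frac{1}{|x-y|^{N+sp}}+\nabla_y\frac{1}{|x-y|^{N+sp}}=0,
\]
so after the integration by parts the kernel is \emph{not} differentiated—only the test factor $[V_\delta(\btau_hu)\eta^2](x)$ and the cut-off $\psi_{\epsilon,\rho}$ are. The first gives the main term $\mathbf{I}$, and the second gives a boundary contribution $\mathbf{II}_{\epsilon,\rho}$ supported on $B_{R+\epsilon}\setminus B_R$ (and $B_{2\rho}\setminus B_\rho$, which vanishes as $\rho\to\infty$). This $\mathbf{II}$-term plays the role of your thin-shell pieces $\mathbf{J}_2,\mathbf{J}_3$.

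Your version has a genuine technical gap: you propose to apply Remark~\ref{rem:int-parts} with the weight $\xi(x,y)=|x-y|^{-N-sp}[V_\delta(\btau_hu)\eta^2](x)\,\mathbf{1}_{\R^N\setminus(B_R(h)\cup B_R)}(y)$, but this $\xi$ is not of class $C^1$ (or even $W^{1,q'}$) in $y$, so the hypotheses of Lemma~\ref{lem:int-parts}/Remark~\ref{rem:int-parts-0} are not met. The paper's cut-off $\psi_{\epsilon,\rho}$ is precisely the device that fixes this, and the limiting $\mathbf{II}$-term it generates is what replaces your hard boundary pieces. Moreover, once you split into $\mathbf{J}_1,\mathbf{J}_2,\mathbf{J}_3$ via the change of variable of Lemma~\ref{Lm:tail}, the integrand on $\mathbf{J}_1$ is no longer simply $\btau_{(h,h)}G$; your formula conflates the pre- and post-change-of-variable expressions. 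The cleanest route is to skip the decomposition entirely and follow the paper's cut-off argument.
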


\begin{proof}
For $\epsilon\in(0,d)$ and $\rho>2R$ we choose a cut-off function $\psi_{\epsilon,\rho}
\in W_0^{1,\infty}([0,\infty),\R)$ such that $\psi_{\epsilon,\rho}\equiv 1$ in $[R+\epsilon,\rho]$, $\psi_{\epsilon,\rho}\equiv 0$ in $[0,R]\cup [2\rho ,\infty) $ and linearly interpolating otherwise. 
With the abbreviations 
$$
    F(x,y)
    :=
    V_{p-1}(U(x,y)) 
    \qquad\mbox{and}\qquad 
    G(x,y)
    :=
    \frac{[V_\delta (\btau_hu)\eta^2] (x)}{|x-y|^{N+sp}}
$$
the integral on the left-hand side of \eqref{est:sec-tail-p<2} can be re-written as
\begin{align*}
    \mathbf T
    &:=
    \iint_{B_{\frac12 (R+r)}\times (\R^N\setminus B_R)} 
    \btau_{(h,h)} F(x,y) G(x,y) \,\dx\dy  
    =
    \lim_{\rho\to\infty} \lim_{\epsilon\downarrow 0} \mathbf T_{\epsilon,\rho},
\end{align*}
where
\begin{align*}
    \mathbf T_{\epsilon,\rho}
    &:=
    \iint_{B_{\frac12 (R+r)}\times (\R^N\setminus B_R)} 
    \btau_{(h,h)} F(x,y) G(x,y) \psi_{\epsilon,\rho}(|y|)\,\dx\dy.
\end{align*}
An application of the integration by parts formula from Remark~\ref{rem:int-parts}  yields
\begin{align*}
    \mathbf T_{\epsilon,\rho}
    &=
    - \iint_{B_{\frac12 (R+r)}\times (\R^N\setminus B_R)} 
    \int_0^1 F(x+th,y+th) \,\dt\,
    \mathfrak H(x,y)  \,\dx\dy,
\end{align*}
where
\begin{align*}
       \mathfrak H(x,y)
       &=(h,h)\cdot \nabla_{(x,y)} \big(G(x,y)\psi_{\epsilon,\rho}(|y|)\big)\\
       &=
        h\cdot\nabla_xG(x,y)\psi_{\epsilon,\rho} (|y|)+
        h\cdot \nabla_y\big(G(x,y)\psi_{\epsilon,\rho}(|y|)\big)
        \\
        &=
         h\cdot \big[\nabla_xG(x,y)+  \nabla_yG(x,y)\big]\psi_{\epsilon,\rho}(|y|)
         +
         G(x,y)h\cdot \nabla_y\psi_{\epsilon,\rho}(|y|).
\end{align*}
For the term in $[\dots]$ of the last display, we compute 
\begin{align*}
    \nabla_x G(x,y) + \nabla_y G(x,y)
    &=
    \frac{\nabla_xV_\delta (\btau_hu(x))\eta^2 (x)}{|x-y|^{N+sp}} +
    \frac{V_\delta (\btau_hu(x)) \nabla_x\eta^2 (x)}{|x-y|^{N+sp}} \\
    &\quad +
    V_\delta (\btau_hu(x))\eta^2 (x)\underbrace{\Bigg[
    \nabla_x\frac{1}{|x-y|^{N+sp}}+ \nabla_y\frac{1}{|x-y|^{N+sp}}\bigg]}_{=0}
\\
    &=
    \frac{\nabla_xV_\delta (\btau_hu(x))\eta^2 (x)}{|x-y|^{N+sp}} +
    \frac{V_\delta (\btau_hu(x)) \nabla_x\eta^2 (x)}{|x-y|^{N+sp}} \\
    &
  \equiv
    \frac{\mathbf G(x)}{|x-y|^{N+sp}} ,
\end{align*}
with the obvious meaning of $\mathbf G(x)$. Inserting this into $\mathfrak H$ above, we get
\begin{align*}
      \mathfrak H(x,y)
      &=
      \frac{\big[\mathbf G(x)\psi_{\epsilon,\rho}(|y|) +
    V_\delta (\btau_hu(x))\eta^2 (x)\nabla\psi_{\epsilon,\rho}(|y|)\big]\cdot h}{|x-y|^{N+sp}}
    \equiv \frac{\mathfrak h(x,y)\cdot h}{|x-y|^{N+sp}},
\end{align*}
and 
\begin{align*}
    \mathbf T_{\epsilon,\rho}
    &=
    - \iint_{B_{\frac12 (R+r)}\times (\R^N\setminus B_R)} 
    \int_0^1 \frac{F(x+th,y+th)}{|x-y|^{N+sp}} \,\dt \,\mathfrak h(x,y)\cdot h\,\dx\dy .
\end{align*}
Next, in the integrand of $\mathbf T_{\epsilon,\rho}$ we replace $|x-y|$ with $|y+th|$ for any $t\in[0,1]$ and $0<|h|\le d\le\frac18 (R-r)$.  In fact, since $|x|<\frac12(R+r)$ and $|y|>R$, we have 
\begin{align*}
    \frac{|x-y|}{|y+th|}
    &\ge 
    1-\frac{|x+th|}{|y+th|}
    \ge
    1-\frac{|x|+|h|}{|y|-|h|}
    \ge
    1-\frac{\frac12(R+r)+d}{R-d}\\
   & =
    \frac{\frac12(R-r)-2d}{R-d} 
    \ge 
    \frac{\frac12(R-r)-2d}{R} 
    \ge 
    \frac{R-r}{4R}.
\end{align*}
Therefore, inserting this back to the previous identity of $\mathbf T_{\epsilon,\rho}$, taking absolute value on both sides and splitting the integral on the right-hand side into two terms according to the expression of $\mathfrak h(x,y)$, we arrive at
\begin{align*}
    |\mathbf T_{\epsilon,\rho}|
    &\le
    4^{N+sp}|h| \Big(\frac{R}{R-r}\Big)^{N+sp} 
    \big[\mathbf{I} + \mathbf{II}_{\epsilon,\rho}\big] ,
\end{align*}
where 
\begin{align*}
    \mathbf{I}
    &:=
    \int_{B_{\frac12 (R+r)}} \bigg[ 
    \int_{\R^N\setminus B_R} 
    \int_0^1 \frac{|F(x+th,y+th)|}{|y+th|^{N+sp}} \,\dt\dy \bigg] 
    \, |\mathbf G(x)| \,\dx, \\
    \mathbf{II}_{\epsilon,\rho}
    &:=
    \int_{B_{\frac12 (R+r)}}\bigg[  \int_{\R^N\setminus B_R} 
    \int_0^1 \frac{|F(x+th,y+th)|}{|y+th|^{N+sp}} |\nabla\psi_{\epsilon,\rho}(|y|)| \,\dt\dy \bigg]
    \,|\btau_hu(x)|^\delta \,\dx .
\end{align*}
To obtain the last line we used $|V_\delta (\btau_hu(x))|=|\btau_h u(x)|^\delta $ and the fact that $|\eta (x)|\le 1$ on $B_{\frac12 (R+r)}$.
Let us deal with these two terms separately. Fubini's theorem, the set inclusion $B_{R-|h|}\subset B_R(th)$, Lemma~\ref{lem:t}, and $R-|h|\ge \frac34 R$ allow us
to estimate the inner integral $[\dots ]$ of $\mathbf I$ by
\begin{align*}
    \int_{\R^N\setminus B_R} &
    \int_0^1 \frac{|F(x+th,y+th)|}{|y+th|^{N+sp}} \,\dt\dy  \\
    &\le
    \int_{\R^N\setminus B_R} 
    \int_0^1 \frac{|u(x+th)|^{p-1}+|u(y+th)|^{p-1}}{|y+th|^{N+sp}} \,\dt\dy\\
    &\le
     \int_0^1\int_{\R^N\setminus B_R} 
    \frac{\|u\|^{p-1}_{L^\infty(B_R)} + |u(y+th)|^{p-1}}{|y+th|^{N+sp}} \,
    \dy \dt\\
    &=
      \int_0^1\int_{\R^N\setminus B_R(th)} 
    \frac{\|u\|^{p-1}_{L^\infty(B_R)} + |u(z)|^{p-1}}{|z|^{N+sp}} \,\dz\dt\\
    &\le
    \int_{\R^N\setminus B_{R-|h|} }
    \frac{\|u\|^{p-1}_{L^\infty(B_R)} + |u(z)|^{p-1}}{|z|^{N+sp}} \,\dz\\
    &=
    \frac{N|B_1|}{sp}\frac{\|u\|^{p-1}_{L^\infty(B_R)}}{(R-|h|)^{sp}}+ \frac{\Tail(u; R-|h|)^{p-1}}{(R-|h|)^{sp}}\\
     &\le
     \frac{C(N)}{spR^{sp}}
     \boldsymbol{\mathfrak T}^{p-1}.
\end{align*}
with the abbreviation $\boldsymbol{\mathfrak T}:=\boldsymbol{\mathfrak T}(u; R+d)$. 
Hence, we have 
\begin{align*}
    |\mathbf I|
    &\le
    \frac{C(N)}{spR^{sp+1}}
    \boldsymbol{\mathfrak T}^{p-1}
    \int_{B_{\frac12 (R+r)}}   
    R|\mathbf G(x)| \,\dx .
\end{align*}
Arguing in a similar way as above and taking also into account the properties of $\psi_{\epsilon,\rho}$ we obtain for the inner integral $[\dots ]$ of $\mathbf{II}_{\epsilon,\rho}$ (note that $|y+th|\ge |y|-d\ge R-\frac18(R-r)\ge\frac12 R$ for $y\in\R^N\setminus B_R$ and $\epsilon<d$)
\begin{align*}
    &\int_{\R^N\setminus B_R} 
    \int_0^1 \frac{|F(x+th,y+th)|}{|y+th|^{N+sp}} |\nabla\psi_{\epsilon,\rho}(|y|)| \,\dt\dy  \\
    &\quad\le
    \int_0^1 \bigg[ 
    \frac{1}{\rho}\int_{B_{2\rho}\setminus B_\rho} 
    \frac{|F(x+th,y+th)|}{|y+th|^{N+sp}} \,\dy +
    \frac{1}{\epsilon} \int_{B_{R+\epsilon}\setminus B_R} 
    \frac{|F(x+th,y+th)|}{|y+th|^{N+sp}} \,\dy\bigg]\dt \\
    &\quad\le
     \frac{1}{\rho}
    \int_0^1
   \int_{\R^N\setminus B_R} 
    \frac{|u(x+th)|^{p-1} +|u(y+th)|^{p-1}}{|y+th|^{N+sp}} \,\dy\dt\\
    &\quad\phantom{\le\,}+
    \frac{1}{\epsilon}  \int_0^1\int_{B_{R+\epsilon}\setminus B_R} 
    \frac{|u(x+th)|^{p-1} +|u(y+th)|^{p-1}}{|y+th|^{N+sp}} \,\dy\dt\\
    &\quad\le
    \frac{1}{\rho} \int_{\R^N\setminus B_{\frac12R}} 
    \frac{\|u\|^{p-1}_{L^\infty(B_R)} + |u(y)|^{p-1}}{|y|^{N+sp}} \,\dy+C(N)\|u\|^{p-1}_{L^\infty(B_{R+d+\varep})} 
    \frac{|B_{R+\varep}\setminus B_R|}{\epsilon R^{N+sp}} \\
    &\quad\le
     \frac{C(N)}{\rho sp R^{sp}} \big( \|u\|_{L^\infty(B_R)} +\Tail(u; R)\big)^{p-1}
     +
     \frac{C(N)}{R^{sp+1}}
     \| u\|^{p-1}_{L^\infty (B_{R+d+\epsilon})}.
\end{align*}
The first term on the right-hand side  disappears as $\rho\to\infty$, while the second one converges to $C(N)R^{-sp-1} \| u\|^{p-1}_{L^\infty (B_{R+d})}$ as $\eps\downarrow 0$.  Therefore, we get
\begin{align*}
    \limsup_{\rho\to\infty} \limsup_{\epsilon\downarrow 0} 
    \mathbf{II}_{\epsilon,\rho}
    &\le 
    \frac{C(N)}{spR^{sp+1}}  \| u\|^{p-1}_{L^\infty (B_{R+d})}
    \int_{B_{\frac12 (R+r)}} |\btau_hu(x)|^\delta \,\dx .
\end{align*}
Combining these estimates, we end up with 
\begin{align*}
    |\mathbf T|
    &\le
    \frac{C(N)|h|}{spR^{sp+1}}
    \Big(\frac{R}{R-r}\Big)^{N+sp}
    \boldsymbol{\mathfrak T}^{p-1}
    \int_{B_{\frac12 (R+r)}}   
    \big[R|\mathbf G(x)| + |\btau_hu(x)|^\delta\big]\,\dx.
\end{align*}
It remains to compute $|\mathbf G(x)|$. In order for that, we use the product and chain rule for Sobolev functions to obtain
\begin{align*}
    |\mathbf G(x)|
    &\le
    |\nabla_xV_\delta (\btau_hu(x))|\eta^2 (x) +
    |\btau_hu(x)|^\delta |\nabla_x\eta^2 (x)| \\
    &\le  
    \delta|\btau_hu(x)|^{\delta-1} |\btau_h \nabla u(x)| + 
    \tfrac{C}{R-r}|\btau_hu(x)|^\delta.
\end{align*}
Inserting  this to the previous estimate gives
\begin{align*}
    |\mathbf T|
    &\le
    \frac{C(N)|h|}{spR^{sp+1}}
    \Big(\frac{R}{R-r}\Big)^{N+sp}
    \boldsymbol{\mathfrak T}^{p-1}
    \int_{B_{\frac12 (R+r)}}   
    \big[\delta R|\btau_hu|^{\delta-1} |\btau_h \nabla u| + 
    \tfrac{R}{R-r}|\btau_hu|^\delta\big]\,\dx.
\end{align*}
In order to estimate the above integral containing $|\btau_h \nabla u|$, we apply Lemma~\ref{lem:N-FS} with $(\gm,R,d)$ replaced by $(\theta,\frac12(R+r),  d=\frac12(R-r))$ and find that 
\begin{align*}
    \int_{B_{\frac12 (R+r)}} & 
    |\btau_h \nabla u|^q\,\dx\\
    &\le 
    C |h|^{\theta q} 
    \Bigg[(1-\theta)[\nabla u]^q_{W^{\theta,q}(B_{R})} +
    \bigg(\frac{R^{(1-\theta )q}}{ d^q} + \frac{1}{\theta  d^{\theta q}}\bigg)\|\nabla u\|^q_{L^q(B_{R})}
    \Bigg] \\
    &\le 
    C \Big(\frac{|h|}{R}\Big)^{\theta q} 
    \Big(\frac{R}{R-r}\Big)^{q}
    \bigg[R^{\theta q}[\nabla u]^q_{W^{\theta,q}(B_{R})} +
    \frac{1}{\theta} \|\nabla u\|^q_{L^q(B_{R})}
    \bigg],
\end{align*}
where $C=C(N,q)$. Therefore, we can estimate the integral appearing in the previous estimate of $|\mathbf T|$ by first applying Hölder's inequality, Lemma~\ref{lem:diff-quot-2} and then the last display. In this way, we obtain
\begin{align*}
    &\int_{B_{\frac12 (R+r)}} 
    \big[\delta R|\btau_hu|^{\delta-1} |\btau_h \nabla u| + |\btau_hu|^\delta\big]\,\dx \\
    &\quad\le 
    \delta R |B_R|^{1-\frac{\dl}{q}} 
    \bigg[\int_{B_{\frac12 (R+r)}}   
    |\btau_hu|^q \,\dx \bigg]^{\frac{\delta-1}{q}} 
    \bigg[ \int_{B_{\frac12 (R+r)}}|\btau_h \nabla u|^q \,\dx \bigg]^{\frac{1}{q}} \\
    &\quad\phantom{\le\,}+ 
    |B_R|^{1-\frac{\dl}{q}} 
    \bigg[\int_{B_{\frac12 (R+r)}} |\btau_hu|^q\,\dx \bigg]^{\frac{\delta}{q}} \\
    &\quad\le 
    \delta |B_R|^{1-\frac{\dl}{q}} R^{1-\theta} |h|^{\delta-1+\theta}
    \Big(\frac{R}{R-r}\Big)
    \bigg[\int_{B_{R}}   
    |\nabla u|^q \,\dx \bigg]^{\frac{\delta-1}{q}}
    \\
    &\quad\phantom{\le\,}\cdot
    \bigg[R^{\theta q}[\nabla u]^q_{W^{\theta,q}(B_{R})} +
    \frac{1}{\theta} \int_{B_{R}} |\nabla u|^q\,\dx \bigg]^{\frac{1}{q}}  + |B_R|^{1-\frac{\dl}{q}}
    |h|^\delta\bigg[\int_{B_{R}} |\nabla u|^q\,\dx \bigg]^{\frac{\delta}{q}} \\
    &\quad\le 
    q |B_R|^{1-\frac{\dl}{q}} \Big(\frac{|h|}{R}\Big)^{\delta-1+\theta}
    \Big(\frac{R}{R-r}\Big)
    \bigg[R^{(1+\theta) q}[\nabla u]^q_{W^{\theta,q}(B_{R})} +
    \frac{R^q}{\theta} \int_{B_{R}} |\nabla u|^q\,\dx
    \bigg]^{\frac{\delta}{q}} ,
\end{align*}
where from the second-to-last line we used $|h|^{\dl}\le R^{1-\theta}|h|^{\delta-1+\theta}$, since $\theta<1$. Substituting this back to the estimate of $|\mathbf T|$, recalling $\dl=q-p+1$, and further applying Young's inequality with exponents $\frac{q}{p-1}$ and $\frac{q}{q-p+1}$ we reach
\begin{align*}
    |\mathbf T|
    &\le
    \frac{C(N,q)}{spR^{sp}} \Big(\frac{|h|}{R}\Big)^{\delta+\theta}
    \Big(\frac{R}{R-r}\Big)^{N+sp+1}
    \big(|B_R|^{\frac{p-1}{q}} \boldsymbol{\mathfrak T}^{p-1}\big)\\
    &\phantom{\le\,}\cdot
    \bigg[R^{(1+\theta) q}[\nabla u]^q_{W^{\theta,q}(B_{R})} +
    \frac{R^q}{\theta} \int_{B_{R}} |\nabla u|^q\,\dx
    \bigg]^{\frac{\delta}{q}} \\
    &\le
    \frac{C(N,q)}{spR^{sp}} \Big(\frac{|h|}{R}\Big)^{\delta+\theta}
    \Big(\frac{R}{R-r}\Big)^{N+sp+1}\\
    &\phantom{\le\,}\cdot
    \bigg[R^{(1+\theta) q}[\nabla u]^q_{W^{\theta,q}(B_{R})} +
    \frac{R^q}{\theta} \int_{B_{R}} |\nabla u|^q\,\dx +
    R^N \boldsymbol{\mathfrak T}^q
    \bigg].
\end{align*}
This  proves the claimed inequality.
\end{proof}

The following lemma will be used to estimate the term containing the fractional derivative of the cut-off function in the improved energy estimate from Proposition~\ref{prop:energy-q-2}. 
The additional regularity assumption $u\in W^{1+\theta,q}_{\rm loc}(\Omega)$ allows us to get the same power of the increment $h$ as in Lemma~\ref{Lm:tail-2}.

\begin{lemma}\label{Lm:I-2}
Let $p\in (1,2]$,  $s\in(0,1)$, $q\in[p,\infty)$,  $\theta\in(0,1)$ and $\delta:=q-p+1$. 
There exists a constant $C=C(N,q)$ such that: whenever
$$
    u\in W^{1+\theta,q}_{\rm loc}(\Omega)\cap L^{p-1}_{sp}(\R^N)\cap L^\infty_{\rm loc}(\Omega),
$$
then for each $0<r<R$, each $d\in (0,\frac 18(R-r)]$,
each $B_{R+d}\equiv B_{R+d}(x_o)\Subset\Omega$,  each  $h\in\R^N$ with $0<|h|\le d$,  and each $C^2$-cut-off function $\eta\in\widetilde{\mathfrak Z}_{r,R}(x_o)$ we have 
\begin{align*}
    &\bigg|\iint_{K_R}
     \frac{\big[ V_{p-1}\big(U_h(x,y)\big)-V_{p-1}\big(U(x,y)\big)\big] 
     \big[V_{\dl}(\btau_hu)\eta\big](x) \big(\eta(x) - \eta(y)\big)}{|x-y|^{N+sp}} \,\dx\dy \bigg| \\
    &\quad\le 
    \frac{C}{R^{sp}} \Big(\frac{|h|}{R}\Big)^{\dl+\theta}
    \Big(\frac{R}{R-r}\Big)^{N+sp+1}\\
    &\quad\phantom{\le\,}\cdot
    \bigg[\frac{R^{(1+\theta)q}}{1-s} [\nabla u]^q_{W^{\theta,q}(B_{R+d})} +
    \frac{R^q}{\theta(1-s)} \|\nabla u\|^q_{L^q(B_{R+d})} +
    R^N \|u\|_{L^{\infty}(B_{R+d})}^q
    \bigg].
\end{align*}
\end{lemma}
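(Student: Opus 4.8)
The integral on the left-hand side involves the product $\big(\eta(x)-\eta(y)\big)$, so the first step is to exploit the smoothness of $\eta\in\widetilde{\mathfrak Z}_{r,R}$ to write $\eta(x)-\eta(y)$ as an integral of its gradient along the segment joining $y$ to $x$, namely $\eta(x)-\eta(y)=\int_0^1\nabla\eta(y+t(x-y))\cdot(x-y)\,\dt$. This produces a factor $|x-y|$ which, combined with $\|\nabla\eta\|_{L^\infty}\le C/(R-r)$, cancels one power of the singular kernel: the kernel $|x-y|^{-N-sp}$ effectively becomes $|x-y|^{-N-sp+1}$ times $\tfrac{C}{R-r}$. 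One then wants to view the remaining object as a difference-quotient pairing to which a discrete integration by parts (Lemma~\ref{lem:int-parts}, in the product form of Remark~\ref{rem:int-parts}) can be applied, exactly as in the proof of Lemma~\ref{Lm:tail-2}.

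\textbf{Key steps.} First I would fix $x_o=0$, set $F(x,y):=V_{p-1}(U(x,y))$ and split off the cut-off factor, writing the integrand as $\btau_{(h,h)}F(x,y)$ times $G(x,y):=\big[V_\delta(\btau_hu)\eta\big](x)\,\big(\eta(x)-\eta(y)\big)|x-y|^{-N-sp}$. Applying Remark~\ref{rem:int-parts} with this $G$ transfers the finite difference $\btau_{(h,h)}$ onto $G$ and introduces the derivative $(h,h)\cdot\nabla_{(x,y)}G$. As in Lemma~\ref{Lm:tail-2}, the diagonal part of the kernel derivative, $\nabla_x|x-y|^{-N-sp}+\nabla_y|x-y|^{-N-sp}=0$, cancels, and one is left with (a) the term where the derivative hits $V_\delta(\btau_hu(x))\eta(x)$, (b) the term where it hits $\eta(x)-\eta(y)$, producing $\nabla\eta(x)-\nabla\eta(y)$ which is bounded by $\tfrac{C}{(R-r)^2}|x-y|$ using the $C^2$-bound on $\eta$, and (c) the term $\nabla_x\eta\cdot h$ times the rest. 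In every term one replaces $|x-y|$ by $|y+th|$ (or $|x+th|$) as in the earlier proofs, since $K_R$ is a bounded region and the cut-off forces the relevant points to stay in $B_R$; here, crucially, $|x-y|$ is \emph{not} a singular factor to fear near the diagonal because the factor $\eta(x)-\eta(y)$ already contributed one power of $|x-y|$ to compensate, so the remaining kernel $|x-y|^{-N-sp+1}$ (or $|x-y|^{-N-sp+2}$ in term (b)) is locally integrable against $\dy$ over $B_R$ by Lemma~\ref{int-sing}, contributing a factor $\sim R^{1-sp}(sp)^{-1}$ (hence the $\widetilde C/s$ form of the constant and the $1-s$ in the denominator after also using $sp\ge s$). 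Once the kernel is integrated out, the surviving $x$-integral has the form $\int_{B_R}\big[\delta|\btau_hu|^{\delta-1}|\btau_h\nabla u|+\tfrac{C}{R-r}|\btau_hu|^\delta+\dots\big]\,\dx$, which is handled verbatim as in Lemma~\ref{Lm:tail-2}: Hölder with exponents $\frac{q}{\delta-1},\ q,\ \frac{q}{q-\delta+1}$, then Lemma~\ref{lem:diff-quot-2} to bound $\|\btau_hu\|_{L^q}$ by $|h|\,\|\nabla u\|_{L^q}$, then Lemma~\ref{lem:N-FS} applied on $B_{R+d}$ (with $\gamma,d$ replaced by $\theta,\frac12(R-r)$) to bound $\|\btau_h\nabla u\|_{L^q}$ by $|h|^\theta$ times $\big[R^\theta[\nabla u]_{W^{\theta,q}(B_{R+d})}+\theta^{-1/q}\|\nabla u\|_{L^q(B_{R+d})}\big]$; a final application of Young's inequality with exponents $\frac{q}{p-1},\frac{q}{\delta}$ (after absorbing the $\boldsymbol{\mathfrak T}^{p-1}$-type factor, here $\|u\|_{L^\infty}$, using $|u(x+th)|\le\|u\|_{L^\infty(B_{R+d})}$) redistributes everything into the bracketed expression raised to the power $1$ and yields the claimed bound, with the exponent of $|h|$ coming out as $\delta-1+\theta+1=\delta+\theta$ after using $|h|^\delta\le R^{1-\theta}|h|^{\delta-1+\theta}$ since $\theta<1$.

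\textbf{Main obstacle.} The delicate point is bookkeeping the powers of $R-r$ and of $|h|$ so that the final constant carries exactly $(R/(R-r))^{N+sp+1}$ and the increment exponent is exactly $\delta+\theta$: term (b), where the $C^2$-bound on $\eta$ gives an extra $\tfrac{1}{(R-r)^2}|x-y|$ but the compensating $|x-y|$ from $\eta(x)-\eta(y)$ is already spent, must instead be controlled by pairing it with the explicit $|x-y|$ coming from the integration-by-parts factor $h\cdot(x-y)$ (equivalently, by noting $|h|\le d\le\frac18(R-r)$ so one $(R-r)^{-1}$ is absorbed against $|h|/R$ up to the $R$-scaling). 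I expect that tracking these factors — and confirming that no extra $|x-y|^{-1}$ singularity survives, so that Lemma~\ref{int-sing} applies cleanly on $B_R$ rather than only on $B_R\setminus B_{R-|h|}$ — is where all the care goes; the rest is a faithful transcription of the argument already carried out for Lemma~\ref{Lm:tail-2}.
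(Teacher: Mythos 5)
There is a genuine gap, and it is in two related places.

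First, you apply Remark~\ref{rem:int-parts} with $G(x,y)=[V_\delta(\btau_h u)\eta](x)\,(\eta(x)-\eta(y))\,|x-y|^{-N-sp}$ on $K_R=B_R\times B_R$, but that formula (through Lemma~\ref{lem:int-parts} and Remark~\ref{rem:int-parts-0}) requires $\spt G\Subset K_R$. This fails: even though $[V_\delta(\btau_h u)\eta](x)$ is compactly supported in the $x$-variable, the factor $\eta(x)-\eta(y)$ equals $\eta(x)\ne 0$ for $y$ near $\partial B_R$ (where $\eta(y)=0$), so the support of $G$ reaches $\partial B_R$ in the $y$-variable. The paper repairs this by inserting the cut-off $\psi_\epsilon(|x|)\psi_\epsilon(|y|)$, integrating by parts in the truncated integral $\mathbf L_\epsilon$, and then letting $\epsilon\downarrow 0$. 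Crucially, the differentiation of $\psi_\epsilon$ produces a nontrivial boundary term $\mathbf{II}_\epsilon$ supported on $B_R\setminus B_{R-\epsilon}$, and it is precisely this term — not any pointwise bound on $F$ — that generates the $R^N\|u\|_{L^\infty(B_{R+d})}^q$ contribution in the final estimate. Your proposal produces no such term, so even setting rigor aside the bookkeeping cannot close.

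Second, your claim that the surviving local integral is ``handled verbatim as in Lemma~\ref{Lm:tail-2}'' breaks down because Lemma~\ref{Lm:tail-2} works on the off-diagonal set $B_{\frac12(R+r)}\times(\R^N\setminus B_R)$, where $|x-y|\ge\tfrac14(R-r)$ and the kernel is bounded; there one may estimate $|F(x+th,y+th)|$ crudely by $C\|u\|_{L^\infty}^{p-1}+|u(y+th)|^{p-1}$ and integrate $|y|^{-N-sp}$ at infinity. On $K_R$ the diagonal is present, and after extracting a single factor $|x-y|\lesssim(R-r)^{-1}|x-y|$ from the cut-off you are left with $|x-y|^{-(N+sp-1)}$ against $\dy$ over $B_R$, which is not integrable once $sp\ge 1$. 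The paper avoids this by \emph{not} bounding $F$ pointwise: it applies H\"older with exponents $\tfrac{q}{p-1}$ and $\tfrac{q}{q-p+1}$ to split the kernel as $|x-y|^{-(N+sq)}$ against $|F|^{q/(p-1)}$ (bounded via Lemma~\ref{lem:FS-S}, giving the $\tfrac{1}{1-s}\|\nabla u\|_{L^q}^q$ term) and $|x-y|^{-(N+sq/(q-p+1))}$ against $|\mathbf G|^{q/(q-p+1)}$ (where the two extracted powers of $|x-y|$ reduce the exponent below $N$ so Lemma~\ref{int-sing} applies and yields another $\tfrac1{1-s}$). Your ``absorb $\|u\|_{L^\infty}$ using $|u(x+th)|\le\|u\|_{L^\infty}$'' step is precisely the crude bound that fails for $sp\ge1$ and is not what the source term $\|u\|_{L^\infty}^q$ in the claimed estimate comes from. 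Apart from these two points, the broad structure — transfer of $\btau_{(h,h)}$ onto $G$, cancellation of the kernel gradient by symmetry, chain rule for $\nabla[V_\delta(\btau_h u)\eta]$, H\"older, Lemma~\ref{lem:N-FS} and Lemma~\ref{lem:diff-quot-2}, final Young — matches the paper.
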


\begin{proof} 
For $\epsilon\in(0,d)$ we choose a cut-off function $\psi_{\epsilon}
\in W^{1,\infty}([0,\infty),\R)$, such that $\psi_{\epsilon}\equiv 1$ in $[0,R-\epsilon]$, $\psi_{\epsilon}\equiv 0$ in $[R,\infty)$ and linearly interpolated otherwise. 
Moreover, we abbreviate 
$$
    F(x,y)
    :=
    V_{p-1}\big(U(x,y)\big) 
$$
and
$$
    G(x,y)
    :=
    [V_{\delta}(\btau_hu)\eta](x) \big(\eta(x) - \eta(y)\big).
$$
With this notation 
the integral on the left-hand side of the claimed inequality can be re-written in the form
\begin{align*}
    \mathbf L
    &:=
    \iint_{K_R} 
    \frac{\btau_{(h,h)} F(x,y) G(x,y)}{|x-y|^{N+sp}} \,\dx\dy  
    =
    \lim_{\epsilon\downarrow 0}  \mathbf L_{\epsilon},
\end{align*}
where
\begin{align*}
    \mathbf L_{\epsilon}
    &:=
    \iint_{K_{R}} 
    \frac{\btau_{(h,h)} F(x,y) G(x,y) \psi_{\epsilon}(|x|)\psi_{\epsilon}(|y|)}{|x-y|^{N+sp}}\,\dx\dy.
\end{align*}
An application of the integration by parts formula from Remark~\ref{rem:int-parts}  in connection with Remark~\ref{rem:int-parts-0} yields
\begin{align*}
    \mathbf L_{\epsilon}
    &=
    - \iint_{K_{R}} 
    \int_0^1 F(x+th,y+th) \,\dt\,
    \mathfrak H(x,y)  \,\dx\dy,
\end{align*}
where
\begin{align}\label{frak-H}
    \mathfrak H(x,y)
    &=(h,h)\cdot \nabla_{(x,y)}\bigg[ \frac{G(x,y)
    \psi_{\epsilon}(|x|)\psi_{\epsilon}(|y|)}{|x-y|^{N+sp}}\bigg].
\end{align}
 At this point, we  argue formally. In principle, we would have to include a second cut-off function in the argument, which truncates the singularity along the diagonal $K_R\cap\{x=y\}$; see Remark \ref{rigorous}. Since this only complicates the proof by a technical aspect, we outsource the argument 
and  proceed by computing the gradient 
on the right-hand side in \eqref{frak-H}. We have
\begin{align*}
    \mathfrak H (x,y)
    &=
    \frac{h\cdot \big[\nabla_xG(x,y) + \nabla_yG(x,y)\big]\psi_{\epsilon}(|x|)\psi_{\epsilon}(|y|)}{|x-y|^{N+sp}} \\
    &\quad +
    \frac{G(x,y) 
    h\cdot \big[\nabla_x\psi_{\epsilon}(|x|)\psi_{\epsilon}(|y|) +
    \psi_{\epsilon}(|x|)\nabla_y\psi_{\epsilon}(|y|) \big]}{|x-y|^{N+sp}} \\
    &\quad + 
    G(x,y)
    \psi_{\epsilon}(|x|)\psi_{\epsilon}(|y|) 
    \bigg[\underbrace{\nabla_x\frac{1}{|x-y|^{N+sp}} + \nabla_y\frac{1}{|x-y|^{N+sp}} }_{=0}\bigg].
\end{align*}
Among the three terms on the right, the third one vanishes due to symmetry in $x$ and $y$. Moreover, in the second term the product $G(x,y) h\cdot \nabla_x\psi_{\epsilon}(|x|)\psi_{\epsilon}(|y|)$ also vanishes due to the definition of $G(x,y)$, $\spt(\eta)\subset B_{\frac12(R+r)}$ and $\spt (\nabla_x\psi_\varep)\subset B_R\setminus B_{R-\varep}$. Applying a similar cancellation argument to the remaining part in the second term, we have that 
$$
    G(x,y) \psi_{\epsilon}(|x|)h\cdot \nabla_y\psi_{\epsilon}(|y|)
    =
    V_{\delta}(\btau_hu(x)) \eta^2(x) \psi_{\epsilon}(|x|)h\cdot \nabla_y\psi_{\epsilon}(|y|)
$$
Hence, using these simplifications and plugging the above expression of $\mathfrak H(x,y)$ in the definition of $\mathbf L_{\epsilon}$, we estimate
\begin{align}\label{est:L-eps}
    |\mathbf L_{\epsilon}|
    &\le |h|\big[\mathbf I + 
    \mathbf {II}_{\epsilon}\big] ,
\end{align}
where we introduced
\[
\mathbf I:=\iint_{K_{R}} 
    \int_0^1 \frac{|F(x+th,y+th)|}{|x-y|^{N+sp}} \,\dt \,|\mathbf G(x,y)|  \,\dx\dy
\]
with
\[
\mathbf G(x,y):=\nabla_x G(x,y) + \nabla_y G(x,y)
\]
and 
\[
\mathbf {II}_{\epsilon}:=\iint_{K_{R}} 
    \int_0^1 \frac{|F(x+th,y+th)|}{|x-y|^{N+sp}} \,\dt \,  |\btau_hu(x)|^\delta |\nabla_y\psi_{\epsilon}(|y|)| \,\dx\dy.
\]
We estimate $\mathbf{I}$ and $\mathbf{II}_{\varep}$ separately. 
First, by Hölder's inequality  with exponents $\frac{q}{p-1}$ and $\frac{q}{q-p+1}$ we have  
\begin{align}\label{est:I-FG}\nonumber
    |\mathbf I|
    &\le 
    \bigg[\iint_{K_{R}} 
    \int_0^1 \frac{|F(x+th,y+th)|^{\frac{q}{p-1}}}{|x-y|^{N+sq}} \,\dt \,\dx\dy \bigg]^{\frac{p-1}{q}} \\
    &\phantom{\le\,}\cdot
    \bigg[\iint_{K_{R}}
    \frac{|\mathbf G(x,y)|^{\frac{q}{q-p+1}}}{|x-y|^{N+s\frac{q}{q-p+1}}} \,\dx\dy \bigg]^{\frac{q-p+1}{q}}.
\end{align}
By a change of variable, the definition of $V_{p-1}$, and an application of Lemma~\ref{lem:FS-S}
the first integral of \eqref{est:I-FG} can be bounded by
\begin{align}\label{est:F}\nonumber
    \iint_{K_{R}} 
    \int_0^1 \frac{|F(x+th,y+th)|^{\frac{q}{p-1}}}{|x-y|^{N+sq}} \,\dt \,\dx\dy
    &=
    \int_0^1 \iint_{K_{R}(th)} 
    \frac{|u(x)-u(y)|^q}{|x-y|^{N+sq}} \,\dx\dy\,\dt \\\nonumber
    &\le 
    \iint_{K_{R+|h|}} 
    \frac{|u(x)-u(y)|^q}{|x-y|^{N+sq}} \,\dx\dy \\\nonumber
    &\le 
    C(N) \frac{(R+|h|)^{(1-s)q}}{(1-s)q} 
    \int_{B_{R+|h|}} |\nabla u|^q \,\dx \\
    &\le 
    \frac{C(N,q)}{R^{sq}}
    \frac{R^{q}}{1-s} 
    \int_{B_{R+d}} |\nabla u|^q \,\dx.
\end{align}
To further estimate the second integral in \eqref{est:I-FG}, we write $\mathbf G(x,y)$ more explicitly according to the definition of $G(x,y)$. Namely, we have
\begin{align*}
    \mathbf G(x,y)&=
    \nabla
    \big[V_\delta (\btau_hu)\eta\big]
    (x) \big(\eta (x) - \eta(y)\big) +
   \big[ V_\delta (\btau_hu)\eta\big](x) \big(\nabla\eta (x) - \nabla\eta (y)\big).
\end{align*}
Plug this in the second integral of \eqref{est:I-FG} and apply the standard trick of triangle's inequality to split the integral. This results in the first estimate in the following display.
To continue, recall that the cut-off function
$\eta\in \widetilde{\mathfrak Z}_{r,R}$ satisfies
$|\eta(x)-\eta(y)|\le \frac{C}{R-r}|x-y|$ and moreover $|\nabla_x\eta(x)-\nabla_y\eta(y)|\le \frac{C}{(R-r)^2}|x-y|$. This leads to the exponent  $N-(1-s)\frac{q}{q-p+1}$ of $|x-y|$ in both integrals of the second estimate in the following display. Finally, we apply the second part of Lemma~\ref{int-sing} to obtain the last estimate. That is,
\begin{align*}
    \iint_{K_{R}}&
    \frac{|\mathbf G(x,y)|^{\frac{q}{q-p+1}}}{|x-y|^{N+s\frac{q}{q-p+1}}} \,\dx\dy \\
    &\le 
    C\iint_{K_{R}}
    \frac{|\nabla [V_{q-p+1} (\btau_hu)\eta](x)|^{\frac{q}{q-p+1}}
    |\eta (x) - \eta(y)|^{\frac{q}{q-p+1}}}{|x-y|^{N+s\frac{q}{q-p+1 }}} \,\dx\dy \\
    &\phantom{\le\,} +
    C\iint_{K_{R}}
    \frac{|\btau_hu(x)|^{q} |\nabla\eta (x) - \nabla\eta (y)|^{\frac{q}{q-p+1}}}{|x-y|^{N+s\frac{q}{q-p+1}}} \,\dx\dy \\
    & \le 
    \frac{C}{(R-r)^{\frac{q}{q-p+1}}}
    \iint_{K_{R}}
    \frac{|\nabla[V_{q-p+1} (\btau_hu)\eta](x)|^{\frac{q}{q-p+1}}
    }{|x-y|^{N-(1-s)\frac{q}{q-p+1}}} \,\dx\dy \\
    &\phantom{\le\,}
    +
    \frac{C}{(R-r)^{\frac{2q}{q-p+1}}}
    \iint_{K_{R}}
    \frac{|\btau_hu(x)|^{q} }{|x-y|^{N-(1-s)\frac{q}{q-p+1}}} \,\dx\dy \\
    & \le 
    \frac{C }{(1-s)R^{s\frac{q}{q-p+1}}}
    \Big(\frac{R}{R-r}\Big)^{\frac{q}{q-p+1}}
    \int_{B_{R}}
    |\nabla[V_{q-p+1} (\btau_hu)\eta]|^{\frac{q}{q-p+1}}
    \,\dx \\
    &\phantom{\le\,} +
    \frac{C }{(1-s)R^{(1+s)\frac{q}{q-p+1}}}
    \Big(\frac{R}{R-r}\Big)^{\frac{2q}{q-p+1}}
    \int_{B_{R}}
    |\btau_hu|^{q} \,\dx,
\end{align*}
with $C=C(N,q)$. To further estimate the first integral on the right-hand side of the last display, we use the chain rule for Sololev functions (note that $u$ is locally bounded):
\begin{align*}
    |\nabla[V_{q-p+1} (\btau_hu)\eta]|
    &\le 
    |\nabla [V_{q-p+1} (\btau_hu)]|\eta +
    |\btau_hu|^{q-p+1} |\nabla\eta| \\
    &\le  
    (q-p+1)|\btau_hu|^{q-p} |\btau_h \nabla u| + 
    \tfrac{C}{R-r}|\btau_hu|^{q-p+1}.
\end{align*}
Substituting this back to the previous estimate, we
obtain
\begin{align}
\label{est:intermed}\nonumber
    \bigg[\iint_{K_{R}}&
    \frac{|\mathbf G(x,y)|^{\frac{q}{q-p+1}}}{|x-y|^{N+s\frac{q}{q-p+1}}} \,\dx\dy\bigg]^{\frac{q-p+1}{q}} \\\nonumber 
    & \le 
    \frac{C}{(1-s)^{\frac{q-p+1}{q}}R^{s}}
   \Big( \frac{R}{R-r} \Big)
    \bigg[
    \int_{B_{R}}
    |\btau_hu|^{\frac{(q-p)q}{q-p+1}} |\btau_h \nabla u|^{\frac{q}{q-p+1}}
    \,\dx \bigg]^{\frac{q-p+1}{q}} \\
    &\phantom{\le\,} +
    \frac{C }{(1-s)^{\frac{q-p+1}{q}}R^{1+s}}
    \Big(\frac{R}{R-r}\Big)^{2}
    \bigg[ \int_{B_{R}}
    |\btau_hu|^{q} \,\dx \bigg]^{\frac{q-p+1}{q}}.
\end{align}
The second integral in \eqref{est:intermed} is estimated by Lemma~\ref{lem:diff-quot-2} with $d=\frac12(R-r)$. Namely,
\begin{align*}
    \bigg[ \int_{B_{R}}
    |\btau_hu|^{q} \,\dx \bigg]^{\frac{q-p+1}{q}}&\le |h|^{q-p+1} \|\nabla u\|_{L^q(B_{R+d})}^{q-p+1}\\
    &=\Big(\frac{|h|}{R}\Big)^{q-p+1}
    \Big[R^q \|\nabla u\|^q_{L^q(B_{R+d})} \Big]^{\frac{q-p+1}{q}}.
\end{align*}
To deal with the first integral on the right-hand side of \eqref{est:intermed}, we use Hölder's inequality with exponents $\frac{q-p+1}{q-p}$ and $q-p+1$, followed by Lemma~\ref{lem:N-FS} and Lemma~\ref{lem:diff-quot-2} with $d=\frac12(R-r)$. In this way, we obtain 
\begin{align*}
    \bigg[\int_{B_{R}}&
    |\btau_hu|^{\frac{(q-p)q}{q-p+1}} |\btau_h \nabla u|^{\frac{q}{q-p+1}}
    \,\dx \bigg]^{\frac{q-p+1}{q}} \\
    &\le 
    \bigg[\int_{B_{R}}
    |\btau_hu|^{q} \,\dx \bigg]^{\frac{q-p}{q}}
    \bigg[\int_{B_{R}} 
    |\btau_h \nabla u|^q\,\dx \bigg]^{\frac{1}{q}} \\
    &\le 
    C |h|^{q-p} 
    \bigg[\int_{B_{R+d}}
    |\nabla u|^{q} \,\dx \bigg]^{\frac{q-p}{q}} \\
    &\phantom{\le\,}\cdot 
    |h|^{\theta}
    \Bigg[(1-\theta)[\nabla u]^q_{W^{\theta,q}(B_{R+d})} +
    \bigg(\frac{(R+d)^{(1-\theta )q}}{  d^q} + \frac{1}{\theta   d^{\theta q}}\bigg)\|\nabla u\|^q_{L^q(B_{R+d})}
    \Bigg]^{\frac{1}{q}} \\
    &\le 
    C |h|^{q-p} 
    \|\nabla u\|^{q-p}_{L^q(B_{R+d})}\\
    &\phantom{\le\,}\cdot 
    \Big(\frac{|h|}{R}\Big)^{\theta} 
    \Big(\frac{R}{R-r}\Big)
    \bigg[(1-\theta) R^{\theta q}[\nabla u]^q_{W^{\theta,q}(B_{R+d})} +
    \frac{1}{\theta} \|\nabla u\|^q_{L^q(B_{R+d})}
    \bigg]^{\frac{1}{q}} \\
    &\le 
    \frac{C}{R}
    \Big(\frac{|h|}{R}\Big)^{q-p+\theta}
    \!
    \frac{R}{R-r}
    \bigg[R^{q(1+\theta)}[\nabla u]^q_{W^{\theta,q}(B_{R+d})} +
    \frac{R^q}{\theta} \|\nabla u\|^q_{L^q(B_{R+d})}
    \bigg]^{\frac{q-p+1}{q}},
\end{align*}
where $C=C(N,q)$. Substituting these estimates to \eqref{est:intermed} yields
\begin{align}\label{est:G}\nonumber
    \bigg[\iint_{K_{R}}&
    \frac{|\mathbf G(x,y)|^{\frac{q}{q-p+1}}}{|x-y|^{N+s\frac{q}{q-p+1}}} \,\dx\dy\bigg]^{\frac{q-p+1}{q}} \\\nonumber
    &\le 
    \frac{C }{(1-s)^{\frac{q-p+1}{q}}R^{1+s}}  
    \Big(\frac{|h|}{R}\Big)^{q-p+\theta}
    \Big(\frac{R}{R-r}\Big)^2\\\nonumber
    &\phantom{\le\,}\cdot
    \bigg[R^{(1+\theta) q}[\nabla u]^q_{W^{\theta,q}(B_{R+d})} +
    \frac{R^q}{\theta} \|\nabla u\|^q_{L^q(B_{R+d})}
    \bigg]^{\frac{q-p+1}{q}} \\\nonumber
    &\phantom{\le\,}+
    \frac{C }{(1-s)^{\frac{q-p+1}{q}}R^{1+s}}
    \Big(\frac{|h|}{R}\Big)^{q-p+1}
    \Big(\frac{R}{R-r}\Big)^{2}
    \Big[R^q \|\nabla u\|^q_{L^q(B_{R+d})} \Big]^{\frac{q-p+1}{q}} \\\nonumber
    &\le 
    \frac{C }{(1-s)^{\frac{q-p+1}{q}}R^{1+s}}
    \Big(\frac{|h|}{R}\Big)^{q-p+\theta}
    \Big(\frac{R}{R-r}\Big)^{2}\\
    &\phantom{\le\,}\cdot
    \bigg[R^{(1+\theta) q}[\nabla u]^q_{W^{\theta,q}(B_{R+d})} +
    \frac{R^q}{\theta} \|\nabla u\|^q_{L^q(B_{R+d})}
    \bigg]^{\frac{q-p+1}{q}},
\end{align}
for a constant $C=C(N,q)$. 
Combining \eqref{est:F} and \eqref{est:G} in \eqref{est:I-FG} we obtain
\begin{align*}
    |\mathbf I|
    &\le 
    \frac{C}{(1-s)R^{sp+1}}
    \Big(\frac{|h|}{R}\Big)^{q-p+\theta}
    \Big(\frac{R}{R-r}\Big)^{2}\\
    &\phantom{\le\,}\cdot
    \bigg[R^{(1+\theta)q}[\nabla u]^q_{W^{\theta,q}(B_{R+d})} +
    \frac{R^{q}}{\theta}
    \|\nabla u\|^q_{L^q(B_{R+d})}
    \bigg] .
\end{align*}
This finishes the analysis of $\mathbf{ I}$.

Next, we turn our attention to the term $\mathbf {II}_{\epsilon}$. 
Using the definition of $F$, we have for the inner integral
\begin{align*}
    \int_0^1 |F(x+th,y+th)| \,\dt 
    &\le 
    \int_0^1 
    \big[|u(x+th)-u(y+th)|^{p-1}\big] \,\dt 
    \le 
    2 \|u\|^{p-1}_{L^\infty(B_{R+d})}
\end{align*}
for a.e.~$x,y\in B_R$, so that 
\begin{align*}
    |\mathbf {II}_{\epsilon}|
    &\le 
    2\|u\|^{p-1}_{L^\infty(B_{R+d})}
    \iint_{K_{R}}
    \frac{|\btau_hu(x)|^{q-p+1}|\nabla_y\psi_{\epsilon}(|y|)| }{|x-y|^{N+sp}}
    \,\dx\dy \\
    &\le 
    \frac{4}{\epsilon} \|u\|^{p-1}_{L^\infty(B_{R+d})}
    \iint_{B_{R}\times (B_R\setminus B_{R-\epsilon})} 
    \frac{|\btau_hu(x)|^{q-p+1}}{|x-y|^{N+sp}} 
    \,\dx\dy.
\end{align*}
To get  the  last line we used that $|\nabla_y\psi_{\epsilon}(|y|)|\le \frac1{\epsilon}$ on $B_R\setminus B_{R-\epsilon}$. 
In addition, for $x\in B_{\frac12(R+r)}$ and $y\in B_R\setminus B_{R-\epsilon}$ we have
$$
    |x-y|\ge R-\epsilon-\tfrac12(R+r)=\tfrac12(R-r)-\epsilon\ge \tfrac14(R-r).
$$ 
This allows us to estimate the kernel from above and obtain
\begin{align*}
    |\mathbf {II}_{\epsilon}|
    &\le 
    \frac{4^{N+sp+1}}{(R-r)^{N+sp}} \|u\|^{p-1}_{L^\infty(B_{R+d})} 
     \underbrace{\frac{|B_R\setminus B_{R-\epsilon}|}{\epsilon}}_{\le N|B_1|R^{N-1}}
    \int_{B_{R}} 
    |\btau_hu|^{q-p+1} 
    \,\dx \\
    &\le 
    \frac{C(N)}{R^{sp+1}} 
    \Big(\frac{R}{R-r}\Big)^{N+sp} 
    \|u\|^{p-1}_{L^\infty(B_{R+d})} 
    |h|^{q-p+1} \int_{B_{R+d}} 
    |\nabla u|^{q-p+1} 
    \,\dx \\
    &\le 
    \frac{C(N)}{R^{sp+1}} 
    \Big(\frac{|h|}{R}\Big)^{q-p+1}
    \Big(\frac{R}{R-r}\Big)^{N+sp} \\
    &\phantom{\le\,}\cdot
    \bigg[\frac{R^q}{1-s} \int_{B_{R+d}} |\nabla u|^q\,\dx +
    (1-s)R^N \|u\|_{L^{\infty}(B_{R+d})}^q
    \bigg].
\end{align*}
To obtain the second line we applied Lemma~\ref{lem:diff-quot-2} with $d=\frac12(R-r)$, whereas the last line follows from H\"older's inequality to raise the power of $|\nabla u|$ from $q-p+1$ to $q$ and then Young's inequality with exponents $\frac{q}{p-1}$ and $\frac{q}{q-p+1}$.
Inserting the estimates for $\mathbf I$ and $\mathbf{II}_\epsilon$ in \eqref{est:L-eps}, we end up with 
\begin{align*}
    |\mathbf L_{\epsilon}|
    &\le 
    \frac{C}{R^{sp}} \Big(\frac{|h|}{R}\Big)^{q-p+1+\theta}
    \Big(\frac{R}{R-r}\Big)^{N+sp+1}\\
    &\phantom{\le\,}\cdot
    \bigg[\frac{R^{(1+\theta )q}}{1-s} [\nabla u]^q_{W^{\theta,q}(B_{R+d})} +
    \frac{R^q}{\theta(1-s)} \|\nabla u\|^q_{L^q(B_{R+d})} +
    R^N \|u\|_{L^{\infty}(B_{R+d})}^q
    \bigg],
\end{align*}
where $C=C(N,q)$. This proves the claim.
\end{proof}

\begin{remark}\label{rigorous}\upshape
Instead of
$$
    \frac{G(x,y)\psi_{\epsilon}(|x|)\psi_{\epsilon}(|y|)}{|x-y|^{N+sp}},
$$
we  consider the function
$$
    \frac{G(x,y)\psi_{\epsilon}(|x|)\psi_{\epsilon}(|y|)}{|x-y|^{N+sp}}
\phi_\sigma (|x-y|)
$$ with a cut-off function $\phi_\sigma \in W^{1,\infty}([0,\infty),\R)$, which disappears on the interval  $ [0,\sigma]$, is identical 1 on $ [2\sigma,\infty)$ and is linearly interpolated in $[\sig,2\sig]$. Instead of $\mathbf L_\epsilon$ we would have $\mathbf L_{\epsilon,\sigma}$ with the corresponding integrand. For this integrand, the assumptions of Remark~\ref{rem:int-parts-0}  are fulfilled, as the singularity of the kernel has been cut off. After calculating the gradient $(h,h)\cdot \nabla_{(x,y)}$, this construction results in three terms in which the cut-off function $\phi_\sigma$ appears as a multiplicative factor and one further term in which $\phi_\sigma$ is differentiated. However, this term disappears as
$$
    (h,h)\cdot \nabla_{(x,y)}
    \phi_\sigma (|x-y|)
    =
    \tfrac1{\sigma}
    \boldsymbol{\chi}_{(\sigma,2\sigma)}(|x-y|)
    h\cdot \big[\nabla_x |x-y|+\nabla_y|x-y|\big]=0.
$$
In the following calculations, we  therefore only have to take into account a multiplicative factor $\phi_\sigma (|x-y|)\le 1$ in all integrals, which however does not create any problem, as all integrals exist.
\end{remark}

\subsection{Energy inequalities for $W^{1,q}$-solutions}

In this chapter we derive three versions of energy inequalities for $(s,p)$-harmonic functions. These differ from each other by regularity conditions assumed apriori.  For the first version, we assume that the solutions are already of class $W^{1,q}_{\rm loc}$ for some $q\ge p$. It provides a basic building block for the higher gradient regularity expanded on in Section~\ref{sec:W1q}. 
In the second energy inequality, we assume that $C^{0,\gamma}_{\rm loc}$-regularity in addition to $W^{1,q}_{\rm loc}$-regularity is available. This enables us to use large parts of the proof of the first energy inequality.  However, the assumption of H\"older continuity allows a more sophisticated coercivity estimate, increasing the integral exponent from $p$ to $2$. It will be used in Section~\ref{sec:fracdiff} to study the fractional differentiability of the gradient.
The third energy inequality refers to $(s,p)$-harmonic functions with $W^{1+\theta,q}_{\rm loc}$-regularity, where $\theta$ means the fractional differentiability of the gradient. 
In Section~\ref{sec:fracdiff} this will lead in the fractional context to the analogue of the $W^{2,2}_{\rm loc}$-regularity from the local case. 

\begin{proposition}[Energy inequality for $W^{1,q}$-solutions]\label{prop:energy-q-1}
Let $p\in (1,2]$, $s\in(0,1)$, $q\in[p,\infty)$.
There exists a constant $C=C(N,p,s, q)$
such that whenever
$u$ is a locally bounded $(s,p)$-harmonic function in the sense of Definition~\ref{def:loc-sol} that satisfies
$$
 u\in W^{1,q}_{\rm loc}(\Omega),
$$
then for every $0<r<R$, $d:=\frac14 (R-r)$,
$B_{R+d}\equiv B_{R+d}(x_o)\Subset\Omega$, every 
$\eps \in(0,1)$,
and  every  step size $0<|h|\le d$ we have 
\begin{align*}
    &\big[V_\frac{q}{p}(\btau_hu)
    \big]^p_{W^{(1-\eps)(1-\frac{p}{2})+s\frac{p}{2},p}(B_r)} \\
    &\qquad\le
    \frac{C}{R^{[(1-\eps)(1-\frac{p}{2})+s\frac{p}{2}]p}}
    \Big(\frac{|h|}{R}\Big)^{q-(1-\frac{p}{2})p}
    \Big(\frac{R}{R-r}\Big)^{N+sp+1} \\
    &\qquad\quad\cdot
    \bigg[ 
    \frac{R^q}{1-s} \int_{B_{R+d}} |\nabla u|^{q} \,\dx +
    R^{N} \boldsymbol{\mathfrak T}(u;R+d)^{q}\bigg]^\frac{p}{2} 
     \bigg[
     \frac{R^{q}}{\epsilon}
     \int_{B_{R}}|\nabla u|^{q}\,\dx
     \bigg]^{1-\frac{p}2}.
\end{align*}
The constant $C$ has the form 
$\widetilde C (N,p) q 4^{q}/s$.
\end{proposition}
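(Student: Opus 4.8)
The plan is to pass, at the discrete level, through a \emph{second-order} energy estimate: reduce the seminorm on the left to a weighted double integral of $|U_h-U|^p$ (notation \eqref{def-U}), split this integral by Hölder's inequality into a \emph{coercive part} carrying the exponent $2$ on $|U_h-U|$ and the kernel $|x-y|^{-(N+sp)}$ and a \emph{remainder part} carrying no $|U_h-U|$ and the kernel $|x-y|^{-(N+(1-\eps)p)}$, then estimate the coercive part through the equation and the remainder through the embedding Lemma~\ref{lem:FS-S}. Write $\alpha:=(1-\eps)(1-\tfrac p2)+s\tfrac p2$. First I would apply Lemma~\ref{lem:Acerbi-Fusco} with exponent $\tfrac qp\ge1$ (so that $C_2=\tfrac qp$), noting $\btau_hu(x)-\btau_hu(y)=U_h(x,y)-U(x,y)$, to get
\[
\big[V_{q/p}(\btau_hu)\big]^p_{W^{\alpha,p}(B_r)}\le\Big(\tfrac qp\Big)^p\iint_{B_r\times B_r}\frac{\big(|\btau_hu(x)|+|\btau_hu(y)|\big)^{q-p}\,|U_h-U|^p}{|x-y|^{N+\alpha p}}\,\dx\dy=:\Big(\tfrac qp\Big)^p\mathcal S.
\]
Because $(N+sp)\tfrac p2+(N+(1-\eps)p)(1-\tfrac p2)=N+\alpha p$, I would write the integrand of $\mathcal S$ as the product of $\big[(|U_h|+|U|)^{p-2}(|\btau_hu(x)|+|\btau_hu(y)|)^{q-p}|U_h-U|^2\big]^{p/2}$ and $\big[(|\btau_hu(x)|+|\btau_hu(y)|)^{q-p}(|U_h|+|U|)^p\big]^{1-p/2}$, and apply Hölder's inequality with exponents $\tfrac2p$, $\tfrac2{2-p}$, obtaining $\mathcal S\le\mathcal S_1^{p/2}\mathcal S_2^{1-p/2}$, where $\mathcal S_1$ integrates the first bracket against $|x-y|^{-(N+sp)}$ over $B_r\times B_r$ and $\mathcal S_2$ integrates the second against $|x-y|^{-(N+(1-\eps)p)}$ over $B_r\times B_r$.

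The heart of the argument is the bound for $\mathcal S_1$. Fix $\eta\in\mathfrak Z_{r,R}(x_o)$ (Definition~\ref{Def:Z}) and set $\delta:=q-p+1\ge1$. Since $u_h:=u(\cdot+h)$ is $(s,p)$-harmonic as well, using \eqref{weak-sol} for $u$ and for $u_h$ with the admissible function $\varphi:=V_\delta(\btau_hu)\eta^p$ (legitimate because $\varphi$ is supported in $B_{\frac12(R+r)}(x_o)\Subset\Om$, $|h|\le d=\tfrac14(R-r)$, and $u$ is locally bounded) and subtracting yields
\[
\iint_{K_R}\frac{\big[V_{p-1}(U_h)-V_{p-1}(U)\big]\big(\varphi(x)-\varphi(y)\big)}{|x-y|^{N+sp}}\,\dx\dy=-2\,\mathbf T,
\]
where $\mathbf T$ is the tail part of the integral over $(\R^N\times\R^N)\setminus K_R$, reduced by $\spt\varphi$ to an integral over $B_{\frac12(R+r)}\times(\R^N\setminus B_R)$. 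On the left I would use Lemma~\ref{lem:algebraic-1} pointwise with $a=u(x+h)$, $b=u(y+h)$, $c=u(x)$, $d=u(y)$, $e=\eta(x)$, $f=\eta(y)$ (so that $\varphi(x)-\varphi(y)=V_\delta(a-c)e^p-V_\delta(b-d)f^p$, $\delta-1=q-p$, $p+\delta-1=q$), bounding the integrand below by the coercive term minus $(\tfrac{2^{\delta+1}}{p-1})^{p-1}(|\btau_hu(x)|+|\btau_hu(y)|)^q|\eta(x)-\eta(y)|^p$; restricting the nonnegative coercive term to $B_r\times B_r$ (where $\eta\equiv1$) recovers $\mathcal S_1$ up to the factor $\tfrac{2^{\delta}}{p-1}$. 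Hence $\mathcal S_1$ is controlled by $\tfrac{2^{\delta+1}}{p-1}|\mathbf T|$ plus a multiple of the penalty integral. The penalty integral I would handle with $|\eta(x)-\eta(y)|\le\tfrac C{R-r}|x-y|$, Lemma~\ref{int-sing} (exponent $(1-s)p<N$), the symmetrisation $(\cdot)^q\le2^{q-1}(|\btau_hu(x)|^q+|\btau_hu(y)|^q)$, and $\|\btau_hu\|_{L^q(B_R)}\le|h|\,\|\nabla u\|_{L^q(B_{R+d})}$ (Lemma~\ref{lem:diff-quot-2}), producing a term $\lesssim R^{-sp}(\tfrac R{R-r})^p(\tfrac{|h|}R)^q\tfrac{R^q}{1-s}\int_{B_{R+d}}|\nabla u|^q$. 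For $\mathbf T$ I would multiply Lemma~\ref{Lm:tail} by $|\varphi(x)|=|\btau_hu(x)|^\delta\eta^p(x)\le|\btau_hu(x)|^\delta$, integrate, use $\delta+(p-1)=q$, Lemma~\ref{lem:diff-quot-2}, Hölder to raise $L^\delta$ to $L^q$, and Young's inequality to separate $\boldsymbol{\mathfrak T}(u;R+d)^{p-1}$ from $\|\nabla u\|_{L^q}$, producing a term $\lesssim R^{-sp}(\tfrac R{R-r})^{N+sp+1}(\tfrac{|h|}R)^q[R^q\int_{B_{R+d}}|\nabla u|^q+R^N\boldsymbol{\mathfrak T}(u;R+d)^q]$. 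Collecting, with $C=C(N,p)$,
\[
\mathcal S_1\le\frac{C\,4^q}{s\,(p-1)^p}\,\frac1{R^{sp}}\Big(\frac R{R-r}\Big)^{N+sp+1}\Big(\frac{|h|}R\Big)^q\bigg[\frac{R^q}{1-s}\int_{B_{R+d}}|\nabla u|^q\,\dx+R^N\boldsymbol{\mathfrak T}(u;R+d)^q\bigg].
\]

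For $\mathcal S_2$ (when $q=p$ the weight is trivial and $\mathcal S_2$ is directly bounded by Lemma~\ref{lem:FS-S} after the inclusion $B_r+h\subset B_R$) I would, for $q>p$, apply Hölder's inequality with exponents $\tfrac q{q-p}$, $\tfrac qp$, splitting off $\iint_{B_r\times B_r}(|\btau_hu(x)|+|\btau_hu(y)|)^q|x-y|^{-\mu_1}$ and $\iint_{B_r\times B_r}(|U_h|+|U|)^q|x-y|^{-\mu_2}$ with $\tfrac{q-p}q\mu_1+\tfrac pq\mu_2=N+(1-\eps)p$; taking $\mu_2=N+\gamma q$ with $\gamma:=1-\tfrac\eps2$ forces $\mu_1=N-\tfrac{qp}{q-p}\tfrac\eps2<N$. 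The first factor is then bounded by Lemma~\ref{int-sing} (giving $\tfrac{R^{N-\mu_1}}{N-\mu_1}$, with $N-\mu_1\sim\eps$) and $\|\btau_hu\|_{L^q(B_r)}\le|h|\,\|\nabla u\|_{L^q(B_R)}$; the second by $(|U_h|+|U|)^q\le2^q(|U_h|^q+|U|^q)$, the inclusion $B_r+h\subset B_R$, and Lemma~\ref{lem:FS-S} (giving $\tfrac{R^{(1-\gamma)q}}{1-\gamma}$, with $1-\gamma=\tfrac\eps2$); the two $\eps$-powers combine to $\eps^{-1}$, giving $\mathcal S_2\le\tfrac{C(N,p)}\eps\,R^{p\eps}\,|h|^{q-p}\int_{B_R}|\nabla u|^q$. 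Multiplying $\mathcal S_1^{p/2}\mathcal S_2^{1-p/2}$ and consolidating the $R$- and $\eps$-powers via $|h|\le d\le R$, $R-r\le R$, $\eps\le1$, $1-s<1$ (the resulting $|h|$-power being $\tfrac{qp}2+(q-p)(1-\tfrac p2)=q-(1-\tfrac p2)p$, exactly as claimed), and absorbing the prefactor $(\tfrac qp)^p$ into the combinatorial constant, I would obtain the asserted estimate with a constant of the form $\widetilde C(N,p)\,q\,4^q/s$ (the $(p-1)^{-p}$ sitting inside $\widetilde C(N,p)$).

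The main obstacle is the $\mathcal S_1$ bound. Three things must be balanced simultaneously there: the discretised test function $V_\delta(\btau_hu)\eta^p$ must be chosen precisely so that Lemma~\ref{lem:algebraic-1} yields the coercive quadratic term with exactly the weights occurring in $\mathcal S_1$; the long-range term $\mathbf T$ must be dominated via Lemma~\ref{Lm:tail}, whose sharp powers of $|h|$ and $|\btau_hu|$ are precisely what generates the $(\tfrac{|h|}R)^q$ gain feeding the fractional differentiability; and the $\boldsymbol{\mathfrak T}$-versus-$\|\nabla u\|_{L^q}$ bookkeeping through Young's inequality must be carried out while keeping the constant of the form $C(N,p)\,4^q/s$. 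By comparison, the two Hölder splittings are essentially mechanical once the identity $(N+sp)\tfrac p2+(N+(1-\eps)p)(1-\tfrac p2)=N+\alpha p$ is noticed.
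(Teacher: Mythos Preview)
Your proposal is correct and follows essentially the same route as the paper's own proof: test the differenced equation with $V_\delta(\btau_hu)\eta^p$, use Lemma~\ref{lem:algebraic-1} for coercivity, Lemma~\ref{Lm:tail} for the tail, then factor the seminorm via H\"older with exponents $\tfrac2p,\tfrac2{2-p}$ into the coercive piece and a remainder, and close the remainder by a second H\"older step together with Lemma~\ref{int-sing} and Lemma~\ref{lem:FS-S}. The only cosmetic difference is your choice of kernel exponents in the second H\"older split ($\mu_2=N+(1-\tfrac\eps2)q$ versus the paper's $N+q-\eps p$); both give the same final $\eps^{-(1-p/2)}$ and the same $|h|$- and $R$-powers.
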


\begin{proof}
Consider $x_o\in \Omega$, $0<r<R$ and $d=\tfrac14 (R-r)$ such that $B_{R+d}(x_o)\Subset\Omega$. 
Since $x_o$ is fixed we omit the reference to the center $x_o$ and write $B_\rho$ and $K_\rho$ instead of $B_\rho(x_o)$ and $K_\rho(x_o)$. 
Let $h\in \R^N\setminus\{ 0\}$ with $|h|\le d$. Testing \eqref{weak-sol} with $\varphi_{-h}(x):=\varphi(x-h)$ instead of $\varphi$, where $\varphi\in W^{s,p}(B_R)$ with $\spt\varphi\in B_{\frac12 (R+r)}$, we conclude by discrete integration by parts that also $u_h$ satisfies \eqref{weak-sol}. Subtracting \eqref{weak-sol} with $u$ from \eqref{weak-sol} with $u_h$, we obtain
\begin{align}\label{system-h}
    \iint_{\R^N\times\R^N}
    \frac{\big(V_{p-1}(u_h(x){-}u_h(y))- V_{p-1}(u(x){-}u(y))\big)(\varphi (x){-}\varphi(y)) }{|x-y|^{N+sp}}\,\dx\dy=0
\end{align}
for any $\varphi\in W^{s,p}(B_R)$ with $\spt\varphi\in B_{\frac12 (R+r)}$. In \eqref{system-h} we now choose
\begin{equation*}
    \varphi := V_\delta (\btau_hu)\eta^p,
\end{equation*}
with $\delta=q-p+1\ge 1$ and $\eta\in \mathfrak Z_{r,R}$.
Since $u$ is locally bounded, one can verify that $\varphi\in W^{s,p}(B_R)$.
Decomposing $\R^N$ into $B_R$ and its complement $\R^N\setminus B_R$ we obtain 
\begin{align*}
    0
    &=
     \iint_{K_R} \!\!
    \frac{\big(V_{p-1}(U_h(x,y))- V_{p-1}(U(x,y))\big)\big([V_{\delta} (\btau_hu)\eta^p] (x)-[V_{\delta} (\btau_hu)\eta^p](y)\big) }{|x-y|^{N+sp}}\,\dx\dy\\
    &\phantom{=\,}
    +
    \iint_{B_{\frac12 (R+r)}\times (\R^N\setminus B_R)}
    \frac{\big(V_{p-1}(U_h(x,y))- V_{p-1}(U(x,y))\big)[V_{\delta} (\btau_hu)\eta^p] (x) }{|x-y|^{N+sp}}\,\dx\dy\\
    &\phantom{=\,}
    -
    \iint_{(\R^N\setminus B_R)\times B_{\frac12 (R+r)}}
    \frac{\big(V_{p-1}(U_h(x,y))- V_{p-1}(U(x,y))\big)[V_{\delta} (\btau_hu)\eta^p] (y)}{|x-y|^{N+sp}}\,\dx\dy.
\end{align*}
Here, we used the abbreviations $U(x,y)$ and $U_h(x,y)$ from~\eqref{def-U}. By interchanging the roles of $x$ and $y$ in the second integral, it can be seen that it coincides with the last integral except for the sign. Therefore, we get 
\begin{equation}\label{I-2T}
      \mathbf I=-2\mathbf T,
\end{equation}
where
\begin{align*}
    \mathbf I
    :=
    \iint_{K_R} \!\!\!
    \frac{\big(V_{p-1}(U_h(x,y))- V_{p-1}(U(x,y))\big)\big([V_{\delta}(\btau_hu)\eta^p] (x)-[V_{\delta}(\btau_hu)\eta^p](y)\big) }{|x-y|^{N+sp}}\,\dx\dy,
\end{align*}
and
\begin{align*}
    \mathbf T
    :=
    \iint_{B_{\frac12 (R+r)}\times (\R^N\setminus B_R)}
    \frac{\big(V_{p-1}(U_h(x,y))- V_{p-1}(U(x,y))\big)[V_{\delta}(\btau_hu)\eta^p] (x)}{|x-y|^{N+sp}}\,\dx\dy.
\end{align*}
The {\bf local integral} $\mathbf I$, more precisely the integrand appearing in $\mathbf I$, is estimated from below using Lemma \ref{lem:algebraic-1} with $a=u_h(x)$, $b=u_h(y)$, $c=  u(x)$, $d=  u(y)$, $e=\eta(x)$, and  $f=\eta(y)$.  In fact, with a constant $C=\widetilde{C}(p)2^{\delta+1}\le\widetilde C(p)2^q$ we have
\begin{align*}
    \mathbf I
    &\ge \tfrac1C\mathbf I_1-C\mathbf I_2,
\end{align*}
where  $\mathbf I_1$ and $\mathbf I_2$ are defined by 
\begin{align*}
    \mathbf I_1
    &:=
    \iint_{K_R} \boldsymbol i_1(x,y)\,\dx\dy,
\end{align*}
and 
\begin{align*}
    \mathbf I_2
    &:=
    \iint_{K_R}
     \frac{ (|\btau_hu(x)|+|\btau_hu(y)|)^{q}| \eta(x)-\eta(y)|^p}{|x-y|^{N+sp}} \,\dx\dy.
\end{align*}
The  integrand $\boldsymbol i_1(x,y)$ is given by 
\begin{align}\label{def-i1}
    \boldsymbol i_1(x,y)
    &:=
     \frac{\mathcal U(x,y)^{p-2}(|\btau_hu(x)|+|\btau_hu(y)|)^{q-p} |\btau_hu(x)-\btau_hu(y)|^2\Theta (x,y)}{|x-y|^{N+sp}},
\end{align}
where
$$
    \mathcal U(x,y):= |U_h(x,y)|+|U(x,y)|\quad
    \mbox{and}\quad \Theta (x,y)=\eta^p(x)+\eta^p(y).
$$
Note that we may restrict the domain of integration in $\mathbf I_1$ to
$K_R^+:=K_R\cap\{ \mathcal U(x,y)>0\}$, since $ \mathcal U(x,y)=0$ implies that $U(x,y)=0=U_h(x,y)$. However, the latter means that $u(x)=u(y)$ and $u(x+h)=u(y+h)$, so that $|\btau_hu(x)- \btau_hu(y)|=0$.  
Joining the lower bound for $\mathbf I$ from above with the identity $\mathbf I=-2\mathbf T$, we find
$$
    \mathbf I_1
    \le 
    C\big[\mathbf I_2 + |\mathbf T|\big],
$$
where $C=\widetilde C(p)2^q$.
Next, we treat the integral $\mathbf I_2$. Using $|\eta (x)-\eta (y)|^p\le \frac{C(p)}{(R-r)^p}|x-y|^p$, and Lemma \ref{int-sing}, with $\beta =(1-s)p$, we have 
\begin{align*}
    \mathbf I_2
    &\le
    \frac{C(p)}{(R-r)^p}\iint_{K_R} \frac{(|\btau_hu(x)|+|\btau_hu(y)|)^{q}}{|x-y|^{N-(1-s)p}}\,\dx\dy\\
    &\le
     \frac{C}{R^{sp}}
    \Big(\frac{R}{R-r}\Big)^p
     \int_{B_R} \frac{|\btau_hu|^{q}}{1-s}\,\dx 
\end{align*}
for a constant $C=\widetilde C(N,p)2^q$.
Next, we deal with the non-local tail term $\mathbf{T}$ that we write in the form 
\begin{align*}
    \mathbf{T}
    =
    \int_{B_{\frac12 (R+r)}} \boldsymbol t(x) V_{q-p+1} (\btau_hu(x)) \eta^p(x) \,\dx,
\end{align*}
where 
\begin{align*}
    \boldsymbol t(x)
    &:=
    \int_{\R^N\setminus B_R}
    \frac{V_{p-1}\big(u_h(x)-u_h(y)\big)- V_{p-1}\big(u(x)-u(y)\big)}{|x-y|^{N+sp}}\,\dy ,
\end{align*}
for any $x\in B_{\frac12 (R+r)}$.
From Lemma \ref{Lm:tail} we have 
\begin{align*}
    |\boldsymbol t(x)|
    \le 
    \frac{C}{sR^{sp}} \Big(\frac{R}{R-r}\Big)^{N+sp+1}
    \bigg[ \frac{|h|}{R} 
    \boldsymbol{\mathfrak T}^{p-1}
    +
    |\btau_h u(x)|^{p-1} \bigg],
\end{align*}
for any $x\in B_{\frac12 (R+r)}$ and with $\boldsymbol{\mathfrak T}:=\boldsymbol{\mathfrak T}(u;x_o,R+d)$ and $C=C(N,p)$.
As a result of this estimate we obtain
\begin{align*}
    |\mathbf{T}|
    &\le
    \int_{B_{\frac12 (R+r)}} \boldsymbol |t(x)||\btau_hu(x)|^{q-p+1}\,\dx
    \\
    &\le
    \frac{C}{sR^{sp}} \Big(\frac{R}{R-r}\Big)^{N+sp+1}
    \Bigg[ \frac{|h|}{R} 
    \boldsymbol{\mathfrak T}^{p-1}\int_{B_{R}} |\btau_hu|^{q-p+1} \,\dx +
    \int_{B_{R}} |\btau_hu|^{q} \,\dx\Bigg] ,
\end{align*}
where $C=C(N,p)$. 
Now we estimate the integral of $|\btau_hu|^{q-p+1}$ such that it merges into the integral of $|\btau_hu|^{q}$ except for a term containing $\boldsymbol{\mathfrak T}$. This can be done by first applying H\"older's inequality and subsequently Young's inequality with the exponents $\frac{q}{p-1}$ and $\frac{q}{q-p+1}$. This way we obtain
\begin{align*}
    \frac{|h|}{R} \boldsymbol{\mathfrak T}^{p-1} \int_{B_{R}} |\btau_hu|^{q-p+1} \,\dx 
    &\le
    C\frac{|h|}{R} \boldsymbol{\mathfrak T}^{p-1} R^{N\frac{p-1}{q}}
    \bigg[\int_{B_{R}} |\btau_hu|^{q} \,\dx\bigg]^\frac{q-p+1}{q}\\
    &=
     C\frac{|h|}{R}    \big[ R^N  \boldsymbol{\mathfrak T}^{q}
     \big]^{\frac{p-1}{q}} 
    \bigg[\int_{B_{R}} |\btau_hu|^{q} \,\dx\bigg]^\frac{q-p+1}{q}\\
    &\le
    C
    \bigg[
    \int_{B_{R}} |\btau_hu|^{q} \,\dx
    +
    \Big(\frac{|h|}{R}\Big)^\frac{q}{p-1}
    R^N  \boldsymbol{\mathfrak T}^{q}
    \bigg] \\
    &\le 
    C
    \bigg[
    \int_{B_{R}} |\btau_hu|^{q} \,\dx
    +
    \Big(\frac{|h|}{R}\Big)^q R^N  \boldsymbol{\mathfrak T}^{q}
    \bigg],
\end{align*}
where $C=C(N)$. To obtain the last line we used 
$\frac{q}{p-1}\ge q$.
Therefore, we get
\begin{align*}
    |\mathbf T|
    &\le
     \frac{C}{sR^{sp}} \Big(\frac{R}{R-r}\Big)^{N+sp+1}
    \bigg[ 
    \int_{B_{R}} |\btau_hu|^{q} \,\dx
    +
    \Big(\frac{|h|}{R}\Big)^q R^N  \boldsymbol{\mathfrak T}^{q}
    \bigg].
\end{align*}
Now we substitute the inequalities for $\mathbf I_2$ and $|\mathbf T|$ into those for $\mathbf I_1$. We also take into account the special structure of the constants in the previous inequalities, as well as $p\le N+sp+1$ to homogenize the exponents of $\frac{R}{R-r}$. 
We obtain
\begin{align}\label{est-I1}
    \mathbf I_1
    &\le
    \frac{C}{sR^{sp}}
    \Big(\frac{R}{R-r}\Big)^{N+sp+1}
    \bigg[ 
    \int_{B_{R}} \frac{|\btau_hu|^{q}}{1-s} \,\dx +
    \Big(\frac{|h|}{R}\Big)^q R^{N} \boldsymbol{\mathfrak T}^{q}\bigg],
\end{align}
where $C=\widetilde C(N,p)4^q$.  

Our next aim is to deduce a lower bound for $\mathbf I_1$. 
Using Lemma \ref{lem:Acerbi-Fusco} with $\gamma=\frac{q}{p}\ge 1$, $a=\btau_hu(x)$, and $b=\btau_hu(y)$, we obtain (note that $C_2=\gamma= \frac{q}{p}$)
\begin{align}\label{est:J-q}\nonumber
      \mathbf J
      &:=
      \iint_{K_r}
     \frac{\big| V_\frac{q}{p}(\btau_hu(x))-V_\frac{q}{p}(\btau_hu(y))\big|^p}{|x-y|^{N+(1-\eps)(1-\frac{p}{2})p + sp\frac{p}{2}}}\,\dx\dy\\\nonumber
     &\ \le
     \tfrac{q}{p}
     \iint_{K_r} 
     \frac{(|\btau_hu(x)|+|\btau_hu(y)|)^{q-p} |\btau_hu(x)-\btau_hu(y)|^p}{|x-y|^{N+(1-\eps)(1-\frac{p}{2})p + sp\frac{p}{2}}} \,\dx\dy\\
     &\ =
     \tfrac{q}{p} 2^{-\frac{p}{2}}
     \iint_{K_r} \boldsymbol i_1^\frac{p}{2}(x,y)\cdot \boldsymbol j^{1-\frac{p}{2}}(x,y)\,\dx\dy.
\end{align}
To obtain the last line we   rewrote  the integrand as  the product of $\boldsymbol i_1^\frac{p}{2}\cdot \boldsymbol j^{1-\frac{p}{2}}$ with 
\begin{align*}
    \boldsymbol j(x,y)
    &:=
     \frac{\mathcal U(x,y)^p
     (|\btau_hu(x)|+|\btau_hu(y)|)^{q-p}}{|x-y|^{N+(1-\eps) p}}
\end{align*}
and $\boldsymbol{i}_1$ defined in~\eqref{def-i1}.
Note that $\Theta(x,y) =2$ for $(x,y)\in K_r$, since $\eta\equiv 1$ in $B_r$. 
We now distinguish between different cases. We start with the {\bf case $p<2$}.  We apply to the right-hand side of \eqref{est:J-q}  H\"older's inequality with the exponents $\frac2p$ and $\frac{2}{2-p}$ to obtain
\begin{align}\label{est:J-p<2-q}
     \mathbf J
    &\le 
    \tfrac{q}{p} \bigg[\iint_{K_r} \boldsymbol i_1(x,y) \,\dx\dy \bigg]^\frac{p}{2}
    \bigg[\iint_{K_r} \boldsymbol j(x,y) \,\dx\dy\bigg]^{1-\frac{p}2} \nonumber\\
    &\le 
    \tfrac{q}{p} \mathbf I_1^\frac{p}{2}
    \bigg[\iint_{K_r} \frac{\mathcal U(x,y)^p
     (|\btau_hu(x)|+|\btau_hu(y)|)^{q-p} }{|x-y|^{N+(1-\eps) p}}\,\dx\dy\bigg]^{1-\frac{p}2}.
\end{align}
If $q=p$, the second integral on the right-hand side of \eqref{est:J-p<2-q} is considerably more simple, because the second factor in the nominator, i.e.~the one with power $q-p$  reduces to 1.
Recalling the definitions of $\mathcal U$, $U$, and $U_h$ we have
\begin{align}\label{est:J-p<2-q=p}\nonumber
    \mathbf J
    &\le
      \mathbf I_1^\frac{p}{2}
    \bigg[\iint_{K_r} \frac{(|u_h(x)-u_h(y)|+|u(x)-u(y)|)^{p}
      }{|x-y|^{N+(1-\eps) p}}\,\dx\dy\bigg]^{1-\frac{p}2}\\
     &\le
     2^\frac{3}2 \mathbf I_1^\frac{p}{2}
     \bigg[
     \iint_{K_{r+d}}\frac{|u(x)-u(y)|^{p}}{|x-y|^{N+(1-\eps) p}}\,\dx\dy
     \bigg]^{1-\frac{p}2}.
\end{align}
If $q>p$ we apply to the right-hand side in \eqref{est:J-p<2-q} H\"older's inequality  with exponents $\frac{q}{p}$ and $\frac{q}{q-p}$. Taking also into account the definitions of $U_h$ and $U$, this yields 
\begin{align}\label{est:J-final-q}\nonumber
    \mathbf J
     &\le
     \tfrac{q}{p} \mathbf I_1^\frac{p}{2}
     \bigg[
     \iint_{K_r}\frac{(|u_h(x)-u_h(y)|+|u(x)-u(y)|)^{q}}{|x-y|^{N+q - \eps p}}\,\dx\dy
     \bigg]^{\frac{p}{q}(1-\frac{p}2)}\\\nonumber
     &\qquad\qquad\quad\cdot
     \bigg[ \iint_{K_r}\frac{(|\btau_hu(x)|+|\btau_hu(y)|)^{q}}{|x-y|^{N-\eps p}}\,\dx\dy
     \bigg]^{\frac{q-p}{q}(1-\frac{p}2)}\\\nonumber
     &\le
     \tfrac{q}{p} \mathbf I_1^\frac{p}{2}
     \bigg[ 2^{q}
     \iint_{K_{r+d}}\frac{|u(x)-u(y)|^{q}}{|x-y|^{N+q-\eps p}}\,\dx\dy
     \bigg]^{\frac{p}{q}(1-\frac{p}2)}\\\nonumber
     &\qquad\qquad\quad\cdot
     \bigg[ 2^{q}
     \iint_{K_r}\frac{|\btau_hu(x)|^{q}}{|x-y|^{N-\eps p}}\,\dx\dy
     \bigg]^{\frac{q-p}{q}(1-\frac{p}2)}\\
     &\le 
     C \mathbf I_1^\frac{p}{2}
     \bigg[
     \iint_{K_{r+d}}\frac{|u(x)-u(y)|^{q}}{|x-y|^{N+q + \eps p}}\,\dx\dy
     \bigg]^{\frac{p}{q}(1-\frac{p}2)}
     \bigg[\frac{R^{\eps p}}{\epsilon p}
     \int_{B_r}|\btau_hu|^{q}\,\dx
     \bigg]^{\frac{q-p}{q}(1-\frac{p}2)} ,
\end{align}
where $C\le 2q$. 
To obtain the last line we applied Lemma \ref{int-sing} with $\beta =\eps p$ to bound the second integral.
At this point, it must be noted that \eqref{est:J-p<2-q=p} can be  obtained from \eqref{est:J-final-q} by letting  $q \downarrow p$. This is possible, since  the constant in 
\eqref{est:J-final-q} is stable as $q\downarrow p$. 
In the {\bf case} $p=2$,  \eqref{est:J-q} reduces to 
$\mathbf J\le \tfrac12 q\mathbf I_1$
and no further estimation of $\mathbf I_1$ is necessary. We can therefore continue with the proof following \eqref{est:J-final-q}. Note also, \eqref{est:J-final-q}$_{p<2}$ reduces  to the $p=2$ estimate as  $p\uparrow  2$, regardless of whether $q=p$. Therefore, in any case we can continue with \eqref{est:J-final-q}.

Once  reached this point, we have all the estimates at hand to complete the proof of the energy estimate. Plugging~\eqref{est-I1} into the right-hand side of \eqref{est:J-final-q} we obtain
\begin{align}\label{est:J-final-q-}\nonumber
    \mathbf J
     &\le 
     \frac{C}{s R^{sp\frac{p}{2}}}
    \Big(\frac{R}{R-r}\Big)^{N+sp+1}
    \bigg[ 
    \int_{B_{R}} \frac{|\btau_hu|^{q}}{1-s} \,\dx +
    \Big(\frac{|h|}{R}\Big)^q R^{N} \boldsymbol{\mathfrak T}^{q}\bigg]^\frac{p}{2} \nonumber\\
    &\qquad\cdot 
     \bigg[
     \iint_{K_{r+d}}\frac{|u(x)-u(y)|^{q}}{|x-y|^{N+q + \eps p}}\,\dx\dy
     \bigg]^{\frac{p}{q}(1-\frac{p}2)}
     \bigg[\frac{R^{\eps p}}{\epsilon p}
     \int_{B_r}|\btau_hu|^{q}\,\dx
     \bigg]^{\frac{q-p}{q}(1-\frac{p}2)}  .
\end{align}
Finally, we apply Lemma~\ref{lem:FS-S} with $\gamma =1-\eps\frac{p}{q}$ to estimate the fractional norm of $u$ in terms of $\nabla u$ and the standard estimate for difference quotients from Lemma~\ref{lem:diff-quot-2}. In this way, we obtain
\begin{align*}
    \mathbf J
     &\le 
     \frac{C}{s R^{sp\frac{p}{2}}}
    \Big(\frac{R}{R-r}\Big)^{N+sp+1}
    \bigg[ 
    |h|^q\int_{B_{R+d}} \frac{|\nabla u|^{q}}{1-s} \,\dx +
    \Big(\frac{|h|}{R}\Big)^q R^{N} \boldsymbol{\mathfrak T}^{q}\bigg]^\frac{p}{2} \\
    &\qquad\cdot 
     \bigg[
     \frac{R^{\eps p}}{\epsilon p}
     \int_{B_{r+d}}|\nabla u|^{q}\,\dx
     \bigg]^{\frac{p}{q}(1-\frac{p}2)}
     \bigg[\frac{R^{\eps p}}{\epsilon p}
     |h|^q\int_{B_{r+d}}|\nabla u|^{q}\,\dx
     \bigg]^{\frac{q-p}{q}(1-\frac{p}2)} \\
     &=
     \frac{C}{sR^{[(1-\eps)(1-\frac{p}{2})+s\frac{p}{2}]p}}
    \Big(\frac{|h|}{R}\Big)^{q-(1-\frac{p}{2})p}
    \Big(\frac{R}{R-r}\Big)^{N+sp+1} \\
    &\qquad\cdot
    \bigg[ 
    \frac{R^q}{1-s} \int_{B_{R+d}} |\nabla u|^{q} \,\dx +
    R^{N} \boldsymbol{\mathfrak T}^{q}\bigg]^\frac{p}{2} 
     \bigg[
     \frac{R^{q}}{\epsilon}
     \int_{B_{R}}|\nabla u|^{q}\,\dx
     \bigg]^{1-\frac{p}2}.
\end{align*}
The constant $C$ has the structure
$
\widetilde C (N,p) q 4^{q}
$.
The integrals on the right-hand side are finite due to the assumption $u\in W^{1,q}_{\rm loc}(\Omega)$. 
\end{proof}

Under the additional assumption $u\in C_{\loc}^{0,\gamma}(\Omega)$, the gain in fractional differentiability in Proposition~\ref{prop:energy-q-1} can be improved. Since $p\in (1,2]$, the resulting fractional differentiability $\gamma -\frac12 (\gamma -s)p=\frac12 p s + (1-\frac12 p)\gamma$ is obviously a convex combination of $s$ and $\gamma$, and thus between $0$ and $1$.

\begin{corollary}\label{cor:energy-q-1}
 Let $p\in (1,2]$, $s\in(0,1)$, $q\in[p,\infty)$, and $\gamma\in(0,1)$.
There exists a constant $C= C(N,p,s,q)$
such that whenever
$u$ is a locally bounded $(s,p)$-harmonic function in the sense of Definition~\ref{def:loc-sol} that satisfies
$$
    u\in W^{1,q}_{\rm loc}(\Omega) \cap 
    C_{\loc}^{0,\gamma}(\Omega),
$$
then for every $0<r<R$, $d:=\frac14 (R-r)$,
$B_{R+d}\equiv B_{R+d}(x_o)\Subset\Omega$, and  every  step size $0<|h|\le d$ we have 
\begin{align*}
    &\big[
    V_\frac{q-p+2}{2}(\btau_hu)
    \big]^2_{W^{\gamma-\frac{1}{2}(\gamma-s)p,2}(B_r)} \\
    &\quad\le 
    \frac{C}{R^{sp}} \Big(\frac{|h|}{R}\Big)^{q}
    \Big(\frac{R}{R-r}\Big)^{N+sp+1}
    [u]_{C^{0,\gamma}(B_{R})}^{2-p} 
    \bigg[ 
    \frac{R^q}{1-s} \int_{B_{R+d}} |\nabla u|^{q} \,\dx +
     R^{N} \boldsymbol{\mathfrak T}^{q}\bigg],
\end{align*}
where $\boldsymbol{\mathfrak T}:=\boldsymbol{\mathfrak T}(u;x_o,R+d)$. 
Moreover, the constant $C$ has the form 
$\widetilde C (N,p) q^2 4^{q}/s$.
\end{corollary}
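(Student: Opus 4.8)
The plan is to re-use the proof of Proposition~\ref{prop:energy-q-1} verbatim up to the bound \eqref{est-I1} for the good term $\mathbf I_1$, and then to replace the interpolation argument leading to \eqref{est:J-final-q} by a single short step that exploits the H\"older continuity of $u$. More precisely, one first repeats the opening of the proof of Proposition~\ref{prop:energy-q-1}: test the differentiated equation \eqref{system-h} with $\varphi=V_\delta(\btau_hu)\eta^p$, $\delta=q-p+1\ge1$, $\eta\in\mathfrak Z_{r,R}(x_o)$ (admissible since $u$ is locally bounded), split off the tail to reach $\mathbf I=-2\mathbf T$, apply Lemma~\ref{lem:algebraic-1} with $a=u_h(x)$, $b=u_h(y)$, $c=u(x)$, $d=u(y)$, $e=\eta(x)$, $f=\eta(y)$, and estimate $\mathbf I_2$ and the tail term $\mathbf T$ via Lemma~\ref{int-sing}, Lemma~\ref{Lm:tail} and Lemma~\ref{lem:diff-quot-2}. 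This gives, with $\boldsymbol i_1$ from \eqref{def-i1} and $\boldsymbol{\mathfrak T}=\boldsymbol{\mathfrak T}(u;x_o,R+d)$, the estimate
\begin{align*}
    \mathbf I_1
    :=
    \iint_{K_R}\boldsymbol i_1(x,y)\,\dx\dy
    \le
    \frac{\widetilde C(N,p)4^q}{sR^{sp}}
    \Big(\frac{|h|}{R}\Big)^q\Big(\frac{R}{R-r}\Big)^{N+sp+1}
    \bigg[\frac{R^q}{1-s}\int_{B_{R+d}}|\nabla u|^q\,\dx+R^N\boldsymbol{\mathfrak T}^q\bigg],
\end{align*}
where we have also used Lemma~\ref{lem:diff-quot-2} with $d=\tfrac14(R-r)$ to pass from $\int_{B_R}|\btau_hu|^q$ to $|h|^q\int_{B_{R+d}}|\nabla u|^q$, exactly as at the end of that proof.

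The new input is the following observation, to be carried out on the diagonal block $K_r$, where $\eta\equiv1$ and hence $\Theta(x,y)=2$. Since $0<|h|\le d=\tfrac14(R-r)$ and $B_{R+d}\Subset\Omega$, for $(x,y)\in K_r$ the four points $x,y,x+h,y+h$ all lie in $B_R\Subset\Omega$, so
\begin{align*}
    \mathcal U(x,y)=|U_h(x,y)|+|U(x,y)|\le 2\,[u]_{C^{0,\gamma}(B_R)}\,|x-y|^\gamma .
\end{align*}
Because $p-2<0$, raising this inequality to the power $p-2$ \emph{reverses} it; plugging the resulting lower bound for $\mathcal U^{p-2}$ into \eqref{def-i1} shows that on $K_r\cap\{\mathcal U>0\}$
\begin{align*}
    \boldsymbol i_1(x,y)
    \ge
    2\big(2[u]_{C^{0,\gamma}(B_R)}\big)^{p-2}
    \frac{\big(|\btau_hu(x)|+|\btau_hu(y)|\big)^{q-p}\,|\btau_hu(x)-\btau_hu(y)|^2}{|x-y|^{N+sp+(2-p)\gamma}} ,
\end{align*}
and $sp+(2-p)\gamma=2\big(\gamma-\tfrac12(\gamma-s)p\big)=:2\alpha$. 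Off $\{\mathcal U>0\}$ this is harmless, since (as noted in the proof of Proposition~\ref{prop:energy-q-1}) $\mathcal U(x,y)=0$ forces $\btau_hu(x)=\btau_hu(y)$, so the quantity we wish to bound vanishes there too.

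It then remains to recognise the numerator above as (a multiple of) the squared finite difference of $V_{\frac{q-p+2}{2}}(\btau_hu)$: applying Lemma~\ref{lem:Acerbi-Fusco} with exponent $\tfrac{q-p+2}{2}\ge1$ and $a=\btau_hu(x)$, $b=\btau_hu(y)$ (so $C_2=\tfrac{q-p+2}{2}$) and squaring gives
\begin{align*}
    \big|V_{\frac{q-p+2}{2}}(\btau_hu(x))-V_{\frac{q-p+2}{2}}(\btau_hu(y))\big|^2
    \le
    \Big(\tfrac{q-p+2}{2}\Big)^2\big(|\btau_hu(x)|+|\btau_hu(y)|\big)^{q-p}\,|\btau_hu(x)-\btau_hu(y)|^2 .
\end{align*}
Combining the last two displays, dividing by $|x-y|^{N+2\alpha}$ and integrating over $K_r$, then using $\iint_{K_r}\boldsymbol i_1\le\iint_{K_R}\boldsymbol i_1=\mathbf I_1$ and finally inserting the bound for $\mathbf I_1$ from Step~1, yields the claimed inequality with a constant of the form $\widetilde C(N,p)\,q^2\,4^q/s$. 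The only genuinely new point — and the point one must get right — is the sign observation for $p<2$: the factor $\mathcal U^{p-2}$ in $\boldsymbol i_1$ is a \emph{negative} power of $\mathcal U$, so a H\"older bound on $u$ becomes a lower bound on $\boldsymbol i_1$ and thereby trades the prescribed amount $(2-p)\gamma$ of the kernel singularity for differentiability, lifting the exponent from $s$ up to the convex combination $\gamma-\tfrac12(\gamma-s)p$ while simultaneously upgrading the integrability exponent from $p$ to $2$; no further obstacle arises, since everything else is the already-established computation behind \eqref{est-I1}.
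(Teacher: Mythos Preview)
Your proposal is correct and follows essentially the same approach as the paper: both start from the bound \eqref{est-I1} in the proof of Proposition~\ref{prop:energy-q-1}, restrict to $K_r$ where $\Theta\equiv 2$, use the H\"older continuity of $u$ to convert the negative power $\mathcal U^{p-2}$ into a \emph{lower} bound carrying the factor $[u]_{C^{0,\gamma}}^{p-2}|x-y|^{\gamma(p-2)}$, and then apply Lemma~\ref{lem:Acerbi-Fusco} with exponent $\tfrac{q-p+2}{2}$ to recognise the $W^{\gamma-\frac12(\gamma-s)p,2}$ seminorm of $V_{\frac{q-p+2}{2}}(\btau_hu)$. The only cosmetic difference is that the paper applies Lemma~\ref{lem:diff-quot-2} in the last line of the upper bound for $\widetilde{\mathbf I}_1$, whereas you fold it in immediately when stating the bound for $\mathbf I_1$; also, for $p=2$ the exponent $p-2$ vanishes and the H\"older step is trivial rather than a genuine reversal, but this is harmless.
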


\begin{proof}
The starting point is inequality~\eqref{est-I1} from the proof of Proposition~\ref{prop:energy-q-1}. Taking into account that $\Theta(x,y)=2$ for $(x,y)\in K_r$ and that the integrand of $\mathbf I_1$ in~\eqref{est-I1} is non-negative, we have
\begin{align}\label{def:tilde-I-1}\nonumber
    \widetilde{\mathbf I}_1
    &:=
    \iint_{K_r} 
    \frac{\mathcal U(x,y)^{p-2}(|\btau_hu(x)|+|\btau_hu(y)|)^{q-p} |\btau_hu(x)-\btau_hu(y)|^2}{|x-y|^{N+sp}}\,\dx\dy \\\nonumber
    &\, \le
    \mathbf I_1
    \le 
    \frac{C}{sR^{sp}}
    \Big(\frac{R}{R-r}\Big)^{N+sp+1}
    \bigg[ 
    \int_{B_{R}} \frac{|\btau_hu|^{q}}{1-s} \,\dx +
    \Big(\frac{|h|}{R}\Big)^q R^{N} \boldsymbol{\mathfrak T}^{q}\bigg] \\
    &\, \le
    \frac{C}{sR^{sp}} \Big(\frac{|h|}{R}\Big)^q
    \Big(\frac{R}{R-r}\Big)^{N+sp+1}
    \bigg[ 
    \frac{R^q}{1-s} \int_{B_{R+d}} |\nabla u|^{q} \,\dx +
    R^{N} \boldsymbol{\mathfrak T}^{q}\bigg],
\end{align}
where $C=\widetilde C(N,p)4^q$ and 
$$
    \mathcal U(x,y)
    = 
    |U_h(x,y)|+|U(x,y)|
    =
    |u_h(x)-u_h(y)|+|u(x)-u(y)|.
$$
From the second to last line we used the standard estimate for difference quotients from Lemma~\ref{lem:diff-quot-2}.
As observed in the proof of Proposition~\ref{prop:energy-q-1}, we may restrict the domain of integration in $\widetilde{\mathbf I}_1$ to $K_r^+:=K_r\cap\{ \mathcal U(x,y)>0\}$.

To complete the proof it remains to bound $\widetilde{\mathbf I}_1$ from below. In view of the assumption $u\in C_{\loc}^{0,\gamma}(\Omega)$ we have 
\begin{align*}
    \mathcal U(x,y)^{p-2}
    &=
    \big(|u_h(x)-u_h(y)|+|u(x)-u(y)|\big)^{p-2} \nonumber\\
    &\ge 
    2^{p-2} [u]_{C^{0,\gamma}(B_{r+d})}^{p-2} |x-y|^{\gamma(p-2)}
\end{align*}
for any $(x,y)\in K_r^+$. Using this inequality and Lemma~\ref{lem:Acerbi-Fusco} applied with $\gamma=\frac{q-p+2}{p}\ge 1$, $a=\btau_hu(x)$, and $b=\btau_hu(y)$ (note that $C_2=\gamma= \frac{q-p+2}{p}$)  in the preceding estimate, we find
\begin{align}\label{lower-H}
    \widetilde{\mathbf I}_1
    &\ge 
    2^{p-2} [u]_{C^{0,\gamma}(B_{r+d})}^{p-2}
    \iint_{K_r} \frac{(|\btau_hu(x)|+|\btau_hu(y)|)^{q-p} |\btau_hu(x)-\btau_hu(y)|^2}{|x-y|^{N+sp+\gamma(2-p)}}\,\dx\dy \nonumber\\
    &\ge 
    \frac{2^p}{(q-p+2)^2} [u]_{C^{0,\gamma}(B_{R})}^{p-2}
    \iint_{K_r} \frac{\big| V_\frac{q-p+2}{2}(\btau_hu(x))-V_\frac{q-p+2}{2}(\btau_hu(y))\big|^2}{|x-y|^{N+sp+\gamma(2-p)}}\,\dx\dy \nonumber\\
    &\ge
    \frac{1}{2^{2-p}q^2} [u]_{C^{0,\gamma}(B_{R})}^{p-2}
    \big[
    V_\frac{q-p+2}{2}(\btau_hu)
    \big]^2_{W^{\gamma-\frac{1}{2}(\gamma-s)p,2}(B_r)}.
\end{align}
In combination with the upper bound for $\widetilde{\mathbf I}_1$, we conclude the proof.
\end{proof}

In contrast to Corollary \ref{cor:energy-q-1}, we now additionally assume $u\in W^{1+\theta,q}_{\rm loc}(\Omega)$. This allows us to apply the higher order tail estimate from Lemma \ref{Lm:tail-2} and Lemma \ref{Lm:I-2}. If $\theta$ is large enough, we end up with a larger exponent of the increment $h$ than in Corollary \ref{cor:energy-q-1}.

\begin{proposition}[Energy  inequality for $W^{1+\theta,q}$-solutions]\label{prop:energy-q-2}
 Let $p\in (1,2]$, $s\in(0,1)$, $q\in[p,\infty)$, and $\theta,\gamma\in(0,1)$.
Then there exists a constant $C=\widetilde C(N,p,q)/s$
such that whenever
$u$ is a locally bounded $(s,p)$-harmonic function in the sense of Definition~\ref{def:loc-sol} that satisfies
$$
    u\in W^{1+\theta,q}_{\rm loc}(\Omega) \cap 
    C_{\loc}^{0,\gamma}(\Omega),
$$
then for every $0<r<R$, $d:=\frac14 (R-r)$,
$B_{R+d}\equiv B_{R+d}(x_o)\Subset\Omega$, and  every  step size $0<|h|\le d$ we have 
\begin{align*}
    &\big[
    V_\frac{q-p+2}{2}(\btau_hu)
    \big]^2_{W^{\gamma-\frac{1}{2}(\gamma-s)p,2}(B_r)} \\
    &\qquad\le 
    \frac{C}{R^{sp}} \Big(\frac{|h|}{R}\Big)^{q-p+1+\theta}
    \Big(\frac{R}{R-r}\Big)^{N+sp+1}
    [u]_{C^{0,\gamma}(B_{R})}^{2-p} \\
    &\qquad\phantom{\le\,}\cdot
    \bigg[\frac{R^{(1+\theta)q}}{1-s} [\nabla u]^q_{W^{\theta,q}(B_{R+d})} +
    \frac{R^q}{\theta(1-s)} \int_{B_{R+d}} |\nabla u|^q\dx +
    R^N \boldsymbol{\mathfrak T}(u;R+d)^q
    \bigg].
\end{align*}
Note that the constant $C$ remains stable as $p\uparrow 2$ and blows up as $p\downarrow 1$. 
\end{proposition}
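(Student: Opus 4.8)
The plan is to run the scheme of Proposition~\ref{prop:energy-q-1} and Corollary~\ref{cor:energy-q-1}, but testing with $\eta^2$ instead of $\eta^p$ so that the higher order tail estimate Lemma~\ref{Lm:tail-2} and the cut-off estimate Lemma~\ref{Lm:I-2} become available, and then carrying the resulting extra term through the coercivity argument of Corollary~\ref{cor:energy-q-1}. Concretely, as in Proposition~\ref{prop:energy-q-1}, subtracting \eqref{weak-sol} for $u_h$ from \eqref{weak-sol} for $u$ gives \eqref{system-h}, into which I insert $\varphi:=V_\delta(\btau_hu)\,\eta^2$ with $\delta=q-p+1\ge1$ and a $C^2$-cut-off $\eta\in\widetilde{\mathfrak Z}_{r,R}(x_o)$ (admissibility of $\varphi$ follows from local boundedness of $u$). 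Splitting $\R^N$ into $B_R$ and its complement and using the antisymmetry of the kernel, one obtains $\mathbf I=-2\mathbf T$, where $\mathbf I$ is the double integral over $K_R$ with increment $[V_\delta(\btau_hu)\eta^2](x)-[V_\delta(\btau_hu)\eta^2](y)$ and $\mathbf T$ is the tail integral over $B_{\frac12(R+r)}\times(\R^N\setminus B_R)$ with factor $[V_\delta(\btau_hu)\eta^2](x)$. Applying Lemma~\ref{lem:algebraic-1-2} pointwise with $a=u_h(x)$, $b=u_h(y)$, $c=u(x)$, $d=u(y)$, $e=\eta(x)$, $f=\eta(y)$ bounds the integrand of $\mathbf I$ below by $\tfrac{2(p-1)}{2^\delta}$ times a non-negative coercive density (carrying weight $\eta(x)\eta(y)$) plus a residual density; integrating over $K_R$, the change of variables $x\leftrightarrow y$ shows that the residual equals twice the integral estimated in Lemma~\ref{Lm:I-2}. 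Restricting the coercive term to $K_r$ (where $\eta\equiv1$), I call it $\widetilde{\mathbf I}_1$ and arrive at $\tfrac{2(p-1)}{2^\delta}\widetilde{\mathbf I}_1\le 2|\mathbf T|+|\mathbf E|$, where $|\mathbf E|$ is at most twice the right-hand side of Lemma~\ref{Lm:I-2}.

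Next I estimate the three quantities. The term $|\mathbf T|$ is precisely the left-hand side of \eqref{est:sec-tail-p<2}, so Lemma~\ref{Lm:tail-2} bounds it by $\frac{C}{R^{sp}}\big(\tfrac{|h|}{R}\big)^{\delta+\theta}\big(\tfrac{R}{R-r}\big)^{N+sp+1}$ times $\big[R^{(1+\theta)q}[\nabla u]^q_{W^{\theta,q}(B_R)}+\tfrac{R^q}{\theta}\|\nabla u\|^q_{L^q(B_R)}+R^N\boldsymbol{\mathfrak T}(u;R+d)^q\big]$ with $C=\widetilde C(N,q)/s$; Lemma~\ref{Lm:I-2} gives $|\mathbf E|$ the same power $\big(\tfrac{|h|}{R}\big)^{\delta+\theta}$ and, up to the factor $\tfrac1{1-s}$ and the bound $\|u\|_{L^\infty(B_{R+d})}\le\boldsymbol{\mathfrak T}(u;R+d)$, the same bracket. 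For the lower bound on $\widetilde{\mathbf I}_1$ I proceed exactly as in Corollary~\ref{cor:energy-q-1}: on $K_r$ the Hölder bound $|u_h(x)-u_h(y)|+|u(x)-u(y)|\le 2[u]_{C^{0,\gamma}(B_R)}|x-y|^\gamma$ together with $p-2\le 0$ gives $(|U_h|+|U|)^{p-2}\ge 2^{p-2}[u]^{p-2}_{C^{0,\gamma}(B_R)}|x-y|^{\gamma(p-2)}$, and Lemma~\ref{lem:Acerbi-Fusco} with exponent $\tfrac{q-p+2}{2}$ turns $(|\btau_hu(x)|+|\btau_hu(y)|)^{q-p}|\btau_hu(x)-\btau_hu(y)|^2$ into a difference of $V_{\frac{q-p+2}{2}}(\btau_hu)$; since $sp+\gamma(2-p)=2\big[\gamma-\tfrac12(\gamma-s)p\big]$ this yields
\[
\widetilde{\mathbf I}_1\ge\frac{1}{2^{2-p}q^2}\,[u]^{p-2}_{C^{0,\gamma}(B_R)}\,\big[V_{\frac{q-p+2}{2}}(\btau_hu)\big]^2_{W^{\gamma-\frac12(\gamma-s)p,\,2}(B_r)}.
\]
Combining this with $\tfrac{2(p-1)}{2^\delta}\widetilde{\mathbf I}_1\le 2|\mathbf T|+|\mathbf E|$ and the bounds for $|\mathbf T|$ and $|\mathbf E|$, absorbing the $N,p,q$-dependent constants and using $B_R\subset B_{R+d}$ to enlarge the domains in the final bracket, produces the claimed inequality, with $[u]^{2-p}_{C^{0,\gamma}(B_R)}$ appearing from dividing by $[u]^{p-2}_{C^{0,\gamma}(B_R)}$ and the constant inheriting the factor $1/s$ from Lemma~\ref{Lm:tail-2}.

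The part I expect to demand the most care is the $|h|$-power accounting. The residual and tail contributions involve integrals of $|\btau_hu|^{\delta-1}|\btau_h\nabla u|$ and of $|\btau_hu|^{\delta}$, which have to be converted --- by H\"older's inequality, Lemma~\ref{lem:diff-quot-2}, Lemma~\ref{lem:N-FS} and Young's inequality --- into the single factor $\big(\tfrac{|h|}{R}\big)^{q-p+1+\theta}$ together with the $W^{\theta,q}$-seminorm of $\nabla u$; this is exactly where the hypothesis $u\in W^{1+\theta,q}_{\rm loc}(\Omega)$ is used and where the improvement over Corollary~\ref{cor:energy-q-1} (which has only $(|h|/R)^q$) originates. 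Since that bookkeeping is already carried out inside Lemmas~\ref{Lm:tail-2} and~\ref{Lm:I-2}, the remaining work is essentially organizational: one must arrange the auxiliary radii so those two lemmas apply --- they are phrased with the slightly smaller increment bound $d\le\tfrac18(R-r)$, which is harmless after a minor adjustment of the intermediate radii --- and then assemble them with the coercivity estimate of Corollary~\ref{cor:energy-q-1}.
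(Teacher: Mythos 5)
Your proposal matches the paper's proof: both test \eqref{system-h} with $V_\delta(\btau_h u)\eta^2$ (with a $C^2$ cut-off), split off the tail via $\mathbf I=-2\mathbf T$, apply Lemma~\ref{lem:algebraic-1-2} to obtain the coercive term with weight $\eta(x)\eta(y)$ plus the residual that by $x\leftrightarrow y$ symmetry is twice the integral in Lemma~\ref{Lm:I-2}, bound $|\mathbf T|$ by Lemma~\ref{Lm:tail-2} and the residual by Lemma~\ref{Lm:I-2}, and conclude by restricting to $K_r$ and invoking the H\"older-continuity lower bound from Corollary~\ref{cor:energy-q-1}. You have also correctly flagged the harmless $d\le\tfrac18(R-r)$ versus $d=\tfrac14(R-r)$ radius mismatch, which the paper's proof glosses over.
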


\begin{proof}
As in the proof of Proposition~\ref{prop:energy-q-1} we omit the reference to the center $x_o$. We consider $\eta\in \mathfrak Z_{r,R}$ and proceed exactly as in the proof of Proposition~\ref{prop:energy-q-1} up to identity~\eqref{I-2T}. The only difference consists in the fact that we replace $\eta^p$ by $\eta^2$ in the choice of the test function.
We also use the abbreviations
$U_h(x,y):= u_h(x)-u_h(y)$ and  $U(x,y):= U_0(x,y)$ and 
$\mathcal U(x,y):= |U_h(x,y)|+|U(x,y)|$. In this way, we obtain
\begin{equation*}
      \mathbf I=-2\mathbf T,
\end{equation*}
where $\mathbf I$ and $\mathbf T$ are defiend by
\begin{align*}
    \mathbf I
    &:=
    \iint_{K_R}\!\!
    \frac{\big(V_{p-1}(U_h(x,y)) {-} V_{p-1}(U(x,y))\big)\big([V_{\delta}(\btau_hu)\eta^2] (x) {-} [V_{\delta}(\btau_hu)\eta^2](y)\big) }{|x-y|^{N+sp}}\,\dx\dy,\\
    \mathbf T
    &:=
    \iint_{B_{\frac12 (R+r)}\times (\R^N\setminus B_R)}
    \frac{\big(V_{p-1}(U_h(x,y)) - V_{p-1}(U(x,y))\big)[V_{\delta}(\btau_hu)\eta^2](x)}{|x-y|^{N+sp}}\,\dx\dy,
\end{align*}
with $\delta=q-p+1\ge 1$. 
The {\bf local integral} $\mathbf I$ is estimated from below using Lemma~\ref{lem:algebraic-1-2} with $a=u_h(x)$, $b=u_h(y)$, $c=  u(x)$, $d=  u(y)$, $e=\eta(x)$, and  $f=\eta(y)$.  In fact, we have
\begin{align*}
    \mathbf I
    &\ge
    \tfrac{p-1}{2^{q-p}} \mathbf I_1 + \mathbf I_2,
\end{align*}
where  $\mathbf I_1$ and $\mathbf I_2$ are defined by 
\begin{align*}
    \mathbf I_1
    &:=
    \iint_{K_R} \frac{\mathcal U(x,y)^{p-2}(|\btau_hu(x)|+|\btau_hu(y)|)^{q-p} |\btau_hu(x)-\btau_hu(y)|^2\eta (x)\eta(y)}{|x-y|^{N+sp}}\,\dx\dy,
\end{align*}
and 
\begin{align*}
    \mathbf I_2
    &:=
    \iint_{K_R}
     \frac{ \big(V_{p-1}(U_h(x,y)) - V_{p-1}(U(x,y))\big) \big([V_{\delta}(\btau_hu)\eta](x) + [V_{\delta}(\btau_hu)\eta] (y)\big) }{|x-y|^{N+sp}}\\
    &\qquad\qquad\qquad\qquad
    \qquad\qquad\qquad
    \qquad
    \cdot  \big(\eta(x)-\eta(y)\big)\,\dx\dy.
\end{align*}
Joining the estimate from below for $\mathbf I$ with the identity $\mathbf I=-2\mathbf T$, we obtain
$$
     \mathbf I_1
     \le 
      \tfrac{2^{q-p}}{p-1} \big[|\mathbf I_2| +
      |\mathbf T| \big].
$$
To estimate $|\mathbf T|$ we use Lemma~\ref{Lm:tail-2}, while to estimate  $|\mathbf I_2|$ we use Lemma~\ref{Lm:I-2}. This implies
\begin{align*}
      \mathbf I_1
    &\le 
    \frac{C}{sR^{sp}} \Big(\frac{|h|}{R}\Big)^{q-p+1+\theta}
    \Big(\frac{R}{R-r}\Big)^{N+sp+1}\\
    &\phantom{\le\,}\cdot
    \bigg[\frac{R^{(1+\theta)q}}{1-s} [\nabla u]^q_{W^{\theta,q}(B_{R+d})} +
    \frac{R^q}{\theta(1-s)} \int_{B_{R+d}} |\nabla u|^q\,\dx +
    R^N \boldsymbol{\mathfrak T}^q
    \bigg],
\end{align*}
for a constant $C=C(N,p,q)$. 
At this point it remains to estimate $\mathbf I_1$ from below. Since $\eta=1$ in $B_r$, we may decrease the domain of integration to $K_r$ and omit $\eta$ in the integrand, so that $\mathbf I_1\ge \widetilde{\mathbf I}_1$, where $\widetilde{\mathbf I}_1$ is defined in \eqref{def:tilde-I-1}  from the proof of Corollary~\ref{cor:energy-q-1}. In view of the regularity assumption $u\in C_{\loc}^{0,\gamma}(\Omega)$, we subsequently may use inequality~\eqref{lower-H} from the proof of Corollary~\ref{cor:energy-q-1} to obtain 
\begin{align*}
    \mathbf I_1
    &\ge 
    \widetilde{\mathbf I}_1 
    \ge 
    \frac{1}{2^{2-p}q^2} [u]_{C^{0,\gamma}(B_{r+h})}^{p-2}
    \iint_{K_r} \frac{\big| V_\frac{q-p+2}{2}(\btau_hu(x))-V_\frac{q-p+2}{2}(\btau_hu(y))\big|^2}{|x-y|^{N+sp+\gamma(2-p)}}\,\dx\dy \\
    &=
    \frac{1}{2^{2-p}q^2} [u]_{C^{0,\gamma}(B_{R})}^{p-2}
    \big[
    V_\frac{q-p+2}{2}(\btau_hu)
    \big]^2_{W^{\gamma-\frac{1}{2}(\gamma-s)p,2}(B_r)}.
\end{align*}
Together with the upper bound for $\mathbf I_1$ this proves the energy inequality.
\end{proof}

\subsection{Energy inequality for $W^{s+\theta,p}$-solutions}

In this subsection we prove the final energy inequality. It can be seen as a variant of Proposition~\ref{prop:energy-q-1} for the case $q=p$ and in a situation where the existence of a weak derivative is not apriori assumed. In fact, this energy inequality will be used in Section~\ref{sec:W^1p} to show that $(s,p)$-harmonic functions have a weak gradient in 
$L^{p}_{\rm loc}$.
This is achieved by improving the fractional differentiability step by step in an iteration process.

\begin{proposition}[Energy inequality for $W^{s+\theta,p}$-solutions]\label{prop:energy-p}
Let $p\in (1,2]$, $s\in(0,1)$, and $\theta\in [0,1-s)$.
There exist a constant $C=\widetilde C(N,p)/s$
such that whenever
$u$ is a locally bounded $(s,p)$-harmonic function in the sense of Definition~\ref{def:loc-sol} that satisfies
$$
 u\in W^{s+\theta,p }_{\rm loc}(\Omega),
$$
then for every $0<r<R$, $d:=\frac14 (R-r)$,
$B_{R+d}\equiv B_{R+d}(x_o)\Subset\Omega$,  and  every  step size $0<|h|\le d$ we have 
\begin{align*}
    [\btau_hu]^p_{W^{s+\theta (1-\frac{p}{2}),p}(B_r)}
    &\le
    \frac{C}{R^{sp\frac{p}{2}}} 
     \Big(\frac{R}{R-r}\Big)^{N+sp+1} 
    [u]_{W^{s+\theta, p}(B_{R})}^{p(1-\frac{p}2)}\\
     &\qquad\qquad \cdot
     \Bigg[
    \int_{B_{R}} \frac{| \btau_hu|^{p}}{1-s} \dx
    +
    \Big(\frac{|h|}{R}\Big)^pR^N \boldsymbol{\mathfrak T}(u;R+d)^{p}
    \Bigg]^{\frac{p}2} .
\end{align*}
\end{proposition}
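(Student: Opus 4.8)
The plan is to specialize the proof of Proposition~\ref{prop:energy-q-1} to the exponent $q=p$, the one substantial change being that at the last step I keep the nonlocal factor in fractional form rather than converting it into an integral of $\nabla u$ (which is not available here). Since $q=p$ forces $\delta=q-p+1=1$, I test the difference equation~\eqref{system-h} with $\varphi:=\btau_hu\,\eta^p$ for $\eta\in\mathfrak Z_{r,R}$. This $\varphi$ is admissible: from $\theta\ge0$ we have $u\in W^{s,p}_{\rm loc}(\Omega)$, hence $\btau_hu\in W^{s,p}$ near $\spt\eta$, and multiplying by $\eta^p$ (with $u$ locally bounded) keeps it in $W^{s,p}(B_R)$ with support in $B_{\frac12(R+r)}$. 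Splitting $\R^N$ into $B_R$ and its complement produces, exactly as in Proposition~\ref{prop:energy-q-1}, the identity $\mathbf I=-2\mathbf T$, with the local double integral $\mathbf I$ over $K_R$ and the tail term $\mathbf T$ over $B_{\frac12(R+r)}\times(\R^N\setminus B_R)$.

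The estimates of $\mathbf I$ and $\mathbf T$ are then the $q=p$ instances of the corresponding steps in Proposition~\ref{prop:energy-q-1}. An application of Lemma~\ref{lem:algebraic-1} with $\delta=1$ gives $\mathbf I\ge\frac1C\mathbf I_1-C\mathbf I_2$, hence $\mathbf I_1\le C(\mathbf I_2+|\mathbf T|)$, where $\mathbf I_1$ carries the integrand $\boldsymbol i_1$ of~\eqref{def-i1} (with $q=p$) and $\mathbf I_2$ contains $(|\btau_hu(x)|+|\btau_hu(y)|)^p|\eta(x)-\eta(y)|^p$. Using $|\eta(x)-\eta(y)|\le\frac{C}{R-r}|x-y|$ with Lemma~\ref{int-sing} for $\beta=(1-s)p$ gives $\mathbf I_2\le\frac{C}{R^{sp}}(\frac{R}{R-r})^p\int_{B_R}\frac{|\btau_hu|^p}{1-s}\,\dx$, and Lemma~\ref{Lm:tail} (with $q-p+1=1$) followed by H\"older's and Young's inequalities (using $\tfrac{p}{p-1}\ge p$ and $\frac{|h|}{R}\le1$) gives $|\mathbf T|\le\frac{C}{sR^{sp}}(\frac{R}{R-r})^{N+sp+1}\big[\int_{B_R}|\btau_hu|^p\,\dx+(\frac{|h|}{R})^pR^N\boldsymbol{\mathfrak T}(u;R+d)^p\big]$. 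Combining these gives the analogue of~\eqref{est-I1}, namely
$$\mathbf I_1\le\frac{C}{sR^{sp}}\Big(\frac{R}{R-r}\Big)^{N+sp+1}\Big[\int_{B_R}\frac{|\btau_hu|^p}{1-s}\,\dx+\Big(\frac{|h|}{R}\Big)^pR^N\boldsymbol{\mathfrak T}(u;R+d)^p\Big].$$

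It remains to bound $\mathbf I_1$ from below by the left-hand side. I set $\eps:=1-s-\theta$, which lies in $(0,1)$ exactly because $\theta\in[0,1-s)$; then $(1-\eps)(1-\tfrac p2)+s\tfrac p2=s+\theta(1-\tfrac p2)$, so the quantity $\mathbf J:=[\btau_hu]^p_{W^{s+\theta(1-p/2),p}(B_r)}$ is precisely the $\mathbf J$ of~\eqref{est:J-q} for $q=p$. Since on $K_r$ one has $\Theta\equiv2$ and $\delta=1$, the integrand of $\mathbf J$ equals $2^{-p/2}\boldsymbol i_1^{p/2}\boldsymbol j^{1-p/2}$ with $\boldsymbol j(x,y)=\mathcal U(x,y)^p|x-y|^{-N-(1-\eps)p}$, and H\"older's inequality with exponents $\tfrac2p$ and $\tfrac2{2-p}$ (the case $p=2$ being trivial, cf.\ the proof of Proposition~\ref{prop:energy-q-1}) gives $\mathbf J\le C(p)\,\mathbf I_1^{p/2}\big(\iint_{K_r}\boldsymbol j\big)^{1-p/2}$. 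Here is the one place where the argument genuinely departs from Proposition~\ref{prop:energy-q-1}: instead of passing to $\nabla u$ via Lemma~\ref{lem:diff-quot-2}/Lemma~\ref{lem:FS-S} — which is not permitted, since the $W^{1,p}$-regularity is not yet known — I use $\mathcal U(x,y)^p\le2^{p-1}(|u_h(x)-u_h(y)|^p+|u(x)-u(y)|^p)$, a change of variables, and the inclusion $K_r(h)\subset K_{r+d}$ (valid for $|h|\le d=\tfrac14(R-r)$) to obtain $\iint_{K_r}\boldsymbol j\le2^p[u]^p_{W^{1-\eps,p}(B_{r+d})}=2^p[u]^p_{W^{s+\theta,p}(B_{r+d})}\le2^p[u]^p_{W^{s+\theta,p}(B_R)}$.

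Inserting the bound for $\mathbf I_1$ into $\mathbf J\le C(p)\,\mathbf I_1^{p/2}\big(\iint_{K_r}\boldsymbol j\big)^{1-p/2}$ and simplifying via $s^{-p/2}\le s^{-1}$ and $(\frac{R}{R-r})^{(N+sp+1)p/2}\le(\frac{R}{R-r})^{N+sp+1}$ — both valid because $0<\tfrac p2\le1$ and $\frac{R}{R-r}\ge1$ — yields the asserted inequality with a constant of the form $\widetilde C(N,p)/s$. I expect no deep obstacle beyond the bookkeeping of constants; the delicate point is the one flagged above, i.e.\ keeping $\iint_{K_{r+d}}|u(x)-u(y)|^p|x-y|^{-N-(1-\eps)p}\,\dx\dy$ in the form $[u]^p_{W^{1-\eps,p}(B_{r+d})}$ with the calibration $1-\eps=s+\theta$, and checking that this is exactly compatible with the left-hand side exponent $s+\theta(1-\tfrac p2)$.
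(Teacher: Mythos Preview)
Your proposal is correct and is essentially identical to the paper's own proof: the paper simply invokes inequality~\eqref{est:J-final-q-} from the proof of Proposition~\ref{prop:energy-q-1} with $q=p$ and $\eps=1-s-\theta$ (the paper writes $\theta-(1-s)$, a sign typo), observes that the $W^{1,q}$-assumption has not been used up to that point, and identifies the remaining double integral over $K_{r+d}$ with $[u]^p_{W^{s+\theta,p}(B_{r+d})}$ rather than passing to $\nabla u$. Your calibration $1-\eps=s+\theta$ and the key observation that one must \emph{not} convert the $\boldsymbol j$-integral to a gradient norm are exactly the paper's reasoning.
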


\begin{proof}
The starting point is the inequality~\eqref{est:J-final-q-} from the proof of Proposition~\ref{prop:energy-q-1}, applied with $q=p$ and $\epsilon=\theta-(1-s)\in(0,1)$. Note that the assumption $u\in W^{1,q}_{\rm loc}(\Omega)$ in Proposition~\ref{prop:energy-q-1} has not been used up to this point. The inequality takes the form
\begin{align*}
     \mathbf J
     &:=
    \iint_{K_r}
     \frac{| \btau_hu(x)-\btau_hu(y)|^p}{|x-y|^{N+[s+\theta(1-\frac{p}{2})]p}}\,\dx\dy 
     \\
     &\, \le 
     \frac{C }{s R^{sp\frac{p}{2}}}
    \Big(\frac{R}{R-r}\Big)^{N+sp+1}
    \bigg[
     \iint_{K_{r+d}}\frac{|u(x)-u(y)|^{p}}{|x-y|^{N+(s+\theta)p}}\,\dx\dy
     \bigg]^{1-\frac{p}2} \\
    &\qquad\cdot 
     \bigg[ 
    \int_{B_{R}} \frac{|\btau_hu|^{p}}{1-s} \,\dx +
    \Big(\frac{|h|}{R}\Big)^p R^{N} \boldsymbol{\mathfrak T}^{p}\bigg]^{\frac{p}{2}} \\
    &\, \le 
    \frac{C }{s R^{sp\frac{p}{2}}}
    \Big(\frac{R}{R-r}\Big)^{N+sp+1}
    [u]_{W^{s+\theta,p}(B_{R})}^{p(1-\frac{p}2)} 
     \bigg[ 
    \int_{B_{R}} \frac{|\btau_hu|^{p}}{1-s} \,\dx +
    \Big(\frac{|h|}{R}\Big)^p R^{N} \boldsymbol{\mathfrak T}^{p}\bigg]^{\frac{p}{2}} ,
\end{align*}
where  $C=(N,p)$. This proves the claimed inequality.
\end{proof}

\section{$W^{1,p}$-estimates}\label{sec:W^1p}
In this section we prove that $(s,p)$-harmonic functions possess a gradient $\nabla u$ in $L^p_{\rm loc}(\Omega ,\R^N)$. The main ingredient is the energy inequality from Proposition~\ref{prop:energy-p}, which is applied in the next lemma in order to obtain an estimate of the second finite difference of $u$ in $L^p$.

\begin{lemma}\label{lem:Nikol-est-p<2}
Let $p\in (1,2]$, $s\in(0,1)$, and $\theta\in [0,1-s)$. Then, there exists a constant $C=\widetilde C(N,p)/s$ such that whenever $u$ is a locally bounded $(s,p)$-harmonic function in the sense of Definition~\ref{def:loc-sol}, which satisfies
$$
    u\in W^{s+\theta, p}_{\rm loc}(\Omega),
$$
 we have
\begin{align*}
      \int_{B_r}|\btau_h(\btau_h u)|^p\,\dx 
      &\le
      C\Big(\frac{|h|}{R}\Big)^{(s+\theta (1-\frac{p}2))p}
       \Big(\frac{R}{R-r}\Big)^{N+sp+1} \\
       &\qquad\cdot  
       \Big[R^{(s+\theta)p}(1-s)[u]_{W^{s+\theta,p}(B_R)}^p +
       R^{N} \|u\|_{L^\infty(B_R)}^p\Big]^{1-\frac{p}2} \\
       &\qquad \cdot
    \bigg[
    \int_{B_{R}} | \btau_hu|^{p} \dx
    +
    \Big(\frac{|h|}{R}\Big)^p R^{N}
    \boldsymbol{\mathfrak T}(u;R)^p
    \bigg]^{\frac{p}2} 
\end{align*}
for any two concentric balls $B_r\subset B_R\Subset \Omega$, $0<r<R$,  and any  $0<|h|\le d:=\frac17 (R-r)$.
\end{lemma}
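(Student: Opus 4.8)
The plan is to turn the energy estimate of Proposition~\ref{prop:energy-p} into the asserted second-difference bound by feeding it into the abstract finite-difference lemma, Lemma~\ref{lem:N-FS}, applied to $w=\btau_hu$. Set $\gamma:=s+\theta\big(1-\tfrac p2\big)$; since $\theta\in[0,1-s)$ one has $0<s\le\gamma<1$, and since $u$ is locally bounded with $u\in W^{s+\theta,p}_{\rm loc}(\Omega)\subset W^{\gamma,p}_{\rm loc}(\Omega)$, Proposition~\ref{prop:energy-p} shows that $\btau_hu\in W^{\gamma,p}$ on slightly smaller balls with a controlled seminorm. First I would fix scales: with $d:=\tfrac17(R-r)$ put $\rho_1:=r+d$, $\rho_2:=r+2d$, $R_1:=r+5d$, so that $\rho_1<\rho_2<R_1$ and $R_1+d=r+6d<R$, whence $B_{R_1+d}\subset B_R\Subset\Omega$; moreover all these radii lie in $[\tfrac57R,R]$, so every ratio $R/\rho_i$, $\rho_i/R$, $R_1/(R_1-\rho_1)$ is bounded by an absolute multiple of $1$ or of $R/(R-r)$, and $R/d=7R/(R-r)$. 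An application of Lemma~\ref{lem:N-FS} to $w=\btau_hu$ on the ball $B_{\rho_1}$ (its $L^p$-norm is finite because $B_{\rho_1+|h|}\subset B_R$, its $W^{\gamma,p}$-seminorm by Proposition~\ref{prop:energy-p}), with step $|h|\le d$, then gives
\[
    \int_{B_r}|\btau_h(\btau_hu)|^p\,\dx
    \le
    C(N,p)\,|h|^{\gamma p}\Big[(1-\gamma)[\btau_hu]^p_{W^{\gamma,p}(B_{\rho_1})}
    +\Big(\tfrac{\rho_1^{(1-\gamma)p}}{d^p}+\tfrac1{\gamma d^{\gamma p}}\Big)\|\btau_hu\|^p_{L^p(B_{\rho_1})}\Big].
\]
Writing $\mathcal A:=(1-s)R^{(s+\theta)p}[u]^p_{W^{s+\theta,p}(B_R)}+R^N\|u\|^p_{L^\infty(B_R)}$ and $\mathcal B:=\int_{B_R}|\btau_hu|^p\,\dx+\big(\tfrac{|h|}R\big)^pR^N\boldsymbol{\mathfrak T}(u;R)^p$, the target is a bound by $C\big(\tfrac{|h|}R\big)^{\gamma p}\big(\tfrac R{R-r}\big)^{N+sp+1}\mathcal A^{1-p/2}\mathcal B^{p/2}$, so it remains to estimate the two terms in the bracket above.

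For the \emph{seminorm term} I would apply Proposition~\ref{prop:energy-p} with $(\rho_1,R_1)$ in place of $(r,R)$ (its ``$d$'' then equals $\tfrac14(R_1-\rho_1)=d$, its tail radius is $R_1+d=r+6d$). Multiplying the output by $|h|^{\gamma p}(1-\gamma)$ and using the identity $\gamma p-\tfrac{sp^2}2=(s+\theta)p\big(1-\tfrac p2\big)$, the prefactor $|h|^{\gamma p}R_1^{-sp^2/2}$ becomes $\big(\tfrac{|h|}R\big)^{\gamma p}R^{(s+\theta)p(1-p/2)}$ up to absolute constants, and $R^{(s+\theta)p(1-p/2)}$ merges with $[u]^{p(1-p/2)}_{W^{s+\theta,p}(B_R)}$ into $\big(R^{(s+\theta)p}[u]^p_{W^{s+\theta,p}(B_R)}\big)^{1-p/2}$. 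Then I would pull the factor $(1-s)^{-p/2}$ out of the $[\,\cdot\,]^{p/2}$-bracket (allowed since $1-s\le1$) and combine it with $1-\gamma\le1-s$ into $(1-s)^{1-p/2}$, producing $\big((1-s)R^{(s+\theta)p}[u]^p_{W^{s+\theta,p}(B_R)}\big)^{1-p/2}\le\mathcal A^{1-p/2}$; finally I would replace $\boldsymbol{\mathfrak T}(u;r+6d)$ by $C(N,p)\boldsymbol{\mathfrak T}(u;R)$ via Remark~\ref{rem:t} and enlarge $B_{R_1}$ to $B_R$ in the remaining integrals. This bounds $|h|^{\gamma p}(1-\gamma)[\btau_hu]^p_{W^{\gamma,p}(B_{\rho_1})}$ by the asserted right-hand side, with a constant of the form $\widetilde C(N,p)/s$ inherited from the proposition.

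For the \emph{lower-order term} I would estimate $\|\btau_hu\|^p_{L^p(B_{\rho_1})}$ in two ways and interpolate. Trivially $\|\btau_hu\|^p_{L^p(B_{\rho_1})}\le\int_{B_R}|\btau_hu|^p\,\dx\le\mathcal B$. Applying Lemma~\ref{lem:N-FS} instead to $w=u$ on $B_{\rho_2}$ with exponent $s+\theta\in(0,1)$ and step $|h|\le d$, and using $\|u\|^p_{L^p(B_{\rho_2})}\le C(N)R^N\|u\|^p_{L^\infty(B_R)}$, $1-s-\theta\le1-s$, $s+\theta\ge s$, and the radius bounds, one obtains $\|\btau_hu\|^p_{L^p(B_{\rho_1})}\le\tfrac{C(N,p)}s\big(\tfrac{|h|}R\big)^{(s+\theta)p}\big(\tfrac R{R-r}\big)^p\mathcal A$. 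Raising the second bound to the power $1-\tfrac p2$ and the first to $\tfrac p2$ gives $\|\btau_hu\|^p_{L^p(B_{\rho_1})}\le\big(\tfrac{C(N,p)}s\big)^{1-p/2}\big(\tfrac{|h|}R\big)^{(s+\theta)p(1-p/2)}\big(\tfrac R{R-r}\big)^{p(1-p/2)}\mathcal A^{1-p/2}\mathcal B^{p/2}$. Multiplying by $|h|^{\gamma p}\big(\tfrac{\rho_1^{(1-\gamma)p}}{d^p}+\tfrac1{\gamma d^{\gamma p}}\big)\le\tfrac{C(p)}s|h|^{\gamma p}\tfrac{R^{(1-\gamma)p}}{d^p}=\tfrac{C(p)}s\big(\tfrac{|h|}R\big)^{\gamma p}\big(\tfrac R{R-r}\big)^p$ (using $\rho_1\le R$, $d\le R$, $\gamma\ge s$), discarding the surplus factor $\big(\tfrac{|h|}R\big)^{(s+\theta)p(1-p/2)}\le1$, and noting $p+p(1-\tfrac p2)=2p-\tfrac{p^2}2\le2\le N+sp+1$, one again arrives at a bound of the required shape.

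The argument uses nothing beyond Proposition~\ref{prop:energy-p}, two applications of Lemma~\ref{lem:N-FS}, and elementary inclusions; the genuine difficulty is the bookkeeping. One has to carry all scale factors $R$, $R/(R-r)$, and especially the powers of $1/s$ arising from $1/\gamma\le1/s$ in Lemma~\ref{lem:N-FS} and from the energy inequality, through both reductions and the two finite-difference estimates, in order to reach the stated constant $\widetilde C(N,p)/s$; one must also absorb the mismatch between the tail radius $r+6d$ delivered by Proposition~\ref{prop:energy-p} and the radius $R$ appearing in the statement, which is handled by Remark~\ref{rem:t}.
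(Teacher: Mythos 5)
Your proof follows the same architecture as the paper's: one application of Lemma~\ref{lem:N-FS} to $w=\btau_h u$ with $\gamma=s+\theta(1-\tfrac p2)$, the seminorm term controlled by Proposition~\ref{prop:energy-p}, and the remaining $L^p$-norm term forced into the $\mathcal A^{1-p/2}\mathcal B^{p/2}$ shape by interpolating between two bounds for $\|\btau_h u\|^p_{L^p}$. The auxiliary radii differ (the paper takes $\tilde r=\tfrac17(5r+2R)$, $\widetilde R=\tfrac17(r+6R)$ so that $\widetilde R+d=R$, which lets the tail radius come out as $R$ on the nose and makes Remark~\ref{rem:t} unnecessary; your choice $R_1+d=r+6d<R$ instead needs Remark~\ref{rem:t}), but that is cosmetic.

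The one substantive discrepancy is the constant. In the interpolation step you bound $\|\btau_h u\|^p_{L^p(B_{\rho_1})}$ from above by $\frac{C(N,p)}{s}\big(\tfrac{|h|}{R}\big)^{(s+\theta)p}\big(\tfrac{R}{R-r}\big)^p\mathcal A$ via a \emph{second} application of Lemma~\ref{lem:N-FS}, this time to $w=u$. That application contributes a factor $1/(s+\theta)\le 1/s$, which you then raise to the power $1-\tfrac p2$; multiplied by the $1/s$ already present in the kernel prefactor $\tfrac1{\gamma d^{\gamma p}}$, your overall constant for the lower-order term has the form $\widetilde C(N,p)/s^{\,2-p/2}$, not $\widetilde C(N,p)/s$ as asserted. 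This is not fatal for the downstream iteration in Lemma~\ref{lem:diff-quot-p<2} (it only changes the blow-up rate as $s\downarrow 0$, while stability as $s\uparrow 1$ survives), but it does fall short of the precise form of the constant stated in the lemma — and your final paragraph explicitly claims to ``reach the stated constant $\widetilde C(N,p)/s$.'' The fix is cheap and is exactly what the paper does: replace your `Bound 2' for $\|\btau_h u\|^p_{L^p(B_{\rho_1})}$ with the crude $L^\infty$-bound $\|\btau_h u\|^p_{L^p(B_{\rho_1})}\le 2^pR^N\|u\|^p_{L^\infty(B_R)}\le 2^p\mathcal A$, which carries no $1/s$. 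You lose the decaying factor $\big(\tfrac{|h|}{R}\big)^{(s+\theta)p(1-\frac p2)}$, but you discard that factor at the end anyway, so nothing is sacrificed. With this substitution the two finite-difference applications reduce to one, and the constant comes out as claimed.
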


\begin{proof}
 We apply the energy estimate from Proposition \ref{prop:energy-p} with  $\theta
\in [0,1-s)$, 
$\tilde r=\frac17(5r+2R)$, and 
$\widetilde R=\frac17(r+6R)$ instead of  $r$ and $R$. 
Then, $d=\tfrac14(\widetilde R-\tilde r)=\tfrac17(R-r)$. Moreover, $\frac{\widetilde R}{\widetilde R-\widetilde r}$
and $\frac1{\widetilde R}$ can be bounded in terms of
$\frac{R}{R-r}$ and $\frac1R$ apart from a multiplicative factor. With these choices, the energy inequality becomes (note that $\widetilde R+d=R$)
\begin{align}\label{est:energy-delta=1-p<2}\nonumber
     [\btau_hu]_{W^{s+\theta (1-\frac{p}2),p}(B_{\widetilde r})}^p &\le
    \frac{C}{R^{sp\frac{p}{2}}} 
     \Big(\frac{R}{R-r}\Big)^{N+sp+1} 
    [u]_{W^{s+\theta,p}(B_{\widetilde R})}^{p(1-\frac{p}2)}\\
     &\quad\cdot
     \bigg[
    \int_{B_{\widetilde R}} \frac{| \btau_hu|^{p}}{1-s} \dx
    +
    \Big(\frac{|h|}{R}\Big)^p R^N
    \boldsymbol{\mathfrak T}(u;R)^p
    \bigg]^{\frac{p}2},
\end{align}
where $C= \widetilde C(N,p)/s$. 
In order to estimate the left-hand side of the above inequality from below we apply  Lemma~\ref{lem:N-FS} with $w$, $q$, $\gamma$, $R$, $d$ replaced by $\btau_hu$, $p$, $s+\theta (1-\frac{p}{2})$,
$\widetilde r$, $d=\frac1{7}(R-r)$.  With a constant $C=C(N,p)$ we obtain 
\begin{align*}
    \int_{B_{\widetilde r-d}}|\btau_\lambda(\btau_h u)|^p\,\dx
    &\le
    C |\lambda|^{(s+\theta (1-\frac{p}2))p}
    \bigg[ 
    \big(\underbrace{1-(s+\theta(1-\tfrac{p}{2}))}_{\le\, 1-s}\big)[\btau_hu]_{W^{s+\theta (1-\frac{p}2),p}(B_{\widetilde r})}^p\\
    &\qquad\qquad\qquad\qquad\;+
    \frac{1}{sR^{(s+\theta (1-\frac{p}2))p}}
    \Big(\frac{R}{R-r}\Big)^p
    \| \btau_hu\|^p_{L^p(B_{\widetilde r})}
    \bigg],
\end{align*}
for any $0<|\lambda|\le d$ and any $0<|h|\le d$. To obtain the last line from the  application of Lemma \ref{lem:N-FS}, we used
$s+\theta (1-\tfrac{p}2)<1$ and
\begin{align*}
    \frac{\widetilde r^{(1-s-\theta (1-\frac{p}{2}))p}}{d^p}
    +
    \frac{1}{(s+\theta (1-\frac{p}2))d^{(s+\theta(1-\frac{p}2))p}}
    &\le 
    \frac{C(p)}{s R^{(s+\theta (1-\frac{p}2))p}}\Big(\frac{R}{R-r}\Big)^p.
\end{align*}
In the above inequality  for $|\btau_\lambda(\btau_h u)|^p$, we use \eqref{est:energy-delta=1-p<2} to bound the fractional semi-norm of $\btau_h u$ on the right-hand side and obtain 
\begin{align*}
      & \int_{B_{\widetilde r-d}} |\btau_\lambda(\btau_h u)|^p\,\dx\\
      &\quad\le
      C\Big(\frac{|\lambda|}{R}\Big)^{(s+\theta (1-\frac{p}2))p}
       \Big(\frac{R}{R-r}\Big)^{N+sp+1} \\
       &\qquad
       \cdot
       \Bigg[
       (1-s)
       \Big[R^{(s+\theta)p}[u]_{W^{s+\theta,p}(B_R)}^p\Big]^{1-\frac{p}2}
       \bigg[
    \int_{B_{R}} \frac{| \btau_hu|^{p}}{1-s} \,\dx
    +
    \Big(\frac{|h|}{R}\Big)^p R^{N}
    \boldsymbol{\mathfrak T}(u;R)^p
    \bigg]^{\frac{p}2} \\
    &\quad\le
      C\Big(\frac{|\lambda|}{R}\Big)^{(s+\theta (1-\frac{p}2))p}
       \Big(\frac{R}{R-r}\Big)^{N+sp+1} \\
       &\qquad
       \cdot
       \bigg[R^{(s+\theta)p}(1-s)[u]_{W^{s+\theta,p}(B_R)}^p +
       \int_{B_{\widetilde R}} | \btau_hu|^{p} \,\dx\bigg]^{1-\frac{p}2} \\
       &\qquad \cdot
       \bigg[
    \int_{B_{R}} | \btau_hu|^{p} \,\dx
    +
    \Big(\frac{|h|}{R}\Big)^p R^{N}
    \boldsymbol{\mathfrak T}(u;R)^p
    \bigg]^{\frac{p}2} ,
\end{align*}
where $C= \widetilde C(N,p)/s$. Letting $\lm=h$ and observing that $\widetilde r-d\ge r$ as well as 
$$
    \int_{B_{\widetilde R}} | \btau_hu|^{p} \,\dx
    \le 
    2^p R^N \|u\|_{L^\infty(B_R)}^p,
$$
we conclude the proof.
\end{proof}

The next lemma is the core element for the proof of the $W^{1,p}$-regularity. The underlying idea is to apply Lemma~\ref{lem:Nikol-est-p<2}  in an iteration argument in order to increase the order of fractional differentiability. 

\begin{lemma}\label{lem:diff-quot-p<2}
Let $p\in ( 1,2]$ and  $s\in(0,1)$. Then, there exists a  constant $C\ge 1$ depending on $N$, $p$ and $s$, such that whenever $u$ is a locally bounded $(s,p)$-harmonic function in the sense of Definition~\ref{def:loc-sol}, $B_{R}\equiv B_{R}(x_o)\Subset \Omega$, and $r\in(0,R)$ we have
\begin{align}\label{est:tau_hu^p-p<2}
    \int_{B_r} |\btau_h u|^p \,\dx 
    \le 
    C\Big(\frac{|h|}{R}\Big)^p \Big(\frac{R}{R-r}\Big)^{(1+\frac{4}{sp})(N+sp+1
    )} 
    \boldsymbol{\mathfrak K}^{p}
\end{align}
for any $h\in\R^N\setminus\{0\}$ with $|h|\le R-r$, where 
\begin{align}\label{def:K<2}
    \boldsymbol{\mathfrak K}^p
    :=
    R^{sp}(1-s)[u]^p_{W^{s,p}(B_{R})} +
    R^{N} \boldsymbol{\mathfrak T}(u;R)^p .
\end{align}
Moreover, the constant $C$ is stable as $s\uparrow1$.
\end{lemma}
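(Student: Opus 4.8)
The strategy is a bootstrap in the fractional differentiability exponent driven by Lemma~\ref{lem:Nikol-est-p<2}, combined with Lemma~\ref{lem:Domokos} to convert the second-order finite difference estimate into a first-order one, and finally Lemma~\ref{lem:FS-N} to upgrade this to a genuine gain in fractional Sobolev regularity. The starting point is the preset regularity $u\in W^{s,p}_{\rm loc}(\Omega)$, i.e.\ $\theta=0$ in Lemma~\ref{lem:Nikol-est-p<2}. Running that lemma once produces a bound on $\int_{B_r}|\btau_h(\btau_hu)|^p$ of order $|h|^{sp}$ (taking $\theta=0$, so $s+\theta(1-\frac p2)=s$), with the right-hand side still involving $\int_{B_R}|\btau_hu|^p$ and the semi-norm $[u]_{W^{s,p}(B_R)}$. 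Using the trivial bound $\int_{B_R}|\btau_hu|^p\le 2^pR^N\|u\|_{L^\infty}^p$ (or keeping it as a difference quotient quantity), Lemma~\ref{lem:Domokos} in the case $\gamma=s<1$ yields $\int_{B_r}|\btau_hu|^p\le C|h|^{sp}[\cdots]$; then Lemma~\ref{lem:FS-N} promotes this to $u\in W^{\beta,p}_{\rm loc}$ for any $\beta<s$. This is not yet an improvement, so the real work is to iterate.

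\textbf{The iteration.} Suppose at stage $k$ we know $u\in W^{s+\theta_k,p}_{\rm loc}(\Omega)$ with a quantitative estimate on $[u]_{W^{s+\theta_k,p}}$ in terms of $\boldsymbol{\mathfrak K}$. Feeding $\theta_k$ into Lemma~\ref{lem:Nikol-est-p<2} gives a second-difference bound of order $|h|^{(s+\theta_k(1-\frac p2))p}$. If $s+\theta_k(1-\frac p2)<1$ we again apply Lemma~\ref{lem:Domokos} (case $\gamma<1$) to get a first-difference bound of the same order, then Lemma~\ref{lem:FS-N} to conclude $u\in W^{s+\theta_{k+1},p}_{\rm loc}$ for any $\theta_{k+1}<\theta_k(1-\frac p2)+ \text{(gain)}$; more precisely the new differentiability exponent is (just below) $s+\theta_k(1-\frac p2)$, so the increment satisfies $\theta_{k+1}\approx s+\theta_k(1-\tfrac p2)-s=\theta_k(1-\tfrac p2)$... wait, that would shrink. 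The correct reading is that the \emph{new} exponent $s+\theta_{k+1}$ we may claim is $s+\theta_k(1-\tfrac p2)$ only if $\theta_k(1-\tfrac p2)>\theta_k$, which fails; hence the scheme must instead be set up so that at each step we gain a fixed fraction: writing $\sigma_k:=s+\theta_k$ for the current differentiability, Lemma~\ref{lem:Nikol-est-p<2} with $\theta=\theta_k$ gives order $|h|^{p\sigma_k - \frac p2\theta_k p/p}$... The clean way is: define the sequence by $\theta_0=0$ and $\theta_{k+1}:=\min\{1-s-\epsilon_0,\ s+\theta_k(1-\tfrac p2)-s\}$ — no. Let me restate: what Lemma~\ref{lem:Nikol-est-p<2} really buys, via Lemmas~\ref{lem:Domokos} and~\ref{lem:FS-N}, is that from $W^{s+\theta,p}$ one gets $W^{\sigma',p}$ for all $\sigma'<s+\theta(1-\tfrac p2)+ (1-\tfrac p2)\cdot(\text{something})$; in the worst case $p=2$ the map is $\theta\mapsto s$, i.e.\ no gain, which cannot be — so actually for $p=2$ one jumps straight from $W^{s,2}$ to $W^{1,2}$ because $s+\theta$ with the Domokos step at $\gamma=1$ behaves differently. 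So the proof must split: \textbf{for $p<2$}, the iteration $\theta\mapsto s+\theta(1-\tfrac p2)-s = \theta - \tfrac p2\theta +$\,gain; here $1-\tfrac p2>0$ and one checks the fixed point of $\theta\mapsto s+\theta(1-\tfrac p2)$ is $\theta^\ast$ with $s+\theta^\ast = \tfrac{2s}{p}>1$ (since $p<2$), so iterating drives the differentiability exponent $\sigma_k=s+\theta_k$ strictly increasing toward $\tfrac{2s}{p}$; after finitely many steps $\sigma_k$ would exceed $1$, but it first crosses the threshold $1$, at which point we are in the regime $\gamma>1$ of Lemma~\ref{lem:Domokos} and Lemma~\ref{lem:diff-quot-1} gives weak differentiability, i.e.\ $u\in W^{1,p}_{\rm loc}$, and the claimed estimate $\int_{B_r}|\btau_hu|^p\le C|h|^p[\cdots]$ follows from Lemma~\ref{lem:diff-quot-2}.

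\textbf{Bookkeeping the constants and geometry.} At each of the finitely many iteration steps one loses a factor of the form $(\tfrac{R}{R-r})^{N+sp+1}$ times constants depending on $N,p,s$; since the number of steps is bounded by $\lceil \tfrac{1}{1-2/p}\cdot\text{(distance to threshold)}\rceil$, which is $O(\tfrac{1}{sp})$-many when one tracks it carefully (this is where the exponent $(1+\tfrac{4}{sp})(N+sp+1)$ in \eqref{est:tau_hu^p-p<2} comes from — each step contracts the radius by a factor controlled by $1/(sp)$), the accumulated constant is finite and, crucially, stable as $s\uparrow 1$ because then $p\le 2$ forces $1-2/p\le 0$\ldots here care is needed: as $s\uparrow1$ the threshold-crossing happens after $O(1)$ steps (since we may start essentially at $W^{s,p}$ with $s$ near $1$), and the per-step constants are bounded uniformly. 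One telescopes the radii: choose a decreasing sequence $R=R_0>R_1>\cdots>R_k=r$ with $R_{j}-R_{j+1}\sim (R-r)/k$, apply the one-step estimate on the pair $(R_{j+1},R_j)$, and multiply through; the number $k$ and hence the total exponent of $\tfrac{R}{R-r}$ is explicit. \textbf{The main obstacle} is precisely this constant-tracking: one must verify that the product of the per-step constants (each roughly $C(N,p)/s$ to some power, times $(\tfrac{R}{R-r})^{N+sp+1}$) telescopes to the stated $(1+\tfrac{4}{sp})(N+sp+1)$ exponent and a constant that does not blow up as $s\uparrow1$ — in particular one must count the iteration steps sharply enough, since a naive bound on their number would introduce an $s$-dependence that degenerates. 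The non-quantitative skeleton (existence of $\nabla u\in L^p_{\rm loc}$) is routine given Lemmas~\ref{lem:Nikol-est-p<2}, \ref{lem:Domokos}, \ref{lem:FS-N}, \ref{lem:diff-quot-1}; the quantitative estimate with the precise exponent is the delicate part.
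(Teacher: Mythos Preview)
Your overall strategy is correct and matches the paper's: iterate Lemma~\ref{lem:Nikol-est-p<2} in tandem with Lemmas~\ref{lem:Domokos} and~\ref{lem:FS-N} to push the fractional differentiability exponent past $1$, at which point the case $\gamma>1$ of Lemma~\ref{lem:Domokos} delivers the first-difference bound of order $|h|^p$.

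There is, however, a genuine gap in your iteration analysis. You read Lemma~\ref{lem:Nikol-est-p<2} as producing a second-difference bound of order $|h|^{(s+\theta(1-\frac p2))p}$, i.e.\ you take only the prefactor and bound the bracket $\big[\int_{B_R}|\btau_h u|^p + (|h|/R)^p R^N\boldsymbol{\mathfrak T}^p\big]^{p/2}$ trivially by a constant. This gives the contracting map $\theta\mapsto\theta(1-\tfrac p2)$ on the increment, which you correctly flag (``that would shrink''). Your subsequent repair via the map $\theta\mapsto s+\theta(1-\tfrac p2)$ with alleged fixed point $s+\theta^\ast=\tfrac{2s}{p}$ is not a correct reading of the lemma, and in any case the claim $\tfrac{2s}{p}>1$ ``since $p<2$'' is false whenever $s\le\tfrac p2$ (take $s=\tfrac13$, $p=\tfrac32$), so the argument would stall below the threshold.

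The mechanism you are missing is that the bracket $\big[\int_{B_R}|\btau_h u|^p+\cdots\big]^{p/2}$ itself carries powers of $|h|$. If at stage $k$ one already has $\int|\btau_h u|^p\le C(|h|/R)^{\sigma_k p}\boldsymbol{\mathfrak K}^p$ with $\sigma_k<1$, then feeding this back into the bracket and applying the lemma with $\theta\approx\sigma_k-s$ gives a second-difference exponent
\[
\big(s+\theta(1-\tfrac p2)\big)+\sigma_k\cdot\tfrac p2\;\approx\; \sigma_k+\tfrac{sp}2.
\]
Thus each step gains a \emph{fixed additive} amount $\sim\tfrac{sp}2$, independently of how small $s$ is, and the threshold $1$ is reached after $O\big(\tfrac{1-s}{sp}\big)$ steps --- this is exactly the origin of the factor $\tfrac4{sp}$ in the exponent of $\tfrac{R}{R-r}$. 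The paper implements this with increments $\sigma_i=\tfrac14 sp\,i$ (the extra halving absorbs the loss in Lemma~\ref{lem:FS-N}), sets $i_o=\big\lceil\tfrac{4(1-s)}{sp}\big\rceil$, and splits into $i_o=1$ (one step overshoots $1$; this is the regime $s$ near $1$, which is what makes the stability transparent) versus $i_o\ge 2$ (a full induction, run with an auxiliary damping factor $\alpha<1$ to keep the intermediate exponents uniformly away from $1$). Your side remark that for $p=2$ one ``jumps straight'' to $W^{1,2}$ is also incorrect: at $p=2$ the gain per step is exactly $s$, so $\big\lceil\tfrac{1-s}{s}\big\rceil$ steps are still required when $s$ is small.
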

\begin{proof}
For $i\in \N_0$ we define sequences
\begin{equation*}
    \rho_i:= r+\frac{R-r}{2^{i+1}}
    \quad\mbox{and}\quad
    \sigma_i:=\tfrac14 spi.
\end{equation*}
First, we apply 
Lemma \ref{lem:N-FS} to $\big(u, p, s, d=\frac12 (R-r), \rho_o\big)$ instead of $\big( w,q,\gamma , d,R\big)$. Note that
$\rho_o+\frac12 (R-r)=R$. With a constant $C=C(N,p)$, Lemma \ref{lem:N-FS} shows that
\begin{align}\label{est:tau-hu-rho-o}\nonumber
     \int_{B_{\rho_o}} |\btau_h u|^p \,\dx 
     &\le
     C |h|^{sp} \bigg[
     (1-s)[u]_{W^{s,p}(B_R)}^p+\Big( \frac{R^{(1-s)p}}{d^{p}}
     +\frac1{sd^{sp}}\Big) \|u\|^p_{L^p(B_R)}
     \bigg]\\\nonumber
     &\le
     C\Big( \frac{|h|}{R} \Big)^{sp}\bigg[ R^{sp}
     (1-s)[u]_{W^{s,p}(B_R)}^p+\Big( \frac{R^{p}}{d^{p}}
     +\frac{R^{sp}}{sd^{sp}}\Big) R^N\|u\|_{L^\infty(B_R)}^p
     \bigg] \\
     &\le 
     \widetilde C_o \Big( \frac{|h|}{R} \Big)^{sp}\Big(\frac{R}{R-r}\Big)^p \boldsymbol{\mathfrak K}^p,
\end{align}
for any $0<|h|\le\tfrac12 (R-r)$. To obtain the last line we used the definitions of $d$ and $\boldsymbol{\mathfrak K}$. The constant $\widetilde C_o$ has the form $\widetilde C(N,p)/s$. 

Next, we apply Lemma \ref{lem:Nikol-est-p<2} for given $i\in\N_0$
with $R=\rho_i$,  $r=\rho_{i+1}$, $d_i=\frac1{7}(\rho_i-\rho_{i+1})=\frac{1}{7\cdot 2^{i+2}}(R-r)$, and $\theta=\sigma\in [0,1-s)$ to be chosen later.  This is only possible if the condition $u\in W^{s+\sigma,p}(B_{\rho_i})$ is fulfilled, which we will assume for the moment.
With a constant $C=\widetilde C(N,p)/s$, the application of Lemma \ref{lem:Nikol-est-p<2}
yields
\begin{align}\label{est:it-i-1-p<2} 
    \int_{B_{\rho_{i+1}}} |\btau_h (\btau_h u)|^p \,\dx
    &\le 
    C \Big(\frac{|h|}{\rho_i}\Big)^{(s+\sigma (1-\frac{p}2))p}
    \Big(\frac{\rho_i}{\rho_i-\rho_{i+1}}\Big)^{N+sp+1} \nonumber\\
    &\qquad\cdot
    \Big[\rho_i^{(s+\sigma)p}
    (1-s)[u]^p_{W^{s+\sigma,p}(B_{\rho_i})} +
    \rho_i^{N} \|u\|_{L^\infty(B_{\rho_i})}\Big]^{1-\frac{p}2} \nonumber\\
     &\qquad\cdot
     \bigg[\int_{B_{\rho_i}}|\btau_h u|^p\,\dx +
     \Big(\frac{|h|}{\rho_i}\Big)^p \rho_i^{N} 
     \boldsymbol{\mathfrak T}(u;\rho_i)^p 
     \bigg]^\frac{p}2 
\end{align}
for any $h\in\R^N\setminus\{0\}$ with $|h|\le d_i$. 
Observing that
$$
    R
    \ge
    \rho_i
    =
    \frac{R}{2^{i+1}}+ r\Big( 1-\frac{1}{2^{i+1}}\Big)
    >
    \frac{R}{2^{i+1}},
$$
and 
$$
    \rho_i-\rho_{i+1}
    =
    \frac{R-r}{2^{i+2}},
$$
we have
$$
     \frac{1}{\rho_i^{(s+\sigma (1-\frac{p}2))p}} \Big(\frac{\rho_i}{\rho_i-\rho_{i+1}}\Big)^{N+sp+1}
     \le
     \frac{C(N,p)2^{(N+2p+1)i}}{R^{(s+\sigma (1-\frac{p}2))p}} \Big(\frac{R}{R-r}\Big)^{N+sp+1}
$$
and by Remark~\ref{rem:t} we have 
\begin{align*}
    \rho_i^{N}\boldsymbol{\mathfrak T}(u;\rho_i)^p
    \le
    \rho_i^{N}
    \bigg[ 
    C(N)
    \Big(\frac{R}{\rho_i}\Big)^{N}\bigg]^\frac{p}{p-1}\boldsymbol{\mathfrak T}(u;R)^p
    \le
     C(N)^\frac{p}{p-1}2^{\frac{N}{p-1}(i+1)}R^{N}\boldsymbol{\mathfrak T}(u;R)^p.
\end{align*}
With these remarks, \eqref{est:it-i-1-p<2} immediately becomes
\begin{align}\label{est:it-i-2-p<2}\nonumber
    \int_{B_{\rho_{i+1}}} |\btau_h (\btau_h u)|^p \,\dx
     &\le 
    C_i \Big(\frac{|h|}{R}\Big)^{(s+\sigma (1-\frac{p}2))p}
    \Big(\frac{R}{R-r}\Big)^{N+sp+1}\\\nonumber
    &\phantom{\le\,}\cdot
    \Big[R^{(s+\sigma)p}(1-s) [u]^p_{W^{s+\sigma,p}(B_{\rho_i})} +
    \boldsymbol{\mathfrak K}^p \Big]^{1-\frac{p}2}\\
     &\phantom{\le\,}\cdot\bigg[
     \int_{B_{\rho_i}}|\btau_h u|^p\,\dx +
     \Big(\frac{|h|}{R}\Big)^p \boldsymbol{\mathfrak K}^p 
     \bigg]^\frac{p}2 
\end{align}
for any $h\in\R^N\setminus\{0\}$ with $|h|\le d_i$, provided that $u\in W^{s+\sigma,p}(B_{\rho_i})$, where $\sigma\in[0,1-s)$, and with a constant $C_i$ of the form 
\begin{equation*}
    C_i=
    \boldsymbol c\,\boldsymbol b^{i+1},
    \quad \mbox{where $\boldsymbol c=\boldsymbol c(N,p,s)=\frac{\widetilde C(N,p)}{s}$\,,\, $\boldsymbol b=\boldsymbol b(N,p) = 2^{\frac{Np}{p-1}+3p+1}$.}
\end{equation*}
Later in the proof, we will iterate inequality~\eqref{est:it-i-2-p<2}. However, we start by considering the inequality for $i=0$. In fact, we apply \eqref{est:it-i-2-p<2}$_{i=0}$ with $\sigma =0$. This is possible, since the regularity assumption $u\in W^{s,p}(B_{\rho_o})$ is granted.
Subsequently, we use inequality \eqref{est:tau-hu-rho-o}, the definition of $\boldsymbol{\mathfrak K}$ in \eqref{def:K<2} and the fact that  $\frac{|h|}{R}\le 1$, which allows us to reduce the exponent from $p$ to $sp$. In this way, we obtain 
\begin{align}\label{it-0}
    \int_{B_{\rho_{1}}}& |\btau_h (\btau_h u)|^p \,\dx \nonumber\\
     &\le 
    C_0 \Big(\frac{|h|}{R}\Big)^{sp}
    \Big(\frac{R}{R-r}\Big)^{N+sp+1} \nonumber\\
    &\phantom{\le\,}
    \cdot\Big[
    \underbrace{
    R^{sp}(1-s)[u]^p_{W^{s,p}(B_{\rho_o})}}_{\le \boldsymbol{\mathfrak K}^p} +
    \boldsymbol{\mathfrak K}^p\Big]^{1-\frac{p}2}
     \bigg[
    \int_{B_{\rho_o}}|\btau_h u|^p\,\dx +
    \Big(\frac{|h|}{R}\Big)^p\boldsymbol{\mathfrak K}^p
     \bigg]^\frac{p}2 \nonumber\\
    &\le 
    C_02^{1-\frac{p}2} \Big(\frac{|h|}{R}\Big)^{sp}
    \Big(\frac{R}{R-r}\Big)^{N+sp+1}
     \bigg[
    \widetilde C_o\Big(\frac{|h|}{R}\Big)^{sp}\Big(\frac{R}{R-r}\Big)^{p} +
    \Big(\frac{|h|}{R}\Big)^p
     \bigg]^\frac{p}2 \boldsymbol{\mathfrak K}^p \nonumber\\
     &\le
     \widetilde B_o \Big(\frac{|h|}{R}\Big)^{sp(1+\frac{p}{2})}
    \Big(\frac{R}{R-r}\Big)^{2(N+sp+1)}\boldsymbol{\mathfrak K}^p,
\end{align}
for every $0<|h|\le d_o$ and with a constant $2C_0 \widetilde C_o^\frac{p}2\le 2\boldsymbol c\boldsymbol b \widetilde C_o=:\widetilde B_o$.

We now choose $i_o\in\N$, such that $s+\sigma_{i_o}\ge 1$ and $s+\sigma_{i_o-1}< 1$. The precise value of $i_o$ is
\begin{equation}\label{def-io}
    i_o=\Bigl\lceil\frac{4(1-s)}{sp}\Bigr\rceil.
\end{equation}
Note that $\sigma_i\in[0,1-s)$ for any $i\in\{0,\dots,i_o-1\}$.
Moreover, $i_o$ only depends on $p$ and $s$.
In order to examine the stability of our estimates as $s\uparrow1$ we consider {\bf two cases}, namely $i_o=1$, which corresponds to $s\in[s_o,1)$, and $i_o\ge 2$, which corresponds to $s\in(0,s_o)$, where $s_o:=\frac{4}{4+p}$. 

First, we deal with \textbf{the case} $i_o=1$. In this case we have $s\in[s_o,1)$. From~\eqref{it-0} we have 
\begin{align*}
    \int_{B_{\rho_{1}}} |\btau_h (\btau_h u)|^p \,\dx 
    \le
     \widetilde B_o \Big(\frac{|h|}{R}\Big)^{sp(1+\frac{p}{2})}
    \Big(\frac{R}{R-r}\Big)^{2(N+sp+1)}\boldsymbol{\mathfrak K}^p,
\end{align*}
for every $0<|h|\le d_o$, where $\widetilde B_o=2\boldsymbol c\boldsymbol b \widetilde C_o$.
For the exponent of $\frac{|h|}{R}$ we compute
\begin{align*}
    s(1+\tfrac{p}{2})
    \ge 
    s_o(1+\tfrac{p}{2})
    =
    \frac{4+2p}{4+p}
    =
    1+\frac{p}{4+p}\ge 1+\tfrac15.
\end{align*}
Therefore, we can apply inequality~\eqref{est-1st-diffquot>1} from Lemma \ref{lem:Domokos} with
$(q,\gamma, r,R,d)$ replaced by $\big( p,s(1+\frac{p}{2}), \rho_{1}, \rho_o,d_{1}\big)$ and with
$$
    M^p=\frac{\widetilde B_o}{R^{sp(1+\frac{p}{2})}}
    \Big(\frac{R}{R-r}\Big)^{2(N+sp+1)}\boldsymbol{\mathfrak K}^p
$$
and obtain
\begin{align*}
     \int_{B_{\rho_{1}}}& |\btau_h u|^p \,\dx\\
     &\le
     C(p)|h|^p
     \bigg[
    \Big(\frac{M}{s(1+\tfrac{p}{2})-1}\Big)^p d_{1}^{(s(1+\frac{p}{2})-1)p}
    +
    \frac{1}{d_{1}^p}\int_{B_{\rho_o}}|u|^p\,\dx
     \bigg]\\
     &\le
     C(p)|h|^p
     \bigg[\widetilde B_o
    \underbrace{\frac{d_{1}^{(s(1+\frac{p}{2})-1)p}}{\frac15 R^{sp(1+\frac{p}{2})}}}_{\le \frac{5}{R^{p}}} \Big(\frac{R}{R-r}\Big)^{2(N+sp+1)}
    +
    \frac{2^N}{R^p}\Big(\frac{R}{\frac1{28}(R-r)} \Big)^p
     \bigg]\boldsymbol{\mathfrak K}^p\\
    &\le
    C(N,p)\widetilde B_o\Big(\frac{|h|}{R}\Big)^p \Big(\frac{R}{R-r}\Big)^{2(N+sp+1)}\boldsymbol{\mathfrak K}^p
\end{align*}
for every $0<|h|\le d_{1}$. Observe that the constant remains stable as $s\downarrow s_o$ and $s\uparrow 1$. Moreover, it blows up as $p\downarrow 1$ and remains stable as $p\uparrow 2$. Since $\rho_1\ge r$, this proves inequality~\eqref{est:tau_hu^p-p<2} in the case $i_o=1$ for any $0<|h|\le d_{1}$. 

Next, we consider \textbf{the case} $i_o\ge2$, where $s\in(0,s_o)$. 
In this range, no stability problem arises, since $s$ is bounded away from 1. 
Now, let
\begin{equation*}
    \alpha:=\frac{1-s}{\sigma_{i_o}+\frac18sp}.
\end{equation*}
Since $\sigma_{i_o}\ge 1-s$, we have 
\begin{equation}\label{est:alpha}
    \alpha\le \frac{1-s}{1-s+\frac18sp}<1.
\end{equation}
On the other hand, since $\sigma_{i_o}=\sigma_{i_o-1}+\frac14 sp<1-s +\frac14 sp$ and and the fact that $[0,1]\ni s\mapsto \frac{1-s}{1-s+\frac38 sp}$ is decreasing, we have
\begin{align}\label{est:alpha>1/5}
    \alpha
    >
    \frac{1-s}{1-s +\frac14 sp +\frac18 sp}
    =
    \frac{1-s}{1-s+ \frac38 sp}
    \ge
    \frac{1-s_o}{1-s_o+ \frac38 s_o p}
    =
    \tfrac25.
\end{align}
Next, we estimate the distance from $s+\alpha \sigma_{i_o}$ to $1$. Using in turn the definition of $\alpha$, i.e. $1-s=\alpha(\sigma_{i_o}+\tfrac18sp)$ and~\eqref{est:alpha>1/5}, we find
\begin{align}\label{est:sigma-io}
    s+\alpha \sigma_{i_o}
    =
    1-(1-s) +\alpha \sigma_{i_o}
    =
    1- \tfrac18sp\alpha 
    <
    1-\tfrac{1}{20}sp.
\end{align}
Similarly, we can estimate the distance from  $s+\alpha \sigma_{i_o+1}$ to $1$. In fact, from~\eqref{est:sigma-io} and~\eqref{est:alpha>1/5} we have
\begin{align}\label{est:sigma-io+1}
    s+\alpha \sigma_{i_o+1}
    =
    s + \alpha \sigma_{i_o} + \tfrac14sp\alpha 
    =
    1 - \tfrac18sp\alpha + \tfrac14sp\alpha
    =
    1 + \tfrac18sp\alpha
    >
    1+ \tfrac{1}{20}sp.
\end{align}
Recall that we are dealing with the case $i_o\ge 2$.
By {\bf induction} we show for any $i\in\{ 1,\dots ,i_o-1\}$ that
\begin{align}\label{est:ind-i}\nonumber
    \int_{B_{\rho_i}}&|\btau_hu|^p\,\dx\\
    &\le
    \underbrace{
    2\boldsymbol c^i\boldsymbol b^{1+2+\dots +i}C_\ast^{2i-1}\widetilde C_o}_{:=
    \mathfrak C_i} \Big(\frac{|h|}{R}\Big)^{(s+\alpha\sigma_{i+1})p}
    \Big(\frac{R}{R-r}\Big)^{(i+1)(N+sp+1)}\boldsymbol{\mathfrak K}^p
\end{align}
for any $0<|h|\le d_i$,
and
\begin{align}\label{ind-requirement}
    [u]_{W^{s+\alpha\sigma_i,p}(B_{\rho_i})}^p
    &\le
    \underbrace{
    C_\ast \mathfrak C_i}_{=:\mathfrak D_i}
    \frac{1}{R^{(s+\alpha\sigma_i)p}}\Big(\frac{R}{R-r}\Big)^{(i+1)(N+sp+1)}\boldsymbol{\mathfrak K}^p.
\end{align}
The constant  $C_\ast$  is defined by
\begin{align*}
    C_\ast :=28^p2^{N+1} \max\bigg\{ \frac{C(p)}{s^p}, \frac{C(N,p)}{s}\bigg\},
\end{align*}
where $C(p)$ stands for the constant in \eqref{est-1st-diffquot<1} from Lemma \ref{lem:Domokos}, and $C(N,p)$ for the one from Lemma \ref{lem:FS-N}.

Let us start  by considering {\bf the case} $i=1$. From~\eqref{it-0} we have
\begin{align}\label{est:tauh-tauh-p<2}\nonumber
    \int_{B_{\rho_{1}}}|\btau_h (\btau_h u)|^p \,\dx
      &\le 
    \widetilde B_o \Big(\frac{|h|}{R}\Big)^{sp(1+\frac{p}{2})}
    \Big(\frac{R}{R-r}\Big)^{2(N+sp+1)}\boldsymbol{\mathfrak K}^p\\
    &\le
     \widetilde B_o \Big(\frac{|h|}{R}\Big)^{(s+\alpha \sigma_2)p}
    \Big(\frac{R}{R-r}\Big)^{2(N+sp+1)} \boldsymbol{\mathfrak K}^p,
\end{align}
for every $0<|h|\le d_o$, where $\widetilde B_o=2\boldsymbol c\boldsymbol b \widetilde C_o$.
In the transition to the last line we used  $\sigma _2=s\frac{p}{2}$ and $\alpha <1$; cf.~\eqref{est:alpha}. Note that the exponent $s+\alpha\sigma_2$
of $|h|$ is strictly less than 1. In fact, we have $s+\alpha\sigma_2\le s+\alpha\sigma_{i_o}<1-\tfrac1{20}sp$; cf.~\eqref{est:sigma-io}. 
Inequality \eqref{est:tauh-tauh-p<2} plays the role of assumption \eqref{ass:Domokos}$_{\sigma<1}$ in Lemma \ref{lem:Domokos}
and permits us to apply \eqref{est-1st-diffquot<1}  with $\big(p, s+\alpha\sigma_2, \rho_1, \rho_{o}, d_o\big)  $ in the place of $\big(q, \gamma, r,R, d\big)$ and with
$$
    M^p
    := 
    \frac{\widetilde B_o}{R^{(s+\alpha\sigma_2)p}}
    \Big(\frac{R}{R-r}\Big)^{2(N+sp+1)}\boldsymbol{\mathfrak K}^p.
$$
Using again the $L^\infty$-bound for $u$, the lower bound $1-(s+\alpha\sigma_2)\ge \frac{1}{20}s$, the fact that $s+\alpha\sigma_2<1<N+sp+1$, the definition of $\boldsymbol{\mathfrak K}$, and $d_o=\frac{1}{7 \cdot 4}(R-r)$, we obtain for every $0<|h|\le d_o$ that
\begin{align*}
     \int_{B_{\rho_{1}}} |\btau_h u|^p \,\dx
     &\le
     C(p) |h|^{(s+\alpha\sigma_2)p}
     \bigg[
        \bigg(\frac{M}{\frac1{20}s}\bigg)^p
        +
        \frac{1}{d_o^{(s+\alpha\sigma_2)p}}
        \int_{B_{\rho_o}}|u|^p\,\dx
     \bigg]\\
     &\le
      \frac{28^pC(p) \widetilde B_o}{s^p}\Big(\frac{|h|}{R}\Big)^{(s+\alpha\sigma_2)p}\\
      &\phantom{\le \,}\cdot
     \bigg[
       \Big(\frac{R}{R-r}\Big)^{2(N+sp+1)}\boldsymbol{\mathfrak K}^p
        +
        \Big(\frac{R}{R-r}\Big)^{(s+\alpha\sigma_2)p}
        2^{N}R^N\| u\|_{L^\infty (B_R)}^p
     \bigg]\\
     &\le
     \underbrace{
    \frac{28^p2^{N+1}C(p)}{s^p}}_{\le C_\ast}\widetilde B_o
     \Big(\frac{|h|}{R}\Big)^{(s+\alpha\sigma_2)p}\Big(\frac{R}{R-r}\Big)^{2(N+sp+1)}\boldsymbol{\mathfrak K}^p. 
\end{align*}
This proves \eqref{est:ind-i}$_{i=1}$ with the constant $C_\ast \widetilde B_o=2\mathbf c\mathbf b C_\ast\widetilde C_o= \mathfrak C_1$.
To obtain the assertion~\eqref{ind-requirement}$_{i=1}$, we apply Lemma \ref{lem:FS-N} with $(q,\gamma,\beta,R,d)$ replaced by $\big( p,s+\alpha\sigma_2, s+\alpha\sigma_1, \rho_1, d_o\big)$ and
\begin{equation*}
    M^p
    :=
     \frac{\mathfrak C_1}{R^{(s+\alpha\sigma_2)p}}\Big(\frac{R}{R-r}\Big)^{2(N+sp+1)}\boldsymbol{\mathfrak K}^p. 
\end{equation*}
Then, $\beta-\gamma =\alpha (\sigma_2-\sigma_1)=\alpha s\frac{p}{4}\ge\frac1{10}sp>\frac1{10}s $; cf.~\eqref{est:alpha>1/5}. In this context, Lemma \ref{lem:FS-N} and $s+\alpha\sigma_1>s$ imply  that 
\begin{align*}
   & [u]^p_{W^{s+\alpha\sigma_1,p}(B_{\rho_1})}\\
    &\quad\le
   C(N,p)
   \Bigg[
   \frac{d_o^{\alpha s\frac{p}{4}p}}{\frac{1}{10}s}
   M^p +
   \frac{1}{(s+\alpha\sigma_1)d_o^{(s+\alpha\sigma_1)p}}\int_{B_{\rho_1}}|u|^p\,\dx
   \Bigg]\\
   &\quad\le
   \frac{ 28^p\cdot 2^{N}C(N,p)}{s}
   \frac{1}{R^{(s+\alpha\sigma_1)p}}\\
   &\qquad\qquad\cdot
   \bigg[\mathfrak C_1
   \underbrace{
   \frac{R^{(s+\alpha\sigma_1)p}(R-r)^{\alpha s\frac{p}{4}p}}{R^{(s+\alpha\sigma_2)p}}}_{\le 1}\Big(\frac{R}{R-r}\Big)^{2(N+sp+1)}
   +\Big( \frac{R}{R-r}\Big)^{(s+\alpha\sigma_1)p}
   \bigg]\boldsymbol{\mathfrak K}^p\\
   &\quad\le 
   \underbrace{\frac{28^p2^{N+1}C(N,p)}{s}}_{\le C_\ast}
   \frac{\mathfrak C_1}{R^{(s+\alpha\sigma_1)p}}\Big(\frac{R}{R-r}\Big)^{2(N+sp+1)} \boldsymbol{\mathfrak K}^p\\
    &\quad \le 
   \frac{C_\ast \mathfrak C_1}{R^{(s+\alpha\sigma_1)p}}\Big(\frac{R}{R-r}\Big)^{2(N+sp+1)} \boldsymbol{\mathfrak K}^p.
\end{align*}
In turn we used $(s+\alpha\sigma_1)p<2(N+sp+1)$ to homogenize the exponents of $\frac{R}{R-r}$ and the definitions of $\boldsymbol{\mathfrak K}$ and $C_\ast$.
This proves \eqref{ind-requirement}$_{i=1}$ with the constant $C_\ast \mathfrak C_1=\mathfrak D_1$.

For the {\bf induction step}, we assume that \eqref{est:ind-i}  and \eqref{ind-requirement}  hold true for some $i\in\{1,\dots ,i_o-2\}$, and show they continue to hold for $i+1$. To this end, applying \eqref{est:it-i-2-p<2}$_i$ with $\sigma=
\alpha\sigma_i$, which is possible since \eqref{ind-requirement}$_i$ ensures that
$u\in W^{s+\alpha \sigma_i,p}(B_{\rho_i})$, we obtain
\begin{align*}
    \int_{B_{\rho_{i+1}}} |\btau_h (\btau_h u)|^p \,\dx
     &\le 
    C_i\Big(\frac{|h|}{R}\Big)^{(s+\alpha \sigma_i (1-\frac{p}2))p}
    \Big(\frac{R}{R-r}\Big)^{N+sp+1}\\
    &\qquad\cdot\Big[
    R^{(s+\alpha \sigma_i)p}(1-s)[u]^p_{W^{s+\alpha \sigma_i,p}(B_{\rho_i})} +
    \boldsymbol{\mathfrak K}^p\Big]^{1-\frac{p}2}\\
     &\qquad
    \cdot
     \bigg[
    \int_{B_{\rho_i}}|\btau_h u|^p + 
    \Big(\frac{|h|}{R}\Big)^p \boldsymbol{\mathfrak K}^p\,\dx
     \bigg]^\frac{p}2\\
     &\le 
    C_i \Big(\frac{|h|}{R}\Big)^{(s+\alpha \sigma_i (1-\frac{p}2))p}
    \Big(\frac{R}{R-r}\Big)^{N+sp+1}\boldsymbol{\mathfrak K}^p\\
    &\qquad
    \cdot\bigg[\mathfrak D_i\Big(\frac{R}{R-r}\Big)^{(i+1)(N+sp+1)}
    + 1\bigg]^{1-\frac{p}2}\\
    &\qquad
    \cdot
     \bigg[
    \mathfrak C_i\Big(\frac{|h|}{R}\Big)^{(s+\alpha\sigma_{i+1})p}
    \Big(\frac{R}{R-r}\Big)^{(i+1)(N+sp+1)} + 
    \Big(\frac{|h|}{R}\Big)^p
     \bigg]^\frac{p}2
\end{align*}
for every $0<|h|\le d_i$.
Here, to obtain the last line we  used \eqref{est:ind-i}$_i$ and \eqref{ind-requirement}$_i$. Since $s+\alpha\sigma_{i+1}\le s+\alpha\sigma_{i_o-1}<1$, we may reduce the exponent of $\frac{|h|}{R}$ form $p$ to $(s+\alpha\sigma_{i+1})p$. This leads to
\begin{align*}
    &\int_{B_{\rho_{i+1}}} |\btau_h (\btau_h u)|^p \,\dx\\
     &\qquad\le 
    \underbrace{
    2C_i\mathfrak  D_i^{1-\frac{p}{2}} 
    \mathfrak  C_i^\frac{p}{2}}_{=:\,\mathfrak B_i}
    \Big(\frac{|h|}{R}\Big)^{[s+\alpha \sigma_i (1-\frac{p}2)+(s+\alpha\sigma_{i+1})\frac{p}2]p}
    \Big(\frac{R}{R-r}\Big)^{(i+2)(N+sp+1)}\boldsymbol{\mathfrak K}^p.
\end{align*}
Note that
\begin{align}\label{est:mathfrak-B_i}
    \mathfrak B_i
    = 
    2C_i[C_\ast \mathfrak C_i]^{1-\frac{p}2}\mathfrak C_i^\frac{p}2
    \le 
    C_\ast C_i \mathfrak C_i
    =
    2\boldsymbol c^{i+1}\boldsymbol b^{1+2+\dots +i+(i+1)}C_\ast^{2i}\widetilde C_o.
\end{align}
For the exponent of $\frac{|h|}{R}$ we have
\begin{align*}
    s+\alpha \sigma_i (1-\tfrac{p}2)
    +
     (s+\alpha \sigma_{i+1})\tfrac{p}{2}
     &>
     s+\alpha \sigma_i (1-\tfrac{p}2)
    +
     (s+\alpha \sigma_{i})\tfrac{p}{2}\\
     &=
     s+\alpha\sigma_i+s\tfrac{p}2\\
     &=
     s+\alpha\sigma_{i+2}+s\tfrac{p}{2}(1-\alpha)\\
     &>
      s+\alpha\sigma_{i+2}.
\end{align*}
Therefore, we find for every $0<|h|\le d_i$ that
\begin{equation*}
    \int_{B_{\rho_{i+1}}} |\btau_h (\btau_h u)|^p \,\dx
     \le 
     \mathfrak B_i \Big(\frac{|h|}{R}\Big)^{(s+\alpha \sigma_{i+2} )p}
    \Big(\frac{R}{R-r}\Big)^{(i+2)(N+sp+1)}\boldsymbol{\mathfrak K}^p.
\end{equation*}
Note that the exponent of $|h|$  is still strictly less than $p$. Indeed, since $i+2\le i_o$, we have
\begin{equation*}
    s+\alpha\sigma_{i+2}\le s+\alpha\sigma_{i_o}
    < 1-\tfrac{1}{20}sp,
\end{equation*}
where we  used \eqref{est:sigma-io}. For this reason we are able to apply~\eqref{est-1st-diffquot<1} from Lemma \ref{lem:Domokos} with $(\gamma , q,r,R,d)$ replaced by
$(s+\alpha\sigma_{i+2}, p,\rho_{i+1}, \rho_i, d_i)$ and
\begin{equation*}
    M^p
    :=
    \frac{\mathfrak B_i}{R^{(s+\alpha \sigma_{i+2} )p}}
    \Big(\frac{R}{R-r}\Big)^{(i+2)(N+sp+1)}\boldsymbol{\mathfrak K}^p,
\end{equation*}
to obtain
\begin{align*}
    &\int_{B_{\rho_{i+1}}}|\btau_hu|^p\,\dx\\
    &\quad\le
    C(p)|h|^{(s+\alpha \sigma_{i+2} )p}\bigg[
    \bigg(\frac{M}{\frac{1}{20} s}\bigg)^p
    +
    \frac{1}{d_i^{(s+\alpha \sigma_{i+2} )p}}\int_{B_{\rho_i}}|u|^p\,\dx
    \bigg]\\
    &\quad\le
    C(p)\Big(\frac{|h|}{R}\Big)^{(s+\alpha \sigma_{i+2} )p}
    \\
    &\qquad\quad\cdot\Bigg[ 
    \frac{\mathfrak B_i}{(\frac1{20}s)^p}\Big(\frac{R}{R-r}\Big)^{(i+2)(N+sp+1)} + \Big( \frac{R}{\frac{1}{7\cdot 2^{i+2}}(R-r)}
    \Big)^{(s+\alpha \sigma_{i+2} )p}2^NR^N\Bigg]\boldsymbol{\mathfrak K}^p\\
    &\le
    \frac{28^pC(p)2^N}{s^p}
    \Big(\frac{|h|}{R}\Big)^{(s+\alpha \sigma_{i+2} )p}
    \Big(\frac{R}{R-r}\Big)^{(i+2)(N+sp+1)}\big[ \underbrace{\mathfrak B_i+2^{ip}}_{\le 2\mathfrak B_i}\big]\boldsymbol{\mathfrak K}^p\\
    &\le
    \underbrace{\frac{28^p2^{N+1}C(p)\mathfrak B_i}{s^p}}_{\le\, C_\ast\mathfrak B_i\le\, \mathfrak C_{i+1}}
    \Big(\frac{|h|}{R}\Big)^{(s+\alpha \sigma_{i+2} )p}
    \Big(\frac{R}{R-r}\Big)^{(i+2)(N+sp+1)}\boldsymbol{\mathfrak K}^p
\end{align*}
 for every $0<|h|\le d_i$. Here, we also used \eqref{est:sigma-io} to bound $1-s-\alpha\sigma_{i+2}$ from below by $\frac1{20}s$, the definition of $d_i$ to replace $\frac1{d_i}$ by $\frac1{R-r}$, the $L^\infty$-bound for $u$, the definition of $\boldsymbol{\mathfrak K}$, and the fact that $(s+\alpha \sigma_{i+2} )p<p\le 2\le 2N$. Moreover, we used  that $2^{ip}\le \mathfrak B_i$ by \eqref{est:mathfrak-B_i}.
This proves \eqref{est:ind-i}$_{i+1}$. 

The completion of the induction step now consists in establishing \eqref{ind-requirement}$_{i+1}$. 
To do this, we apply Lemma \ref{lem:FS-N} with $(q,\gamma ,\beta, R,d)$ replaced by $\big( p,s+\alpha\sigma_{i+2}, s+\alpha\sigma_{i+1}, \rho_{i+1}, d_{i+1}\big)$ and with 
\begin{equation*}
    M^p
    :=
     \frac{\mathfrak C_{i+1}}{R^{(s+\alpha\sigma_{i+2})p}}\Big(\frac{R}{R-r}\Big)^{(i+2)(N+sp+1)}\boldsymbol{\mathfrak K}^p. 
\end{equation*}
Then, $\gamma -\beta  =\alpha (\sigma_{i+2}-\sigma_{i+1})=\alpha s\frac{p}{4}\ge\frac1{10}sp$ by \eqref{est:alpha>1/5}. Using also the $L^\infty$-bound for $u$ to estimate the integral of $|u|^p$ on $B_{\rho_{i+1}}$ in terms of $\boldsymbol{\mathfrak K}^p$, the definition of $M^p$, the relation between $\sigma_{i+2}$ and $\sigma_{i+1}$, the definition of $d_{i+1}$, and $(s+\alpha \sigma_{i+1} )p<p\le 2\le 2N$ to increase the exponent of $\frac{R}{R-r}$ from $(s+\alpha \sigma_{i+1} )p$ to $(i+2)(N+sp+1)$ we get
\begin{align*}
    [u]_{W^{s+\alpha\sigma_{i+1},p}(B_{\rho_{i+1}})}^p
    &\le
    C(N,p)\Bigg[
    \frac{d_{i+1}^{\alpha s\frac{p}{4}p}}{\frac1{10}s}M^p +
    \frac{1}{ s d_{i+1}^{(s+\alpha\sigma_{i+1})p}}
    \int_{B_{\rho_{i+1}}}|u|^p\,\dx
    \Bigg]\\
    &\le
    \frac{C(N,p)}{s}\bigg[
    10 R^{\alpha s\frac{p}{4}p} M^p +
    \Big( \frac{7\cdot 2^{i+3}}{R-r}
    \Big)^{(s+\alpha \sigma_{i+1} )p}
    2^N\boldsymbol{\mathfrak K}^p
    \bigg]\\
    &\le
    \frac{28^p2^{N}C(N,p)}{s}\bigg[
    R^{\alpha s\frac{p}{4}p} M^p +
    \frac{2^{(i+1)p}}{(R-r)^{(s+\alpha \sigma_{i+1} )p}}
    \boldsymbol{\mathfrak K}^p
    \bigg]\\
    &\le
     \frac{28^p2^{N}C(N,p)}{s R^{(s+\alpha\sigma_{i+1})p}}
     \Big(\frac{R}{R-r}\Big)^{(i+2)(N+sp+1)}
     \big[\mathfrak C_{i+1} +
    2^{(i+1)p} \big]\boldsymbol{\mathfrak K}^p\\
     &\le
      \frac{28^p2^{N+1}C(N,p)}{s}\frac{\mathfrak C_{i+1}}{R^{(s+\alpha\sigma_{i+1})p}}
     \Big(\frac{R}{R-r}\Big)^{(i+2)(N+sp+1)} \boldsymbol{\mathfrak K}^p\\
      &\le
      \frac{C_\ast \mathfrak C_{i+1}}{R^{(s+\alpha\sigma_{i+1})p}}
     \Big(\frac{R}{R-r}\Big)^{(i+2)(N+sp+1)} \boldsymbol{\mathfrak K}^p.
\end{align*}
In turn we used $2^{(i+1)p}\le \mathfrak C_{i+1}$.
Since $C_\ast \mathfrak C_{i+1}=\mathfrak D_{i+1}$ this proves the claim \eqref{ind-requirement}$_{i+1}$.

At this point, it remains to {\bf establish inequality~\eqref{est:tau_hu^p-p<2}} in the case $i_o\ge 2$. By \eqref{ind-requirement}$_{i_o-1}$ we have
$[u]_{W^{s+\alpha\sigma_{i_o-1},p}(B_{\rho_{i_o-1}})}<\infty$, which is the requirement for the application of \eqref{est:it-i-2-p<2}$_{i_o-1}$
on $B_{\rho_{i_o-1}}$ with $\alpha\sigma_{i_o-1}$ instead of $\sigma$. 
According to \eqref{est:it-i-2-p<2}$_{i_o-1}$ we have
\begin{align*}
    \int_{B_{\rho_{i_o}}}& |\btau_h (\btau_h u)|^p \,\dx
     \le 
    C_{i_o-1} \Big(\frac{|h|}{R}\Big)^{(s+\alpha \sigma_{i_o-1} (1-\frac{p}2))p}
    \Big(\frac{R}{R-r}\Big)^{N+sp+1}\mathbf I_1^{1-\frac{p}2}\mathbf{I}_2^\frac{p}2
 \end{align*}
 for any $h\in\R^N\setminus\{0\}$ with $|h|\le d_{i_o-1}$. Here we abbreviated
 \begin{align*}
     \mathbf I_1
     &:=
    R^{(s+\alpha \sigma_{i_o-1})p}(1-s)
    [u]^p_{W^{s+\alpha \sigma_{i_o-1},p}(B_{\rho_{i_o-1}})} +
    \boldsymbol{\mathfrak K}^p,\\
    \mathbf{I}_2
    &:=
    \int_{B_{\rho_{i_o-1}}}|\btau_h u|^p\,\dx +
    \Big(\frac{|h|}{R}\Big)\boldsymbol{\mathfrak K}^p.
 \end{align*}
To estimate $\mathbf I_1$ we use \eqref{ind-requirement}$_{i_o-1}$, which implies
\begin{align*}
    \mathbf I_1
    &\le
    \mathfrak D_{i_o-1} 
    \Big(\frac{R}{R-r}\Big)^{i_o(N+sp+1)}\boldsymbol{\mathfrak K}^p +
    \boldsymbol{\mathfrak K}^p\\
     &\le
     2\mathfrak D_{i_o-1} 
     \Big(\frac{R}{R-r}\Big)^{i_o(N+sp+1)}\boldsymbol{\mathfrak K}^p.
\end{align*}
With \eqref{est:ind-i}$_{i_o-1}$ we estimate $\mathbf I_2$ by
\begin{align*}
    \mathbf I_2
    &\le
    \bigg[
    \mathfrak C_{i_o-1}
     \Big(\frac{|h|}{R}\Big)^{(s+\alpha\sigma_{i_o})p}
    \Big(\frac{R}{R-r}\Big)^{i_o(N+sp+1)} +
    \Big(\frac{|h|}{R}\Big)^p
    \bigg]\boldsymbol{\mathfrak K}^p\\
    &\le 
    2 \mathfrak C_{i_o-1}
    \Big(\frac{|h|}{R}\Big)^{(s+\alpha\sigma_{i_o})p}
    \Big(\frac{R}{R-r}\Big)^{i_o(N+sp+1)}
    \boldsymbol{\mathfrak K}^p.
\end{align*}
Here, to obtain the last line we used \eqref{est:sigma-io}, 
i.e.~$s+\alpha\sigma_{i_o}<1$, in order to reduce the exponent of $\frac{|h|}{R}$. Plugging these estimates back and recalling that $\mathfrak D_{i_o-1} =C_\ast \mathfrak C_{i_o-1}$, we get
\begin{align*}
    &\int_{B_{\rho_{i_o}}} |\btau_h (\btau_h u)|^p \,\dx\\
     &\quad
     \le 
    \underbrace{2C_\ast^{-\frac{p}2}}_{\le\, 1}
    \underbrace{C_{i_o-1} \mathfrak D_{i_o-1}}_{\le\, \mathfrak C_{i_o}}
    \Big(\frac{|h|}{R}\Big)^{(s+\alpha \sigma_{i_o-1} (1-\frac{p}2)
    + (s+\alpha\sigma_{i_o})\frac{p}{2})p }
    \Big(\frac{R}{R-r}\Big)^{(i_o+1)(N+sp+1)}
    \boldsymbol{\mathfrak K}^p.
 \end{align*}
 The exponent of $\frac{|h|}{R}$ can be simplified. In fact, we have
 \begin{align*}
     s+\alpha \sigma_{i_o-1} (1-\tfrac{p}2)
     +
     (s+\alpha\sigma_{i_o})\tfrac{p}{2}
     &=
     s+\alpha \sigma_{i_o-1} (1-\tfrac{p}2)
     +
     (s+\alpha\sigma_{i_o-1}+\alpha\tfrac14 sp)\tfrac{p}{2}\\
     &>
     s+\alpha \sigma_{i_o-1}+ \tfrac{1}2 sp\\
     &>
    s+\alpha \sigma_{i_o+1}. 
\end{align*}
Therefore, for any $h\in\R^N\setminus\{0\}$ with $|h|\le d_{i_o-1}$ we get
\begin{align}\label{est:tauh-tauh-io}
    \int_{B_{\rho_{i_o}}} |\btau_h (\btau_h u)|^p \,\dx
     &
     \le 
    \mathfrak C_{i_o} \Big(\frac{|h|}{R}\Big)^{(s+\alpha \sigma_{i_o+1} )p}
    \Big(\frac{R}{R-r}\Big)^{(i_o+1)(N+sp+1)}
    \boldsymbol{\mathfrak K}^p.
 \end{align}
Recall that the exponent $s+\alpha \sigma_{i_o+1}$ satisfies \eqref{est:sigma-io+1}, i.e.
$$
    s+\alpha \sigma_{i_o+1}>1+\tfrac1{20}sp.
$$
Therefore, we can apply \eqref{est-1st-diffquot>1} from Lemma \ref{lem:Domokos}  on $B_{\rho_i}$, i.e.~the case in which the exponent $\sigma$
is greater than 1,  with $(p,s+\alpha \sigma_{i_o+1}, \rho_{i_o-1}, \rho_{i_o}, d_{i_o})$ instead of $(q,\gamma, R,r, d)$ and with
$$
    M^p
    :=
    \frac{\mathfrak C_{i_o}}{R^{(s+\alpha \sigma_{i_o+1} )p}}
    \Big(\frac{R}{R-r}\Big)^{(i_o+1)(N+sp+1)}
    \boldsymbol{\mathfrak K}^p.
$$
Note that \eqref{est:tauh-tauh-io} plays the role of the hypothesis \eqref{ass:Domokos}$_{\sig>1}$ in Lemma \ref{lem:Domokos}.  Using also the $L^\infty$-bound for $u$ to estimate the integral of $|u|^p$ on $B_{\rho_{i_o}}$ in terms of $\boldsymbol{\mathfrak K}^p$, the definition of $d_{i_o}$, and $p\le (i_o+2)(N+sp+1)$ we get
\begin{align*}
    \int_{B_{\rho_{i_o}}} |\btau_h u|^p \,\dx
    &\le
    C(p)|h|^p
    \bigg[
    \Big(\frac{M}{s+\alpha \sigma_{i_o+1}-1}\Big)^p
    d_{i_o}^{(s+\alpha \sigma_{i_o+1}-1)p}
    +
    \frac{1}{d_{i_{o}}^p}\int_{B_{\rho_{i_o}}}|u|^p\,\dx
    \bigg]\\
    &\le
    C(p)|h|^p
    \bigg[
    \frac{M^p}{(\tfrac1{20}sp)^p}
    R^{(s+\alpha \sigma_{i_o+1}-1)p}
    +
    \Big(\frac{7\cdot 2^{i_o+2}}{R-r}\Big)^{p}2^N \boldsymbol{\mathfrak K}^p
    \bigg]\\
    &\le
    \frac{28^p2^{N}C(p)}{s^p}\Big(\frac{|h|}{R}\Big)^p\Big(\frac{R}{R-r}\Big)^{(i_o+1)(N+sp+1)}\big[\mathfrak C_{i_o}+  2^{i_op}\big]\boldsymbol{\mathfrak K}^p\\
    &\le
    \frac{28^p2^{N+1}C(p)\mathfrak C_{i_o}}{s^p}
    \Big(\frac{|h|}{R}\Big)^p\Big(\frac{R}{R-r}\Big)^{\frac{4}{sp}(N+sp+1)}\boldsymbol{\mathfrak K}^p\\
     &\le
    C_\ast \mathfrak C_{i_o}
    \Big(\frac{|h|}{R}\Big)^p\Big(\frac{R}{R-r}\Big)^{\frac{4}{sp}(N+sp+1)}\boldsymbol{\mathfrak K}^p
\end{align*}
for every $0<|h|\le d_{i_o}$.
Here, to obtain the second last line we used $i_o+1<\frac{4}{sp}$, which follows from~\eqref{def-io}, since
$$
    i_o+1
    < 
    \frac{4(1-s)}{sp}+2
    =
    \frac{4}{sp}-\frac{4-2p}{p}
    \le 
    \frac{4}{sp}.
$$
The constant in the final estimate above has the form
$$
    C_\ast \mathfrak C_{i_o}
    =
    \mathfrak D_{i_o}
    =
    2\boldsymbol c^{i_o}\boldsymbol b^{1+2+\dots +i_o}C_\ast^{2i_o}\widetilde C_o
    \le 
    2\boldsymbol c^{\frac{4}{sp}-1} 
    \boldsymbol b^{\frac2{sp}(\frac4{sp}-1)}
    C_\ast^{2(\frac4{sp}-1)}\widetilde C_o
    =:
    C(N,p,s).
$$
We denote this constant by $C$. Note that $C$ blows up as $s\downarrow 0$. Moreover, $C$ blows up as $p\downarrow 1$ and remains stable as $p\uparrow 2$. It is also stable as $s\uparrow s_o$. Taking into account that $\rho_{i_o}\ge r$, this proves inequality~\eqref{est:tau_hu^p-p<2} in the case $i_o\ge 2$ for any $0<|h|\le d_{i_o}$.

Finally, it remains to establish~\eqref{est:tau_hu^p-p<2} when $d_{i_o}<|h|\le R-r $. However, in this case we have $\frac{R}{|h|}<\frac{R}{d_{i_o}}=7\cdot 2^{i_o+2} \frac{R}{R-r}$ and hence
\begin{align*}
    \int_{B_r} |\btau_h u|^p \,\dx 
    \le 
    C r^N \|u\|_{L^\infty(B_r)}
    \le 
    2^{i_o}C\Big(\frac{|h|}{R}\Big)^p \Big(\frac{R}{R-r}\Big)^{p} 
    \boldsymbol{\mathfrak K}^{p},
\end{align*}
where $C=C(N)$. Recalling that $p\le \frac{4}{sp}(N+sp+1)$ and $i_o<\frac{4}{sp}-1$ this proves~\eqref{est:tau_hu^p-p<2} and  finishes the proof of Lemma~\ref{lem:diff-quot-p<2}.
\end{proof}

Combination of Lemma \ref{lem:diff-quot-p<2} with the standard estimate for difference quotients from Lemma \ref{lem:diff-quot-1} immediately yields that $(s,p)$-harmonic functions are of class $W^{1,p}_{\rm loc}(\Omega)$. Theorem~\ref{thm:W1p-p<2} is an immediate consequence of Corollary~\ref{cor:W1p-p<2} applied with the choice $r=\frac12 R$.

\begin{corollary}[$W^{1,p}$-regularity]\label{cor:W1p-p<2}
Let $p\in (1,2]$ and $s\in(0,1)$. Then,  whenever $u$ is a locally bounded $(s,p)$-harmonic function in the sense of Definition~\ref{def:loc-sol}, we have
$$
    u\in W^{1,p}_{\rm loc}(\Omega).
$$
Moreover, there exists a constant $C=C(N,p,s)$ such that for any $B_{R}\equiv B_{R}(x_o)\Subset \Omega$ and $r\in(0,R)$ the quantitative $W^{1,p}$-estimate
\begin{align*}
    \int_{B_r} |\nabla u|^p \,\dx 
    &\le 
    \frac{C}{R^{p}} \Big(\frac{R}{R-r}\Big)^{\frac4{sp}(N+sp+1)} 
    \boldsymbol{\mathfrak K}^{p}
\end{align*}
holds true, where $\boldsymbol{\mathfrak K}$ is defined in~\eqref{def:K<2}. 
Moreover, the constant $C$ is stable as $s\uparrow1$. It blows up as $s\downarrow 0$ or $p\downarrow 1$.
\end{corollary}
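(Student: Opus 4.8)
The plan is to obtain the gradient bound directly from the second-difference estimate of Lemma~\ref{lem:diff-quot-p<2} combined with the standard difference-quotient characterization of Sobolev functions in Lemma~\ref{lem:diff-quot-1}. All the substantive work has already been carried out in Lemma~\ref{lem:diff-quot-p<2}: the iterative bootstrap that raises the order of fractional differentiability of $\btau_h u$ from $s$ up to $1$ (organized into the two cases $i_o=1$ and $i_o\ge2$, with the careful tracking of the constants $\boldsymbol c$, $\boldsymbol b$, $C_\ast$, $\mathfrak C_i$, $\mathfrak D_i$). What is left here is only a short extraction argument.

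Concretely, I would fix $B_R\equiv B_R(x_o)\Subset\Omega$, $r\in(0,R)$, and let $e_1,\dots,e_N$ denote the coordinate unit vectors. Applying Lemma~\ref{lem:diff-quot-p<2} with the vector increment $h\,e_j$ (whose length is $|h|$) gives, for each $j\in\{1,\dots,N\}$ and every real $h$ with $0<|h|\le R-r$,
$$
\int_{B_r}\big|\btau_h^{(e_j)}u\big|^p\,\dx\le M^p|h|^p,
\qquad
M^p:=\frac{C}{R^p}\Big(\frac{R}{R-r}\Big)^{\frac{4}{sp}(N+sp+1)}\boldsymbol{\mathfrak K}^p,
$$
where the exponent of $\tfrac{R}{R-r}$ is the one produced by the final estimate in the proof of Lemma~\ref{lem:diff-quot-p<2}. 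Since $u$ is locally bounded we have $u\in L^p(B_R)$, and $B_R\Subset\Omega$; hence Lemma~\ref{lem:diff-quot-1} applies with enclosing ball $B_R$ and $d=R-r$, and yields that the weak derivative $D_{e_j}u$ exists on $B_{R-d}=B_r$ with $\int_{B_r}|D_{e_j}u|^p\,\dx\le M^p$. Summing over $j=1,\dots,N$ and absorbing the dimensional factor into $C$ gives $u\in W^{1,p}(B_r)$ together with the asserted estimate
$$
\int_{B_r}|\nabla u|^p\,\dx\le\frac{C}{R^p}\Big(\frac{R}{R-r}\Big)^{\frac{4}{sp}(N+sp+1)}\boldsymbol{\mathfrak K}^p .
$$

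Since $B_R\Subset\Omega$ and $r\in(0,R)$ are arbitrary, this gives $u\in W^{1,p}_{\rm loc}(\Omega)$, and the qualitative behaviour of the constant — stability as $s\uparrow1$ and blow-up as $s\downarrow0$ or $p\downarrow1$ — is inherited verbatim from Lemma~\ref{lem:diff-quot-p<2}. Theorem~\ref{thm:W1p-p<2} then follows by choosing $r=\tfrac12 R$, so that $\big(\tfrac{R}{R-r}\big)^{\frac{4}{sp}(N+sp+1)}=2^{\frac{4}{sp}(N+sp+1)}$ is a constant depending only on $N,p,s$ that can be absorbed into $C$, and by unfolding $\boldsymbol{\mathfrak K}$ through \eqref{def:K<2} and passing to $p$-th roots.

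I do not expect a real obstacle at this step. The only points requiring a little care are to invoke the precise power of $\tfrac{R}{R-r}$ that the proof of Lemma~\ref{lem:diff-quot-p<2} actually produces, and to make sure that the auxiliary ball required in the hypothesis of Lemma~\ref{lem:diff-quot-1} stays compactly inside $\Omega$; both are immediate here because the choice $d=R-r$ keeps everything within $B_R\Subset\Omega$.
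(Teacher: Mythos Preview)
Your proposal is correct and follows exactly the paper's approach: the paper simply states that the corollary follows by combining Lemma~\ref{lem:diff-quot-p<2} with the difference-quotient criterion of Lemma~\ref{lem:diff-quot-1}, and you have spelled out the details of that combination. Your remark about invoking the precise power of $\tfrac{R}{R-r}$ produced inside the proof of Lemma~\ref{lem:diff-quot-p<2} (namely $\tfrac{4}{sp}(N+sp+1)$ rather than the slightly larger $(1+\tfrac{4}{sp})(N+sp+1)$ appearing in the lemma's statement) is well placed, since the corollary's exponent matches the former.
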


\section{Higher gradient regularity}\label{sec:W1q}

In the previous section we established that the weak gradient of $(s,p)$-harmonic functions exists in $L^p_{\rm loc}$. 
The aim in this section is to improve the integrability of the gradient $\nabla u$ to any exponent $q\ge p$. This will be achieved by a Moser-type iteration argument. The following lemma is the first step in this direction.

\begin{lemma}\label{lem:second-diff-theta-p<2-new}
Let $p\in(1,2]$, $s\in(0,1)$, and $q\in [p,\infty)$. For any 
\begin{equation}\label{def:eps-q}
     \eps \in(0, 1-s)
\end{equation}
there exists a constant $C=C(N,p,s,q,\eps )$ such that whenever $u$ is a locally bounded $(s,p)$-harmonic function in the sense of Definition~\ref{def:loc-sol} that satisfies
$$
u\in W^{1,q}_{\rm loc}(\Omega),
$$
then for any ball $B_{R}\equiv B_{R}(x_o)\Subset \Omega$ and any $r\in(0,R)$ we have
\begin{align*}
    \int_{B_{r}} 
    |\btau_h\btau_hu|^q \,\dx
    &\le 
    C\Big(\frac{|h|}{R}\Big)^{q+sp\frac{p}{2}-\eps p(1-\frac{p}{2})} 
    \Big(\frac{R}{R-r}\Big)^{N+sp +1}
   \boldsymbol{\mathfrak K}^q,
\end{align*}
where
\begin{align}\label{def:K-2diff}
    \boldsymbol{\mathfrak K}^{q}
    :=
    R^{q}\|\nabla u\|^{q}_{L^q(B_R)} +
    R^{N}\boldsymbol{\mathfrak T}(u;R)^{q}.
\end{align}
The constant $C$ has the form
\begin{equation}\label{structure-C}
    C
    = 
    \frac{\widetilde C(N,p,q)}{s}
    \Big(\frac{1-s}{\eps}\Big)^{1-\frac{p}{2}}.
\end{equation}
\end{lemma}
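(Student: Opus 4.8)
The strategy is to transcribe the proof of Lemma~\ref{lem:Nikol-est-p<2} to a general exponent $q\ge p$, using the energy inequality for $W^{1,q}$-solutions, Proposition~\ref{prop:energy-q-1}, in place of the one for $W^{s+\theta,p}$-solutions. Fix $B_R=B_R(x_o)\Subset\Omega$ and $r\in(0,R)$, and set $\widetilde r:=\tfrac17(5r+2R)$, $\widetilde R:=\tfrac17(r+6R)$, so that $d:=\tfrac14(\widetilde R-\widetilde r)=\tfrac17(R-r)$, $\widetilde R+d=R$, $\widetilde r-d\ge r$, $B_{\widetilde R+d}\Subset\Omega$, and $\widetilde R,\widetilde R-\widetilde r$ are comparable to $R,R-r$ up to fixed numerical factors. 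Write $\beta:=(1-\eps)(1-\tfrac{p}{2})+s\tfrac{p}{2}$; since $\eps\in(0,1-s)$ and $p\in(1,2]$ one checks $s\le\beta<1$. We carry out the argument for $0<|h|\le d$. Applying Proposition~\ref{prop:energy-q-1} with $(\widetilde r,\widetilde R,d,\eps)$ in place of $(r,R,d,\eps)$, using $\widetilde R+d=R$, the bounds $R^q\|\nabla u\|^q_{L^q(B_R)}\le\boldsymbol{\mathfrak K}^q$ and $R^N\boldsymbol{\mathfrak T}(u;R)^q\le\boldsymbol{\mathfrak K}^q$, and $\tfrac{1}{1-s},\tfrac{1}{\eps}\ge1$ to pull the two bracketed factors of Proposition~\ref{prop:energy-q-1} through $\boldsymbol{\mathfrak K}^q$, one obtains
\[
\big[V_{q/p}(\btau_hu)\big]^p_{W^{\beta,p}(B_{\widetilde r})}
\le \frac{C(N,p,q)}{s\,(1-s)^{p/2}\eps^{1-p/2}}\,
\frac{1}{R^{\beta p}}\Big(\tfrac{|h|}{R}\Big)^{q-(1-\frac{p}{2})p}\Big(\tfrac{R}{R-r}\Big)^{N+sp+1}\boldsymbol{\mathfrak K}^q ,
\]
where we used $[(1-\eps)(1-\tfrac{p}{2})+s\tfrac{p}{2}]p=\beta p$ in the power of $R$.

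Next, since $u\in W^{1,q}_{\rm loc}(\Omega)$ and $u$ is locally bounded, $V_{q/p}(\btau_hu)\in W^{\beta,p}(B_{\widetilde r})$, and Lemma~\ref{lem:N-FS} applied to $w=V_{q/p}(\btau_hu)$ with exponent $p$, differentiability $\beta\in(0,1)$, ball $B_{\widetilde r}$, parameter $d$, and step size $h$ yields, on $B_{\widetilde r-d}\supseteq B_r$,
\[
\int_{B_r}\big|\btau_h V_{q/p}(\btau_hu)\big|^p\dx
\le C|h|^{\beta p}\bigg[(1-\beta)\big[V_{q/p}(\btau_hu)\big]^p_{W^{\beta,p}(B_{\widetilde r})}
+\Big(\tfrac{\widetilde r^{(1-\beta)p}}{d^p}+\tfrac{1}{\beta d^{\beta p}}\Big)\int_{B_{\widetilde r}}|\btau_hu|^q\dx\bigg].
\]
Into the first term one inserts the previous display and uses $1-\beta\le1-s$, which turns the factor $(1-s)^{-p/2}$ into $(1-s)^{1-p/2}$; for the second term one bounds $\beta^{-1}\le s^{-1}$, applies Lemma~\ref{lem:diff-quot-2} to get $\int_{B_{\widetilde r}}|\btau_hu|^q\le|h|^q\|\nabla u\|^q_{L^q(B_R)}$, and then exploits $\tfrac{|h|}{R}\le1$ together with $q+\beta p\ge q+sp\tfrac{p}{2}-\eps p(1-\tfrac{p}{2})$ to lower the power of $|h|$ to the target one. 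Since $\beta p<N+sp+1$ the powers of $\tfrac{R}{R-r}$ homogenise, and since $\eps<1-s$ gives $(\tfrac{1-s}{\eps})^{1-p/2}\ge1$, the $\beta$-dependent lower-order term (whose constant is $\sim\tfrac1s$) is also dominated. Altogether,
\[
\int_{B_r}\big|\btau_h V_{q/p}(\btau_hu)\big|^p\dx
\le \frac{\widetilde C(N,p,q)}{s}\Big(\tfrac{1-s}{\eps}\Big)^{1-p/2}
\Big(\tfrac{|h|}{R}\Big)^{q+sp\frac{p}{2}-\eps p(1-\frac{p}{2})}\Big(\tfrac{R}{R-r}\Big)^{N+sp+1}\boldsymbol{\mathfrak K}^q .
\]

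Finally, because $q/p\ge1$, Lemma~\ref{lem:Acerbi-Fusco} with $\gamma=q/p$, $a=\btau_hu(x+h)$, $b=\btau_hu(x)$, combined with $|a|+|b|\ge|a-b|=|\btau_h\btau_hu(x)|$ and $q/p-1\ge0$, gives the pointwise bound $\big|\btau_h V_{q/p}(\btau_hu)(x)\big|\ge 2^{1-q/p}|\btau_h\btau_hu(x)|^{q/p}$, hence $|\btau_h\btau_hu|^q\le 2^{q-p}\big|\btau_h V_{q/p}(\btau_hu)\big|^p$ pointwise. Integrating over $B_r$ and inserting the last display proves the asserted estimate, with a constant of the form \eqref{structure-C}, for $0<|h|\le d=\tfrac17(R-r)$; the complementary range of $h$, if required, is disposed of by the crude bound $\int_{B_r}|\btau_h\btau_hu|^q\le C(N)\|u\|^q_{L^\infty(B_r)}|B_r|$ together with $\tfrac{R}{|h|}<7\cdot\tfrac{R}{R-r}$ and $R^N\|u\|^q_{L^\infty(B_R)}\le\boldsymbol{\mathfrak K}^q$, exactly as at the end of the proof of Lemma~\ref{lem:diff-quot-p<2}.

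The main obstacle is the bookkeeping of constants in the first two steps: one must verify that the factor $\tfrac{1}{1-s}$ coming out of Proposition~\ref{prop:energy-q-1} is precisely absorbed by the weight $(1-\beta)$ in Lemma~\ref{lem:N-FS} — which is legitimate only because the hypothesis $\eps<1-s$ forces $\beta\ge s$, i.e.\ $1-\beta\le1-s$ — and that the same hypothesis makes $(\tfrac{1-s}{\eps})^{1-p/2}\ge1$, so that the lower-order $\beta$-dependent term of Lemma~\ref{lem:N-FS} is subsumed under the target constant \eqref{structure-C}. Apart from this, and the routine tracking of the powers of $\tfrac{R}{R-r}$ and of $|h|$, the proof is a faithful transcription of the $q=p$ argument behind Lemma~\ref{lem:Nikol-est-p<2}.
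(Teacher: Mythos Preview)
Your proof is correct and follows essentially the same route as the paper: apply Proposition~\ref{prop:energy-q-1} on the intermediate radii $\widetilde r,\widetilde R$, convert the resulting $W^{\beta,p}$-seminorm of $V_{q/p}(\btau_hu)$ into an $L^p$-bound on $\btau_h V_{q/p}(\btau_hu)$ via Lemma~\ref{lem:N-FS}, and conclude by the pointwise inequality $|\btau_h V_{q/p}(\btau_hu)|\ge c\,|\btau_h\btau_hu|^{q/p}$; the key cancellation $(1-\beta)\cdot(1-s)^{-p/2}\le(1-s)^{1-p/2}$ that you highlight is exactly what the paper uses. Your additional paragraph on large $|h|$ is not needed here (the lemma is only applied with $|h|\le d$), but it does no harm.
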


\begin{proof}
We apply the energy inequality from Proposition \ref{prop:energy-q-1} with $r,R,d$ replaced by $\tilde r=\frac17(5r+2R)$,
$\widetilde R=\frac17(r+6R)$, $d=\frac14(\widetilde R-\tilde r)=\frac17(R-r)$. Note that $\widetilde R+d=R$ and 
$\frac{\widetilde R}{\widetilde R-\widetilde r}$
and $\frac1{\widetilde R}$ can be bounded in terms of
$\frac{R}{R-r}$ and $\frac1R$ apart from a multiplicative factor. With these choices Proposition \ref{prop:energy-q-1} yields
\begin{align*}
    \mathbf I
    &:=
    \big[V_\frac{q}{p}(\btau_hu)
    \big]^p_{W^{(1-\eps)(1-\frac{p}{2})+s\frac{p}{2},p}(B_{\widetilde r})} \\
    &\ \le
    \frac{C}{R^{[(1-\eps)(1-\frac{p}{2})+s\frac{p}{2}]p}}
    \Big(\frac{|h|}{R}\Big)^{q-(1-\frac{p}{2})p}
    \Big(\frac{R}{R-r}\Big)^{N+sp+1} 
    \boldsymbol{\mathfrak K}^{q},
\end{align*}
where the constant $C$ has the form
\begin{align*}
        C
        &=
        \frac{\widetilde C (N,p,q)}{s(1-s)^{\frac{p}{2}}\eps^{(1-\frac{p}2)} }.
\end{align*}
Note that $\widetilde C$ blows up as $p\downarrow 1$. 
Now, we apply Lemma~\ref{lem:N-FS} to $w=V_\frac{q}{p}(\btau_hu)$ on $B_{\widetilde r}$ with $q=p$, $d=\frac17 (R-r)$, and 
$$
    \gamma 
    = 
    (1-\eps)(1-\tfrac{p}{2})+s\tfrac{p}{2}
    =
    1-(1-s)\tfrac{p}{2}-\eps (1-\tfrac{p}{2}),
$$
to deduce that
\begin{align*}
    \int_{B_{\tilde r-d}} &
    \big|\btau_\lambda\big(V_{\frac{q}{p}}(\btau_hu) \big)\big|^p \,\dx \\
    &\le 
    C|\lambda|^{\gamma p} 
    \bigg[(1-\gamma)\mathbf I +
    \bigg(\frac{\widetilde R^{(1-\gamma)p}}{d^p}
    +\frac{1}{\gamma d^{\gamma p}}
    \bigg)  
    \int_{B_{\widetilde r}} \big|V_{\frac{q}{p}}(\btau_hu)\big|^p \,\dx\bigg] \\
    &\le 
    C\Big(\frac{|\lambda|}{R}\Big)^{\gamma p} 
    \bigg[(1-\gamma) R^{\gamma p}\mathbf I +
    \frac{1}{\gamma}\Big(\frac{R}{R-r}\Big)^p
    \int_{B_{\widetilde r}} |\btau_hu|^q \,\dx\bigg],
\end{align*}
for any $0<|\lambda|\le d$, where $C=C(N,p)$ stands for the constant from  Lemma~\ref{lem:N-FS}. We reduce the domain of integration on the left-hand side from $B_{\tilde r-d}$ to $B_r$. Using \eqref{def:eps-q} we have $\gamma>s$ and hence $1-\gamma < 1-s$. Moreover, we use the estimate for $\mathbf I$ from above and the standard estimate for difference quotients from Lemma~\ref{lem:diff-quot-2} and the definition of $\boldsymbol{\mathfrak K}$. In this way, we obtain for any $0<|\lambda|\le d$ that
\begin{align*}
    \int_{B_{r}} &
    \big|\btau_\lambda\big(V_{\frac{q}{p}}(\btau_hu) \big)\big|^p \,\dx
    \\
    &\le 
    C\Big(\frac{|\lambda|}{R}\Big)^{\gamma p} 
    \bigg[\frac{1-s}{s(1-s)^{\frac{p}{2}}\eps^{(1-\frac{p}2)}} \Big(\frac{|h|}{R}\Big)^{q-(1-\frac{p}{2})p}
    \Big(\frac{R}{R-r}\Big)^{N+sp+1} 
    \boldsymbol{\mathfrak K}^{q} \\
    &\qquad\qquad\qquad\quad +
    \frac{1}{s}\Big(\frac{|h|}{R}\Big)^q \Big(\frac{R}{R-r}\Big)^p
    R^q\int_{B_{\widetilde r+d}} |\nabla u|^q \,\dx\bigg] \\
    &\le 
    \frac{C}{s}\Big(\frac{|\lambda|}{R}\Big)^{\gamma p} \Big(\frac{R}{R-r}\Big)^{N+sp+1}
    \bigg[\frac{(1-s)^{(1-\frac{p}2)}}{\eps^{(1-\frac{p}2)}} \Big(\frac{|h|}{R}\Big)^{q-(1-\frac{p}{2})p}
    \boldsymbol{\mathfrak K}^{q} +
    \Big(\frac{|h|}{R}\Big)^q 
    \boldsymbol{\mathfrak K}^{q} \bigg] \\
    &\le 
    \frac{C}{s} \Big(\frac{1-s}{\epsilon}\Big)^{1-\frac{p}2}
    \Big(\frac{|\lambda|}{R}\Big)^{\gamma p}
    \Big(\frac{|h|}{R}\Big)^{q-(1-\frac{p}{2})p}
    \Big(\frac{R}{R-r}\Big)^{N+sp+1}
    \boldsymbol{\mathfrak K}^{q} ,
\end{align*}
where $C=C(N,p,q)$. In turn, we also used $p\le N+sp+1$. 
Here we choose $\lambda=h$ and compute the exponent 
$$
    \gamma p + q-\big(1-\tfrac{p}{2}\big)p
    =
    \Big[1-(1-s)\tfrac{p}{2}-\eps \big(1-\tfrac{p}{2}\big)\Big]p
    + q-\big(1-\tfrac{p}{2}\big)p
    =
    q+sp\tfrac{p}{2}-\eps p\big(1-\tfrac{p}{2}\big).
$$
Since $\frac{q}{p}\ge 1$, we have
\begin{align*}
    \big|\btau_h\big(V_{\frac{q}{p}}(\btau_hu) \big)\big| 
    \ge 
    |\btau_h(\btau_h u)|^{\frac{q}{p}}
    \qquad \mbox{in $B_{r}$,}
\end{align*}
so that
\begin{align*}
    \int_{B_{r}} 
    |\btau_h\btau_hu|^q \,\dx
    &\le 
    \frac{C}{s} \Big(\frac{1-s}{\epsilon}\Big)^{1-\frac{p}2}
    \Big(\frac{|h|}{R}\Big)^{q+sp\frac{p}{2}-\eps p(1-\frac{p}{2})} 
    \Big(\frac{R}{R-r}\Big)^{N+sp +1}
   \boldsymbol{\mathfrak K}^q,
\end{align*}
which proves the claim.
\end{proof}

The next proposition is an application of Lemma~\ref{lem:second-diff-theta-p<2-new}. The estimate for second finite differences is translated into fractional differentiability of the gradient. 

\begin{proposition}\label{prop:grad-frac-1}
Let $p\in (1,2]$ and  $s\in(0,1)$ and $q\in[p,\infty)$. Then, whenever $u$ is a locally bounded $(s,p)$-harmonic function in the sense of Definition~\ref{def:loc-sol} that satisfies
\[
    u\in W^{1,q}_{\rm loc}(\Omega),
\]
we have
$$
    \nabla u\in W^{\alpha,q}_{\rm loc}(\Omega)\qquad 
    \mbox{for any $\alpha\in(0,\beta)$, where $\beta:=\tfrac{sp}{2}\cdot\frac{p}{q}$.}
$$
Moreover, there exists a constant $C=C(N,p,s,q,\alpha)$ such that for any ball $B_{R}\equiv B_{R}(x_o)\Subset \Omega$ and for any $r\in(0,R)$, we have
\begin{align*}
    [\nabla u]_{W^{\alpha, q}( B_{r})}^{q}
    \le
    \frac{C}{R^{(1+\alpha) q}} 
    \Big(\frac{R}{R-r}\Big)^{N+q+sp}
    \boldsymbol{\mathfrak K}^{q},
\end{align*}
where $\boldsymbol{\mathfrak K}$ is defined in~\eqref{def:K-2diff} and the constant is of the form $C=
\frac{\widetilde C(N,p,q)}{s\alpha\beta^{q}(\beta-\alpha)^2 (1-\alpha)^q}$.
\end{proposition}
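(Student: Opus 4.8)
## Proof Plan for Proposition~\ref{prop:grad-frac-1}

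The plan is to convert the second-difference estimate of Lemma~\ref{lem:second-diff-theta-p<2-new} into fractional Sobolev regularity of $\nabla u$ by invoking Lemma~\ref{lem:2nd-Ni-FS}. First I would fix $\eps\in(0,1-s)$ and apply Lemma~\ref{lem:second-diff-theta-p<2-new} on a pair of slightly enlarged concentric balls, say with radii $R'=\frac12(R+r)$ in place of $R$ and suitable intermediate radii accommodating the ``$+6d$'' and ``$+4d$'' cushions required by Lemma~\ref{lem:2nd-Ni-FS}. The output is a bound of the form
\[
    \int_{B_{R+4d}} |\btau_h\btau_h u|^q\,\dx \le M^q |h|^{q(1+\gamma)}
\]
with $\gamma := \frac{sp}{2}\cdot\frac pq - \eps\,\frac pq\big(1-\frac p2\big) = \beta - \eps\frac pq(1-\frac p2)$, after identifying $q+sp\frac p2 - \eps p(1-\frac p2) = q(1+\gamma)$, and with $M^q$ controlled by $\boldsymbol{\mathfrak K}^q$ times the appropriate powers of $R$, $\frac{R}{R-r}$, and the constant structure from \eqref{structure-C}. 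Here $d$ is chosen as a small fixed fraction of $R-r$, and the geometric setup must be arranged so that $B_{R+6d}\Subset\Omega$ and the $W^{1,q}_{\loc}$-hypothesis supplies finiteness of $\|\nabla u\|_{L^q}$ on the relevant ball.

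Next I would apply Lemma~\ref{lem:2nd-Ni-FS} with this $\gamma$ and any target exponent $\beta' = \alpha/q$... more precisely, Lemma~\ref{lem:2nd-Ni-FS} gives $\nabla u \in W^{\beta,q}$ for $\beta\in(0,\gamma)$ together with the estimate
\[
    [\nabla u]_{W^{\beta,q}(B_R)}^q \le \frac{C d^{q(\gamma-\beta)}}{(\gamma-\beta)\gamma^q(1-\gamma)^q}\Big[M^q + \frac{(R+4d)^q}{\beta d^{q(1+\gamma)}}\int_{B_{R+4d}}|\nabla u|^q\,\dx\Big].
\]
The remaining task is to choose $\eps$ so that $\gamma$ sits strictly between the prescribed $\alpha$ and $\beta=\beta\cdot\frac pq$... to be careful: the statement asks for $\alpha\in(0,\beta)$ with $\beta=\frac{sp}{2}\cdot\frac pq$, which is exactly the value $\gamma$ attains as $\eps\downarrow 0$. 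So given $\alpha<\beta$, I would pick $\eps = \eps(\alpha)$ small enough that $\alpha < \gamma < \beta$, say $\gamma = \frac12(\alpha+\beta)$, which forces $\eps\frac pq(1-\frac p2) = \frac12(\beta-\alpha)$, i.e.\ $\eps$ comparable to $\beta-\alpha$ (up to factors involving $p,q$; one checks $\eps<1-s$ holds, shrinking further if necessary). Then $\gamma-\alpha \sim \beta-\alpha$, $1-\gamma$ is bounded below by $1-\beta\ge 1-s\cdot\frac{p^2}{2q}$, and $\gamma\ge\frac12\beta$, so all the denominators $(\gamma-\beta)$, $\gamma^q$, $(1-\gamma)^q$ in Lemma~\ref{lem:2nd-Ni-FS} translate into the claimed structure $C = \widetilde C(N,p,q)/\big(s\alpha\beta^q(\beta-\alpha)^2(1-\alpha)^q\big)$ — the factor $(1-s)/\eps$ in \eqref{structure-C} becoming $(1-s)^{1-p/2}/(\beta-\alpha)^{1-p/2}$ and being absorbed into $(\beta-\alpha)^{-2}$.

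The bookkeeping of constants and radii is the main technical burden, but the genuine obstacle is matching the geometry: Lemma~\ref{lem:2nd-Ni-FS} requires $w\in W^{1,q}(B_{R+6d})$ and a second-difference bound on $B_{R+4d}$ for all $|h|\le d$, whereas Lemma~\ref{lem:second-diff-theta-p<2-new} delivers its estimate on $B_r$ in terms of data on $B_R$ with $|h|\le d=\frac17(R-r)$. One must therefore run Lemma~\ref{lem:second-diff-theta-p<2-new} with a family of shrinking balls — e.g.\ apply it with inner radius $R+4d$ and outer radius $\frac12((R+4d)+(R+6d))$ or similar — and then replace $\frac{R+\mathrm{something}\cdot d}{R-r}$ by a multiple of $\frac{R}{R-r}$, which is legitimate since $d\le R-r$. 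Collecting the powers of $\frac{R}{R-r}$ from both lemmas (the $(N+sp+1)$ from Lemma~\ref{lem:second-diff-theta-p<2-new}, plus the $d^{-q(1+\gamma)}$ and $(R+4d)^q$ from Lemma~\ref{lem:2nd-Ni-FS}) and homogenizing using $q+sp\le N+q+sp$ yields the stated exponent $N+q+sp$. Finally, since $\alpha<\gamma$ was arbitrary subject to $\alpha<\beta$, the conclusion $\nabla u\in W^{\alpha,q}_{\loc}$ for all $\alpha\in(0,\beta)$ follows, and the constant blows up as $\alpha\uparrow\beta$, as $\beta\downarrow 0$, as $s\downarrow 0$, and as $p\downarrow 1$, consistent with the asserted form.
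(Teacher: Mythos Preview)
Your proposal is correct and follows essentially the same approach as the paper: apply Lemma~\ref{lem:second-diff-theta-p<2-new} on an intermediate ball to obtain a second-difference bound with exponent $q(1+\gamma)$ where $\gamma=\beta-\eps\tfrac{p}{q}(1-\tfrac{p}{2})$, choose $\eps$ so that $\gamma=\tfrac12(\alpha+\beta)$, and then feed this into Lemma~\ref{lem:2nd-Ni-FS}. The paper's proof differs only in the specific choice of intermediate radius ($\tilde r=\tfrac{1}{11}(7r+4R)$ with $d=\tfrac{1}{11}(R-r)$, arranged so that $\tilde r=r+4d$ matches the geometry of Lemma~\ref{lem:2nd-Ni-FS} exactly) and in the explicit constant bookkeeping.
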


\begin{proof}
We apply Lemma~\ref{lem:second-diff-theta-p<2-new} with $r$ replaced by $\tilde r=\frac{1}{11}(7r+4R)$. We leave the larger radius $R$ unchanged in the application. 
Taking into account $R-\tilde r=\frac{7}{11}(R-r)$, we obtain for any $\epsilon\in (0,1-s)$ that
\begin{align}\label{est:tau_htau_hu-p<2}
    \int_{B_{\tilde r}}\big| \btau_h(\btau_hu)\big|^{q}\,\dx
    \le 
    C\Big(\frac{|h|}{R}\Big)^{q+sp\frac{p}{2}-\eps p(1-\frac{p}{2})}
    \Big(\frac{R}{R-r}\Big)^{N+sp+1}
    \boldsymbol{\mathfrak K}^{q},
\end{align}
for any $0<|h|\le d=\frac17(R-\tilde r)=\frac{1}{11}(R-r)$, and where $C$ has the same structure as the constant in \eqref{structure-C} from Lemma \ref{lem:second-diff-theta-p<2-new}. We choose some $\alpha\in(0,\beta)$ and let 
\begin{equation*}
    \eps 
    :=
    \min\big\{ \tfrac12 (1-s), \tfrac{1}{2-p}\tfrac{q}{p}(\beta-\alpha)\big\},
\end{equation*}
so that 
\begin{equation*}
    sp\tfrac{p}{2}-\eps p(1-\tfrac{p}{2})
    \ge 
    \tfrac12(\alpha+\beta) q
    =:
    \tilde\beta q.
\end{equation*}
Reducing the exponent of $|h|$ in  \eqref{est:tau_htau_hu-p<2} if necessary, which is allowed due to the inequality $|h|\le R$, we find
\begin{align}\label{est:tau_htau_hu-p<2-beta}
    \int_{B_{\tilde r}}\big| \btau_h(\btau_hu)\big|^{q}\,\dx
    &\le 
     C_1
    \Big(\frac{|h|}{R}\Big)^{(1+\tilde\beta)q}
    \Big(\frac{R}{R-r}\Big)^{N+sp+1}\boldsymbol{\mathfrak K}^{q}.
\end{align}
The constant $C_1$ takes the form
$$
    C_1
    =
    \frac{\widetilde C(N,p,q)}{s}
    \bigg(\frac{1-s}{\min\big\{\frac12(1-s), \tfrac{1}{2-p}\tfrac{q}{p}(\beta-\alpha)\big\}}\bigg)^{1-\frac{p}{2}}
    \le 
    \frac{\widetilde C(N,p,q)}{s(\beta-\alpha)}.
$$
Estimate \eqref{est:tau_htau_hu-p<2-beta} plays the role of assumption \eqref{ass:W^beta,q-second-diff} in Lemma \ref{lem:2nd-Ni-FS} with 
$$
    M^q
    :=
    \frac{C_1}{R^{(1+\tilde\beta)q}}
    \Big(\frac{R}{R-r}\Big)^{N+sp+1}
    \boldsymbol{\mathfrak K}^{q} ,
$$
which we apply on $B_{\tilde r}$; note that $\tilde r=r+4d$. The other parameters of Lemma \ref{lem:2nd-Ni-FS} are fixed by $\gamma= \tilde\beta$, $q=q$, $d=\frac1{11}(R-r)$. Finally, $\alpha$ takes the role of $\beta$; we have $\tilde\beta -\alpha=\frac12 (\beta-\alpha)$. The application is allowed due to the assumption $W^{\alpha,q}_{\rm loc}(\Omega)$. Lemma~\ref{lem:2nd-Ni-FS} ensures that $\nabla u\in W^{\alpha, q}( B_{r})$ with the quantitative estimate 
\begin{align*}
    &[\nabla u]_{W^{\alpha, q}( B_{r})}^{q} \\
    &\qquad\le
    \frac{C(N,q)\,d^{(\tilde\beta-\alpha) q}}{\frac12  (\beta-\alpha) \tilde\beta^{q}(1-\tilde\beta)^{q}}
    \bigg[
    M^{q} +
    \frac{\tilde r^{q}}{\alpha d^{(1+\tilde \beta)q}}\int_{B_{\tilde r}}|\nabla u|^{q}\,\dx\bigg]\\
    &\qquad\le
    \frac{C(N,q)}{\alpha(\beta-\alpha) \beta^{q}(1-\alpha)^q}
    \Bigg[
    \frac{C_1 d^{(\tilde\beta-\alpha) q}}{R^{(1+\tilde\beta) q}} \Big(\frac{R}{R-r}\Big)^{N+sp+1}
    \boldsymbol{\mathfrak K}^{q}
    +
    \frac{1}{d^{(1+ \alpha)q}}
    \boldsymbol{\mathfrak K}^{q}
    \Bigg]\\
    &\qquad\le
    \frac{C(N,q)}{\alpha(\beta-\alpha) \beta^{q}(1-\alpha)^q}
    \bigg[
     \frac{C_1}{R^{(1+\alpha)q}}\Big(\frac{R}{R-r}\Big)^{N+sp+1}
    +
    \frac{1}{d^{(1+ \alpha)q}}
     \bigg] \boldsymbol{\mathfrak K}^{q}\\
    &\qquad\le
    \frac{C(N,q)C_1}{\alpha(\beta-\alpha) \beta^{q}(1-\alpha)^q R^{(1+\alpha)q}}
    \bigg[
    \Big(\frac{R}{R-r}\Big)^{N+sp+1}
    +
    \Big(\frac{R}{R-r}\Big)^{(1+\alpha)q}
    \bigg]\boldsymbol{\mathfrak K}^{q}\\
    &\qquad\le
    \frac{C(N,q)C_1 }{\alpha(\beta-\alpha) \beta^{q}(1-\alpha)^q R^{(1+\alpha)q}}
    \Big(\frac{R}{R-r}\Big)^{N+q+sp}
    \boldsymbol{\mathfrak K}^{q}\\
    &\qquad\equiv
    \frac{C_2}{R^{(1+\alpha)q}}
    \Big(\frac{R}{R-r}\Big)^{N+q+sp}
    \boldsymbol{\mathfrak K}^{q}.
\end{align*}
Here to obtain the second-to-last line we used $(1+\alpha)q<N+sp+q$.  Note that the constant $C_2$ takes the form
$$
     C_2
     =
    \frac{\widetilde C(N,p,q)}{s\alpha(\beta-\alpha)^2 \beta^{q}(1-\alpha)^q}.
$$
This finishes the proof of the proposition.
\end{proof}

By the Sobolev embedding for fractional Sobolev spaces the fractional differentiability of the gradient obtained in Proposition~\ref{prop:grad-frac-1} leads to higher integrability of the gradient.

\begin{lemma}\label{lem:increase-exp-p<2}
Let $p\in (1,2]$ and  $s\in(0,1)$ and $q\in [p,\infty)$. There exists a constant $C$ depending on $N$, $p$, $s$ and $q$, such that whenever $u$ is a locally bounded $(s,p)$-harmonic function in the sense of Definition~\ref{def:loc-sol}, that satisfies
\[
    u\in W^{1,q}(B_R)
\]
for some $B_{R}\equiv B_{R}(x_o)\Subset \Omega$, then for any $r\in(0,R)$ we have
\begin{align*}
    \bigg[\mint_{B_{r}}| \nabla u|^{\frac{Nq}{N-\alpha q}}\,\dx\bigg]^{\frac{N-\alpha q}{N}}
    &\le
    C\Big(\frac{R}{r}\Big)^N \Big(\frac{R}{R-r}\Big)^{N+q+sp}
    \boldsymbol{\mathfrak M}^q,
\end{align*}
where $\alpha=\frac{sp^2}{4 q}$ and
\begin{align*}
    \boldsymbol{\mathfrak M}
    :=
    \bigg[\mint_{B_{R}}|\nabla u|^{q}\,\dx\bigg]^\frac{1}{q} +
    \frac{1}{R}\boldsymbol{\mathfrak T}(u;R) .
\end{align*}
The constant $C$ has the form
$C=\frac{\widetilde C(N,p,q)}{s^{q+4}}$.
\end{lemma}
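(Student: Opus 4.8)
The plan is to derive the stated reverse Hölder inequality by combining the fractional differentiability of $\nabla u$ from Proposition~\ref{prop:grad-frac-1} with the fractional Sobolev--Poincaré inequality from Lemma~\ref{lem:frac-Sob-2}. First I would fix the intermediate radius $\varrho:=\tfrac12(R+r)$ and apply Proposition~\ref{prop:grad-frac-1} on the pair $B_\varrho\subset B_R$ with the specific exponent $\alpha=\tfrac{sp^2}{4q}=\tfrac12\beta$, where $\beta=\tfrac{sp}{2}\cdot\tfrac pq$ is the threshold there. This is an admissible choice since $\alpha\in(0,\beta)$, and it yields
\begin{align*}
    [\nabla u]_{W^{\alpha,q}(B_\varrho)}^q
    \le
    \frac{C}{\varrho^{(1+\alpha)q}}
    \Big(\frac{\varrho}{\varrho-r}\Big)^{N+q+sp}
    \Big[\varrho^q\|\nabla u\|_{L^q(B_\varrho)}^q + \varrho^N\boldsymbol{\mathfrak T}(u;\varrho)^q\Big]
\end{align*}
with a constant of the form $\widetilde C(N,p,q)/(s\alpha\beta^q(\beta-\alpha)^2(1-\alpha)^q)$. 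Since $\varrho-r=\tfrac12(R-r)$ and $R/\varrho\le 2$, all the radius ratios are comparable to $R/(R-r)$, and using Remark~\ref{rem:t} to replace $\boldsymbol{\mathfrak T}(u;\varrho)$ by $(R/\varrho)^{N/(p-1)}\boldsymbol{\mathfrak T}(u;R)$ (harmlessly, as $R/\varrho\le 2$) this becomes a clean estimate in terms of $R$, $R-r$, $\|\nabla u\|_{L^q(B_R)}$ and $\boldsymbol{\mathfrak T}(u;R)$.

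Next I would apply Lemma~\ref{lem:frac-Sob-2} to $w=\nabla u\in W^{\alpha,q}(B_r)$ with $\gamma=\alpha$; the hypothesis $\alpha q<N$ is satisfied here because $\alpha q=\tfrac{sp^2}{4}<1<N$ (using $p\le 2$ and $s<1$). This gives
\begin{align*}
    \bigg[\mint_{B_r}|\nabla u|^{\frac{Nq}{N-\alpha q}}\,\dx\bigg]^{\frac{N-\alpha q}{Nq}}
    \le
    2^{\frac{q-1}{q}}\bigg[C^q r^{\alpha q}\,\frac{1}{|B_r|}[\nabla u]_{W^{\alpha,q}(B_r)}^q
    + \mint_{B_r}|\nabla u|^q\,\dx\bigg]^{\frac1q},
\end{align*}
where the Sobolev constant $C=C(N,q,\alpha)$ is bounded, since $\alpha q<1$ keeps us away from the critical regime (alternatively one can note $\alpha<\tfrac12$ always, so the sharper $\gamma\in[\tfrac12,1)$ estimate is not even needed). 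Raising both sides to the power $q$, multiplying through by $r^{\alpha q}$ absorbed appropriately, and inserting the bound for $[\nabla u]_{W^{\alpha,q}(B_r)}^q$ from the previous step, I would then normalize: divide everything by $|B_r|^{-1}$ factors to turn $L^q$-norms into averages, pull out $R^q$, and bound $r^{\alpha q}\varrho^{-(1+\alpha)q}\varrho^q\le R^{-q}\cdot(R/r)^{?}$ — here one keeps careful track of the powers of $R/r$ and $R/(R-r)$. The term $\mint_{B_r}|\nabla u|^q$ is $\le (R/r)^N\mint_{B_R}|\nabla u|^q$, and the tail term is already subsumed in $\boldsymbol{\mathfrak M}^q$ after multiplying by $R^{-q}$. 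Collecting everything yields the claimed inequality with the power $(R/r)^N$ and $(R/(R-r))^{N+q+sp}$.

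The main bookkeeping obstacle — not a conceptual one — will be tracking the dependence of the final constant on $s$ and $q$ to confirm the stated form $C=\widetilde C(N,p,q)/s^{q+4}$. The worst factors come from Proposition~\ref{prop:grad-frac-1}'s constant $\widetilde C/(s\alpha\beta^q(\beta-\alpha)^2(1-\alpha)^q)$: with $\alpha=\tfrac12\beta$ one has $\beta-\alpha=\tfrac12\beta$, $\alpha=\tfrac12\beta$, so $\alpha(\beta-\alpha)^2\beta^q=\tfrac18\beta^{q+3}$, and $\beta=\tfrac{sp^2}{2q}\ge \tfrac{s}{2q}$ since $p>1$, giving $\beta^{q+3}\gtrsim (s/(2q))^{q+3}$; combined with the extra $1/s$ out front and $(1-\alpha)^{-q}\le 2^q$ (as $\alpha<\tfrac12$), this produces $1/s^{q+4}$ up to a $q$-dependent constant, matching the claim. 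One should also double-check that no hidden $(1-s)^{-1}$ survives: Proposition~\ref{prop:grad-frac-1}'s constant has no such factor, and Remark~\ref{rem:t} only contributes $C(N)$, so the only $s$-degeneracy is the stated $s^{-(q+4)}$. I would present this normalization step explicitly but keep the elementary radius-power juggling terse. The only genuinely delicate point is verifying $\alpha q<N$ and the boundedness of the Sobolev constant, which as noted follows immediately from $\alpha q=\tfrac{sp^2}{4}<1$.
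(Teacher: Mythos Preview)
Your approach is essentially the same as the paper's: apply Proposition~\ref{prop:grad-frac-1} with the particular choice $\alpha=\tfrac{sp^2}{4q}=\tfrac12\beta$, then feed the resulting $W^{\alpha,q}$-seminorm bound into the fractional Sobolev--Poincar\'e inequality of Lemma~\ref{lem:frac-Sob-2} (noting $\alpha q=\tfrac{sp^2}{4}<1<N$), and track the constants exactly as you describe. The only difference is that the paper applies Proposition~\ref{prop:grad-frac-1} directly on the pair $B_r\subset B_R$ rather than introducing your intermediate radius $\varrho=\tfrac12(R+r)$; since that proposition already accommodates arbitrary $r<R$, the extra layer is harmless but unnecessary and you can drop it to simplify the radius bookkeeping.
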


\begin{proof}
We apply Proposition~\ref{prop:grad-frac-1} with the choice $\alpha=\tfrac{sp^2}{4q}\in (0,\beta)$, where $\beta=\tfrac{sp^2}{2q}$ and infer that $\nabla u\in W^{\alpha, q}_{\rm loc}(\Omega)$. Moreover, the quantitative estimate 
\begin{align*}
    [\nabla u]_{W^{\alpha, q}( B_{r})}^{q}
    \le
    C_2 R^{N-\alpha q} 
    \Big(\frac{R}{R-r}\Big)^{N+q+sp}
    \boldsymbol{\mathfrak M}^{q}
\end{align*}
holds. Note that $\boldsymbol{\mathfrak K}^{q}\le C(N)R^{N+q}\boldsymbol{\mathfrak M}^{q}$, where $\boldsymbol{\mathfrak K}$ is defined in~\eqref{def:K-2diff} and the constant is of the form 
$$
    C_2
    =
    \frac{\widetilde C(N,p,q)}{s\alpha\beta^{q}(\beta-\alpha)^2 (1-\alpha)^q}
    \le 
    \frac{\widetilde C(N,p,q)}{s^{q+4}}.
$$
By the Sobolev embedding for fractional Sobolev spaces from Lemma~\ref{lem:frac-Sob-2} -- note that the application is permitted since $N-\alpha q>0$ -- we conclude that $\nabla u\in L^{\frac{Nq}{N-\alpha q}}(B_{r})$ together with the quantitative estimate
\begin{align*}
    \bigg[\mint_{B_{r}}| \nabla u|^{\frac{Nq}{N-\alpha q}}\,\dx\bigg]^{\frac{N-\alpha q}{N}}
    &\le 
    C_{\rm Sob}\bigg[r^{\alpha q-N} [\nabla u]_{W^{\alpha, q}( B_{r})}^{q} +
    \mint_{B_r} |\nabla u|^q \,\dx \bigg]\\
    &\le
    C_{\rm Sob}\bigg[
    C_2 \Big(\frac{R}{r}\Big)^{N-\alpha q}
    \Big(\frac{R}{R-r}\Big)^{N+q+sp}
     +\Big(\frac{R}{r}\Big)^{N}\bigg]
      \boldsymbol{\mathfrak M}^{q}
    \\
    &\le
     2C_{\rm Sob}C_2 \Big(\frac{R}{r}\Big)^{N} \Big(\frac{R}{R-r}\Big)^{N+q+sp}
    \boldsymbol{\mathfrak M}^{q}.
\end{align*}
The  constant $2C_{\rm Sob}C_2$ has the structure
\begin{align*}
    2C_{\rm Sob}C_2
    =
    \frac{C(N,q)(1-\alpha)}{(N-\alpha q)^{q-1}}\cdot
    \frac{\widetilde C(N,p,q)}{s^{q+4}}
    \le
    \frac{\widetilde C(N,p,q)}{s^{q+4}}.
\end{align*}
To obtain the last line we used $N-\alpha q= N-\frac14 sp^2\ge N-1\ge 1$.
This proves the claimed inequality.
\end{proof}

Lemma~\ref{lem:increase-exp-p<2} allows to set up a Moser-type iteration scheme that improves for any given $q\in[p,\infty)$ the regularity of an $(s,p)$-harmonic function from $W^{1,p}_{\rm loc}(\Omega)$ to $W^{1,q}_{\rm loc}(\Omega)$.

\begin{theorem}[$W^{1,q}$-gradient regularity]\label{lem:W1q-p<2}
Let $p\in (1,2]$ and  $s\in(0,1)$. Then, whenever $u$ is a locally bounded $(s,p)$-harmonic function in the sense of Definition~\ref{def:loc-sol},  we have
$$
    u\in W^{1,q}_{\rm loc}(\Omega)\quad \mbox{for any $q\in [p,\infty)$.}
$$
Moreover, for any $q\ge p$ there exists a constant  $C=C(N,p,s,q)$, such that on any ball $B_{R}\equiv B_{R}(x_o)\Subset \Omega$ the quantitative $L^q$-gradient estimate 
\begin{align*}
    \bigg[\mint_{B_{R/2}}| \nabla u|^{q}\,\dx\bigg]^{\frac{1}{q}}
    &\le
    C\Bigg[\bigg[\mint_{B_{R}}|\nabla u|^{p}\,\dx\bigg]^\frac{1}{p} +
    \frac{1}{R}\boldsymbol{\mathfrak T}(u;R)\Bigg]
\end{align*}
holds true. The constant $C$ is stable as $s\uparrow 1$. It blows up as $p\downarrow 1$.
\end{theorem}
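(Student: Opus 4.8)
The plan is to run a Moser-type iteration based on the reverse-H\"older estimate of Lemma~\ref{lem:increase-exp-p<2}. Starting from the known $W^{1,p}_{\rm loc}$-regularity (Corollary~\ref{cor:W1p-p<2}), I would define an increasing sequence of exponents $q_0=p$ and $q_{k+1}=\frac{Nq_k}{N-\alpha_k q_k}$ where $\alpha_k=\frac{sp^2}{4q_k}$ is the gain from Lemma~\ref{lem:increase-exp-p<2}; note $\alpha_k q_k=\frac{sp^2}{4}$ is a fixed constant independent of $k$, so $q_{k+1}=\frac{N}{N-sp^2/4}\,q_k$, i.e. the exponents grow geometrically with ratio $\lambda:=\frac{N}{N-sp^2/4}>1$. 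Hence $q_k\to\infty$, and for any prescribed $q\ge p$ there is a finite $k_q$ with $q_{k_q}\ge q$. The first step is therefore purely bookkeeping: fix a chain of concentric balls $B_{R/2}=:B_{r_{k_q}}\subset\cdots\subset B_{r_1}\subset B_{r_0}:=B_R$ with $r_k-r_{k+1}\sim 2^{-k}R$, apply Lemma~\ref{lem:increase-exp-p<2} on each pair $(B_{r_{k+1}},B_{r_k})$ to pass from $L^{q_k}$ to $L^{q_{k+1}}$, and then a final H\"older-interpolation step to come down from $q_{k_q}$ to $q$ if $q_{k_q}>q$.

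The second step is to control the accumulated constants. At stage $k$, Lemma~\ref{lem:increase-exp-p<2} gives
\begin{align*}
    \bigg[\mint_{B_{r_{k+1}}}|\nabla u|^{q_{k+1}}\,\dx\bigg]^{\frac{1}{q_{k+1}}}
    &\le
    C_k^{1/q_k}\,\Big(\tfrac{r_k}{r_{k+1}}\Big)^{N/q_k}\Big(\tfrac{r_k}{r_k-r_{k+1}}\Big)^{(N+q_k+sp)/q_k}
    \bigg[\bigg(\mint_{B_{r_k}}|\nabla u|^{q_k}\,\dx\bigg)^{1/q_k}+\tfrac{1}{r_k}\boldsymbol{\mathfrak T}(u;r_k)\bigg],
\end{align*}
with $C_k=\widetilde C(N,p,q_k)/s^{q_k+4}$; since $q_k$ grows geometrically, $C_k^{1/q_k}$ stays bounded (the exponent $1/q_k$ tames the $s^{-(q_k+4)}$ factor up to a bounded constant, and $\widetilde C(N,p,q_k)$ can be tracked to grow at most exponentially in $q_k$, so its $q_k$-th root is controlled), and $\big(\tfrac{r_k}{r_k-r_{k+1}}\big)^{(N+q_k+sp)/q_k}\le C\,2^{k(N+sp)/q_k}\cdot 2^{k}$, whose product over $k$ converges because $\sum_k k/q_k<\infty$ (geometric decay of $1/q_k$) and there are only finitely many factors anyway. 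The tail terms $\tfrac{1}{r_k}\boldsymbol{\mathfrak T}(u;r_k)$ are handled by Remark~\ref{rem:t}: each is bounded by $C(N)(R/r_k)^{N/(p-1)}\tfrac1R\boldsymbol{\mathfrak T}(u;R)\le C\tfrac1R\boldsymbol{\mathfrak T}(u;R)$, with a constant uniform over the finitely many stages. Iterating and collapsing the finite product yields the asserted estimate with $C=C(N,p,s,q)$; tracing the $s$-dependence, every factor $s^{-(q_k+4)/q_k}$ is bounded as $s\uparrow1$, so $C$ is stable in that limit, while the $\widetilde C(N,p,q_k)$ and the base constant from Corollary~\ref{cor:W1p-p<2} blow up as $p\downarrow1$.

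For the final reduction from $q_{k_q}$ to an arbitrary intermediate $q\in[q_{k_q-1},q_{k_q}]$, I would simply apply H\"older's inequality on $B_{R/2}$, $\big(\mint_{B_{R/2}}|\nabla u|^q\big)^{1/q}\le\big(\mint_{B_{R/2}}|\nabla u|^{q_{k_q}}\big)^{1/q_{k_q}}$, which is harmless. The remaining conceptual point is that the conclusion $u\in W^{1,q}_{\rm loc}(\Omega)$ for \emph{every} $q<\infty$ follows because $q_k\to\infty$; the finiteness of each step is guaranteed since the hypothesis $u\in W^{1,q_k}(B_{r_k})$ needed to apply Lemma~\ref{lem:increase-exp-p<2} at stage $k$ is exactly the output of stage $k-1$, bootstrapped from the base case $u\in W^{1,p}_{\rm loc}$. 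The main obstacle I anticipate is not the structure of the iteration — which is standard Moser — but the careful verification that $C_k^{1/q_k}$ and the product of geometric-growth factors $\prod_k 2^{ck/q_k}$ remain bounded as $k$ ranges over $\{0,\dots,k_q-1\}$ with a bound that depends on $q$ but is \emph{stable as $s\uparrow1$}; this requires keeping the precise form $\widetilde C(N,p,q)/s^{q+4}$ from Lemma~\ref{lem:increase-exp-p<2} under control through the iteration, and is the place where the bookkeeping of constants — emphasized throughout the paper — has to be done with care rather than waved away.
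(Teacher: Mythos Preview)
Your proposal is correct and follows essentially the same approach as the paper: a Moser-type iteration on the exponents $q_k=\big(\tfrac{N}{N-sp^2/4}\big)^k p$ using Lemma~\ref{lem:increase-exp-p<2} on a dyadic chain of balls, with the tail terms controlled via Remark~\ref{rem:t} and a final H\"older step to descend from $q_{k_q}$ to $q$. The paper's proof differs only in cosmetic details (it writes out the finite product of constants explicitly rather than arguing about boundedness of $C_k^{1/q_k}$), and your identification of the constant-tracking as the main bookkeeping issue is accurate.
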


\begin{proof}
In view of Theorem~\ref{thm:W1p-p<2} we know that $u\in W^{1,p}_{\rm loc}(\Omega)$. 
Based on the quantitative higher integrability from Lemma \ref{lem:increase-exp-p<2} we set up an iteration argument in order to iteratively increase the integrability exponent. To this end, we define a sequence $(q_i)_{i\in\N_0}$ of exponents, a sequence $(\rho_i)_{i\in\N_0}$ of radii, and the associated sequence of shrinking concentric balls $(B_i)_{i\in\N_0}$ by
\begin{equation*}
    \left\{
    \begin{array}{c}
    \displaystyle
    q_o:=p,
    \qquad
    q_{i}=\frac{Nq_{i-1}}{N-\frac14sp^2}
    =\bigg(\frac{N}{N-\frac14sp^2}\bigg)^{i}p,\\[12pt]
    \displaystyle
    \rho_i:=\frac{R}2 +\frac{R}{2^{i+1}},\qquad B_i=B_{\rho_i}.
     \end{array}
    \right.
\end{equation*}
Clearly $q_i\ge p$ is an increasing sequence and $q_i\to \infty$ as $i\to\infty$. For $i\in \N$ we apply Lemma \ref{lem:increase-exp-p<2} with $r=\rho_i$, $R=\rho_{i-1}$, $q=q_{i-1}$, and $\alpha=\frac14\frac{sp^2}{q_{i-1}}$.
This amounts to
\begin{align}\label{est:int-q_i}
    \bigg[
    \mint_{B_i}|\nabla u|^{q_i}\,\dx
    \bigg]^\frac1{q_i}
    &\le
    C^\frac1{q_{i-1}}
    \Big(\frac{\rho_{i-1}}{\rho_i}\Big)^{\frac{N}{q_{i-1}}}
    \Big(\frac{\rho_{i-1}}{\rho_{i-1}-\rho_{i}}\Big)^{\frac{N+sp}{q_{i-1}}
    +1}\boldsymbol{\mathfrak M}_{i-1},
 \end{align} 
where we abbreviated 
\begin{align*}
    \boldsymbol{\mathfrak M}_{i-1}
    &:=
    \bigg[\mint_{B_{i-1}}|\nabla u|^{q_{i-1}}\,\dx   \bigg]^\frac1{q_{i-1}}
    +
    \frac1{\rho_{i-1}}\boldsymbol{\mathfrak T}(u; \rho_{i-1}).
\end{align*}
To proceed further, we estimate the numerical factors and the tail term in the above inequality. Indeed, we have
$$
    \frac{\rho_{i-1}}{\rho_i}=\frac{\frac12 R+\frac1{2^i}R}{\frac12 R+\frac1{2^{i+1}}R}<2,
    \quad\quad
    \frac{R}{\rho_{i-1}}
    =
    \frac{R}{\frac12 R+\frac1{2^{i}}R}
    \le 2,
$$
and
$$
    \frac{\rho_{i-1}}{\rho_{i-1}-\rho_{i}}
    =
    \frac{\frac12 R+\frac1{2^{i}}R}{\frac1{2^{i}}R-\frac1{2^{i+1}}R}
    =
    2^{i+1}(\tfrac12+\tfrac1{2^{i}})
    \le
    2^{i+2}.
$$
Moreover, by Remark~\ref{rem:t} we have
\begin{align*}
    \boldsymbol{\mathfrak T}(u; \rho_{i-1})
    \le
    C(N)\Big(\frac{R}{\rho_{i-1}}\Big)^{\frac{N}{p-1}} 
    \boldsymbol{\mathfrak T}(u; R)
    \le
    C(N,p) \boldsymbol{\mathfrak T}(u;R).
\end{align*}
Using these estimates in \eqref{est:int-q_i}$_i$, we obtain for any $i\in\N$ that
\begin{align}\label{before-iteration}\nonumber
    \bigg[
    \mint_{B_i}&|\nabla u|^{q_i}\,\dx
    \bigg]^\frac1{q_i}\\\nonumber
    &\le 
    C^\frac1{q_{i-1}}2^\frac{N}{q_{i-1}} \big(2^{i+2}\big)^{\frac{N+sp}{q_{i-1}}
    +1}
    \Bigg[
    \bigg[\mint_{B_{i-1}}|\nabla u|^{q_{i-1}}\,\dx  \bigg]^\frac1{q_{i-1}}
    +\frac{C(N,p)}{R}\boldsymbol{\mathfrak T}(u;R)
    \Bigg]\\\nonumber
    &\le
   C(N,p)\underbrace{2^{\big[\frac{N+sp}{q_{i-1}}
    +1\big](i-1)}}_{\le 2^{(N+2)(i-1)}} C^\frac1{q_{i-1}}\Bigg[
    \bigg[\mint_{B_{i-1}}|\nabla u|^{q_{i-1}}\,\dx \bigg]^\frac1{q_{i-1}}
    +
    \frac1{R}\boldsymbol{\mathfrak T}(u;R)\Bigg]\\
    &=:C_{i-1} \Bigg[\bigg[\mint_{B_{i-1}}|\nabla u|^{q_{i-1}}\,\dx   \bigg]^\frac1{q_{i-1}}
    +
    \frac1{R}\boldsymbol{\mathfrak T}(u;R)\Bigg].
\end{align}
Recalling that
\begin{align*}
    C^\frac1{q_{i-1}}
    &=
    \bigg[
    \frac{\widetilde C(N,p,q_{i-1})}{s^{q_{i-1}+4}}\bigg]^\frac1{q_{i-1}}\le 
    \frac{\widetilde C(N,p,q_{i-1})}{s^5},
\end{align*}
we obtain that $C_{i-1}$ has the structure
$$
    C_{i-1}=
     \frac{2^{(N+2)(i-1)}\widetilde C(N,p,q_{i-1})}{s^5}.
$$
Iterating \eqref{before-iteration} results in
 \begin{align*}   
    \bigg[
    \mint_{B_i}|\nabla u|^{q_i}\,\dx
    \bigg]^\frac1{q_i} 
    &\le 
    C\Bigg[
    \bigg[\mint_{B_{R}}|\nabla u|^{p}\,\dx   \bigg]^\frac1{p}
    +
    \frac{1}{R}\boldsymbol{\mathfrak T}(u;R)\Bigg],
\end{align*} 
where $C=i\prod_{j=0}^{i-1} C_j$.
Since $q_i\to\infty$ as $i\to\infty$ there exists $i_o\in\N$, such that $q_{i_o-1}<q\le q_{i_o}$. This fixes $i_o\in\N$ in dependence on $N$, $p$, $s$, and $q$. Enlarging the domain of integration from $B_{\frac12 R}$ to $B_{i_o}$ and using H\"older's inequality, we finally get
\begin{align*}
    \bigg[
    \mint_{B_{\frac12 R}}|\nabla u|^{q}\,\dx
    \bigg]^\frac1{q}
    &\le
    2^\frac{N}{q}\bigg[
    \mint_{B_{i_o}}|\nabla u|^{q_{i_o}}\,\dx
    \bigg]^\frac1{q_{i_o}} \\
    &\le 
    C\Bigg[
    \bigg[\mint_{B_{R}}|\nabla u|^{p}\,\dx   \bigg]^\frac1{p}
    +
    \frac1{R}\boldsymbol{\mathfrak T}(u;R)\Bigg],
\end{align*} 
where $C=C(N,p,s,q)$. 
\end{proof}

At this point Theorem~\ref{thm:W1q-p<2} can be achieved by combining Theorem~\ref{cor:W1p-p<2} and Proposition~\ref{lem:W1q-p<2}.

\begin{proof}[\textbf{\upshape Proof of Theorem~\ref{thm:W1q-p<2}}]
First we apply Theorem~\ref{lem:W1q-p<2} on the balls $B_{\frac12 R}$ and $B_{\frac34 R}$, which is possible after slightly changing the radii. Subsequently we use Corollary~\ref{cor:W1p-p<2} to estimate the $L^p$ norm of $\nabla u$ and Lemma~\ref{lem:t} to increase  in the tail term the radius $\tfrac34 R$ to $R$. In this way, we get
\begin{align*}
    \|\nabla u\|_{L^q(B_{R/2})}
    &\le
    C R^{\frac{N}{q}} 
    \Big[R^{-\frac{N}{p}} \|\nabla u\|_{L^p(B_{\frac34 R})} +
    \tfrac{1}{R}\boldsymbol{\mathfrak T}\big(u;\tfrac34 R\big)\Big] \\
    &\le
    C R^{\frac{N}{q}-1} 
    \Big[R^{s-\frac{N}{p}} (1-s)^{\frac1p}[u]_{W^{s,p}(B_{R})} +
    \boldsymbol{\mathfrak T}(u;R)\Big].
\end{align*}
This finishes the proof of Theorem~\ref{thm:W1q-p<2}.
\end{proof}

By Morrey embedding the $W^{1,q}$-regularity from Theorem~\ref{thm:W1q-p<2} immediately implies that $(s,p)$-harmonic functions are Hölder continuous for any Hölder exponent $\gamma\in(0,1)$. This is exactly the content of Theorem~\ref{thm:Hoelder-p<2}.

\begin{proof}[\textbf{\upshape Proof of Theorem~\ref{thm:Hoelder-p<2}}]
From Theorem~\ref{lem:W1q-p<2} we know that $u\in W^{1,q}_{\rm loc}(\Omega)$ for any $q\ge p$. Therefore, by Morrey embedding, Lemma~\ref{Lem:morrey-classic}, we conclude that $u\in C^{0,\gamma}_{\rm loc}(\Omega)$ for any $\gamma\in(0,1)$. Now, fix some $\gamma\in(0,1)$ and consider a ball $B_R\equiv B_R(x_o)\Subset\Omega$. Applying in turn Lemma~\ref{Lem:morrey-classic} with the choice $q=\frac{N}{1-\gamma}$ and Theorem~\ref{lem:W1q-p<2}, we obtain the quantitative estimate 
\begin{align*}
    [u]_{C^{0,\gamma}(B_{\frac12 R})}
    &=
    [u]_{C^{0,1-\frac{N}{q}}(B_{\frac12 R})}
    \le 
    C\|\nabla u\|_{L^{q}(B_{\frac12 R})} \\
    &\le
    C R^{\frac{N}{q}-1} 
    \Big[R^{s-\frac{N}{p}} (1{-}s)^{\frac1p}[u]_{W^{s,p}(B_{R})} +
    \boldsymbol{\mathfrak T}(u;R)\Big].
\end{align*}
Recalling the choice of $q$, we conclude the claimed inequality.
\end{proof}

\section{Fractional differentiability of the gradient}\label{sec:fracdiff}

In this section we prove, as stated in Theorem~\ref{thm:grad-frac}, that the gradient of $(s,p)$-harmonic functions exhibits a certain fractional differentiability  at each $L^q$-scale with $q\ge 2$. Particularly, for $q=2$ we have  $\nabla u\in W^{\alpha ,2}_{\rm loc}(\Omega)$ for any $0<\alpha <\min\{\frac12 sp, 1-(1-s)p\}$ with a quantitative local estimate which remains stable as $s\uparrow 1$. It is straightforward to check that the upper bound of $\al$ tends to $1$ as $s\uparrow1$.
Therefore,  we formally recover the $W^{2,2}$-regularity obtained in \cite{Manfredi-Weitsman} and \cite{Acerbi-Fusco} for $p$-harmonic functions in the singular case $p\in (1,2)$. 

We start with two preparatory lemmata. The first one ensures that $\nabla u\in W^{\alpha,q}_{\loc}$ for any $q\ge p$ and $\alpha<\frac{sp}{q}$.

\begin{lemma}\label{lem:tauhh-inproved-basic}
Let $p\in (1,2]$, $s\in (0, 1)$, $q\in[2,\infty)$, $\varep\in (0,\min\{1-s,\frac12 sp\}]$, and $\tilde\varep\in (0,\frac12 sp)$.
Whenever
$u$ is a locally bounded $(s,p)$-harmonic function in the sense of Definition~\ref{def:loc-sol}, we have
$$
    \nabla u\in W^{\be,q}_{\loc}(\Om)\quad\text{where}\quad
    \be:=\frac{sp-\varep-\tilde\epsilon}{q}\in (0,1).
$$
Moreover, there exists $C=C(N,p,s,q,\tilde\varep)$, such that for every ball
$B_{R}\equiv B_{R}(x_o)\Subset\Omega$ and $r\in(0,R)$, we have 
\begin{align*}
     [\nabla u]_{W^{\beta,q}(B_r)}^q
    &\le \frac{C}{R^{(1+\be) q}}\Big(\frac{R}{R-r}\Big)^{N+q+1}\boldsymbol{\mathfrak K}^q,
\end{align*}
where
\begin{align*}
    \boldsymbol{\mathfrak K}^q
    &:=
    R^{N+(1-\varep) q}[u]^q_{C^{0,1-\varep}(B_{R})} +
    R^q  \|\nabla u \|^q_{L^q(B_R)} +
    R^N \boldsymbol{\mathfrak T}(u;R)^q .
\end{align*}
\end{lemma}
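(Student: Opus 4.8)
The statement asserts a gain of fractional differentiability for $\nabla u$ at the $L^q$-scale up to the threshold $\frac{sp}{q}$, under the standing assumptions that $u$ is locally bounded and (by Theorems~\ref{thm:W1q-p<2} and~\ref{thm:Hoelder-p<2}, which may be invoked freely) that $u\in W^{1,q}_{\loc}(\Om)\cap C^{0,1-\varep}_{\loc}(\Om)$ for any $\varep\in(0,1)$. The key device is Corollary~\ref{cor:energy-q-1}, which, with the H\"older exponent $\gamma=1-\varep$, yields a fractional energy estimate for $V_{\frac{q-p+2}{2}}(\btau_hu)$ at the differentiability $\gamma-\tfrac12(\gamma-s)p=1-\varep-\tfrac12(1-\varep-s)p$, together with the $\big(\tfrac{|h|}{R}\big)^q$ decay in the step size. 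The plan is: (i) apply Corollary~\ref{cor:energy-q-1} on slightly enlarged radii $\widetilde r,\widetilde R$ to produce this seminorm bound; (ii) convert it, via Lemma~\ref{lem:N-FS} applied to $w=V_{\frac{q-p+2}{2}}(\btau_hu)$, into an $L^{2}$-estimate for $\btau_\lambda V_{\frac{q-p+2}{2}}(\btau_hu)$ with the appropriate power of $|\lambda|$; (iii) use the elementary pointwise bound $|\btau_\lambda V_{\frac{q-p+2}{2}}(\btau_hu)|\ge |\btau_\lambda\btau_hu|^{\frac{q-p+2}{2}}$ (valid since $\frac{q-p+2}{2}\ge 1$ as $q\ge p$), raise to the power $\frac{2}{q-p+2}$, and choose $\lambda=h$ to obtain an $L^{q-p+2}$-estimate for the second difference $\btau_h\btau_hu$; (iv) upgrade from $L^{q-p+2}$ to $L^q$ by H\"older's inequality combined with the $L^\infty$-bound (absorbing the loss into the tail-type quantity $\boldsymbol{\mathfrak K}$), arriving at
\[
    \int_{B_{\tilde r}}|\btau_h\btau_hu|^q\,\dx
    \le C\Big(\tfrac{|h|}{R}\Big)^{(sp-\varep\,\mathfrak a)q'}\Big(\tfrac{R}{R-r}\Big)^{N+sp+1}\boldsymbol{\mathfrak K}^q
\]
for a suitable exponent (the precise bookkeeping of the $\varep$-dependent exponent is routine and is where $\tilde\varep$ will be introduced to leave room below the threshold); and (v) feed this second-difference estimate into Lemma~\ref{lem:2nd-Ni-FS} with $\gamma=\tilde\beta$ a value with $\beta<\tilde\beta<\frac{sp}{q}$, which delivers $\nabla u\in W^{\beta,q}(B_r)$ together with the claimed quantitative seminorm bound, after the usual homogenisation of the powers of $\frac{R}{R-r}$ using $(1+\beta)q<N+q+1$.

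The parameter bookkeeping needs care. One chooses $\varep\in(0,\min\{1-s,\tfrac12 sp\}]$ so that both Corollary~\ref{cor:energy-q-1} is applicable (it requires no constraint beyond $\gamma\in(0,1)$, but the H\"older seminorm $[u]_{C^{0,1-\varep}}$ must be finite, which holds) and the resulting differentiability $1-\varep-\tfrac12(1-\varep-s)p$ stays $>s$, so that Lemma~\ref{lem:N-FS} can be applied with $1-\gamma<1-s$ and the estimate is stable as $s\uparrow 1$. The exponent of $|h|$ produced in step (iii)--(iv), after setting $\lambda=h$, works out to something of the form $sp-\varep(1-\tfrac p2)\cdot\frac{2}{q-p+2}\cdot\frac{q}{q}$ times $q$, and then the H\"older upgrade in step (iv) further modifies it; the net effect is an exponent of $|h|/R$ of the shape $q+(\text{positive})-(\text{const})\cdot\varep$, and one picks $\varep$ small enough (in terms of $\tilde\varep$) that this exceeds $(1+\beta)q$ with $\beta=\frac{sp-\varep-\tilde\varep}{q}$. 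Reducing the exponent of $|h|/R$ when needed is legitimate because $|h|\le R$; this is the standard move already used repeatedly in the paper.

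\textbf{Main obstacle.} The genuinely delicate point is keeping the constants stable as $s\uparrow 1$ while the threshold $\frac{sp}{q}$ approaches $\frac pq\le 1$: the factor $(1-s)^{-1}$ appearing in Corollary~\ref{cor:energy-q-1} (through $\int|\nabla u|^q/(1-s)$ inside the bracket, and through $[u]_{C^{0,\gamma}}$) and in Lemma~\ref{lem:N-FS} (through $1-\gamma$) must cancel against the explicit $(1-s)$-weights in the definition of $\boldsymbol{\mathfrak K}$, and one must verify that the $C^{0,1-\varep}$-seminorm term, when bounded by Theorem~\ref{thm:Hoelder-p<2}, contributes only the $(1-s)$-scaled quantities already present in $\boldsymbol{\mathfrak K}$. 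A secondary technical nuisance is that the second-difference exponent $q-p+2$ is strictly between $q$ and $q$ only when $p=2$; for $p<2$ one genuinely loses integrability at the intermediate step and must recover it via the $L^\infty$-bound, which is precisely why the $R^{N+(1-\varep)q}[u]^q_{C^{0,1-\varep}}$ and $R^N\boldsymbol{\mathfrak T}^q$ terms appear in $\boldsymbol{\mathfrak K}$. Apart from this, the argument is a direct combination of Corollary~\ref{cor:energy-q-1}, Lemma~\ref{lem:N-FS}, and Lemma~\ref{lem:2nd-Ni-FS}, entirely parallel to the structure of Lemma~\ref{lem:second-diff-theta-p<2-new} and Proposition~\ref{prop:grad-frac-1} but exploiting the improved coercivity that H\"older continuity buys.
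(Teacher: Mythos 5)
There is a genuine gap, and it sits precisely at the point you wave away as ``routine bookkeeping.'' In step (i) you apply Corollary~\ref{cor:energy-q-1} with the target exponent $q$ itself, producing a $W^{\widetilde\gamma,2}$-seminorm bound for $V_{\frac{q-p+2}{2}}(\btau_hu)$ with step-size decay $\big(\tfrac{|h|}{R}\big)^{q}$. After Lemma~\ref{lem:N-FS} (which contributes the extra factor $|\lambda|^{2\widetilde\gamma}$), the pointwise bound, and the choice $\lambda=h$, this yields an $L^{q-p+2}$-estimate for $\btau_h\btau_hu$ with step-size exponent $q+2\widetilde\gamma$. Passing to $L^{q}$ via H\"older --- note that for $p<2$ we have $q-p+2>q$, so this is a loss of integrability by H\"older alone, and the $L^\infty$-bound you invoke plays no role --- multiplies the step-size exponent by $\tfrac{q}{q-p+2}<1$. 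Using $2\widetilde\gamma=(2-p)+sp-\varep(2-p)$, a direct computation gives
\[
(q+2\widetilde\gamma)\,\tfrac{q}{q-p+2}=q+\tfrac{q\,(sp-\varep(2-p))}{q-p+2},
\]
so the differentiability order that Lemma~\ref{lem:2nd-Ni-FS} can extract is capped at $\tfrac{sp}{q-p+2}$, which is strictly below the claimed threshold $\tfrac{sp}{q}$ whenever $p<2$. The deficit $\tfrac{sp(2-p)}{q-p+2}$ is a fixed positive quantity depending only on $s,p,q$, so it cannot be absorbed into $\tilde\varep$, which must be allowed to be arbitrarily small.

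The paper's proof sidesteps this by applying Corollary~\ref{cor:energy-q-1} with $q+p-2$ in place of $q$ (which is legitimate since $q+p-2\in[p,q]$). This produces the seminorm of $V_{q/2}(\btau_hu)$ directly, so the pointwise bound already gives $|\btau_h\btau_hu|^{q}$ with no intermediate exponent. The step-size decay from the energy inequality is now only $\big(\tfrac{|h|}{R}\big)^{q+p-2}$, but the $2\widetilde\gamma$ coming from Lemma~\ref{lem:N-FS} exactly restores the missing $(2-p)$, giving the total exponent $q+sp-\varep(2-p)$ at the $L^q$-scale and hence $\beta$ arbitrarily close to $\tfrac{sp}{q}$. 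The only price is that one must bound $\|\nabla u\|_{L^{q+p-2}}$ by $\|\nabla u\|_{L^q}$ (a downward H\"older step, harmless, and what the paper records as~\eqref{eq-holder}). Your remaining steps (ii), (iii), (v) and your discussion of the $(1-s)$-stability are sound; it is only the choice of exponent fed into Corollary~\ref{cor:energy-q-1}, and the ensuing step (iv), that need to be replaced.
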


\begin{proof}
By Theorem~\ref{thm:W1q-p<2} we have $u\in W^{1,q}_{\rm loc}(\Omega)$. Moreover, from Theorem \ref{thm:Hoelder-p<2} we know that $u\in C_{\rm loc}^{0,1-\varep}(\Omega)$ for any $\varep\in(0,1)$. For $\varep$ as in 
the statement, we set $\gamma:=1-\varep$ for ease of notation. 
Fix $0<r<R$ and introduce $\tilde r=\frac17(5r+2R)$,
$\widetilde R=\frac17(r+6R)$, and $d=\frac14(\widetilde R-\tilde r)=\frac17(R-r)$. 
First of all, by Hölder's inequality, we have
\begin{equation}\label{eq-holder}
    \int_{B_{R}} |\nabla u|^{q+p-2}\,\dx
    \le 
    C(N)R^{\frac{N(2-p)}{q}}\bigg[\int_{B_{R}} |\nabla u|^{q}\,\dx\bigg]^{\frac{q+p-2}{q}}.
\end{equation}
Next, we apply Corollary~\ref{cor:energy-q-1} with $(\tilde r, \widetilde R,\vartheta,q+p-2,1-\epsilon)$ in place of $(r,R,\theta,q,\gamma)$. Note that $\widetilde R+d=R$ and that
$\frac{\widetilde R}{\widetilde R-\tilde r}$
and $\frac1{\widetilde R}$ can be bounded in terms of
$\frac{R}{R-r}$ and $\frac1R$ apart from a multiplicative factor, use~\eqref{eq-holder}, and the definition of $\boldsymbol{\mathfrak K}$ to obtain that 
\begin{align*}
    \mathbf{I}
    &:= 
    \big[
    V_\frac{q}{2}(\btau_hu)
    \big]^2_{W^{\gamma-\frac{1}{2}(\gamma-s)p,2}(B_{\tilde r})}
    \\
    &\ \le 
    \frac{C}{(1-s)R^{sp}} \Big(\frac{|h|}{R}\Big)^{q+p-2}
    \Big(\frac{R}{R-r}\Big)^{N+3}
    [u]_{C^{0,\gamma}(B_{R})}^{2-p} \\
    &\ \phantom{\le\,}\cdot
    \Bigg[ R^{\frac{N(2-p)}{q}+q+p-2}  
    \bigg[\int_{B_{R}} |\nabla u|^{q}\,\dx\bigg]^{\frac{q+p-2}{q}} +
    R^N \boldsymbol{\mathfrak T}(u;R)^{q+p-2}
    \Bigg] \\
    &\ \le 
    \frac{C}{(1-s) R^{sp}} \Big(\frac{|h|}{R}\Big)^{q+p-2}
    \Big(\frac{R}{R-r}\Big)^{N+3}
    [u]_{C^{0,\gamma}(B_{R})}^{2-p} 
    R^{\frac{N(2-p)}{q}} \boldsymbol{\mathfrak K}^{q+p-2}.
\end{align*}
Estimating $[u]_{C^{0,\gamma}(B_{R})}$ in terms of $R^{-(\frac{N}{q}+\gamma)}\boldsymbol{\mathfrak K}$, we obtain 
\begin{align}\label{est:I-fract}
    \mathbf{I}
    &\le 
    \frac{C}{(1-s)R^{sp+\gamma(2-p)}} \Big(\frac{|h|}{R}\Big)^{q+p-2}
    \Big(\frac{R}{R-r}\Big)^{N+3}
    \boldsymbol{\mathfrak K}^{q}.
\end{align}
Here, the constant $C$ has the form $C=\widetilde C(N,p,q)/s$.
Let us apply Lemma~\ref{lem:N-FS} to $w=V_\frac{q}{2}(\btau_hu)$ on $B_{\tilde r}$ with $(q,d,\gm)$ in that lemma replaced by $(2,d=\frac17 (R-r),\widetilde\gm)$, where
\begin{equation}\label{def:tildegm}
   \widetilde\gm= \gamma-\tfrac{1}{2}(\gamma-s)p
   =
   \big( 1-\tfrac12 p\big)\gm +\tfrac12 ps\in [\tfrac12 ps,\gm ],
\end{equation}
and deduce that
\begin{align*}
    \int_{B_{\tilde r-d}} &
    \big|\btau_\lm \big(V_{\frac{q}{2}}(\btau_hu)\big) \big|^2 \,\dx \\
    &\le 
    C(N)|\lm |^{2\widetilde\gamma} 
    \Bigg[(1-\widetilde\gamma)\mathbf I +
    \bigg(\frac{\tilde r^{2(1-\widetilde\gamma)}}{d^2}
    +\frac{1}{\widetilde\gamma d^{2\widetilde\gamma }}
    \bigg)  
    \int_{B_{\widetilde r}} \big|V_{\frac{q}{2}}(\btau_hu)\big|^2 \,\dx\Bigg]
\end{align*}
for any $0<|\lm |\le d$. Since $\varep <1-s$  we have
\begin{equation}\label{est-tildegm}
    1-\widetilde\gamma 
    =
    \tfrac12p(1-s)+(1-\tfrac12p)\epsilon
    \le 
    1-s.
\end{equation}
Moreover, we observe $\tilde r-d=r+d$. Therefore,  the last inequality yields
\begin{align*}
    \int_{B_{r+d}} &
    \big|\btau_\lm \big(V_{\frac{q}{2}}(\btau_hu)\big) \big|^2 \,\dx
    \\
    &\le 
    C(N)|\lm |^{2\gamma -(\gm-s)p} 
    \bigg[(1-s)\mathbf I+
    \frac{1}{s R^{sp+\gamma(2-p)}} \Big(\frac{R}{R-r}\Big)^2
    \int_{B_{\widetilde r}} |\btau_hu|^q \,\dx\bigg].
\end{align*}
The integral on the right-hand side is estimated by Lemma \ref{lem:diff-quot-2}, that is
\begin{equation*}
    \int_{B_{\widetilde r}} |\btau_hu|^q \,\dx
    \le \Big(\frac{|h|}{R}\Big)^qR^q\int_{B_{ R}} |\nabla u|^q \,\dx
    \le \Big(\frac{|h|}{R}\Big)^{q+p-2}\boldsymbol{\mathfrak K}^{q},
\end{equation*}
while $\mathbf I$ is bounded by~\eqref{est:I-fract}. Consequently, we obtain
\begin{align*}
    \int_{B_{r+d}} 
    \big|\btau_\lm \big(V_{\frac{q}{2}}(\btau_hu)\big) \big|^2 \,\dx
    \le 
    \frac{C}{s}
    \Big(\frac{|\lm|}{R}\Big)^{sp +\gamma(2-p)} 
    \Big(\frac{|h|}{R}\Big)^{q+p-2} 
    \Big(\frac{R}{R-r}\Big)^{N+3} 
    \boldsymbol{\mathfrak K}^{q} ,
\end{align*}
where $C=C(N,p,q)$. 
In the above estimate, we choose $\lambda=h$. Furthermore, since $q\ge 2$, we have
\begin{align*}
    \big|\btau_h\big(V_{\frac{q}{2}}(\btau_hu) \big)\big| 
    \ge 
    |\btau_h(\btau_h u)|^{\frac{q}{2}}
    \qquad \mbox{in $B_{r+d}$.}
\end{align*}
Therefore, for any $0<|h|\le d$ we arrive at
\begin{align*}
    \int_{B_{r+d}} |\btau_h(\btau_hu)|^q \,\dx 
    &\le
    \frac{C}{s} \Big(\frac{|h|}{R}\Big)^{q+p-2 +sp+\gamma(2-p)}
    \Big(\frac{R}{R-r}\Big)^{N+3} 
    \boldsymbol{\mathfrak K}^q \\
    &=
    \frac{C}{s} \Big(\frac{|h|}{R}\Big)^{q +sp-\epsilon(2-p)}
    \Big(\frac{R}{R-r}\Big)^{N+3} 
    \boldsymbol{\mathfrak K}^q \\
    &\le 
    \frac{C}{s} \Big(\frac{|h|}{R}\Big)^{q +sp-\epsilon-\frac12\tilde\epsilon}
    \Big(\frac{R}{R-r}\Big)^{N+3} 
    \boldsymbol{\mathfrak K}^q,
\end{align*}
where $C=C(N,p,q)$. 
Apply Lemma~\ref{lem:2nd-Ni-FS} with $\gm$ replaced by 
$$
    \widetilde\beta
    =
    \frac{sp-\varep-\frac12\tilde\epsilon}{q}
$$
and with
\begin{align*}
    M^q 
    &= \frac{C}{s} \frac{1}{R^{q(1+\widetilde\beta)}}
    \Big(\frac{R}{R-r}\Big)^{N+3}
    \boldsymbol{\mathfrak K}^q
\end{align*} 
and $\frac14 d$ in place of $d$. 
Moreover, in the application we fix $\beta$  by
\begin{align*}
    \beta:= \frac{sp-\varep-\tilde\epsilon}{q},
\end{align*}
such that $\beta \in (0,\widetilde\beta)$ and $\widetilde\beta -\beta =\frac{\tilde\epsilon}{2q}$. In this setup, Lemma~\ref{lem:2nd-Ni-FS} yields
that $\nabla u\in W^{\be,q}(B_r)$ with the
quantitative estimate
\begin{align}\label{add}  
[\nabla u]_{W^{\beta,q}(B_r)}^q
    &\le
    \frac{C(N,q)d ^{q(\widetilde\beta -\beta)}}{(\widetilde\beta -\beta) \widetilde\beta ^q (1-\widetilde\beta )^q}
    \bigg[ M^q + \frac{(r+d )^{q}}{\beta d ^{q(1+\widetilde\beta )}} 
    \int_{B_{r+d}}|\nabla u|^q\,\dx
    \bigg].
\end{align}
To simplify the estimate \eqref{add}, we note that $r+d\le R$ and use the elementary inequalities
\begin{equation*}
    \widetilde\beta\ge \frac{sp}{4q},
    \quad
    1-\widetilde \beta\ge \frac{q-sp+\frac12\tilde\epsilon}{q}
    \ge
    \frac{\tilde\epsilon}{2q},
\end{equation*}
so that the denominator of the first factor in \eqref{add} can be bounded from below by
\begin{align*}
    (\widetilde\beta -\beta) \widetilde\beta ^q (1-\widetilde\beta )^q
    &\ge
    \frac{\tilde\eps}{q}\Big(\frac{sp}{4q}\Big)^q\Big(\frac{\tilde\epsilon}{2q}\Big)^q
    =
    C(p,q) s^q \tilde\epsilon^{q+1}.
\end{align*}
Inserting this in the $W^{\beta,q}$-estimate \eqref{add}  will result in 
\begin{align*}
 [\nabla u]_{W^{\beta,q}(B_r)}^q
    &\le \frac{C}{R^{(1+\be) q}}\Big(\frac{R}{R-r}\Big)^{N+q+1}\boldsymbol{\mathfrak K}^q,
\end{align*}
where the constant $C$ has the structure
\[
    C
    =
    \frac{\widetilde C(N,p,q)}{s^{q+1}\tilde\varep^{q+1} }.
\]
This is the desired estimate. Note that
the constant $\widetilde C$ blows up as $p\downarrow 1$, but remains stable for $p=2$. Moreover, the constant remains stable as $s\uparrow 1$.
\end{proof}

In contrast to Lemma~\ref{lem:tauhh-inproved-basic}, in the next result we additionally assume that $u\in W^{1+\theta,q}_{\rm loc}(\Omega)$ for some $\theta\in(0,1)$. If $\theta$ is large enough and $s>\frac{q(p-1)}{p}$, this leads to an improvement in fractional differentiability. Nevertheless, we state the lemma for the larger range $s\in (\frac{p-1}{p}, 1)$. In this larger range, it is guaranteed that the parameter $\beta$, which expresses the fractional differentiability, is positive.

\begin{lemma}\label{lem:tauhh-inproved}
Let $p\in (1,2]$, $s\in (\frac{p-1}{p}, 1)$, $q\in[2,\infty)$, $\theta\in(0,1)$, $\varep\in (0,\min\{1-s,\frac12[1-(1-s)p]\}]$, and $\tilde\epsilon\in(0,\frac12\theta]$.
Whenever
$u$ is a locally bounded $(s,p)$-harmonic function in the sense of Definition~\ref{def:loc-sol} that satisfies
$$
    u\in W^{1+\theta,q}_{\rm loc}(\Omega),
$$
we have
$$
\nabla u\in W^{\be,q}_{\loc}(\Om)\quad\text{where}\quad
\be:=\frac{1-(1-s)p+\theta-\varep-\tilde\epsilon}{q}\in (0,1).
$$
Moreover, there exists $C=C(N,p,s,q,\theta,\tilde\varep)$, such that for every ball
$B_{R}\equiv B_{R}(x_o)\Subset\Omega$ and $r\in(0,R)$, we have 
\begin{align*}
     [\nabla u]_{W^{\beta,q}(B_r)}^q
    &\le \frac{C}{R^{(1+\be) q}}\Big(\frac{R}{R-r}\Big)^{N+q+1}\boldsymbol{\mathfrak K}^q,
\end{align*}
where
\begin{align*}
    \boldsymbol{\mathfrak K}^q
    &:=
    R^{N+(1-\varep) q}[u]^q_{C^{0,1-\varep}(B_{R})} +
    R^{(1+\theta)q}[\nabla u]^q_{W^{\theta,q}(B_{R})} +
    R^q \|\nabla u\|^q_{L^{q}(B_{R})} +
    R^N \boldsymbol{\mathfrak T}^q 
\end{align*}
and
$\boldsymbol{\mathfrak T}:= \boldsymbol{\mathfrak T}(u;R)$.
\end{lemma}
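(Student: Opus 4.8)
The plan is to follow the same blueprint as the proof of Lemma~\ref{lem:tauhh-inproved-basic}, but to replace the energy inequality from Corollary~\ref{cor:energy-q-1} by the sharper one from Proposition~\ref{prop:energy-q-2}, which is available precisely because we now assume $u\in W^{1+\theta,q}_{\rm loc}(\Omega)$ in addition to $C^{0,\gamma}_{\rm loc}$-regularity (the latter being automatic from Theorem~\ref{thm:Hoelder-p<2}). Concretely, I would fix $0<r<R$, set $\tilde r=\tfrac17(5r+2R)$, $\widetilde R=\tfrac17(r+6R)$, $d=\tfrac14(\widetilde R-\tilde r)=\tfrac17(R-r)$, so that $\widetilde R+d=R$, and apply Proposition~\ref{prop:energy-q-2} with $(\tilde r,\widetilde R,\theta,q+p-2,1-\epsilon)$ in place of $(r,R,\theta,q,\gamma)$. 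Writing $\gamma:=1-\varep$ and $\delta=(q+p-2)-p+1=q-1$, and using H\"older's inequality \eqref{eq-holder}-style to raise powers of $\nabla u$ from $q$ to $q+p-2$, together with the estimate $[u]_{C^{0,\gamma}(B_R)}\le R^{-(N/q+\gamma)}\boldsymbol{\mathfrak K}$, this yields a bound of the form
\[
    \mathbf I
    :=\big[V_\frac{q}{2}(\btau_hu)\big]^2_{W^{\gamma-\frac12(\gamma-s)p,2}(B_{\tilde r})}
    \le
    \frac{C}{(1-s)R^{sp+\gamma(2-p)}}\Big(\frac{|h|}{R}\Big)^{q+p-2-(p-1)+\theta}\Big(\frac{R}{R-r}\Big)^{N+3}\boldsymbol{\mathfrak K}^q,
\]
where the exponent of $|h|/R$ simplifies to $q-1+\theta$; here $C=\widetilde C(N,p,q,\theta)/s$.

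Next I would run exactly the same Nikol'ski\u\i{}-type step as in Lemma~\ref{lem:tauhh-inproved-basic}: apply Lemma~\ref{lem:N-FS} to $w=V_{q/2}(\btau_hu)$ on $B_{\tilde r}$ with $(q,d,\gamma)$ replaced by $(2,\tfrac17(R-r),\widetilde\gamma)$, where $\widetilde\gamma=\gamma-\tfrac12(\gamma-s)p$, use $1-\widetilde\gamma\le 1-s$ (valid since $\varep<1-s$) and the difference-quotient bound from Lemma~\ref{lem:diff-quot-2} to control $\int_{B_{\widetilde r}}|\btau_hu|^q$ by $(|h|/R)^{q+p-2}\boldsymbol{\mathfrak K}^q$ — note one needs $(|h|/R)^{q+p-2}\ge(|h|/R)^{q-1+\theta}$, which holds because $p-2\le-1+\theta$ fails in general, so I should instead absorb the discrepancy using $|h|\le R$ to reduce whichever exponent is larger; since we assume $s>\frac{p-1}{p}$ the governing exponent $q+(sp-1)+\theta-\varepsilon(2-p)$ coming from the $\mathbf I$-term is the relevant one. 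Choosing $\lambda=h$ and using $q\ge2$ to get $|\btau_h(V_{q/2}(\btau_hu))|\ge|\btau_h(\btau_hu)|^{q/2}$, I arrive at
\[
    \int_{B_{r+d}}|\btau_h(\btau_hu)|^q\,\dx
    \le
    \frac{C}{s}\Big(\frac{|h|}{R}\Big)^{q+1-(1-s)p+\theta-\varepsilon(2-p)}\Big(\frac{R}{R-r}\Big)^{N+3}\boldsymbol{\mathfrak K}^q
    \le
    \frac{C}{s}\Big(\frac{|h|}{R}\Big)^{q+1-(1-s)p+\theta-\varepsilon-\frac12\tilde\epsilon}\Big(\frac{R}{R-r}\Big)^{N+3}\boldsymbol{\mathfrak K}^q,
\]
where the last step uses $\varepsilon(2-p)\le\varepsilon+\tfrac12\tilde\epsilon$ (or just $\varepsilon(2-p)\le\varepsilon$ since $p\ge1$, leaving room to also drop $\tfrac12\tilde\epsilon$). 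Finally, apply Lemma~\ref{lem:2nd-Ni-FS} with $\gamma$ there replaced by $\widetilde\beta:=\tfrac1q(1-(1-s)p+\theta-\varepsilon-\tfrac12\tilde\epsilon)$, with $\beta:=\tfrac1q(1-(1-s)p+\theta-\varepsilon-\tilde\epsilon)\in(0,\widetilde\beta)$ so that $\widetilde\beta-\beta=\tfrac{\tilde\epsilon}{2q}$, and with the obvious choice of $M$; bounding $\widetilde\beta\ge\tfrac{1-(1-s)p}{2q}$ (positive since $s>\frac{p-1}{p}$) and $1-\widetilde\beta\ge\tfrac{\tilde\epsilon}{2q}$ to control the prefactor, this gives the claimed estimate with a constant of the form $\widetilde C(N,p,q,\theta)/(s^{?}\tilde\varep^{q+1})$, stable as $s\uparrow1$ and blowing up as $p\downarrow1$.

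The main obstacle I anticipate is bookkeeping rather than conceptual: carefully matching the exponent of $|h|/R$ produced by Proposition~\ref{prop:energy-q-2} (whose local term $\mathbf I_1$ carries the increment $(|h|/R)^{q-p+1+\theta}$ after the substitution $q\rightsquigarrow q+p-2$, i.e.\ $(|h|/R)^{q-1+\theta}$) against the auxiliary lower-order term coming from the $\|\btau_hu\|_{L^q}$-part of the Nikol'ski\u\i{} estimate, and then verifying that after reducing via $|h|\le R$ the surviving exponent is exactly $q+1-(1-s)p+\theta-\varepsilon-\tilde\epsilon$ and that this exceeds $q$ iff $1-(1-s)p+\theta>\varepsilon+\tilde\epsilon$, which is guaranteed by the hypotheses $s>\frac{p-1}{p}$, $\varepsilon\le\tfrac12[1-(1-s)p]$, $\tilde\epsilon\le\tfrac12\theta$. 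A secondary technical point is checking that $\widetilde\beta<1$ and $\beta>0$ across the full parameter range so that Lemma~\ref{lem:2nd-Ni-FS} applies; both follow from the stated constraints since $1-(1-s)p+\theta-\varepsilon-\tilde\epsilon<1+\theta\le 2\le q$ and $1-(1-s)p+\theta-\varepsilon-\tilde\epsilon\ge\tfrac12[1-(1-s)p]+\tfrac12\theta>0$. Once these inequalities are confirmed, the remaining computations are entirely parallel to the proof of Lemma~\ref{lem:tauhh-inproved-basic} and introduce no new difficulty.
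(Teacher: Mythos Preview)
Your overall architecture is the same as the paper's: apply Proposition~\ref{prop:energy-q-2} with $q+p-2$ in place of $q$, feed the resulting $W^{\widetilde\gamma,2}$-seminorm bound for $V_{q/2}(\btau_h u)$ into Lemma~\ref{lem:N-FS}, take $\lambda=h$, use $|\btau_h(V_{q/2}(\btau_hu))|\ge|\btau_h(\btau_hu)|^{q/2}$, and finish with Lemma~\ref{lem:2nd-Ni-FS}. The bookkeeping you flag at the end (checking $\beta>0$, $\widetilde\beta<1$, and that the $|h|/R$-exponent exceeds $q$) is fine.

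There is, however, one genuine gap. When you substitute $q\rightsquigarrow q+p-2$ in Proposition~\ref{prop:energy-q-2} while keeping the differentiability parameter equal to $\theta$, the right-hand side contains the term $[\nabla u]^{q+p-2}_{W^{\theta,q+p-2}(B_{R+d})}$, which you must bound by $[\nabla u]_{W^{\theta,q}}$ (this is what sits inside $\boldsymbol{\mathfrak K}$). The ``\eqref{eq-holder}-style'' H\"older trick works for the plain $L^{q+p-2}$-norm of $\nabla u$, but for the fractional seminorm at the \emph{same} order $\theta$ it fails: splitting
\[
\iint_{K_R}\frac{|\nabla u(x)-\nabla u(y)|^{q+p-2}}{|x-y|^{N+\theta(q+p-2)}}\,\dx\dy
\]
by H\"older with exponents $\tfrac{q}{q+p-2}$ and $\tfrac{q}{2-p}$ leaves the non-integrable factor $\iint_{K_R}|x-y|^{-N}\,\dx\dy$. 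The paper's remedy is to apply Proposition~\ref{prop:energy-q-2} with a slightly smaller parameter $\vartheta:=\theta-\tfrac12\tilde\epsilon$ in place of $\theta$; then the same H\"older splitting produces $\iint_{K_R}|x-y|^{-N+\frac{q(\theta-\vartheta)(q+p-2)}{2-p}}\,\dx\dy$, which is finite by Lemma~\ref{int-sing} (with a constant of order $1/\tilde\epsilon$). This costs only $\tfrac12\tilde\epsilon$ in the $|h|/R$-exponent, which you have already budgeted for anyway, and the rest of your argument goes through unchanged with $\vartheta$ replacing $\theta$ in the energy step.
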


\begin{proof}
By Theorem \ref{thm:Hoelder-p<2} we have $u\in C_{\rm loc}^{0,1-\varep}(\Omega)$. To keep some of the exponents as simple as possible, we use the abbreviations $\gamma :=1-\varep$ and $\vartheta:=\theta-\frac12\tilde\epsilon$.
Let $0<r<R$ and abbreviate $\tilde r=\frac17(5r+2R)$,
$\widetilde R=\frac17(r+6R)$, and $d=\frac14(\widetilde R-\tilde r)=\frac17(R-r)$. 
First of all, we note that for $p\in (1,2)$ and since $\vartheta\in(0,\theta)$, by H\"older's inequality and Lemma \ref{int-sing} we have
\begin{align*}
    &[\nabla u]^{q+p-2}_{W^{\vartheta,q+p-2}(B_{R})}=\iint_{K_{R}}\frac{|\nabla u(x)-\nabla u(y)|^{q+p-2}}{|x-y|^{N+\vartheta(q+p-2)}}\dx\dy\\
    &\quad\le 
    \bigg[
    \iint_{K_{R}}\frac{|\nabla u(x)-\nabla u(y)|^{q}}{|x-y|^{N+\theta q}}\dx\dy
    \bigg]^{\frac{q+p-2}{q}}
    \bigg[
    \iint_{K_{R}}\frac{\dx\dy}{|x-y|^{N-\frac{q(\theta - \vartheta)(q+p-2)}{2-p}}}
    \bigg]^{\frac{2-p}{q}}\\
    &\quad\le 
    \frac{C(N,q)}{\tilde\epsilon} R^{\frac{N(2-p)}{q}+(\theta-\vartheta)(q+p-2)} [\nabla u]^{q+p-2}_{W^{\theta,q}(B_{R})}.
\end{align*}
If $p=2$, the argument is simpler, since $|x-y|^{q(\theta-\vartheta)}$ is bounded by $[2R]^{q(\theta-\vartheta)}$, and thus the exponent of $|x-y|$ can be increased from $N+\vartheta q$ to $N+\theta q$. In any case, the last two inequalities are valid for any $p\in (1,2]$. Apply Proposition~\ref{prop:energy-q-2} with $(\tilde r, \widetilde R,\vartheta,q+p-2$ in place of $(r,R,\theta,q)$ and with $\gm =1-\varep$, note that $\widetilde R+d=R$ and 
$\frac{\widetilde R}{\widetilde R-\widetilde r}$
and $\frac1{\widetilde R}$ can be bounded in terms of
$\frac{R}{R-r}$ and $\frac1R$ apart from a multiplicative factor. Subsequently, we use Hölder's inequality as in~\eqref{eq-holder}, the above estimate, and the definition of $\boldsymbol{\mathfrak K}$ to obtain 
\begin{align*}
    \mathbf{I}
    &:= 
    \big[
    V_\frac{q}{2}(\btau_hu)
    \big]^2_{W^{\gamma-\frac{1}{2}(\gamma-s)p,2}(B_{\tilde r})}
    \\
    &\ \le 
    \frac{C}{(1-s)R^{sp}} \Big(\frac{|h|}{R}\Big)^{q-1+\vartheta}
    \Big(\frac{R}{R-r}\Big)^{N+sp+1}
    [u]_{C^{0,\gamma}(B_{R})}^{2-p} \\
    &\ \phantom{\le\,}
    \cdot
    \Bigg[R^{(1+\vartheta)(q+p-2)}[\nabla u]^{q+p-2}_{W^{\vartheta,q+p-2}(B_{R})} +
    \frac{R^{q+p-2}}{\vartheta} \int_{B_{R}} |\nabla u|^{q+p-2}\,\dx +
    R^N \boldsymbol{\mathfrak T}^{q+p-2}
    \Bigg] \\
    &\ \le 
    \frac{C}{(1-s)R^{sp}} \Big(\frac{|h|}{R}\Big)^{q-1+\vartheta}
    \Big(\frac{R}{R-r}\Big)^{N+3}
    [u]_{C^{0,\gamma}(B_{R})}^{2-p} \\
    &\ \phantom{\le\,}\cdot
    \Bigg[ \frac{R^{\frac{N(2-p)}{q}+(1+\theta)(q+p-2)}}{\tilde\epsilon} [\nabla u]^{q+p-2}_{W^{\theta,q}(B_{R})}\\
    &\ \qquad\,
    +
    \frac{R^{\frac{N(2-p)}{q}+q+p-2}}{\tilde\epsilon}  \bigg[\int_{B_{R}} |\nabla u|^{q}\,\dx\bigg]^{\frac{q+p-2}{q}} +
    R^N \boldsymbol{\mathfrak T}^{q+p-2}
    \Bigg] \\
    &\ \le 
    \frac{C}{(1-s)\tilde\epsilon R^{sp}} \Big(\frac{|h|}{R}\Big)^{q-1+\vartheta}
    \Big(\frac{R}{R-r}\Big)^{N+3}
    [u]_{C^{0,\gamma}(B_{R})}^{2-p} 
    R^{\frac{N(2-p)}{q}} \boldsymbol{\mathfrak K}^{q+p-2}.
\end{align*}
In turn, we used $\vartheta=\theta-\frac12\tilde\epsilon\ge2\tilde\epsilon-\frac12\tilde\epsilon = \frac32\tilde\epsilon$. 
Keeping in mind that  $[u]_{C^{0,\gamma}(B_{R})}$ can be estimated by $R^{-[\frac{N}{q}+\gamma]}\boldsymbol{\mathfrak K}$, we obtain 
\begin{align}\label{est:I-q/2}
    \mathbf{I}
    &\le 
    \frac{C}{(1-s)\tilde\epsilon}
    \frac{1}{R^{sp+\gamma(2-p)}} \Big(\frac{|h|}{R}\Big)^{q-1+\vartheta}
    \Big(\frac{R}{R-r}\Big)^{N+3}
    \boldsymbol{\mathfrak K}^{q}.
\end{align}
The constant $C$ has the structure $C=\widetilde C(N,p,q)/s$ and $\widetilde C$ is stable as $p\uparrow 2$ and blows up as $p\downarrow 1$; see Proposition~\ref{prop:energy-q-2}.

Let us apply Lemma~\ref{lem:N-FS} to $w=V_\frac{q}{2}(\btau_hu)$ on $B_{\widetilde r}$ with $(q,d,\gm)$ replaced by $(2,d=\frac17 (R-r),\widetilde\gm)$, where $\tilde\gamma$ is defined in~\eqref{def:tildegm} and deduce that
\begin{align*}
    \int_{B_{\tilde r-d}} &
    \big|\btau_\lm \big(V_{\frac{q}{2}}(\btau_hu)\big) \big|^2 \,\dx \\
    &\le 
    C(N)|\lm |^{2\widetilde\gamma} 
    \Bigg[(1-\widetilde\gamma)\mathbf I +
    \bigg(\frac{\tilde r^{2(1-\widetilde\gamma)}}{d^2}
    +\frac{1}{\widetilde\gamma d^{2\widetilde\gamma }}
    \bigg)  
    \int_{B_{\tilde r}} \big|V_{\frac{q}{2}}(\btau_hu)\big|^2 \,\dx\Bigg]
\end{align*}
for any $0<|\lm |\le d$. Since $\varep <1-s$ we have $1-\widetilde\gamma \le 1-s$; see~\eqref{est-tildegm}. Moreover, $\tilde r-d=r+d$. 
Therefore,  the last inequality yields
\begin{align*}
    \int_{B_{r+d}} &
    \big|\btau_\lm \big(V_{\frac{q}{2}}(\btau_hu)\big) \big|^2 \,\dx
    \\
    &\le 
    C(N)|\lm |^{2\gamma -(\gm-s)p} 
    \bigg[(1-s)\mathbf I+
    \frac{1}{s R^{sp+\gamma(2-p)}} \Big(\frac{R}{R-r}\Big)^2
    \int_{B_{\widetilde r}} |\btau_hu|^q \,\dx\bigg].
\end{align*}
The integral on the right-hand side is estimated by Lemma \ref{lem:diff-quot-2}, that is
\begin{equation*}
    \int_{B_{\widetilde r}} |\btau_hu|^q \,\dx
    \le \Big(\frac{|h|}{R}\Big)^qR^q\int_{B_{ R}} |\nabla u|^q \,\dx
    \le \Big(\frac{|h|}{R}\Big)^{q-1+\vartheta}\boldsymbol{\mathfrak K}^{q},
\end{equation*}
while $\mathbf I$ is bounded by~\eqref{est:I-q/2}. We obtain
\begin{align*}
    \int_{B_{r+d}} 
    \big|\btau_\lm \big(V_{\frac{q}{2}}(\btau_hu)\big) \big|^2 \,\dx
    \le 
    \frac{C}{s\tilde\epsilon}
    \Big(\frac{|\lm|}{R}\Big)^{sp +\gamma(2-p)} 
    \Big(\frac{|h|}{R}\Big)^{q-1+\vartheta} 
    \Big(\frac{R}{R-r}\Big)^{N+3} 
    \boldsymbol{\mathfrak K}^{q} ,
\end{align*}
where $C=C(N,p,q)$. 
Here we choose $\lambda=h$. Since $q\ge 2$, we have
\begin{align*}
    \big|\btau_h\big(V_{\frac{q}{2}}(\btau_hu) \big)\big| 
    \ge 
    |\btau_h(\btau_h u)|^{\frac{q}{2}}
    \qquad \mbox{in $B_{r+d}$.}
\end{align*}
Therefore, for any $0<|h|\le d$ we get
\begin{align*}
    \int_{B_{r+d}} |\btau_h(\btau_hu)|^q \,\dx 
    \le
    \frac{C}{s\tilde\epsilon} \Big(\frac{|h|}{R}\Big)^{q-1+\vartheta +sp+\gamma(2-p)}
    \Big(\frac{R}{R-r}\Big)^{N+3} 
    \boldsymbol{\mathfrak K}^q,
\end{align*}
for a constant $C=C(N,p,q)$. Recalling $\gamma =1-\epsilon$ and  $\vartheta=\theta-\frac12\tilde\epsilon$, the above estimate can be re-written as
\begin{align*}
    \int_{B_{\widetilde R-d}} |\btau_h(\btau_hu)|^q \,\dx 
    \le
    \frac{C}{s\tilde\varep} \Big(\frac{|h|}{R}\Big)^{q+1 - (1-s)p+\theta -(2-p)\varep -\frac12\tilde\epsilon}
    \Big(\frac{R}{R-r}\Big)^{N+3}\boldsymbol{\mathfrak K}^q. 
\end{align*}
As before, we have $C=C(N,p,q)$.
Apply Lemma~\ref{lem:2nd-Ni-FS} with $\gm$ replaced by 
$$
    \widetilde\beta
    =
    \frac{1-(1-s)p+\theta-(2-p)\varep-\frac12\tilde\epsilon}{q}
$$
and
\begin{align*}
    M^q 
    &= \frac{C}{s\tilde\varep} \frac{1}{R^{q(1+\widetilde\beta)}}
    \Big(\frac{R}{R-r}\Big)^{N+3}\boldsymbol{\mathfrak K}^q
\end{align*} 
and $\bar d=\frac14 d$. 
Moreover, we fix $\beta$ in the application by
\begin{align*}
    \beta:= \frac{1-(1-s)p+\theta-\varep-\tilde\epsilon}{q}.
\end{align*}
Note that $\beta \in (0,\widetilde\beta)$ and $\widetilde\beta -\beta =\frac{(p-1)\varep+\frac12\tilde\epsilon}{q}>\frac{\tilde\varep}{2q}$. Lemma~\ref{lem:2nd-Ni-FS} yields
$\nabla u\in W^{\be,q}(B_r)$ with the
quantitative estimate
\begin{align}\label{add1}
     [\nabla u]_{W^{\beta,q}(B_r)}^q
    &\le
    \frac{C(N,q)d ^{q(\widetilde\beta -\beta)}}{(\widetilde\beta -\beta) \widetilde\beta ^q (1-\widetilde\beta )^q}
    \bigg[ M^q + \frac{(r+d )^{q}}{\beta d ^{q(1+\widetilde\beta )}} 
    \int_{B_{r+d}}|\nabla u|^q\,\dx
    \bigg].
\end{align}
Noting that $r+d\le R$ and using the elementary inequalities
\begin{equation*}
    \widetilde\beta> \beta\ge \frac{\theta}{2q},
    \quad
    1-\widetilde \beta\ge \frac{q-1-\theta}{q},
\end{equation*}
the denominator of the first factor in \eqref{add1} can be bounded from below by
\begin{align*}
    (\widetilde\beta -\beta) \widetilde\beta ^q (1-\widetilde\beta )^q
    \ge
    \frac{\tilde\eps}{2q}\Big(\frac{\theta}{2q}\Big)^q\Big(\frac{q-1-\theta}{q}\Big)^q 
    =
    \frac{\tilde\varep\theta^q(q-1-\theta)^q}{C(q)}.
\end{align*}
Inserting this in the $W^{\beta,q}$-estimate \eqref{add1}  will result in 
\begin{align*}
 [\nabla u]_{W^{\beta,q}(B_r)}^q
    &\le \frac{C}{R^{(1+\be) q}}\Big(\frac{R}{R-r}\Big)^{N+q+1}\boldsymbol{\mathfrak K}^q,
\end{align*}
where the constant $C$ has the structure
\[
    C
    =
    \frac{\widetilde C(N,p,q)}{s\tilde\varep^2\theta^{q+1}(q-\theta-1)^q }.
\]
This is the desired estimate. 
\end{proof}

\begin{remark}\upshape
   We stress that the constant $C$ in the previous lemmas eventually blows up as $\tilde\varep\downarrow 0$. However, it is independent of $\varep$ which is required to be less than $1-s$. Therefore, $C$ remains stable as $s\uparrow1$.
\end{remark}

Now, we have all prerequisites at hand to give the proof of Theorem~\ref{thm:grad-frac}.

\begin{proof}[\textbf{\upshape Proof of Theorem~\ref{thm:grad-frac}}]
Note that
\begin{equation}\label{precise:beta}
    \beta=
    \left\{
    \begin{array}{cl}
    \displaystyle \frac{sp}{q},&\mbox{if $\displaystyle s\le \frac{q(p-1)}{p}$,}\\[8pt]
    \displaystyle\frac{1-(1-s)p}{q-1}, &\mbox{if $\displaystyle s> \frac{q(p-1)}{p}$.}
    \end{array}
    \right.
\end{equation}
We proceed in three steps.

\textit{Step 1: Almost $W^{\frac{sp}{q},q}$-regularity.} This step involves Lemma~\ref{lem:tauhh-inproved-basic} only and applies to all $s\in (0,1)$.
To begin with, let $\alpha\in(0,\frac{sp}{q})$. From Theorem~\ref{lem:W1q-p<2} we have  $u\in W^{1,q}_{\rm loc}(\Omega)$ for any $q\ge p$, while from  Theorem~\ref{thm:Hoelder-p<2} we know that $u\in C^{0,\gamma}_{\rm loc}(\Omega)$ for any $\gamma\in(0,1)$.
This allows  to fix in Lemma~\ref{lem:tauhh-inproved-basic} the parameters $\varep$ and $\tilde\varep$ by
$$
    \epsilon
    =
    \min\big\{1-s,\tfrac12(sp-\alpha q)\big\}
    \in 
    \big(0,\min\{1-s,\tfrac12sp\big\}\big]
$$
and 
$$
    \tilde\epsilon 
    =
    \tfrac12(sp-\alpha q)
    \in 
    \big(0,\tfrac12sp\big].
$$
The application of the Lemma~\ref{lem:tauhh-inproved-basic} results in the fractional differentiability of $\nabla u$ in the sense that
$$
    \nabla u\in W^{\beta,q}_{\loc}(\Om),
    \quad\mbox{where $\beta=\frac{sp-\varep-\tilde\epsilon}{q}\in (\alpha,1)$.}
$$
The lower and upper bounds for $\beta$ are a direct consequence of the choices of $\varep$ and $\tilde\varep$.
Moreover, we have the quantitative estimate
\begin{align*}
     [\nabla u]_{W^{\beta,q}(B_r)}^q
    \le 
    \frac{C}{R^{(1+\beta) q}}\Big(\frac{R}{R-r}\Big)^{N+q+1}
    \boldsymbol{\mathfrak K}^q,
\end{align*}
for every ball
$B_{R}\equiv B_{R}(x_o)\Subset\Omega$ and $r\in(0,R)$ and with a constant $C=C(N,p,s,q,\alpha)$ of the form 
\[
    C
    =
    \frac{\widetilde C(N,p,q)}{s^{q+1}(\frac{sp}{q}-\alpha)^{q+1} }.
\]
Here we have used the abbreviation 
\begin{align*}
    \boldsymbol{\mathfrak K}^q
    :=
    R^{N+(1-\varep)q}[u]^q_{C^{0,1-\varep}(B_{R})} +
    R^q \|\nabla u\|^q_{L^{q}(B_{R})} +
    R^N \boldsymbol{\mathfrak T}(u;R)^q .
\end{align*}
Recall that $\beta\ge\alpha$, so that $\nabla u\in W^{\alpha,q}_{\loc}(\Om)$ and 
\begin{align}\label{frac-basic}
    [u]_{W^{\al,q}(B_{r})}^q
    &\le 
    2^{(\beta-\al)q} 
    r^{(\beta-\alpha)q}
    [u]_{W^{\beta,q}(B_{r})}^q \nonumber\\
    &\le 
    \frac{C}{R^{(1+\alpha) q}}\Big(\frac{R}{R-r}\Big)^{N+q+1}
    \boldsymbol{\mathfrak K}^q.
\end{align}
Note that \eqref{frac-basic} holds for any $s\in(0,1)$. In particular, we have proved the qualitative assertion of Theorem~\ref{thm:grad-frac} in the range $s\le \frac{q(p-1)}{p}$, in which  we have $\beta=\frac{sp}{q}$, cf.~\eqref{precise:beta}.

\textit{Step 2: Almost $W^{\frac{1-(1-s)p}{q-1},q}$-regularity for $s>\frac{q(p-1)}{p}$.}
In this step we concentrate on the case $s>\frac{q(p-1)}{p}$ and employ Lemma~\ref{lem:tauhh-inproved} together with an iteration scheme to upgrade the differentiability to any number less than $\beta=\frac{1-(1-s)p}{q-1}$. 
To this end, let $\theta_o=\frac{sp}{2q}$ and consider $\alpha\in(\theta_o,\be)$.
Moreover, we fix $\varep$ and $\tilde \varep$ by
\begin{equation*}
    \varep
    =
    \min\big\{1-s,\tfrac13(\beta-\alpha)(q-1)\big\}
    \in
    \big( 0, \min\big\{1-s, \tfrac12[1-(1-s)p]\big\}\big]
\end{equation*}
and 
\begin{equation*}
    \tilde\varep
    =
    \min\big\{\tfrac12 \theta_o,\tfrac13(\beta-\alpha)(q-1)\big\}
    \in
    \big( 0, \tfrac12\theta_o\big],
\end{equation*}
such that the requirements on $\varep$ and $\tilde\epsilon$ imposed in Lemma~\ref{lem:tauhh-inproved} are satisfied.
For $i\in\N_0$ introduce 
\begin{equation*}
    \left\{
    \begin{array}{c}
       \displaystyle
       \theta_{i+1}=\frac{1-(1-s)p+\theta_i-\varep-\tilde\epsilon}{q}, \\[8pt]
        \displaystyle
        \rho_i=r +\frac{R-r}{2^{i+1}}, 
        \qquad
        B_i=B_{\rho_i}.
    \end{array}
    \right.
\end{equation*}
By iteration we have
\begin{align*}
    \theta_i
    &=
    \bigg( 1+\frac1{q} +\dots +\frac{1}{q^{i-1}}\bigg)\frac{1-(1-s)p-\varep-\tilde\epsilon}{q}+\frac{\theta_o}{q^i}\\
    &=\frac{1}{q^i}\bigg[\theta_o-\frac{1-(1-s)p-\varep-\tilde\epsilon}{q-1}\bigg]
    +
    \frac{1-(1-s)p-\varep-\tilde\epsilon}{q-1}.
\end{align*}
Since
\begin{align*}
    \theta_o-\frac{1-(1-s)p-\varep-\tilde\epsilon}{q-1}
    \le 
    \theta_o-\beta+\tfrac{2}{3}(\beta-\alpha)
    =
    \theta_o-\tfrac{1}{3}\beta-\tfrac{2}{3}\alpha
    <
    \tfrac{1}{3}(\theta_o-\beta)
    <0,
\end{align*}
the sequence $(\theta_i)_{i\in\N_0}$ is strictly increasing and 
\begin{equation*}
    \lim_{i\to\infty}\theta_i
    =
    \frac{1-(1-s)p-\varep-\tilde\epsilon}{q-1}
    >
    \alpha .
\end{equation*}
From {\it Step~1}, in particular from inequality~\eqref{frac-basic} applied with the choice $\frac{sp}{2q}=\theta_o$ for $\alpha$, we already know that
$\nabla u\in W^{\theta_o,q}_{\rm loc}(\Omega)$,
with the quantitative estimate
\begin{align}\label{est:quant-theta_o}
    R^{(1+\theta_o) q}[\nabla u]_{W^{\theta_o, q}( B_{r})}^{q}
    &\le
    C
    \Big(\frac{R}{R-r}\Big)^{N+q+1}
    \boldsymbol{\mathfrak K}^q .
\end{align}
The constant $C$ has the structure 
$$
    C=\frac{\widetilde C(N,p,q)}{s^{2q+2}}.
$$
Therefore, we are allowed to apply Lemma~\ref{lem:tauhh-inproved} with $(\theta_o,\theta_1,\rho_1,\rho_o)$ in place of $(\theta,\be,r,R)$ and with $\varep$ as above in order to improve the fractional differentiability of $\nabla u$ to order $\theta_1$, i.e.~$\nabla u\in W^{\theta_1,q}_{\rm loc}(\Omega)$.

For the {\bf induction step} $i\to i+1$ let us now assume that we have already established $\nabla u\in W^{\theta_i,q}(B_i)$ for some $i\in\N_0$. Since $\varep \le\frac12[1-(1-s)p]$ and $\tilde\epsilon\le\frac12\theta_o\le\frac12\theta_i$ all assumptions of Lemma~\ref{lem:tauhh-inproved} are fulfilled. The application with
$(\theta_i,\theta_{i+1},\rho_{i+1},\rho_i)$ in place of $(\theta,\beta,r,R)$ and with the above choices of $\varep$ and $\tilde\epsilon$ yields
\begin{align*}
     \rho_{i+1}^{(1+\theta_{i+1}) q}[\nabla u]_{W^{\theta_{i+1},q}(B_{i+1})}^q
    &\le C_i\Big(\frac{\rho_i}{\rho_i-\rho_{i+1}}\Big)^{N+q+1}
    \Big[
      \rho_i^{(1+\theta_i)q}[\nabla u]^q_{W^{\theta_i,q}(B_{i})}  +  \boldsymbol{\mathfrak K}_i^q \Big],
\end{align*}
where
\begin{align*}
    \boldsymbol{\mathfrak K}_i^q
    &:=
    \rho_i^{N+(1-\varep) q}[u]^q_{C^{0,1-\varep}(B_{i})} +
    \rho_i^q \|\nabla u\|^q_{L^q(B_{i})}  +
    \rho_i^N \boldsymbol{\mathfrak T}(u;\rho_i)^q.
\end{align*}
The constant $C_i$ has the structure
\begin{equation*}
    C_i
    =
    \frac{\widetilde C(N,p,q)}{s\tilde\varep^2\theta_i^{q+1}(q-\theta_i-1)^q }
    \le 
    \frac{\widetilde C(N,p,q)}{s^{q+2}\tilde\varep^{q+2}}
    \le 
    \frac{\widetilde C(N,p,q)}{s^{2q+4}(\beta-\widetilde\alpha)^{q+2}}
    =: 
    C_*.
\end{equation*}
To obtain the last inequality we used $\frac{sp^2}{4q}\le \theta_i<\frac{1-(1-s)p-\varep-\tilde\epsilon}{q-1}$. 
To further simplify the previous recursive estimate, observe that
$$
    R
    \ge
    \rho_i
    =
    \frac{R}{2^{i+1}}+ r\Big( 1-\frac{1}{2^{i+1}}\Big)
    >
    \frac{R}{2^{i+1}},
$$
and 
$$
    \rho_i-\rho_{i+1}
    =
    \frac{R-r}{2^{i+2}},
$$
such that we have
$$
\Big(\frac{\rho_i}{\rho_i-\rho_{i+1}}\Big)^{N+q+1}
     \le
     C(N,q)2^{(N+q+1)i} \Big(\frac{R}{R-r}\Big)^{N+q+1}.
$$
In addition, we have
\begin{align*}
    \rho_i^{(1-\varep) q+N }[u]^q_{C^{0,1-\varep}(B_{i})} 
    &\le 
    R^{(1-\varep)q+N}[u]^q_{C^{0,1-\varep}(B_{R})},
\end{align*}
and
\begin{align*}
   \rho_i^q \int_{B_{i}} |\nabla u|^q\,\dx
   \le 
   R^q\int_{B_R} |\nabla u|^q\,\dx.
\end{align*}
Moreover, Remark~\ref{rem:t} yields
\begin{align*}
    \rho_i^{N}\boldsymbol{\mathfrak T}(u;\rho_i)^q
    \le
    \rho_i^{N}
    \bigg[ 
    C(N)
    \Big(\frac{R}{\rho_i}\Big)^{N}\bigg]^\frac{q}{p-1}\boldsymbol{\mathfrak T}(u;R)^q 
    \le
    C(N,p,q)2^{\frac{Nq}{p-1}i}R^{N}\boldsymbol{\mathfrak T}(u;R)^q.
\end{align*} 
Consequently, $\boldsymbol{\mathfrak K}_i$ is estimated by $\boldsymbol{\mathfrak K}$ with a multiplicative factor thanks to the last three estimates, and we arrive at a simpler recursive estimate which reads as
\begin{align*}
     \rho_{i+1}^{(1+\theta_{i+1}) q}[\nabla u]_{W^{\theta_{i+1},q}(B_{i+1})}^q
     \le 
     C_*\boldsymbol{b}^{i} \Big(\frac{R}{R-r}\Big)^{N+q+1}
    \Big[ 
    \rho_i^{(1+\theta_i)q}[\nabla u]^q_{W^{\theta_i,q}(B_{i})}
    +
    \boldsymbol{\mathfrak K}^q \Big].  
\end{align*} 
Here, we denoted $\boldsymbol{b}:=2^{N+q+1+\frac{Nq}{p-1}}$.
Iterating this inequality and taking into account that $\boldsymbol b\ge 1$ and $C_\ast\ge 1$ we obtain
\begin{align*}
    \rho_i^{(1+\theta_i)q}&[\nabla u]^q_{W^{\theta_i,q}(B_{i})}\\
    &\quad\le  
    i\boldsymbol{b}^{1+2+\dots+i} C_*^{i} \Big(\frac{R}{R-r}\Big)^{i(N+q+1)}
    \Big[
    R^{(1+\theta_o)q}[\nabla u]^q_{W^{\theta_o,q}(B_{R})}
    +
    \boldsymbol{\mathfrak K}^q\Big] \\
    &\quad\le  i\boldsymbol{b}^{1+2+\dots+i} C_*^{i} \Big(\frac{R}{R-r}\Big)^{(i+1)(N+q+1)}
    \boldsymbol{\mathfrak K}^q ,
\end{align*}
where from the second to last line we used inequality~\eqref{est:quant-theta_o}. 
In order to conclude, we only need to take $i_o\in \N$ to be the first index satisfying 
\begin{equation*}
    \theta_{i_o}
    \ge 
    \frac{1-(1-s)p-\frac{3}{2}(\varep+\tilde\epsilon)}{q-1}
    \ge 
    \alpha,
\end{equation*}
i.e.
\begin{equation*}
    i_o
    =
    \Bigg\lceil\frac{\ln\Big(1+\frac{\theta_o(q-1)-[1-(1-s)p]}{\epsilon+\tilde\epsilon}\Big)}{\ln q}\Bigg\rceil.
\end{equation*}
Note that $i_o$ only depends on $p, s, q$, and $\alpha$. It is stable as $s\uparrow1$ and blows up as $s\downarrow 0$ and $\alpha\uparrow\beta$. 
Hence,
\begin{align}\label{frac-iter}
    r^{(1+\alpha)q} [u]_{W^{\al,q}(B_{r})}^q
    &\le 
    2^{(\theta_{i_o}-\al)q} 
    r^{(1+\theta_{i_o})q}
    [u]_{W^{\theta_{i_o},q}(B_{\rho_{i_o}})}^q \nonumber\\
    &\le 
    2^q i_o\boldsymbol{b}^{1+2+\dots+i_o} C_*^{i_o} \Big(\frac{R}{R-r}\Big)^{(i_o+1)(N+q+1)}
    \boldsymbol{\mathfrak K}^q.
\end{align}

\textit{Step 3: Almost $W^{\beta,q}$-regularity.}
By combining \textit{Step~1} and \textit{Step~2}, in particular~\eqref{frac-basic} and~\eqref{frac-iter}, we have  established that $\nabla u\in W^{\alpha,q}_{\rm loc}(\Omega)$ for every $\alpha\in(0,\beta)$ together with the quantitative estimate 
\begin{align*}
    &[u]_{W^{\al,q}(B_{\frac12 R})}^q \\
    &\quad\le 
    \frac{C}{R^{(1+\alpha)q}}
    \bigg[R^{N+(1-\varep)q}[u]^q_{C^{0,1-\varep}(B_{\frac34 R})} +
    R^q \int_{B_{\frac34 R}} |\nabla u|^q\dx +
    R^N \boldsymbol{\mathfrak T}\big(u; \tfrac34 R\big)^q\bigg].
\end{align*}
Finally, we use Theorem~\ref{thm:W1q-p<2}, Theorem~\ref{thm:Hoelder-p<2}, and Lemma~\ref{lem:t} to further estimate the terms in the bracket on the right-hand side. After slightly adjusting the radii we obtain 
\begin{align*}
    [u]_{W^{\al,q}(B_{\frac12 R})}^q
    &\le 
    \frac{C R^{N}}{R^{(1+\alpha)q}}
    \Big[R^{s-\frac{N}{p}}(1-s)^{\frac1p}[u]_{W^{s,p}(B_{R})} +
    \boldsymbol{\mathfrak T}(u; R)\Big]^q.
\end{align*}
This finishes the proof of the theorem.
\end{proof}

\end{document}